\def\ES#1{\EuScript{#1}}
\def\wt#1{\widetilde{#1}}
\def\bs#1{\boldsymbol{#1}}
\newcommand{\subjclass}[1]{%
  \let\@oldtitle\@title%
  \gdef\@title{\@oldtitle\footnotetext{\textbf{\emph{Mathematics subject classification (2010):}} #1.}}%
}
\newcommand{\keywords}[1]{%
  \let\@@oldtitle\@title%
  \gdef\@title{\@@oldtitle\footnotetext{\textbf{\emph{Key words and phrases:}} #1.}}%
}
\title{Matching of orbital integrals (transfer)\\ and Roche Hecke algebra isomorphisms}
\subjclass{22E50}
\keywords{Hecke algebra isomorphisms, geometric transfer, local data}
\author[1]{Bertrand Lemaire\thanks{The author acknowledge partial support by the Agence Nationale de la Recherche, 
projet ANR--13--BS01--00120--02 FERPLAY}}
\author[2]{Manish Mishra}
\affil[1]{\footnotesize Aix Marseille Universit\'e, CNRS, Centrale Marseille, I2M, UMR 7373, 163 Avenue de Luminy, 
Case 901, 13288 Marseille, France. \textit{Email:} Bertrand.Lemaire@univ-amu.fr.}
\affil[2]{\footnotesize Department of Mathematics, 
Indian Institute of Science Education and Research (IISER) Pune, India. \textit{Email:} manish@iiserpune.ac.in}
\date{\today}
\begin{document}

\maketitle

\begin{abstract}
Let $F$ be a non--Archimedan local field, $G$ a connected reductive group defined and split over $F$, and $T$ a maximal $F$--split torus in $G$. Let 
$\chi_0$ be a depth zero character of the maximal compact subgroup $\ES{T}$ of $T(F)$. It gives by inflation a character $\rho$ of an Iwahori 
subgroup $\ES{I}\subset \ES{T}$ of $G(F)$. From Roche \cite{R}, $\chi_0$ defines a reductive $F$--split group $\wt{G}'$ 
whose connected component 
$G'$ is an endoscopic group of $G$, and there is an isomorphism of ${\Bbb C}$--algebras 
$\ES{H}(G(F),\rho) \rightarrow \ES{H}(\wt{G}'(F),1_{\ES{I}'})$ where $\ES{H}(G(F),\rho)$ is the Hecke algebra of compactly supported $\rho^{-1}$--spherical functions on $G(F)$ 
and $\ES{I}'$ is an Iwahori subgroup of $G'(F)$. This isomorphism gives by restriction an injective morphism $\zeta: \ES{Z}(G(F),\rho)\rightarrow \ES{Z}(G'(F),1_{\ES{I}'})$ between the 
centers of the Hecke algebras. We prove here that a certain linear combination of morphisms analogous to $\zeta$ realizes the transfer (matching of strongly $G$--regular semisimple orbital integrals). 
If ${\rm char}(F)=p>0$, our result is unconditional only if $p$ is large enough.
\end{abstract}

\newtheorem*{rem}{Remark}
\newtheorem*{exam}{Example}
\newtheorem*{rem1}{Remark 1}
\newtheorem*{rem2}{Remark 2}
\newtheorem*{rem3}{Remark 3}
\newtheorem*{rem4}{Remark 4}
\newtheorem*{thm}{Theorem}
\newtheorem*{thm*}{Theorem*}
\newtheorem*{thm**}{Theorem**}
\newtheorem*{thm1}{Theorem 1}
\newtheorem*{thm2}{Theorem 2}
\newtheorem*{prop1}{Proposition 1}
\newtheorem*{prop2}{Proposition 2}
\newtheorem*{prop}{Proposition}
\newtheorem*{defi}{Definition}
\newtheorem*{lem}{Lemma}
\newtheorem*{lem1}{Lemma 1}
\newtheorem*{lem2}{Lemma 2}
\newtheorem*{lem3}{Lemma 3}
\newtheorem*{lem4}{Lemma 4}
\newtheorem*{nota}{Notation}
\newtheorem*{cor}{Corollary}
\newtheorem*{conj}{Conjecture}

\tableofcontents

\addcontentsline{toc}{section}{Introduction}
\section*{Introduction}

\noindent{\bf 0.1.} --- Let $F$ be a locally compact non--Archimedean field, $G$ a connected reductive $F$--group, and $\underline{G}'$ an endoscopic datum of $G$ with underlying group $G'$. 
If ${\rm char}(F)=0$, we know \cite{N,W2,W1} that for any function $f\in C^\infty_{\rm c}(G(F))$, there exists a function $f'\in C^\infty_{\rm c}(G'(F))$ such that $f$ and $f'$ have matching 
(semisimple) strongly $G$--regular orbital integrals. Such a function $f'$ is called an {\it endoscopic transfer}, or simply a {\it transfer}, of $f$. If ${\rm char}(F)>0$, the existence of transfer is not known in general. 

Even if the transfer is not unique, it is expected that for each $f$, there is a transfer $f'$ which is --- in some sense --- better than the others. For example if $G$ and $\underline{G}'$ are unramified, the {\it fundamental lemma} states that for any $K$--spherical function $f\in C^\infty_{\rm c}(G(F))$, there exists a unique $K'$--spherical function $f'\in C^\infty_{\rm c}(G'(F))$ which is a transfer of $f$; where $K$ and $K'$ are hyperspecial maximal compact sugbroups of $G(F)$ and $G'(F)$. Moreover in that case, the correspondence $f\mapsto f'$ is given by a morphism of algebras. If ${\rm char}(F)=0$, other examples of ``distinguished transfers'' are given in \cite{Hai1,Hai2} and \cite{KV}.

Our goal here is to produce ``distinguished transfers'' in the following case (studied by Roche \cite{R}): ${\rm char}(F)\geq 0$, $G$ is $F$--split, $G'$ is an ($F$--split) endoscopic group of $G$ associated 
to a depth zero character of the maximal compact subgroup of a maximal split torus in $G(F)$, and $f$ is in the center of the Hecke algebra associated to this character.

\vskip2mm
\noindent{\bf 0.2.} --- Let $p$ be the residual characteristic of $F$. From now on, we suppose $G$ is $F$--split. 
Let $T$ be a maximal $F$--split torus in $G$, and $\chi_0$ a depth zero character of the maximal compact subgroup $\ES{T}$ of $T(F)$. This character can be extended to a $W_{\chi_0}$--invariant character $\chi$ of $T(F)$, where $W_{\chi_0}$ is the subgroup of the Weyl group $W=N_G(T)/T$ formed of those elements which fix $\chi_0$. We fix an hyperspecial maximal compact subgroup $\ES{K}$ of $G(F)$ corresponding to a vertex in the apartment of the Bruhat--Tits bulding of $G(F)$ associated to $T$. Let $B$ be a Borel subgroup of $G$ containing $T$. 
The Borel pair $(B,T)$ of $G$ determines an Iwahori subgroup $\ES{I}\subset \ES{K}$ of $G(F)$. The character $\chi_0$ of $\ES{T}$ defines by inflation a character $\rho=\rho_{\chi_0}$ of $\ES{I}$. It defines also a connected reductive $F$--split group $G'$ with maximal $F$--split torus $T'=T$, which turns out to be an endoscopic group of $G$. We fix a hyperspecial maximal compact 
subgroup $\ES{K}'$ of $G'(F)$ as above. Then to $\ES{I}$ is associated an Iwahori subgroup $\ES{I}'\subset \ES{K}'$ of $G'(F)$. Let $\ES{Z}(G(F),\rho)$ be the center of the Hecke algebra $\ES{H}(G(F),\rho)$ of compactly supported $\rho^{-1}$--spherical functions on $G(F)$, and $\ES{Z}(G'(F),1_{\ES{I}'})$ the center of the Hecke algebra $\ES{H}(G'(F),1_{\ES{I}'})$ of compactly supported $\ES{I}'$--biinvariant functions on $G'(F)$. We have a natural injective morphism of ${\Bbb C}$--algebras (cf. \ref{Bernstein center})
$$
\zeta_\chi: \ES{Z}(G(F),\rho) \rightarrow \ES{Z}(G'(F),1_{\ES{I}'}).
$$
On the other hand, the image of the Langlands parameter $\varphi_\chi: W_F \rightarrow \check{T}$ of $\chi$ is contained in the center $Z(\check{G}')$ of the 
dual group (in the sense of Langlands) $\check{G}'$ of $G'$. Thus it defines a character $\omega'_\chi$ of $G'(F)$. The question is: does the map
$$
\ES{Z}(G(F),\rho)\rightarrow C^\infty_{\rm c}(G'(F)),\,f\mapsto (\omega'_\chi)^{-1}\zeta_\chi(f)
$$
realize the transfer?

\vskip2mm
\noindent{\bf 0.3.} --- 
In \cite{Hai2}, Haines answered a different but similar question, relative to unramified base change (in characteristic zero). 
In the case of unramified base change, the study is made simpler by the following property: for an unramified extension $F_r/F$ of degree $r$, 
all the elements of the Weyl group $W_r$ of $G(F_r)$ that give a non--trivial contribution to the geometric transfer from $G(F_r)$ to $G(F)$, belong to the Weyl 
group $W$ of $G(F)$. This is no longer true for our problem. So we have to consider, for each element $w\in W$, a variant of the morphism $\zeta_\chi$: let $\chi_{w,0}$ be the character $({^w\!\chi_0})\chi_0^{-1}$ of $\ES{T}$, and $\rho'_w$ the character of $\ES{I}'$ extending $\chi_{w,0}$ by inflation. Let also ${^w\!\rho}$ be the character $\rho\circ {\rm Int}_{n_w^{-1}}$ of the Iwahori subgroup ${^w\ES{I}}= {\rm Int}_{n_w}(\ES{I})$, where $n_w$ is a representative element of $w$ in $\ES{K}\cap N_G(T)$. We have a (similar) natural injective 
morphism of ${\Bbb C}$--algebras
$$
\zeta_{w,\chi}: \ES{Z}(G(F),{^w\!\rho})\rightarrow \ES{Z}(G'(F),\rho'_w).
$$
We also have an isomorphism of ${\Bbb C}$--algebras
$$
\ES{Z}(G(F),\rho)\rightarrow \ES{Z}(G(F), {^w\!\rho}),\, f\mapsto {^w\!f}= f\circ {\rm Int}_{n_w^{-1}}.
$$
For $f\in \ES{Z}(G(F),\rho)$, let $\bs{\zeta}_\chi(f)\in C^\infty_{\rm c}(G'(F))$ be the function defined by
$$
\bs{\zeta}_\chi(f)=\vert W'\vert^{-1} \sum_{w\in W}\zeta_{w,\chi}({^w\!f}).
$$
Note that 
the function $(\omega'_\chi)^{-1}\bs{\zeta}_\chi(f)\in C^\infty_{\rm c}(G'(F))$ depends only on $\chi_0$ (lemma of \ref{the theorem}), and not on the choice of the $W_{\chi_0}$--invariant character $\chi$ of 
$T(F)$ extending $\chi_0$. We denote it by
$$
\bs{\xi}_{\chi_0}(f)= (\omega'_\chi)^{-1}\bs{\zeta}_\chi(f).
$$ 
Our main result is the following:
\begin{thm}
Let $f\in \ES{Z}(G(F),\rho)$. The function $\bs{\xi}_{\chi_0}(f)$ is a transfer of $f$. 
\end{thm}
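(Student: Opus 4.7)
\medskip
\noindent\textbf{Proof proposal.} The plan is to verify the transfer identity directly on orbital integrals at elements of $T(F)_{\mathrm{reg}}$ and then invoke the Weyl integration formula. Any strongly $G$--regular semisimple conjugacy class on which a function in $\ES{H}(G(F),\rho)$ has a non--zero orbital integral meets $T(F)$ after suitable conjugation in $\ES{K}$, and the same holds on the $G'$--side under the canonical identification $T=T'$; so it suffices to prove the matching at elements $t \in T(F)_{\mathrm{reg}}$. Because $\bs{\xi}_{\chi_0}(f)$ is linear in $f$, I would then test on a Bernstein--type generating set for $\ES{Z}(G(F),\rho)$, namely the Weyl symmetrizations $z_\Omega = \sum_{\mu \in \Omega}\theta_\mu$ indexed by $W_{\chi_0}$--orbits $\Omega \subset X_*(T)$, where $\theta_\mu$ is the Bernstein element attached to $\mu$.

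For each such $z_\Omega$ and each $t \in T(F)_{\mathrm{reg}}$, the orbital integral $O_t(z_\Omega)$ can be computed via a Macdonald--type formula in the Iwahori--Hecke algebra: it is a weighted sum of values of $\chi$ on Weyl translates of $t$, modulated by a Jacobian. The same formalism applies on the $G'$--side, where the relevant Iwahori algebra is $\ES{H}(G'(F),1_{\ES{I}'})$. Expanding $\bs{\zeta}_\chi(f) = \vert W'\vert^{-1}\sum_{w\in W}\zeta_{w,\chi}({^w\!f})$ and comparing with this formula, I expect the outer sum $\sum_{w\in W}$ to reproduce exactly the symmetrization needed for $O_t$ on the $G'$--side: the reason one must sum over the full Weyl group $W$ and not merely over $W_{\chi_0}$ is that a single $G'(F)$--orbit of $t$ can collect contributions from several $W_{\chi_0}$--cosets in $W$, while the factor $\vert W'\vert^{-1}$ compensates for the stabilizer of $\chi_0$ inside $W$ relative to $W'$.

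The last step is to identify the transfer factor $\Delta(t,t)$ along the identity matching $T \leftrightarrow T'$. Because the Langlands parameter $\varphi_\chi$ has image in $Z(\check{G}')$, this factor degenerates to the product of (i) the character $\omega'_\chi(t)$, which is precisely cancelled by $(\omega'_\chi)^{-1}$ in the definition of $\bs{\xi}_{\chi_0}(f)$, and (ii) the Weyl--discriminant ratio $\vert D_G(t)/D_{G'}(t)\vert^{1/2}$, which appears naturally when comparing the Macdonald formulas for $G$ and for $G'$, whose root systems differ by the complementary roots killed by $\chi_0$. The hardest step, I anticipate, will be bridging Roche's isomorphism --- which is constructed representation--theoretically via types and intertwiners, not as an explicit map of orbital integrals --- with the computation of $O_t\bigl(\zeta_{w,\chi}({^w\!z_\Omega})\bigr)$. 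I would work at the level of the Bernstein presentations of both centers, under which $\zeta_\chi$ becomes an inclusion of symmetric algebras on $X_*(T)$, and translate the representation--theoretic construction into a pointwise one by tracking traces on unramified principal series. The restriction on $p$ in positive characteristic enters both through Roche's construction of the type $\rho$ and through the known cases of the transfer theorem used in the preliminary Weyl--integration reduction.
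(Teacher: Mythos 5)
\medskip
\noindent\textbf{Review.} Your opening reduction is where the argument breaks, and it cannot be repaired along the route you propose. You claim that any strongly $G$--regular semisimple class on which a function in $\ES{H}(G(F),\rho)$ has a non--zero orbital integral meets $T(F)$ after conjugation by $\ES{K}$, hence that it suffices to verify the matching at $t\in T(F)_{\rm reg}$. This is false. Functions in $\ES{H}(G(F),\rho)$ are compactly supported and biinvariant under $\ES{I}$ up to a character, and they have non--vanishing orbital integrals at elliptic regular elements $\gamma\in G(F)$ --- already the unit element $e_\rho$ has $\mathbf{O}_\gamma(e_\rho)\neq 0$ for any elliptic $\gamma$ whose conjugacy class meets $\ES{I}$. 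The orbits of such $\gamma$ never meet the split torus $T(F)$, so nothing in the Weyl integration formula lets you discard them; you would be throwing away exactly the hard part of the theorem. On the $G'$--side the same phenomenon occurs at elliptic strongly $G$--regular $\delta\in G'(F)$. These elliptic contributions are the entire content of the paper's Theorem*, and no Macdonald--type formula computes them: the Macdonald formalism gives orbital integrals of Bernstein elements at elements of $T(F)$, precisely because those orbital integrals reduce to constant--term computations along $B$, and this reduction is unavailable for elliptic $\gamma$.

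The paper's proof is structured around this obstacle and looks quite different from what you sketch. After a parabolic descent (lemmas of \ref{a result of descent}) that handles the non--elliptic classes by reducing them to Levi subgroups --- this is the only place where a constant--term / Weyl--integration idea appears, and it is what motivated the two--character variant (Theorem**) --- the authors attack the elliptic case with a local--global argument. Using elementary functions $\mathfrak{f}_\nu$ (whose orbital integrals \emph{are} supported on $T(F)$ and whose traces on irreducible representations have a simple exponent expansion, as in \ref{traces of elementary functions}), together with the existence of ``local data'' obtained by comparing two simple trace formulas (one for $G$, one for the elliptic endoscopic group $G'$), they convert the geometric transfer identity into a family of character identities, separate the ``strings of Hecke eigenvalues,'' and specialize $\nu\to 0$ to deduce the result for the unit and then for all of $\ES{Z}(G(F),\rho)$ via actions on principal series. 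Your transfer--factor observation (degeneration to $\omega'_\chi$ times a Weyl--discriminant ratio at split elements) is correct and is exactly what is used for the elementary--function matching in \ref{matching of elementary functions}, and your heuristic about the sum over $W$ and the factor $|W'|^{-1}$ is also on the right track --- but both of these ideas are subsidiary to the main mechanism, which is the trace--formula comparison. In short, the direct approach you outline proves only the case $G'=T$ (a regular depth--zero character; see \ref{an example}, where the paper does exactly this), and does not reach the general case.
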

The proof of the theorem uses a local--global argument based on a simple trace formula, whose geometric side sees only the elliptic regular elements. Thus we have to reduce the theorem to the following ``elliptic version'' of it:
\begin{thm*}
Assume that the endoscopic group $G'$ of $G$ is elliptic, and let $f$ be a function in $\ES{Z}(G(F),\rho)$. Then $f$ and $\bs{\xi}_{\chi_0}(f)$ 
have matching elliptic strongly $G$--regular orbital integrals.
\end{thm*}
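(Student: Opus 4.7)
The goal is to prove, for any strongly $G$--regular elliptic semisimple $\gamma\in G'(F)$ and any $f\in \ES{Z}(G(F),\rho)$, the identity
\[
SO_\gamma(\bs{\xi}_{\chi_0}(f)) \;=\; \sum_\delta \Delta(\gamma,\delta)\, O_\delta(f),
\]
where $\delta$ runs over representatives of the rational conjugacy classes in the stable class of a norm of $\gamma$ (both sides being zero if no norm exists). The strategy is to express each side in terms of Bernstein--theoretic data attached to $\chi$, and to match the resulting expressions via Roche's isomorphism.

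On the $G$--side, since $f$ lies in the center of $\ES{H}(G(F),\rho)$, it acts by a scalar on each irreducible subquotient of the principal series induced from the characters $\chi^w$ for $w\in W_{\chi_0}\backslash W$. Combining Harish--Chandra's formula for orbital integrals of matrix coefficients (equivalently, the inversion formula for the Bernstein transform exploited by Haines in \cite{Hai1,Hai2}) with Weyl integration on the elliptic torus $S\subset G$ containing $\delta$, one expresses $O_\delta(f)$ as a finite sum indexed by $W_{\chi_0}\backslash W$ of values of the Bernstein transform $\widehat f$ paired against normalized character values on $S(F)$. An analogous formula holds on $G'$ expressing $SO_\gamma(h)$, for $h\in\ES{Z}(G'(F),1_{\ES{I}'})$, in terms of $\widehat h$ on the unramified Bernstein variety of $G'$ and $W'$--invariant character values on the elliptic torus $S'\simeq S$ containing $\gamma$; ellipticity of $G'$ is used precisely to identify $S'$ with $S$.

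Next, I would invoke the compatibility of Roche's isomorphism with Bernstein coordinates (implicit in the construction recalled in \ref{Bernstein center}): as $w$ ranges over $W$, the morphisms $\zeta_{w,\chi}$ realize the various sheets of the natural $W/W_{\chi_0}$--cover relating the $\chi$--Bernstein variety of $G$ and the unramified Bernstein variety of $G'$. The Weyl average $|W'|^{-1}\sum_w\zeta_{w,\chi}({^w\!f})$ then identifies $\widehat{\bs\zeta_\chi(f)}$ with the $W'$--invariant extension of $\widehat f$ across the induced isomorphism of Bernstein varieties. The role of the factor $(\omega'_\chi)^{-1}$ is to supply the Langlands--Shelstad transfer factor: by Kottwitz's computation of endoscopic transfer factors attached to cocycles valued in $Z(\ck G')$, one has $\omega'_\chi(\gamma)=\Delta(\gamma,\delta_0)$ for any base pair $(\gamma,\delta_0)$, up to a constant absorbed in the normalization of Haar measures. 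Piecing these identifications together reduces both sides of the matching identity to the same sum of values of $\widehat f$.

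The main obstacle is the first step: the spectral formula for elliptic orbital integrals of Bernstein--center elements. Unlike the split regular case, an elliptic torus is not contained in the complex torus parameterizing the Bernstein block, so $O_\delta(f)$ is not simply the evaluation of $\widehat f$ at $\delta$ but requires Harish--Chandra inversion against elliptic characters. To avoid circular use of the theorem being proved, I would carry out this inversion by topological Jordan decomposition, reducing to a statement on the connected centralizer $G_\delta(F)$ --- a connected reductive $F$--group of strictly smaller semisimple rank --- and then inducting on $\dim G$. A secondary delicate point, already flagged by the authors, is the precise comparison of the transfer factors implicit in Roche's combinatorial construction with the Langlands--Shelstad normalization, which is what forces the assumption that $p$ is large enough when ${\rm char}(F)=p>0$.
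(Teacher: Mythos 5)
Your overall architecture is genuinely different from the paper's, and the gaps in it are real.

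The paper proves the elliptic version by a local--global argument: it introduces Labesse elementary functions $\mathfrak{f}_\nu$, $\bs{\mathfrak{f}}'_\nu$, proves directly that $(\omega'_\chi)^{-1}\bs{\mathfrak{f}}'_\nu$ is a transfer of $\mathfrak{f}_\nu$ (a completely explicit orbital-integral computation, since these functions are supported on conjugates of $t_\nu\ES{T}$), computes their traces, invokes local data (Hales-type trace-formula comparison, or Waldspurger's deduction from Arthur's spectral transfer in characteristic $0$), and then uses linear independence of $\nu\mapsto\mu(t_\nu)$ to extract the $\nu=0$ case and the characterization of $\zeta_{w,\chi}$ by action on principal series to pass from unit elements to arbitrary $f\in\ES{Z}(G(F),\rho)$. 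You instead propose a purely local Plancherel-inversion argument. The trouble is that the step you flag as ``the main obstacle'' --- a spectral formula for elliptic orbital integrals of Bernstein-center elements --- is not a technical obstacle but the entire content: it is essentially Arthur's spectral transfer statement, and as Waldspurger explained to the authors (see \ref{walds}), the only known route to that statement is itself a local--global argument of the same Hales type. So ``avoiding circular use of the theorem'' is not achievable this way.

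Two of your concrete devices also fail. First, the topological Jordan decomposition plus induction on $\dim G$ collapses: you are proving an identity for \emph{strongly} $G$-regular $\delta$, and for such $\delta$ the connected centralizer $G_\delta$ is a torus, so there is nothing to induct to. What you presumably want is descent to the centralizer of the topologically semisimple part of $\delta$, but the Bernstein-block structure and the morphisms $\zeta_{w,\chi}$ are tied to the Iwahori level and do not descend in an elementary way through the topological Jordan decomposition; the paper's only descent mechanism is \emph{parabolic} descent (\ref{parabolic descent}--\ref{a result of descent}), which precisely fails to touch the elliptic locus and is why the local--global argument was needed in the first place. Second, your explanation of the factor $(\omega'_\chi)^{-1}$ and of the ``$p$ large enough'' restriction is off. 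The identity $\omega'_\chi(\gamma)=\Delta(\gamma,\delta_0)$ ``up to constants'' is false: $\omega'_\chi$ is constant on stable conjugacy classes, whereas $\Delta(\cdot,\cdot)$ genuinely varies across the $G(F)$-classes in a stable class (this is exactly where the nontrivial endoscopic sign lives). What the paper actually uses is that $\omega'_\chi\vert_{T(F)}=\chi$, together with the explicit computation of $\Delta(t,t)$ for split $t$ (\ref{matching of elementary functions}.(5)--(6)); no identification of $\omega'_\chi$ with a transfer factor off the maximal split torus is needed or true. As for the restriction on $p$ in positive characteristic: it has nothing to do with transfer-factor normalizations. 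It enters in \ref{local data in char p} as the hypotheses (H1)--(H3) required to run the global trace-formula comparison (fundamental lemma for the unit at almost all places, and existence of matching linear combinations of supercuspidal matrix coefficients at auxiliary places), and the paper states explicitly that Roche's Hecke-algebra isomorphism itself requires no restriction on $p$ in the depth-zero case.
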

For technical reasons, we will have to prove a slight generalization of the elliptic version above. Let us fix another depth zero character $\psi_0$ of $\ES{T}$. For each element $w\in W$, the depth zero 
character $\xi_{w,0}={^w(\psi_0\chi_0)} \chi_0^{-1}$ of $\ES{T}$ inflates to a character $\rho'_{\xi_{w,0}}$ of $\ES{I}'$, and we have a natural injective morphism of ${\Bbb C}$--algebras
$$
\zeta_{w,\chi}: \ES{Z}(G(F), {^w\!(\rho_{\psi_0\chi_0})})\rightarrow \ES{Z}(G'(F),\rho'_{\xi_{w,0}}).
$$
For $f\in \ES{Z}(G(F),\rho_{\psi_0\chi_0})$, we define $\bs{\xi}_{\chi_0}= (\omega'_\chi)^{-1}\bs{\zeta}_\chi(f)\in C^\infty_{\rm c}(G'(F))$ as above. Then we have the ``variant with two characters'' of the theorem*:
\begin{thm**}
Assume that the endoscopic group $G'$ of $G$ is elliptic, 
and let $f$ be a function in $\ES{Z}(G(F),\rho_{\psi_0\chi_0})$. Then $f$ and $\bs{\xi}_{\chi_0}(f)$ 
have matching elliptic strongly $G$--regular orbital integrals.
\end{thm**}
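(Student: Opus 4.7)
My strategy is to reduce the matching of elliptic strongly $G$-regular orbital integrals to a local character identity and then verify it using Roche's Hecke algebra isomorphism and explicit transfer factor computations.

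First, since $f$ lies in the Bernstein center of its block, it acts by a scalar $\hat{f}(\pi)$ on each irreducible $\pi$ in the $\rho_{\psi_0\chi_0}$-Bernstein block. Kazhdan's density theorem, together with the elliptic Plancherel expansion, yields a finite character expansion
$$
O_\gamma(f) \;=\; \sum_{\pi \in \Pi_{\mathrm{ell}}} c_\pi\, \hat{f}(\pi)\, \Theta_\pi(\gamma)
$$
for $\gamma$ strongly regular elliptic in $G(F)$, and an analogous expansion on the $G'(F)$-side for $\bs{\xi}_{\chi_0}(f)$ on the Iwahori block of $G'(F)$. The matching of elliptic orbital integrals therefore reduces to an identity between finite sums of characters on stably conjugate elliptic regular elements.

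Next, I would use the Roche isomorphism together with its twisted variants $\zeta_{w,\chi}$ to set up a bijection $\pi\leftrightarrow \pi'$ between the irreducible constituents of the two blocks and to relate the Bernstein scalars on both sides. The symmetrized sum $|W'|^{-1}\sum_{w\in W}\zeta_{w,\chi}({^w\!f})$ averages over the Roche isomorphisms coming from $W$-conjugate characters, which produces the conjugation invariance on $G'(F)$ required of a transfer; the normalization $|W'|^{-1}$, with $W'=W_{\chi_0}^\circ$ the Weyl group of $G'$, accounts for the redundancy coming from the stabilizer $W_{\chi_0}$. The twist by $(\omega'_\chi)^{-1}$ absorbs the abelian character of $G'(F)$ attached to the central Langlands parameter $\varphi_\chi\in Z(\check{G}')$, which is exactly the character distinguishing the endoscopic $L$-embedding from the identity. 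The problem thus reduces to a Langlands--Shelstad-type character identity relating $\Theta_\pi(\gamma)$ to a sum over $\gamma'\sim_{\mathrm{st}}\gamma$ of $\Delta(\gamma',\gamma)\,\omega'_\chi(\gamma')^{-1}\,\Theta_{\pi'}(\gamma')$.

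The principal obstacle will be verifying this character identity with the correct Langlands--Shelstad normalization of the transfer factor $\Delta(\gamma',\gamma)$ for Roche's endoscopic datum: one needs to identify $\Delta(\gamma',\gamma)$, up to a $\gamma$-dependent constant, with $\omega'_\chi(\gamma')$ times the ratio of the two depth-zero characters appearing in the Bernstein-theoretic descriptions of $\pi$ and $\pi'$. The extra $\psi_0$-twist (the only novelty compared with Theorem*) should be absorbed naturally by the parabolic induction/Jacquet-restriction dictionary on the Bernstein centers, since $\psi_0$ does not alter the endoscopic datum. A secondary obstacle is positive characteristic: both Kazhdan's density theorem and the existence of a general transfer require $p$ to be large enough, which is precisely why the statement is unconditional only in that range.
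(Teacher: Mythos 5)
Your proposal attempts to prove the full endoscopic \emph{character identity} $\Theta_\pi(\gamma)=\sum_{\gamma'}\Delta(\delta,\gamma')\,\omega'_\chi(\gamma')^{-1}\Theta_{\pi'}(\gamma')$ for the Bernstein blocks involved, and then to feed it through an elliptic-Plancherel expansion of the orbital integral. This is much stronger than what the theorem asserts, and it is precisely the ``principal obstacle'' you name without supplying a method for resolving it --- so the plan defers the entire content of the proof. The paper never establishes such a character identity and in fact explicitly leaves the spectral transfer on these blocks as an application of the result, not a step in its proof.

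The paper's actual mechanism is different and weaker on the spectral side, but constructive on the geometric side. It introduces ``local data'' $(I,\{\alpha_i(\pi)\},\{\beta_i(\pi')\})$ whose defining property is only that a certain finite system of \emph{abstract} spectral identities (A) is equivalent to the geometric matching (B) --- no explicit $\pi\leftrightarrow\pi'$ correspondence and no transfer-factor computation at the level of characters is needed. Existence of local data is then established globally, by comparing two simple trace formulas (\S3); in characteristic zero one may instead use Arthur's spectral transfer (\S2.6), which is closer in flavor to what you have in mind. With local data in hand, the crucial computational input you are missing is the \emph{Labesse elementary functions} $\mathfrak{f}_\nu$ and $\bs{\mathfrak{f}}'_\nu$ (\S2.1--\S2.3): their orbital integrals and traces are computable by hand, the matching for them is verified explicitly, and a linear-independence-of-characters argument in $\nu$ then yields the system (A) for the unit elements $e_{\rho_{\psi_0\chi_0}}$, $e_{\rho'_{\xi_{w,0}}}$. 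Finally, the characterization of the center by its scalars on principal series upgrades the unit case to an arbitrary $f\in\ES{Z}(G(F),\rho_{\psi_0\chi_0})$. None of these three ingredients --- local data, elementary functions, unit-to-center bootstrap --- appears in your outline.

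A further gap: even before any of this, the paper needs the reductions of \S1.13--\S1.14 (a $z$-extension to make $G_{\rm der}$ simply connected, then an extension making the scheme-theoretic center a torus, ultimately passing to $G_{\rm AD}$) so that the simple trace formula in \S3 can be applied. Your proposal does not address why one may assume the hypotheses required by the trace formula comparison, nor why the result descends back to the original $G$.

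In short, the correct skeleton is not ``expand by characters, prove a character identity''; it is ``prove the geometric matching for a rich explicit family (elementary functions), import an equivalence between geometric and abstract spectral identities (local data), and propagate from units to the full Bernstein center.'' Your proposal has the right target in mind (the Bernstein-center scalars on principal series do ultimately drive the extension step) but omits the machinery that makes the argument close without proving the endoscopic character identity outright.
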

We prove that the theorem is implied by the theorem**. The local--global argument is used in the proof of the theorem**. In fact, it allows us to prove a slightly stronger result: 
assuming the endoscopic group $G'$ is elliptic, for all functions $f\in \ES{Z}(G(F), \rho_{\psi_0\chi_0})$, the function $\bs{\xi}_{\chi_0}(f)$ is a transfer of $f$. 
We also prove that this result remains true without assuming $G'$ elliptic (i.e. the ``variant with two characters'' of the theorem).

\vskip2mm
\noindent{\bf 0.4.} --- 
The local--global argument in question is the existence of Hales local data for our local problem, obtained as in \cite{Hal} by comparing two simple trace formulas\footnote{Note that these ideas are due to Langlands: the comparison of two trace formulas already appears in the base change for $GL(2)$, and also the extraction of local informations by a local--global argument after separation of the ``strings of Hecke eigenvalues''.}, one for $G$ and the other for $G'$. In characteristic zero, Waldspurger pointed out to us that the spectral transfer, proved by Arthur in \cite{A}, provides these local data by a purely local argument. The proof of Waldspurger (see \ref{walds}) is much shorter but --- as he explained to us --- it is essentially the same because Arthur obtained the spectral transfer using a local--global argument very close to that of Hales. Note that (in characteristic zero) the proof of Waldspurger makes unnecessary the reduction\footnote{This reduction, however, had a heuristic role for us, as it was the descent formulas of \ref{a result of descent} that enabled us to find the right candidate for the transfer.} to an elliptic endoscopic group $G'$.

The rest of the proof is a simple adaptation of the one of Haines \cite{Hai2}. As in loc.~cit, we introduce some Labesse elementary functions --- for $G$ and for $G'$ --- 
whose orbital integrals on strongly regular elements and traces on irreducible smooth representations are very easy to calculate. In particular, for each $B$--dominant regular co--character $\nu$ of 
$T$, we have a function $\mathfrak{f}_\nu \in \ES{H}(G(F), \rho_{\psi_0\chi_0})$ and a function $\bs{\mathfrak{f}}'_\nu\in \sum_{w\in W}\ES{H}(G'(F), \rho'_{\xi_{w,0}})$ such that 
$(\omega'_\chi)^{-1}\bs{\mathfrak{f}}'_\nu$ is a transfer of $\mathfrak{f}_\nu$. Thanks to the existence of local data, these functions allow to rewrite the geometric identities of transfer in terms of characters identities. By linear independence of characters, viewed as linear combinations of characters on the set of $B$--dominant regular co--characters $\nu$, we obtain the same identity for $\nu=0$. In others words, denoting by $e_{\rho_{\psi_0\chi_0}}$ the unit element of the Hecke algebra $\ES{H}(G(F),\rho_{\psi_0\chi_0})$, and by $e_{\rho'_{\xi_{w,0}}}$ 
the unit element of the Hecke algebra $\ES{H}(G'(F),\rho'_{\xi_{w,0}})$, the function $(\omega'_\chi)^{-1}\vert W'\vert^{-1} \sum_{w\in W} e_{\rho'_{\xi_{w,0}}}$ is a transfer of $e_{\rho_{\psi_0\chi_0}}$. 
Now for $f\in \ES{Z}(G(F),\rho_{\psi_0\chi_0})$, thanks again to the existence of local data, the characterization of $\bs{\zeta}_{w,\chi}(f)$ for $w\in W$ by actions on principal series 
gives the desired result: $\bs{\xi}_{\chi_0}(f)= (\omega'_\chi)^{-1}\bs{\zeta}_{\chi}(f)$ is a transfer of $f$. This proves the ``variant with two characters'' of the theorem in the case of an elliptic endoscopic group $G'$, and hence in general.

\vskip2mm
\noindent{\bf 0.5.} --- 
Let us briefly describe the contents of the various sections.

In section \ref{main result} we introduce the objects and state the various versions of the theorem. 
We prove the reduction of the theorem to the theorem**. We also prove that we can suppose (in the theorem**) that the derived group $G_{\rm der}$ of $G$ is simply connected and the scheme--theoretic  center of $G$ is a torus; alternatively, we can suppose $G=G_{\rm AD}$. 

In section \ref{proof of the theorem assuming local data}, we introduce the elementary functions adapted to our problem, for $G$ and for $G'$ (we do not suppose $G'$ elliptic). Then we calculate their orbital integrals on strongly regular elements and their traces on smooth irreducible representations. Together with the existence of local data, these calculations allow us to prove the variant with two characters of the theorem. When ${\rm char}(F)=0$, we also give the proof of Waldspurger of the existence of local data.

In section \ref{existence of local data}, we prove the existence of local data when ${\rm char}(F)\geq 0$ as in \cite{Hal}, 
assuming $G$ semisimple and $G'$ elliptic (this reduction is possible, as we have seen in section \ref{main result}). 
For the last part of this introduction, let us change notations: we denote by $F_0$, $G_0$, $G'_0$ the objects previously denoted by $F$, $G$ and $G'$. We can 
realize our local situation $(G_0,G'_0)$ as the localization at a finite place $v_0$ of a global field $F$ 
of a global situation $(G,G')$, where $G$ is a $F$--split connected reductive group and $G'$ is a $F$--split elliptic endoscopic group of $G$. Thus we have 
$F_{v_0}\simeq F_0$, $G_{v_0}\;(=G\times_F F_{v_0})\simeq G_0$ and $G'_{\!v_0}\simeq G'_0$. Suppose for a moment that ${\rm char}(F_0)=0$. Hence $F$ is a number field. Since the global endoscopic group $G'$ is elliptic and unramified {\it at almost all}\,\footnote{This is a necessary condition to make the proof of Hales correct: one has to verify that the local situation of \cite{Hal} --- an unramified quasi--split $F_0$--group and an unramified elliptic endoscopic datum of it --- can be embeded in a global situation such that this condition is satisfied.}--- in fact at all --- finite places of $F$, it is possible to isolate it in the stabilized elliptic part of the trace formula for $G$. Here we used the (well--known?) following result: {\it two (global) endoscopic data of $G$ which are equivalent almost everywhere, are globally equivalent}. An unpublished proof of this result is due to Waldspurger. It will be written for publication shortly. In the meanwhile, the incredulous reader may consider this result as an additional hypothesis. Once we have isolated $G'$, we prove the existence of local data following \cite{Cl, Lab1} and \cite{Hal}. If ${\rm char}(F_0)=p>0$, the proof works in the same way provided that the following local results are available: the fundamental lemma for the unit (at almost all place $v$ of $F$); at a given place $v\neq v_0$, 
the existence of a non--zero linear combination $f_v$ of matrix coefficients of irreducible supercuspidal representations of $G(F_v)$ that transfers to $G'(F_v)$; at a given place $w\neq v_0$, the existence of a function on $G(F_w)$ that transfers to a non--zero linear combination $f'_w$ of matrix coefficients of irreducible supercuspidal representations of $G'(F_w)$. These local results are proved if $p$ is large  enough.

\vskip2mm
\noindent{\bf 0.6.} --- 
Let us emphasize that our proof is modeled on that of Haines \cite{Hai2}, even if the technical complications are relevant. 
We wrote this proof carefully because we hope to be able to apply it to much more general situations, e.g. for a character $\chi_0$ of positive depth, 
or for a general depth zero Bernstein block. 

In characteristic $p>0$, the result is weaker because of the restriction on $p$. It is also more interesting because we do not know the existence 
of transfer in general\footnote{Let us mention the work of Gordon and Hales \cite{GH}, who prove the existence of transfer for ``large $p$'', but with a different meaning: 
the constraint on $p$ is not effective, and for a given locally compact non--Archimedean field $F$ of characteristic $p$, it is not possible to say if it applies or not to $F$. In our method, 
the restriction on $p$ can be made explicit (the group $G$ being fixed): we only need the hypothesis (H1), (H2), (H3) in \ref{local data in char p} to be satisfied.}, but only for particular functions (cf. 0.5).
As a consequence, for $G$ split, our result shows the existence of transfer relative to certain split endoscopic groups $G'$ 
(see remark 3 of \ref{some results of Roche}), in characteristic $p>0$ large enough (see remark of \ref{local data in char p}),
for certain more functions than the one guaranteed by the fundamental lemma.

Note also that our result should make possible an explicit description of the spectral endoscopic transfer from $G'(F)$ to $G(F)$ for 
the representations in the depth zero Bernstein blocks involved here. 

\vskip3mm
\noindent{\bf Acknowledgements.} We warmly thank J.--L. Waldspurger for his sagacity to build counter--examples that destroyed our expectations. It is also thanks to his proof of the existence of local data in characteristic zero (see \ref{walds}) that we decided to extend our result to the positive characteristic case. We also thank J.--L. Colliot--Th\'el\`ene, T.~Haines, G.~Henniart, J.--P. Labesse, A.~Roche and R.~Weissauer for their helpful comments.

Finally, we thank the anonymous referee for his many helpful remarks and suggested corrections (mathematics as well as grammatical).

\section{Main result and various reductions}\label{main result}

In this section \ref{main result}, $F$ is a non--Archimedean local field, and $G$ is a connected reductive $F$--split (i.e. defined and split over $F$) group. 
We identify $G$ with $G(\overline{F})$ for a fixed algebraic closure $\overline{F}$ of $F$. 
Then $G(F)$ identifies with the subgroup $G(F^{\rm sep})^{\Gamma_F}\subset G$ of elements of $G(F^{\rm sep})$ fixed by 
$\Gamma_F={\rm Gal}(F^{\rm sep}/F)$, where $F^{\rm sep}$ is the separable closure of $F$ in $\overline{F}$.  

\subsection{Some results of Roche \cite{R}}\label{some results of Roche}
Let $\mathfrak{o}$ be the ring of integers of $F$, $\mathfrak{p}$ the maximal ideal of $\mathfrak{o}$, and $\mathfrak{K}= \mathfrak{o}/\mathfrak{p}$ the residue field. We denote by $\vert \,\vert_F$ the absolute value on $F$ normalized by $\vert \varpi \vert_F = q^{-1}$ for a uniformizer element $\varpi$ of $F$, where $q$ is the number of elements of $\mathfrak{K}$. Let $(B,T)$ be a Borel pair of $G$, defined and split over $F$, and $U$ the radical unipotent of $B$. Let $X$, resp. $\check{X}$, be the group of algebraic characters, resp. co--characters, of $T$. Let $\ES{T}= T(\mathfrak{o})$ be the maximal compact subgroup of $T$, and $\ES{T}_+\subset \ES{T}$ the kernel of the natural morphism (reduction modulo $\mathfrak{p}$) $T(\mathfrak{o})\rightarrow T(\mathfrak{K})$. We fix a character $\chi_0$ of $\ES{T}$. We assume that $\chi_0$ is {\it depth zero}, that is, trivial on $\ES{T}_+$.

We fix a hyperspecial maximal compact subgroup $\ES{K}$ of $G(F)$ corresponding to a hyperspecial vertex 
$x_0$ in the apartment of the Bruhat--Tits building of $G(F)$ associated to $T$. The base--point $x_0$ allows us to identify this 
apartment with the vector space 
$V= \check{X}\otimes_{\Bbb Z}{\Bbb R}$. Thus we have
$$
\ES{K}\cap B= (\ES{K}\cap T)(\ES{K}\cap U), \quad \ES{K}\cap T = \ES{T}.
$$

Let $\Phi=\Phi(G,T)$ be the set of roots of $T$ in $G$, and $\check{\Phi}$ the set of coroots. Let $\Phi_+=\Phi(B,T)$ be the set of roots of $T$ in $B$, and $\Delta\subset \Phi_+$ the corresponding basis of $\Phi$. Let $\ES{I}\subset \ES{K}$ be the Iwahori subgroup of $G(F)$ corresponding to the alc\^{o}ve
$$
\ES{A}= \{v\in V: 0<\alpha(v)<1,\, \forall \alpha \in \Phi_+\},\quad V= \check{X}\otimes_{\Bbb Z}{\Bbb R}.
$$
Hence $\ES{I}$ is in {\it good position} with respect to the Borel pair $(B,T)$: 
we have the triangular decomposition
$$
\ES{I}= (\ES{I}\cap \overline{U})(\ES{I}\cap T)(\ES{I}\cap U)
$$
where $\overline{U}$ is the unipotent radical of the Borel subgroup $\overline{B}$ opposite to $B$ with respect to $T$. Moreover, we have $\ES{T}= \ES{I}\cap T$ and $\ES{T}_+= \ES{I}_+ \cap T$, where $\ES{I}_+$ is the pro--unipotent radical of $\ES{I}$, and the inclusion $\ES{T}\subset \ES{I}$ induces an isomorphism $\ES{T}/\ES{T}_+\buildrel \simeq\over{\longrightarrow} \ES{I}/\ES{I}_+$. So the character $\chi_0$ of $\ES{T}$ can be viewed as a $1$--dimensional representation of $\ES{I}$, denoted $\rho=\rho_{\chi_0}$. It is given by
$$
\rho(\bar{u}t u)= \chi_0(t),\quad \bar{u}\in \ES{I}\cap \overline{U},\, t\in \ES{T}, u \in \ES{I}\cap U.
$$

Let $\chi$ be a character\footnote{i.e. a continuous morphism into ${\Bbb C}^\times$.} of $T(F)$ extending $\chi_0$. Let $\mathfrak{s} = \mathfrak{s}_{\chi_0}$ be the $G(F)$--inertial class $[T,\chi]_{G}$ of the cuspidal pair $(T,\chi)$ of $G(F)$. It does not depend on the choice of $\chi$, and we know by Roche \cite{R} that the pair $(\ES{I},\rho)$ is an $\mathfrak{s}$--type in the sense of Bushnell--Kutzko: for an irreducible (smooth, complex) representation $\pi$ of $G(F)$, we have ${\rm Hom}_{\ES{I}}(\rho,\pi\vert_{\ES{I}})\neq 0$ if and only if $\pi$ is isomorphic to a subquotient of a principal series representation $i_B^G(\chi\xi)$ of $G(F)$ for some character $\xi$ of $T(F)$ trivial on $\ES{T}$; where $i_B^G$ denotes the normalized parabolic induction functor. 

\begin{rem1}
{\rm In Roche's proof, there is a restriction on the residue characteristic on $F$. However, in the depth zero situation, no such restriction is needed (see \cite{Hai2}).
}
\end{rem1}

Let $N=N_G(T)$ be the normalizer of $T$ in $G$, and $W= N/T \;(= N(F)/T(F))$ the Weyl group of $G$. We denote by $N(F)_{\chi_0}$, resp. $W_{\chi_0}$, the subgroup of $N(F)$, resp. $W$, formed by the elements which fix $\chi_0$. The group $N(F)_{\chi_0}$ contains $T(F)$, and we have $W_{\chi_0}= N(F)_{\chi_0}/T(F)$. 

The character $\chi_0$ of $\ES{T}$ extends to a ($W_{\chi_0}$--invariant) character $\tilde{\chi}$ of $N(F)_{\chi_0}$. In fact, $\chi_0$ extends to a character $\tilde{\chi}_0$ of the subgroup $\ES{N}_{\chi_0}$ of $\ES{N}= N(\mathfrak{o})$ formed by the elements which fix $\chi_0$ \cite[6.11]{HL}. The choice a uniformizer element $\varpi$ of $F$ gives an indentification $N(F)= \check{X}\rtimes \ES{N}$, which induces an identification $N(F)_{\chi_0}=  \check{X}\rtimes \ES{N}_{\chi_0}$. Let $\tilde{\chi}= \tilde{\chi}_0^\varpi$ be the character of $N(F)_{\chi_0}$ extending $\tilde{\chi}_0$ trivially on $\check{X}$. It is $W_{\chi_0}$--invariant by construction, and the restriction $\tilde{\chi}\vert_{T(F)}$ is the 
character $\chi^\varpi_0$ of $T(F)$ extending $\chi_0$ trivially on $\check{X}$ for the identification $T(F)= \check{X}\times \ES{T}$ given by $\varpi$. From now on, we suppose 
$\chi = \chi_0^\varpi$ and $\tilde{\chi}= \tilde{\chi}_0^\varpi$ for a character $\tilde{\chi}_0$ of $\ES{N}_{\chi_0}$ extending $\chi_0$ and a uniformizer element $\varpi$ of $F$.

Let
$$
\Phi'= \{\alpha\in \Phi: \chi_0\circ \check{\alpha}\vert_{\mathfrak{o}^\times}=1\},
$$
$$
\check{\Phi}'= \{\check{\alpha}\in \check{\Phi}: \chi_0\circ \check{\alpha}\vert_{\mathfrak{o}^\times}=1\}.
$$
Hence we have $\check{\Phi}'= \{\check{\alpha} \in \Phi: \alpha \in \Phi'\}$. We also denote by $W'$ the subgroup of $W_{\chi_0}$ generated by the reflections $s_\alpha$ for $\alpha\in \Phi'$. By definition, $\check{\Phi}'$ is a {\it closed} sub--root system of $\check{\Phi}$. Hence $\Phi'$ is a sub--root system of $\Phi$, with Weyl group $W'$. Let $G'$ be a connected reductive group with root datum $(X,\Phi',\check{X},\check{\Phi}')$, defined and split over $F$. We identify $T$ with a maximal split torus $T'$ in $G'$. Then $\Phi'_+= \Phi'\cap \Phi_+$ is a subset of positive roots of $\Phi'$. It defines a basis $\Delta'$ of $\Phi'$, and a Borel subgroup $B'$ of $G'$ containing $T'$. We fix a hyperspecial maximal compact subgroup 
$\ES{K}'=\ES{K}_{G'}$ of $G'(F)$ corresponding to a hyperspecial vertex $x'_0$ in the apartment of the Bruhat--Tits building of $G'(F)$ associated to $T'$. 
The base--point $x'_0$ allows us to identify this apartment with $V\;(= \check{X}\otimes_{\Bbb Z}{\Bbb R})$. 
Let $\ES{I}'=\ES{I}_{G'}\subset \ES{K}'$ be the Iwahori subgroup of $G'(F)$ corresponding to the alc\^{o}ve
$$
\ES{A}'= \{v\in V: 0<\alpha(v)<1,\, \forall \alpha \in \Phi'_+\}.
$$
Put $C_{\chi_0}= \{w\in W_{\chi_0}: w(\Phi'_+)=\Phi'_+\}$. Then we have \cite[lemma 8.1]{R}
$$
W_{\chi_0}= W' \rtimes C_{\chi_0},
$$
and the choice of an $F$--\'epinglage of the based root datum $(X,\Phi',\Delta',\check{X},\check{\Phi}',\check{\Delta}')$ gives an embedding of $C_{\chi_0}$ in ${\rm Aut}_F(G')$. We put
$$
\smash{\wt{G}}'= G'\rtimes C_{\chi_0}.
$$
Then we have \cite{R}:
\begin{enumerate}
\item[(1)] $G'$ is an endoscopic group of $G$;
\item[(2)] $\ES{H}(\smash{\wt{G}}'(F),1_{\ES{I}'})\simeq \ES{H}(G'(F),1_{\ES{I}'})\wt{\otimes}\,{\Bbb C}[C_{\chi_0}]$;
\item[(3)] there is a support--preserving isomorphism of ${\Bbb C}$--algebras
$$
\Psi_{\tilde{\chi}}:\ES{H}(G(F),\rho)\buildrel\simeq \over{\longrightarrow} \ES{H}(\smash{\wt{G}}'(F),1_{\ES{I}'}).
$$
\item[(4)] if the scheme--theoretic center of $G$ is a torus, then we have $C_{\chi_0}=\{1\}$.
\end{enumerate}
In (1), $G'$ is the underlying group of an endoscopic datum $\underline{G}'=(G',\ES{G}',s)$ of $G$, where $s$ is an element of $\check{T}= {\rm Hom}(\check{X},{\Bbb C}^\times)$ whose connected centralizer $\check{G}_s$ in $\check{G}$ (the complex dual group of $G$) is a connected reductive group over ${\Bbb C}$ with root system $(\check{X},\check{\Phi}',X,\Phi')$, and $\ES{G}'= \check{G}_s \times W_F\;(\subset {^LG}= \check{G}\times W_F)$. 
In (2), the twisted tensor product $\wt{\otimes}$ is given by the conjugation action of $C_{\chi_0}$ on $\check{X}\rtimes W'$ (cf. \cite[8]{R}). In (3), $\ES{H}(G(F),\rho)$ is the convolution ${\Bbb C}$--algebra of compactly supported functions on $G(F)$ which are $\rho^{-1}$--spherical, and $\ES{H}(\smash{\wt{G}}'(F),1_{\ES{I}'})$ is the convolution ${\Bbb C}$--algebra of compactly supported $\ES{I}'$--biinvariant functions on $\smash{\wt{G}}'(F)$. The isomorphism $\Psi_{\tilde{\chi}}$ of (3) is described explicitely in \ref{Hecke algebra isomorphism} (proposition 2). In (4), the scheme--theoretic center of $G$ is the diagonalizable $F$--group scheme whose group of characters is $X/{\Bbb Z}\Phi$. We denote it by $\mathfrak{Z}_G$. Hence $\mathfrak{Z}_G$ is a torus if and only if $X/{\Bbb Z}\Phi$ is torsion free, which is equivalent to $\check{G}$ having simply connected derived group. Note that the reduced subscheme $(\mathfrak{Z}_G)_{\rm red}$ is a smooth group scheme that coincides with the ``center'' $Z(G)$ of $G$ in the sense of Borel (whose group of characters is the quotient of $X/{\Bbb Z}\Phi$ by its $p$--torsion). Hence $\mathfrak{Z}_G$ is a torus if and only if it is reduced and connected.

\begin{rem2}
{\rm Let us make precise (1). The group $\check{T}$ is the complex torus 
dual (in the sense of Langlands) to $T$: we have $X(\check{T})= \check{X}$ and $\check{X}(\check{T})= X$. Let $W_F\subset \Gamma_F$ be the absolute Weil group of $F$, and
$\tau_F:W_F\rightarrow W_F^{\rm ab}
\buildrel \simeq\over{\longrightarrow} F^\times$ 
the morphism of local class field theory. 
The Langland parameter of $\chi:T(F)\rightarrow {\Bbb C}^\times$ 
is the morphism $\varphi_\chi: W_F \rightarrow \check{T}$ defined by
$$
\nu(\varphi_\chi(\sigma))= \chi(\nu(\tau_F(\sigma))),\quad \nu \in \check{X},\,\sigma \in W_F.
$$
The restriction $\varphi_\chi\vert_{I_F}$ to the inertia group $I_F=\tau_F^{-1}(\mathfrak{o}^\times)$ of $F$, depends only on $\chi_0$. 
We denote it by $\varphi_{\chi_0}$. Since the group $\mathfrak{K}^\times$ is cyclic, the image ${\rm Im}(\varphi_{\chi_0})$ is a cyclic subgroup of $\check{T}$. 
Let us choose a generator $s$ of ${\rm Im}(\varphi_{\chi_0})$. Then we have
$$
\check{G}'= Z_{\check{G}}({\rm Im}(\varphi_{\chi_0}))^\circ = \check{G}_s.
$$
Let $\phi\in W_F$ be a Frobenius element such that $\tau(\phi)= \varpi$. Then $W_F = I_F \rtimes \langle \phi \rangle$ and $\varphi_\chi: W_F\rightarrow \check{T}$ 
is the morphism extending $\varphi_{\chi_0}$ trivially on $\langle \phi \rangle$.
}
\end{rem2}

\begin{rem3}
{\rm From remark 2, we see that any {\it split} endoscopic group $G'$ of $G$ such that $\check{G}'= \check{G}_s$ for a semisimple element $s$ whose order is finite 
and divides $\vert \kappa_F^\times\vert$,  
can be realized as the endoscopic group associated to a depth zero character $\chi_0$ of $\ES{T}$ as above.}
\end{rem3}

\subsection{Hecke algebra isomorphism}\label{Hecke algebra isomorphism}

Let $\ES{W}= N(F)/\ES{T}$ be the Iwahori--Weyl group of $G$. Recall that we have a (non--canonical) identification $N(F)= \check{X}\rtimes \ES{N}$ given by the choice of the uniformizer element $\varpi$ of $F$. Since $\ES{N}/\ES{T}= N/T =W$, this identification gives also an identification $\ES{W}= \check{X}\rtimes W$. Let $\ES{W}_{\chi_0}=\check{X}\rtimes W_{\chi_0}$ be the subgroup of $\ES{W}$ formed by the elements which fix $\chi_0$. As we have defined the subgroup $W'\subset W_{\chi_0}$, we define a subgroup $\ES{W}'\subset \ES{W}_{\chi_0}$ as follows. Let $\Phi'_{\rm aff}$ be the set of affine roots $a= \alpha + n$ where $\alpha\in \Phi'$ and $n\in {\Bbb Z}$. Then $\ES{W}'$ is the subgroup of $\ES{W}_{\chi_0}$ generated by the affine reflections $s_a$ for $a\in \Phi'_{\rm aff}$. 
Hence we have
$$
\ES{W}'= \check{X}\rtimes W'.
$$
Recall that we have defined in \ref{some results of Roche} an alc\^ove $\ES{A}'$ in $V$. This alc\^ove is a chamber for the decomposition of $V$ induced by $\Phi'_{\rm aff}$: $\ES{A}'$ is a connected component of $V\smallsetminus \bigcup_{a\in \Phi'_{\rm aff}}H_a$ where $H_a= \{v\in V: a(v)=0\}$. It also defines a partial order on $\Phi'_{\rm aff}$: for $a\in \Phi'_{\rm aff}$, write $a>0$ if $a(v)>0$ for all $v\in \ES{A}'$. Let $\Delta'_{\rm aff}$ be the subset of $\Phi'_{\rm aff}$ formed of those affine roots $a$ which are minimal positive for this partial order.  Put
$$
\ES{S}'= \{s_a: a\in \Delta'_{\rm aff}\},
$$
$$
\ES{C}_{\chi_0}= \{w\in \ES{W}_{\chi_0}: w(\ES{A}')= \ES{A}'\}.
$$
From \cite[lemma 6.2]{R}, we have the

\begin{lem}
The pair $(\ES{W}'\!,\ES{S}')$ is a Coxeter system --- i.e. $\ES{W}'$ is a Coxeter group with $\ES{S}'$ as fundamental set of generators ---, and there is a canonical decomposition $\ES{W}_{\chi_0}= \ES{W}' \rtimes \ES{C}_{\chi_0}$. The length function $l'$ on $\ES{W}'$ 
can be naturally extended to $\ES{W}_{\chi_0}$ in such a way that $\ES{C}_{\chi_0}$ consists of the length--zero elements.
\end{lem}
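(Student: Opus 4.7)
The plan is to apply the general theory of discrete affine reflection groups on the Euclidean space $V=\check X\otimes_{\Bbb Z}{\Bbb R}$. First I would verify that the hyperplane arrangement $\ES{H}'=\{H_a:a\in\Phi'_{\rm aff}\}$ is locally finite and that $\ES{W}_{\chi_0}$ acts on $V$ by affine transformations preserving it. Local finiteness is clear because $\Phi'$ is a finite root system, and $\ES{W}_{\chi_0}$ preserves $\ES{H}'$ because $W_{\chi_0}$ stabilizes $\chi_0$ and hence $\Phi'$. By its very definition $\ES{A}'$ is a chamber of this arrangement, and $\ES{W}'$ is a subgroup of the full reflection group it defines.

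For the Coxeter assertion I would invoke the standard structure theorem (Bourbaki, \emph{Groupes et alg\`ebres de Lie}, Ch.~V, \S3): a group generated by affine reflections in the walls of a locally finite hyperplane arrangement on a Euclidean space, acting simply transitively on its chambers, is a Coxeter group whose canonical generators are the reflections in the walls of any fixed chamber. Applied to $\ES{W}'$ with chamber $\ES{A}'$ and walls indexed by $\Delta'_{\rm aff}$, this gives that $(\ES{W}',\ES{S}')$ is a Coxeter system.

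For the semi--direct decomposition I would argue by alcove transport. Given $w\in\ES{W}_{\chi_0}$, simple transitivity of $\ES{W}'$ on chambers yields a unique $w'\in\ES{W}'$ with $w'(\ES{A}')=w(\ES{A}')$; setting $c:=(w')^{-1}w$, one sees that $c$ stabilizes $\ES{A}'$ and hence lies in $\ES{C}_{\chi_0}$. Uniqueness of this factorization gives $\ES{W}_{\chi_0}=\ES{W}'\cdot\ES{C}_{\chi_0}$ with trivial intersection, and $\ES{C}_{\chi_0}$ normalizes $\ES{W}'$ since it permutes the walls of $\ES{A}'$ and hence the reflections in $\ES{S}'$. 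To extend the length function I would define
$$
l'(w)=\#\{H\in\ES{H}':H\text{ separates }\ES{A}'\text{ from }w(\ES{A}')\}
$$
for every $w\in\ES{W}_{\chi_0}$; this agrees with the Coxeter length on $\ES{W}'$ and vanishes precisely on the elements fixing $\ES{A}'$, i.e.\ on $\ES{C}_{\chi_0}$.

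The main obstacle I anticipate is reconciling the identification $\ES{W}'=\check X\rtimes W'$ with the fact that the products $s_\alpha s_{\alpha+n}=t_{n\check\alpha}$ a priori produce only translations in the coroot lattice $\check Q'$ of $\Phi'$, which may be strictly smaller than $\check X$. Following \cite{R}, one has to argue that in the present situation the missing translations are already generated by the $s_a$ inside $\ES{W}_{\chi_0}$; alternatively one may take $\ES{W}'=\check Q'\rtimes W'$ and absorb $\check X/\check Q'$ into $\ES{C}_{\chi_0}$, an exchange which leaves both the Coxeter system $(\ES{W}',\ES{S}')$ and the length extension undisturbed.
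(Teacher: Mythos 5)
The paper itself gives no proof, merely citing Roche \cite[lemma 6.2]{R}; your Bourbaki--style affine reflection group argument is exactly the right approach and is what Roche does. The tension you flag at the end is real, and your second resolution is the correct one: the group generated by the affine reflections $s_a$ for $a\in\Phi'_{\rm aff}$ is $\check{Q}'\rtimes W'$ (where $\check{Q}'$ is the coroot lattice of $\Phi'$), and it is this affine Weyl group that acts simply transitively on chambers and is the Coxeter group of the lemma. The line ``Hence we have $\ES{W}'=\check{X}\rtimes W'$'' just before the lemma is an abuse; $\check{X}\rtimes W'$ is the Iwahori--Weyl group of $G'$ and is not a Coxeter group once $\check{X}/\check{Q}'\neq 1$, precisely because the nontrivial alcove stabilizer $\ES{C}'\simeq\check{X}/\check{Q}'$ obstructs simple transitivity. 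The quotient $\check{X}/\check{Q}'$ is instead absorbed into $\ES{C}_{\chi_0}$ exactly as you propose, and the paper records this a few lines later by the decomposition $\ES{C}_{\chi_0}=\ES{C}'\rtimes C_{\chi_0}$. Your first proposed resolution (that the $s_a$ already generate all of $\check{X}$) fails whenever $\check{X}/\check{Q}'\neq 1$, so your hedge toward the second alternative was necessary.
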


Let $\ES{H}(\ES{W}'\!,\ES{S}')$ be the Hecke algebra of the Coxeter system $(\ES{W}'\!,\ES{S}')$, that is the associative ${\Bbb C}$--algebra with ${\Bbb C}$--basis $\{T_w: w\in \ES{W}'\}$ and multiplication given by
$$
T_{w_1w_2}= T_{w_1}T_{w_2} \quad \hbox{if $l'(w_1w_1)=l'(w_1)+l'(w_2)$},
$$ 
$$
T_sT_s = (q-1)T_s + qT_1\quad \hbox{if $s\in \ES{S}'$}.
$$
The group $\ES{C}_{\chi_0}$ acts by ${\Bbb C}$--algebra automorphisms on $\ES{H}(\ES{W}'\!,\ES{S}')$ via
$$
\eta\cdot T_{w}= T_{\eta w \eta^{-1}},\quad \eta\in \ES{C}_{\chi_0},\, w\in \ES{W}'.
$$
This action naturally extends to an injective morphism of ${\Bbb C}$--algebras
$$
{\Bbb C}[\ES{C}_{\chi_0}] \rightarrow {\rm End}_{{\Bbb C}-{\rm alg}}(\ES{H}(\ES{W}'\!,\ES{S}')).
$$
Via this morphism, we get a ${\Bbb C}$--algebra
$$
\ES{H}(\ES{W}_{\chi_0},\ES{S}') = \ES{H}(\ES{W}'\!,\ES{S}')\wt{\otimes}\,{\Bbb C}[\ES{C}_{\chi_0}]
$$
where the multiplication in the twisted tensor product is given by
$$
(T_{w_1}\otimes e_{\eta_1})(T_{w_2}\otimes e_{\eta_2})= T_{w_1}T_{\eta_1w_2\eta_1^{-1}}\otimes e_{\eta_1\eta_2}
$$
for $w_1,\, w_2\in \ES{W}'$ and $\eta_1,\, \eta_2\in \ES{C}_{\chi_0}$, where we have denoted by $e_{\eta_1}$ the element of ${\Bbb C}[\ES{C}_{\chi_0}]$ given by $\eta_{\eta_1}(\eta)= \delta_{\eta_1,\eta}$ 
(Kronecker symbol). For $w\in \ES{W}'$ and 
$\eta\in \ES{C}_{\chi_0}$, we write the basis element $T_w\otimes e_\eta\in \ES{H}(\ES{W}_{\chi_0},\ES{S}')$ as $T_{w\eta}$. Thus we have $T_wT_\eta = T_{w\eta}$ and 
$T_\eta T_wT_{\eta^{-1}} = T_{\eta w \eta^{-1}}$

For an element $w\in \ES{W}_{\chi_0}=N(F)_{\chi_0}/\ES{T}$, let $f_{\tilde{\chi},w}$ be the the unique element of the Hecke algebra $\ES{H}(G(F),\rho)$ supported on $\ES{I}w\ES{I}=\ES{I}n\ES{I}$ and having value $q^{-l(w)/2}\tilde{\chi}(n)^{-1}$ at $n$ for any representative element $n$ of $w$ in $N(F)_{\chi_0}$, where $l$ is the length function on the affine Weyl group $\ES{W}=N(F)/\ES{T}$ of $G$. The functions $f_{\tilde{\chi},w}$ for $n\in \ES{W}_{\chi_0}$ generate the ${\Bbb C}$--vector space $\ES{H}(G(F),\rho)$. On the other hand, let $\phi'_w$ be the element $q^{-l'(w)/2}T_w\in \ES{H}(\ES{W}_{\chi_0}, \ES{S}')$, 
where $l'$ is the length function on $\ES{W}_{\chi_0}$ defined by the lemma. From \cite[theorem 6.3]{R}\footnote{For a depth zero character $\chi_0$, it is an easy consequence of a 
previous work of Morris \cite{M} which does not depend on the residual characteristic.}, we have

\begin{prop1}
There is an isomorphism of ${\Bbb C}$--algebras
$$
\vartheta_{\tilde{\chi}}:\ES{H}(\ES{W}_{\chi_0}, \ES{S}') \buildrel\simeq \over{\longrightarrow} \ES{H}(G(F),\rho)
$$ 
which sends $\phi'_w$ to $f_{\tilde{\chi},w}$ for all $w\in \ES{W}_{\chi_0}$. Moreover, the restriction $\vartheta_{\tilde{\chi}}\vert_{\ES{H}(\ES{W}'\!,\ES{S}')}$ 
is the unique injective morphism of ${\Bbb C}$--algebras $\ES{H}(\ES{W}'\!, \ES{S}') \rightarrow \ES{H}(G(F),\rho)$ which is support--preserving --- in the sense 
that the image of $T_w$ has support $\ES{I}w\ES{I}$ for all $w\in \ES{W}'$.
\end{prop1}

Now let us recall the dual interpretation of the proposition 1 \cite{R}. The group $\ES{W}'= \check{X}\rtimes W'$ is the Iwahori--Weyl group of $G'$. Put
$$
\ES{C}'= \{w\in \ES{W}': w(\ES{A}') = \ES{A}'\}.
$$
Recall we have the decomposition \cite[lemma 8.1]{R} $W_{\chi_0}= W' \rtimes C_{\chi_0}$. From loc.~cit., 
we have also the decomposition
$$
\ES{C}_{\chi_0}= \ES{C}' \rtimes C_{\chi_0}.
$$
This gives a decomposition
$$
{\Bbb C}[\ES{C}_{\chi_0}]= {\Bbb C}[\ES{C}']\, \wt{\otimes}\, {\Bbb C}[C_{\chi_0}]
$$
where the tensor product is defined by the conjugation action of $C_{\chi_0}$ on $\ES{C}'$. Hence we have
$$
\ES{H}(\ES{W}'\!,\ES{S}') \,\wt{\otimes}\, {\Bbb C}[\ES{C}_{\chi_0}]\, \simeq\, \left(\ES{H}(\ES{W}'\!,\ES{S}')\, \wt{\otimes}\, {\Bbb C}[\ES{C}']\right) \wt{\otimes}\,
{\Bbb C}[C_{\chi_0}].\leqno{(1)}
$$
By Iwahori--Matsumoto, we know there is a support--preserving isomorphism of ${\Bbb C}$--algebras
$$
 \ES{H}(\ES{W}'\!,\ES{S}')\, \wt{\otimes}\, {\Bbb C}[\ES{C}']\buildrel\simeq \over{\longrightarrow}\ES{H}(G'(F),1_{\ES{I}'}).\leqno{(2)}
$$
For an element $w\in \ES{W}_{\chi_0}=N(F)_{\chi_0}/\ES{T}$, let $f'_w$ be the unique element of 
the Hecke algebra $\ES{H}(\wt{G}'(F),1_{\ES{I}'})$ supported on $\ES{I}'w\ES{I}'= \ES{I}'n\ES{I}'$ and having value $q^{-l'(w)/2}$ at $n$, for any representative element $n$ 
of $w$ in $N(F)_{\chi_0}$. From (1) and (2), there is an isomorphism of ${\Bbb C}$--algebras
$$
\ES{H}(\ES{W}_{\chi_0},\ES{S}')= \ES{H}(\ES{W}'\!,\ES{S}') \,\wt{\otimes}\, {\Bbb C}[\ES{C}_{\chi_0}]
\buildrel\simeq \over{\longrightarrow} \ES{H}(\wt{G}'(F),1_{\ES{I}'}).\leqno{(3)}
$$
which sends $\phi'_w$ to $f'_w$ for all $w\in \ES{W}_{\chi_0}$. Composing (3) with the isomorphism $\vartheta_{\tilde{\chi}}^{-1}$ of the proposition 1, 
we obtain the

\begin{prop2}
There is an isomorphism of ${\Bbb C}$--algebras
$$
\Psi_{\tilde{\chi}}:\ES{H}(G(F),\rho)\buildrel\simeq \over{\longrightarrow} \ES{H}(\wt{G}'(F),1_{\ES{I}'})
$$ 
which sends $f_{\tilde{\chi},w}$ to $f'_w$ for all $w\in \ES{W}_{\chi_0}$.
\end{prop2}

\begin{rem}
{\rm 
As in \cite{R}, we chose a character $\tilde{\chi}$ of $N(F)_{\chi_0}$ extending $\chi_0$ of the form $\tilde{\chi}= \tilde{\chi}_0^\varpi$ for a 
character $\tilde{\chi}_0$ of $\ES{N}_{\chi_0}$ extending $\chi_0$ and a uniformizer element $\varpi$ of $F$. Now let $\chi^\natural$ be another character of $T(F)$ such that 
$\chi^\natural\vert_{\ES{T}}= \chi_0$. Then there exists a character $\tilde{\chi}^\natural$ of $N(F)_{\chi_0}$ which extends $\chi^\natural$ if and only if $\chi^\natural$ is $W_{\chi_0}$--invariant 
\cite[lemma 9.2.3]{HR}, i.e. 
if and only if $\chi^\natural = \xi \chi$ for a $W_{\chi_0}$--invariant unramified character $\xi$ of $T(F)$. For such a $\tilde{\chi}^\natural$, we can define in the same manner an isomorphism of ${\Bbb C}$--algebras 
$\Psi_{\tilde{\chi}^\natural}:\ES{H}(G(F),\rho)\buildrel\simeq \over{\longrightarrow} \ES{H}(\wt{G}'(F),1_{\ES{I}'})$.
}
\end{rem}

\subsection{Bernstein center}\label{Bernstein center}
Most of the material in this subsection come from \cite[4]{Hai2}.

We denote by $\mathfrak{X}(\mathfrak{s})$ the complex algebraic variety --- in fact a complex torus --- associated to $\mathfrak{s}$, and by $\mathfrak{R}^\mathfrak{s}(G(F))$ the corresponding category of (complex, smooth) representations of $G(F)$. 
The elements of $\mathfrak{s}$ are the $G(F)$--conjugacy classes $(T,\psi)_G$ of cuspidal pairs $(T,\psi)$ of $G(F)$ such that $\psi$ is a character of $T(F)$ which extends a $W$--conjugate of $\chi_0$.
Recall we chose a base--point $(T,\chi)_G\in \mathfrak{s}$ such that $\chi$ is $W_{\chi_0}$--invariant. 
Then, writing $\Psi(T)= {\rm Hom}(T(F)/\ES{T},{\Bbb C}^\times)$ --- the group of unramified characters of $G(F)$ ---, we have an isomorphism of algebraic varieties
$$
\Psi(T)/W_{\chi_0} \buildrel \simeq \over{\longrightarrow} \mathfrak{X}(\mathfrak{s}),\, \xi\mapsto (T,\xi\chi)_G.
$$
We also have an isomorphism of algebraic varieties
$$
\Psi(T)/W' \buildrel \simeq \over{\longrightarrow} \mathfrak{X}(\mathfrak{s}'),\, \xi\mapsto (T,\xi)_{G'},
$$
where $\mathfrak{s}'=[T,1]_{G'}$ is the $G'(F)$--inertial class of the cuspidal pair $(T,1)$ of $G'(F)$.
We deduce from these two isomorphisms a surjective morphism of algebraic varieties
$$
\mathfrak{X}(\mathfrak{s}')\rightarrow \mathfrak{X}(\mathfrak{s}),\, (T,\xi)_{G'}\mapsto (T, \xi\chi)_G,
$$
which gives dually an injective morphism of ${\Bbb C}$--algebras
$$
\eta_\chi:{\Bbb C}[\mathfrak{X}(\mathfrak{s})]\rightarrow {\Bbb C}[\mathfrak{X}(\mathfrak{s}')].\leqno{(1)}
$$

On the other hand, there is a natural equivalence between $\mathfrak{R}^\mathfrak{s}(G(F))$ and the category of left $\ES{H}(G(F),\rho)$--modules --- given by $\pi\mapsto \pi^\rho$ where $\pi^\rho$ denotes the $\rho$--isotypic component of $\pi$ ---, which gives an 
isomorphism between the centers of these categories (cf. \cite[3.1]{Hai2}), say
$$
\beta: {\Bbb C}[\mathfrak{X}(\mathfrak{s})]\buildrel \simeq \over{\longrightarrow} \ES{Z}(G(F),\rho).
$$
Here $\ES{Z}(G(F),\rho)$ denotes the center of the (unital) algebra $\ES{H}(G(F),\rho)$. We also have an isomorphism
$$
\beta': {\Bbb C}[\mathfrak{X}(\mathfrak{s}')]\buildrel \simeq \over{\longrightarrow} \ES{Z}(G'(F),1_{\ES{I}'}).
$$
The isomorphism $\Psi_{\tilde{\chi}}$ of \ref{some results of Roche}.(3) gives by restriction an isomorphism of ${\Bbb C}$--algebras
$$
\ES{Z}(G(F),\rho)\buildrel\simeq\over{\longrightarrow} \ES{Z}(\smash{\wt{G}}'(F),1_{\ES{I}'}).\leqno{(2)}
$$
On the other and, let
$$
\zeta_\chi = \zeta_\chi^{\ES{I}}: \ES{Z}(G(F),\rho) \rightarrow \ES{Z}(G'(F),1_{\ES{I}'})
$$
be the unique morphism of ${\Bbb C}$--algebras which makes the following diagram
$$
\xymatrix{
{\Bbb C}[\mathfrak{X}(\mathfrak{s})] \ar[r]^(.45)\beta \ar[d]_{\eta_\chi} & \ES{Z}(G(F),\rho) \ar[d]^{\zeta_\chi}\\
{\Bbb C}[\mathfrak{X}(\mathfrak{s}')] \ar[r]^(.45){\beta'} & \ES{Z}(G'(F),1_{\ES{I}'})
}\leqno{(3)}
$$
commutative. It is injective by construction. 

\begin{rem1}
{\rm If $W_{\chi_0}=W'$, i.e. if $\wt{G}'=G'$, it is proved in \cite[9]{HR} that $\zeta_{\chi}$ coincides with $\Psi_{\tilde{\chi}}\vert_{\ES{Z}(G(F),\rho)}$. In particular in that case, 
$\Psi_{\tilde{\chi}}\vert_{\ES{Z}(G(F),\rho)}$ depends only on $\chi$ and not on the choice of $\tilde{\chi}$. We shall see later (lemma bellow) that this 
remains true if we remove the hypothesis $W_{\chi_0}=W'$.  
}
\end{rem1}

Now let $\psi$ be a character of $T(F)$ which extends a $W$--conjugate of $\chi_0$. Put $\psi^*= \delta_B^{1/2}\psi$. Let $i_B^G(\psi)^\rho$ be the space of functions $\phi: G(F)\rightarrow {\Bbb C}$ such that
$$
\phi(tugk)= \psi^*(t) \phi(g) \rho(k),\quad t\in T(F),\, u\in U(F),\, g\in G(F),\, k\in \ES{I}.
$$
Then every function $f\in \ES{Z}(G(F),\rho)$ acts on the left on $i_B^G(\psi)^\rho$ by multiplication by the scalar
$$
\lambda_\psi(f)= \beta^{-1}(f)((T,\psi)_G),
$$
where $\beta^{-1}(f)$ is viewed as a regular function on the complex algebraic variety $\mathfrak{X}(\mathfrak{s})$. In particular, we have the equality $\lambda_{^w\psi}= \lambda_\psi$ as functions on $\ES{Z}(G(F),\rho)$. In the same manner, for an unramified character $\xi$ of $T'(F)=T(F)$, every function $f'\in \ES{Z}(G'(F),1_{\ES{I}'})$ acts on the left on the space $i_{B'}^{G'}(\xi)^{\ES{I}'}=i_{B'}^{G'}(\xi)^{1_{\ES{I}'}}$ by multiplication by the scalar
$$
\lambda'_{\xi}(f')= \beta'^{-1}(f')((T'\!,\xi)_{G'}).
$$
Now take $\psi = \xi \chi $ for an unramified character $\xi$ of $T(F)$. From (3) we have
$$
\lambda_{\xi \chi}(f)= \lambda'_\xi(\zeta_\chi(f)),\quad f\in \ES{Z}(G(F),\rho).\leqno{(4)}
$$
In other words, the morphism of ${\Bbb C}$--algebras $\zeta_\chi: \ES{Z}(G(F),\rho)\rightarrow \ES{Z}(G'(F),1_{\ES{I}'})$ is characterized by actions on principal series: for $f\in \ES{Z}(G(F),\rho)$, $\zeta_\chi(f)$ is the unique element of $\ES{Z}(G'(F),1_{\ES{I}'})$ which acts on the left on the space $i_{B'}^{G'}(\xi)^{\ES{I}'}$ by multiplication by the scalar 
$\lambda_{\chi\xi}(f)$, for all unramified characters $\xi$ of $T(F)$.

\begin{lem}
The center $\ES{Z}(\wt{G}'(F),1_{\ES{I}'})$ is contained in the center $\ES{Z}(G'(F),1_{\ES{I}'})$, and 
$\zeta_{\chi}$ coincides with $\left(\ES{Z}(\wt{G}'(F),1_{\ES{I}'}) \hookrightarrow \ES{Z}(G'(F),1_{\ES{I}'})\right)\circ \Psi_{\tilde{\chi}}\vert_{\ES{Z}(G(F),\rho)}$. In particular, 
the restriction $\Psi_{\tilde{\chi}}\vert_{\ES{Z}(G(F),\rho)}$ depends only on $\chi$ and not on the choice of $\tilde{\chi}$.
\end{lem}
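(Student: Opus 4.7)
The plan has three steps: establish the inclusion of centers, identify $\zeta_\chi$ with the composition of $\Psi_{\tilde{\chi}}|_{\ES{Z}(G(F),\rho)}$ followed by this inclusion via the principal--series characterization, and deduce the ``in particular'' assertion as an immediate corollary.

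First I would prove $\ES{Z}(\wt{G}'(F),1_{\ES{I}'}) \subset \ES{Z}(G'(F),1_{\ES{I}'})$. Using the decomposition $\ES{H}(\wt{G}'(F),1_{\ES{I}'}) \simeq \ES{H}(G'(F),1_{\ES{I}'}) \wt{\otimes}\,{\Bbb C}[C_{\chi_0}]$ from \ref{some results of Roche}.(2), write a central element as $A = \sum_{\eta \in C_{\chi_0}} h_\eta \otimes e_\eta$ with $h_\eta \in \ES{H}(G'(F),1_{\ES{I}'})$. Commutation of $A$ with $z \otimes e_1$ for $z \in \ES{Z}(G'(F),1_{\ES{I}'})$ gives $z h_\eta = h_\eta(\eta\cdot z) = (\eta\cdot z)h_\eta$, hence $(z - \eta\cdot z)h_\eta = 0$ for every $\eta$. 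Since $\ES{H}(G'(F),1_{\ES{I}'})$ is torsion--free over its center $\ES{Z}(G'(F),1_{\ES{I}'}) \simeq {\Bbb C}[\mathfrak{X}(\mathfrak{s}')]$ (an integral domain), and the action of $C_{\chi_0}$ on $\mathfrak{X}(\mathfrak{s}') = \Psi(T)/W'$ is faithful thanks to the decomposition $W_{\chi_0} = W' \rtimes C_{\chi_0}$, this forces $h_\eta = 0$ for every $\eta \neq 1$ and $h_1 \in \ES{Z}(G'(F),1_{\ES{I}'})$. The remaining commutation of $A = h_1 \otimes e_1$ with $1 \otimes e_{\eta'}$ then forces $h_1$ to be $C_{\chi_0}$--invariant, so that the map $h_1 \mapsto h_1 \otimes e_1$ identifies $\ES{Z}(\wt{G}'(F),1_{\ES{I}'})$ with $\ES{Z}(G'(F),1_{\ES{I}'})^{C_{\chi_0}} \subset \ES{Z}(G'(F),1_{\ES{I}'})$.

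Since $\Psi_{\tilde{\chi}}$ is a ${\Bbb C}$--algebra isomorphism it carries $\ES{Z}(G(F),\rho)$ to $\ES{Z}(\wt{G}'(F),1_{\ES{I}'})$; composing with the inclusion just proved produces a morphism $\ES{Z}(G(F),\rho) \to \ES{Z}(G'(F),1_{\ES{I}'})$. By the characterization (4) of \ref{Bernstein center}, it suffices to show that for every $f \in \ES{Z}(G(F),\rho)$ and every unramified character $\xi$ of $T(F)$, $\Psi_{\tilde{\chi}}(f)$ acts on $i_{B'}^{G'}(\xi)^{\ES{I}'}$ by the scalar $\lambda_{\chi\xi}(f)$. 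Under Roche's type equivalence, the $\ES{H}(G(F),\rho)$--module $i_B^G(\chi\xi)^\rho$ corresponds via $\Psi_{\tilde{\chi}}$ to the $\ES{H}(\wt{G}'(F),1_{\ES{I}'})$--module $i_{B'}^{\wt{G}'}(\xi)^{\ES{I}'}$; by Mackey (induction in stages through $G'$) this restricts, over $\ES{H}(G'(F),1_{\ES{I}'})$, to $\bigoplus_{\eta \in C_{\chi_0}} i_{B'}^{G'}(\eta\xi)^{\ES{I}'}$. As $\Psi_{\tilde{\chi}}(f)$ lies in the $C_{\chi_0}$--invariants of $\ES{Z}(G'(F),1_{\ES{I}'})$, it acts by a common scalar on each summand; matching it with the value on the $\rho$--isotypic side, namely $\lambda_{\chi\xi}(f)$, yields the required identity $\zeta_\chi(f) = \Psi_{\tilde{\chi}}(f)$. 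The known case $W_{\chi_0} = W'$ of \cite[9]{HR} is recovered when $C_{\chi_0} = \{1\}$.

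The ``in particular'' assertion is then automatic: $\zeta_\chi$ is defined in \ref{Bernstein center} purely in terms of $\chi$ (through $\eta_\chi$ and the commutative diagram (3) there), so the identity just obtained implies that $\Psi_{\tilde{\chi}}|_{\ES{Z}(G(F),\rho)}$ cannot depend on the choice of extension $\tilde{\chi}$. I expect the principal--series comparison in the middle step to be the main obstacle: one must invoke Roche's compatibility between $\Psi_{\tilde{\chi}}$ and parabolic induction precisely enough to identify the two modules and to keep track of the $C_{\chi_0}$--twisting, so that passage to $C_{\chi_0}$--invariants correctly produces the common eigenvalue on the $G'$--side.
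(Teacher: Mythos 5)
Your proposal is correct and reaches the same conclusion, but the route for the first assertion is genuinely different. The paper does not prove the inclusion $\ES{Z}(\wt{G}'(F),1_{\ES{I}'}) \subset \ES{Z}(G'(F),1_{\ES{I}'})$ as a separate step: instead it establishes the identity $\Psi_{\tilde{\chi}}(f)=\zeta_\chi(f)\,\wt\otimes\,1$ directly (via Roche's diagram (5) with $\theta_{B,\chi}$ and $\tilde\theta_{B',1}$, Roche's theorem 9.4, and the scalar comparison on the modules $i_{B'}^{\wt{G}'}(\xi)^{\ES{I}'}$), and the inclusion of centers then falls out automatically because $\Psi_{\tilde{\chi}}$ maps $\ES{Z}(G(F),\rho)$ onto $\ES{Z}(\wt{G}'(F),1_{\ES{I}'})$ while $\zeta_\chi$ lands in $\ES{Z}(G'(F),1_{\ES{I}'})\,\wt\otimes\,1$. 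You instead give a self-contained, purely algebraic proof of the inclusion (indeed of the sharper equality $\ES{Z}(\wt{G}'(F),1_{\ES{I}'}) = \ES{Z}(G'(F),1_{\ES{I}'})^{C_{\chi_0}}$) by a commutator calculation in the twisted tensor product; this is a nice clarification, but it requires two extra ingredients that you should check are available, namely that $\ES{H}(G'(F),1_{\ES{I}'})$ is torsion-free over its center (true, since it is a finitely generated free module over $\ES{Z}(G'(F),1_{\ES{I}'})\simeq {\Bbb C}[\mathfrak{X}(\mathfrak{s}')]$, an integral domain), and that $C_{\chi_0}$ acts faithfully on $\mathfrak{X}(\mathfrak{s}')=\Psi(T)/W'$ (which follows from the faithfulness of the $W_{\chi_0}$-action on $\Psi(T)$ combined with $W'\cap C_{\chi_0}=\{1\}$). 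For the second assertion, your argument is essentially the paper's: both hinge on Roche's compatibility of $\Psi_{\tilde\chi}$ with parabolic induction to identify the principal-series modules and then compare the central eigenvalues; you pass through a Mackey decomposition of $i_{B'}^{\wt{G}'}(\xi)\vert_{G'}$ and the $C_{\chi_0}$-invariance proved in your first step, whereas the paper works directly with the $\wt{G}'$-module $i_{B'}^{\wt{G}'}(\xi)^{\ES{I}'}$ and verifies that both $\Psi_{\tilde\chi}(f)$ and $\zeta_\chi(f)\,\wt\otimes\,1$ act by $\lambda_{\chi\xi}(f)$ before invoking the principal-series separation of central elements. The content is the same; your version is slightly more explicit about the $C_{\chi_0}$-twisting, which is exactly the technical point you flag at the end as the likely obstacle.
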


\begin{proof}Let
$$
\Psi_\chi^T: \ES{H}(T(F),\chi)\buildrel\simeq\over{\longrightarrow} \ES{H}(T'(F),1_{\ES{T}'})
$$ 
be the isomorphism of ${\Bbb C}$--algebras obtained by replacing $G$ by $T$ in the definition of $\Psi_{\tilde{\chi}}$. From \cite[lemma 9.3]{R} (see also 
\cite[9.3.4]{HR}), 
there are morphisms of ${\Bbb C}$--algebras
$$
\theta_{B,\chi}: \ES{H}(T(F),\chi) \rightarrow \ES{H}(G(F),\rho)
$$
and
$$
\tilde{\theta}_{B'\!,1}:\ES{H}(T'(F),1_{\ES{T}'})\rightarrow \ES{H}(\wt{G}'(F),1_{\ES{I}'})
$$
which make the following diagram
$$
\xymatrix{
\ES{H}(G(F),\rho) \ar[r]^(.45){\Psi_{\tilde{\chi}}}_(.45){\simeq}  & \ES{H}(\wt{G}'(F),1_{\ES{I}'}) \\
\ES{H}(T(F),\chi) \ar[u]^{\theta_{B,\chi}}\ar[r]^(.45){\Psi_\chi^{T}}_(.45){\simeq} & \ES{H}(T'(F),1_{\ES{T}'})\ar[u]_{\tilde{\theta}_{B'\!,1}}
}\leqno{(5)}
$$
commutative. It induces a commutative diagram of functors
$$
\xymatrix{
\hbox{$\ES{H}(G(F),\rho)$--Mod} \ar[r]^(.46){(\Psi_{\tilde{\chi}})_*}_(.46){\simeq}  & \hbox{$\ES{H}(\wt{G}'(F),1_{\ES{I}'})$--Mod} \\
\hbox{$\ES{H}(T(F),\chi)$--Mod} \ar[u]^{(\theta_{B,\chi})_*}\ar[r]^(.48){(\Psi_\chi^{T})_*}_(.48){\simeq} & \hbox{$\ES{H}(T'(F),1_{\ES{T}'})$--Mod}\ar[u]_{(\tilde{\theta}_{B'\!,1})_*}
}\leqno{(6)}
$$
where ?--Mod denotes the category of left $?$--modules. 
From \cite[theorem 9.4]{R}, $(\theta_{B,\chi})_*$ coincides with the induction functor $i_B^G$ (via the natural equivalences of categories), 
and $(\tilde{\theta}_{B'\!,1})_*$ coincides with 
$i_{B'}^{\tilde{G}}= i_{G'}^{\wt{G}'}\circ i_{B'}^{G'}$ 
where
$$
i_{G'}^{\wt{G}'}: \hbox{$\ES{H}(G'(F),1_{\ES{I}'})$--Mod}\rightarrow \hbox{$\ES{H}(\wt{G}'(F),1_{\ES{I}'})$--Mod}
$$
is the ordinary induction functor. Here we used the equivalence of categories (cf. \cite[9, page 400]{R})
$$
\mathfrak{R}^{1_{\ES{I}'}}(\wt{G}'(F))\rightarrow \hbox{$\ES{H}(\wt{G}'(F),1_{\ES{I}'})$--Mod},\, W\mapsto W^{\ES{I}'}
$$
where $\mathfrak{R}^{1_{\ES{I'}}}(\wt{G}'(F))$ denotes the category of representations of $\wt{G}'(F)$ generated by their $\ES{I}'$--fixed vectors.
In other words, for all unramified characters $\xi$ of $T(F)$, we have
$$
(\Psi_{\tilde{\chi}})_*(i_B^G(\xi\chi)^{\ES{I}})= i_{B'}^{\wt{G}'}(\xi)^{\ES{I}'}.
$$
Hence for $f\in \ES{Z}(G(F),\rho)$, $\Psi_{\tilde{\chi}}(f)$ acts on the space
$$
i_{B'}^{\wt{G}'}(\xi)^{\ES{I}'}= \ES{H}(\wt{G}'(F),1_{\ES{I}'})\otimes_{\ES{H}(G(F),1_{\ES{I}'})} i_{B'}^{G'}(\xi)^{\ES{I}'}
$$
by multiplication by the scalar $\lambda_{\chi \xi}(f)$, for all unramified characters $\xi$ of $T(F)$. But 
$\zeta_\chi(f)= \zeta_\chi(f) \wt{\otimes}1 \in \ES{H}(\wt{G}'(F),1_{\ES{I}'})$ acts on the space $i_{B'}^{\wt{G}'}(\xi)$ by multiplication by the scalar 
$\lambda_{\chi \xi}(f)$, for all unramified characters $\xi$ of $T(F)$. Hence 
$$
\Psi_{\tilde{\chi}}(f)- \zeta_\chi(f)
$$
acts by $0$ on each irreducible subquotient of the $\ES{H}(\wt{G}'(F),1_{\ES{I}'})$--module 
$i_{B'}^{\wt{G}'}(\xi)^{\ES{I}'}$, for all unramified characters $\xi$ of $T'(F)$. This implies $\Psi_{\tilde{\chi}}(f)=\zeta_\chi(f)$ as elements 
of $\ES{H}(\wt{G}'(F),1_{\ES{I}'})$. The lemma is proved.  
\end{proof}

\begin{rem2}
{\rm Let $\chi^\natural$ be another $W_{\chi_0}$--invariant character of $T(F)$ extending $\chi_0$. It defines as above an injective morphism of ${\Bbb C}$--algebras 
$\eta_{\chi^\natural}:{\Bbb C}[\mathfrak{X}(\mathfrak{s})] \rightarrow {\Bbb C}[\mathfrak{X}(\mathfrak{s}')]$ which gives, via the isomorphisms $\beta$ and $\beta'$, an injective morphism 
of ${\Bbb C}$--algebras $\zeta_{\chi^\natural}: \ES{Z}(G(F),\rho)\rightarrow \ES{Z}(G'(F),1_{\ES{I}'})$. 
Moreover, if $\tilde{\chi}^\natural$ is a character of $N(F)_{\chi_0}$ that extends $\chi^\natural$, then we have $\zeta_{\chi^\natural}={\Psi_{\tilde{\chi}^\natural}}\vert_{\ES{Z}(G(F),\rho)}$ (cf. the remark of \ref{Hecke algebra isomorphism}).
}
\end{rem2}

\subsection{Conjugation by $w\in W$}\label{conjugation by W}
This subsection is a simple variant of \cite[4.3]{Hai2}.

Let $w$ be an element of $W$. It can be lifted to an element $n\in N \cap \ES{K}$. The character ${^w\!\chi_0}=\chi_0\circ {\rm Int}_{w^{-1}}$ of $\ES{T}$ can be identified as in \ref{some results of Roche} with a character of the Iwahori subgroup ${^w\ES{I}}={\rm Int}_n(\ES{I})$ of $G(F)$ --- the inflation of ${^w\!\chi_0}$ to ${^w\ES{I}}$. This character is ${^w\!\rho}= \rho\circ {\rm Int}_{n^{-1}}$. The group ${^w\ES{I}}$ and the character ${^w\!\rho}$ of ${^w\ES{I}}$, and more generally the function ${^w\!f}= f\circ {\rm Int}_{n^{-1}}\in \ES{H}(G(F),{^w\!\rho})$ for any $f\in \ES{H}(G(F),\rho)$, do not depend on the choice of the representative element $n\in \ES{K}\cap N$ of $w$: since $\ES{K}\cap T= \ES{T}$, another lift of $w$ in $N\cap \ES{K}$ belongs to $n\ES{T}= \ES{T}n$. The map
$$
\ES{H}(G(F),\rho)\rightarrow \ES{H}(G(F),{^w\!\rho}),\, f\mapsto {^w\!f}
$$
is an isomorphism of ${\Bbb C}$--algebras which restricts to an isomorphism
$$
[w]:\ES{Z}(G(F),\rho)\buildrel \simeq\over{\longrightarrow} \ES{Z}(G(F),{^w\!\rho}).
$$
From \cite[4.3.1]{Hai2}, the following diagram
$$
\xymatrix{{\Bbb C}[\mathfrak{X}(\mathfrak{s})] \ar[r]^(.45)\beta \ar@{=}[d] & \ES{Z}(G(F),\rho)\ar[d]^{[w]} \\
{\Bbb C}[\mathfrak{X}(\mathfrak{s})] \ar[r]^(.45){\beta} & \ES{Z}(G(F), {^w\!\rho})
}\leqno{(1)}
$$
is commutative. Here, the lower $\beta$ is the isomorphism of ${\Bbb C}$--algebras given by the natural equivalence between $\mathfrak{R}^{\mathfrak{s}}(G(F))$ and the category of left $\ES{H}(G(F),{^w\!\rho})$--modules. As for the morphism $\zeta_\chi$ in \ref{Bernstein center}.(1), the isomorphism of ${\Bbb C}$--algebras $[w]: \ES{Z}(G(F),\rho)\rightarrow \ES{Z}(G(F),{^w\!\rho})$ can be characterized by actions on principal series (cf. \ref{Bernstein center}): for $f\in \ES{Z}(G(F),\rho)$, $[w]( f) = {^w\!f}$ is the unique element of $\ES{Z}(G(F),{^w\!\rho})$ which acts on the left on $i_{^w\!B}^G({^w\psi})^{^w\!\rho}\simeq i_B^G(\psi)^\rho$ by multiplication by the scalar
$$
\lambda_{^w\psi}({^w\!f})\;\left(=\lambda_\psi({^w\!f})= \lambda_\psi(f)\right),
$$
for all characters $\psi$ of $T(F)$ extending a $W$--conjugate of $\chi_0$.

If $\psi = \xi \chi$ for an unramified character $\xi$ of $T(F)$, then we have ${^w\psi}= ({^w\xi}) \chi_w \chi$ with
$$
\chi_w= ({^w\chi})\chi^{-1}.
$$
Note that $\chi_w$ is a character of $T(F)$ extending $\chi_{w,0}= ({^w\chi_0})\chi_0^{-1}$. If $w\in W_{\chi_0}$, then we have 
$\chi_{w,0}=1$ and $\chi_w=1$. Moreover, if $w_0\in W_{\chi_0}$, since ${^{w_0}\chi}=\chi$ we have
$$
\chi_{ww_0}=\chi_w,\quad \chi_{w_0w}= \chi_{w_0ww_0^{-1}}.
$$
Let $\mathfrak{s}'_w=\mathfrak{s}'_{\chi_{w,0}}$ be the $G'(F)$--inertial class $[T(F),\chi_w]_{G'}$ of the cuspidal pair $(T(F), \chi_w)$ of $G'(F)$. 
Recall that we have an isomorphism of algebraic varieties
$$
\Psi(T)/W' \buildrel \simeq \over{\longrightarrow} \mathfrak{X}(\mathfrak{s}'),\, \xi\mapsto (T,\xi)_{G'}.
$$
We also have an isomorphism of algebraic varieties
$$
\Psi(T)/W'_{^w\!\chi_0} \buildrel \simeq \over{\longrightarrow} \mathfrak{X}(\mathfrak{s}'_w),\, \xi\mapsto (T,({^w\xi})\chi_w)_{G'},
$$
where $W'_{^{w\!\chi_0}}= W' \cap wW_{\chi_0} w^{-1}$ is the subgroup of $W'$ formed by the elements which fix ${^w\!\chi_0}$. 
The surjective morphism of algebraic varieties
$$
\mathfrak{X}(\mathfrak{s}'_w)\rightarrow \mathfrak{X}(\mathfrak{s}'),\, (T,({^w\xi}) \chi_w)_{G'}\mapsto (T,\xi)_{G'}
$$
gives by duality an injective morphism of ${\Bbb C}$--algebras
$$
 \mu'_w=\mu'_{w,\chi}: {\Bbb C}[\mathfrak{X}(\mathfrak{s}')]\rightarrow {\Bbb C}[\mathfrak{X}(\mathfrak{s}'_w)],
$$
which is the identity if $w\in W'$. The natural isomorphism $\ES{T}/\ES{T}_+ \buildrel \simeq \over{\longrightarrow} \ES{I}'/\ES{I}'_+$ allows us to view 
$\chi_{w,0}$ as a character $\rho'_w= \rho'_{\chi_{w,0}}$ of $\ES{I}'$ trivial on $\ES{I}'_+$ (cf. \ref{some results of Roche}) --- the inflation of $\chi_{w,0}$ to $\ES{I}'$. Let
$$
\{w\}'=\{w\}'_\chi: \ES{Z}(G'(F),1_{\ES{I}'})\rightarrow \ES{Z}(G'(F),\rho'_w)
$$
be the injective morphism of ${\Bbb C}$--algebras which makes the following diagram
$$
\xymatrix{{\Bbb C}[\mathfrak{X}(\mathfrak{s}')] \ar[r]^(.45){\beta'} \ar[d]_{\mu'_{w}} & \ES{Z}(G'(F),1_{\ES{I}'}) \ar[d]^{\{w\}'}\\
{\Bbb C}[\mathfrak{X}(\mathfrak{s}'_w)] \ar[r]^(.45){\beta'} & \ES{Z}(G'(F), \rho'_w)
}
$$
commutative. Here the lower $\beta'$ is the isomorphism of ${\Bbb C}$--algebras 
given by the natural equivalence between $\mathfrak{R}^{\mathfrak{s}'_w}(G'(F))$ 
and the category of left $\ES{H}(G'(F), \rho'_w)$--modules. Note that if $w\in W_{\chi_0}$, 
then $\{w\}'$ is an automorphism of $\ES{Z}(G'(F),1_{\ES{I}'})$, which is the identity if $w\in W'$.

\begin{nota}
{\rm 
For all unramified characters $\xi$ of $T(F)$, we put $\{w\}'(\xi)= ({^w\xi})\chi_w$.
}
\end{nota}

The morphism $\{w\}'$ can be characterized by actions on principal series: for $f'\in \ES{Z}(G'(F),1_{\ES{I}'})$, $\{w\}'(f')$ is the unique element of 
$\ES{Z}(G'(F),\rho'_w)$ which acts on the left on $i_{B'}^{G'}(\{w\}'(\xi))^{\rho'_w}$ by multiplication by the scalar
$$
\lambda'_{\{w\}'(\xi)}(\{w\}'(f'))= \lambda'_{\xi}(f'),
$$
for all unramified characters $\xi$ of $T'(F)$.

For $w_0\in W_{\chi_0}$, since ${^{w_0}\!\chi}=\chi$, we have $\chi_{w_0w}= {^{w_0}(\chi_w)}$ and
$$
\{w_0w\}'(\xi)= ({^{w_0w}\xi})\chi_{w_0w}={^{w_0}(\{w\}'(\xi))}.
$$
In particular for $w'\in W'$, we have $\mathfrak{s}'_{w'w}= \mathfrak{s}'_w$ and $\mu'_{w'w}= \mu'_{w}$, and the morphism 
$\{w'w\}': \ES{Z}(G'(F),1_{\ES{I}'})\rightarrow \ES{Z}(G'(F), \rho'_{w'w})$ is given by
$$
\{w'w\}' = [w']'_w\circ \{w\}'
$$
where $[w']'_w: \ES{Z}(G'(F),\rho'_w) \rightarrow \ES{Z}(G'(F),\rho'_{w'w})$ is the isomorphism of ${\Bbb C}$--algebras 
which makes the following 
diagram
$$
\xymatrix{
{\Bbb C}[\mathfrak{X}(\mathfrak{s}'_w)] \ar[r]^(.45){\beta'} \ar@{=}[d] & \ES{Z}(G'(F),\rho'_w) \ar[d]^{[w']'_w}\\
{\Bbb C}[\mathfrak{X}(\mathfrak{s}'_w)] \ar[r]^(.45){\beta'}  & \ES{Z}(G'(F),\rho'_{w'w})
}
$$
commutative. As for the morphism $\{w\}'$ introduced above, the isomorphism $[w']'_w$ can be characterized by actions on principal series: 
for $f'\in \ES{Z}(G'(F),\rho'_w)$, $[w']_w(f')$ is the unique element of 
$\ES{Z}(G'(F),\rho'_{w'w})$ which acts on the left on $i_{B'}^{G'}(\{w'w\}'(\xi))^{\rho'_{w'w}}$ by multiplication by the 
scalar
$$
\lambda'_{\{w'w\}'(\xi)}([w']'_w(f'))= \lambda'_{\{w\}(\xi)}(f')
$$
for all unramified characters $\xi$ of $T'(F)$. 

On the other hand, for $w_0\in W_{\chi_0}$, since $\chi_{ww_0,0}= \chi_{w,0}$, we have $\mathfrak{s}'_{ww_0}= \mathfrak{s}'_w$ and 
$\mu'_{ww_0}= \mu'_w$. We also have $\rho'_{ww_0}=\rho'_w$ and $\chi_{ww_0}=\chi_w$, and
$$
\{ww_0\}'(\xi)= ({^{ww_0}\xi})\chi_w = {^w(\xi_{w_0})}\cdot \{w\}'(\xi),\quad \xi_{w_0}= ({^{w_0}\xi})\xi^{-1},
$$
for all unramified characters $\xi$ of $T(F)$. The automorphism of algebraic varieties
$$
{\Bbb C}[\mathfrak{X}(\mathfrak{s}'_w)]\rightarrow {\Bbb C}[\mathfrak{X}(\mathfrak{s}'_w)],\, (T'\!,({^{ww_0}\xi})\chi_w)_{G'} \mapsto (T'\!,({^w\xi})\chi_w)_{G'}
$$
gives by duality, and via the isomorphism $\beta':{\Bbb C}[\mathfrak{X}(\mathfrak{s}'_w)]\rightarrow \ES{Z}(G'(F), \rho'_w)$, an automorphism of 
${\Bbb C}$--algebras
$$
\{w_0\}'_w: \ES{Z}(G'(F),\rho'_w) \rightarrow \ES{Z}(G'(F),\rho'_w).
$$
By construction we have
$$
\{ww_0\}'= \{w_0\}'_w \circ \{w\}'.
$$
As before, it can be characterized via actions on principal series: for a function $f'\in \ES{Z}(G'(F),\rho'_w)$, $\{w_0\}'_w(f')$ is the unique element of 
$\ES{Z}(G'(F),\rho'_w)$ which acts on the left on $i_{B'}^{G'}({^w(\xi_{w_0})}\psi)^{\rho'_w}$ with $\psi =\{w\}'(\xi)$ by multiplication by the scalar
$$
\lambda'_{{^w\!(\xi_{w_0})}\psi}(\{w_0\}'_w(f'))= \lambda'_\psi(f')
$$
for all unramified characters $\xi$ of $T(F)$. Note that in general $\{w_0\}'_w \neq {\rm id}$, even if $w_0\in W'$. 

\vskip1mm
The surjective morphism of algebraic varieties
$$
\mathfrak{X}(\mathfrak{s}'_w)\rightarrow \mathfrak{X}(\mathfrak{s}),\, (T,({^w\xi}) \chi_w)_{G'}\mapsto (T,\xi \chi)_G
$$
gives by duality an injective morphism of ${\Bbb C}$--algebras
$$
\eta_{w,\chi}:{\Bbb C}[\mathfrak{X}(\mathfrak{s})]\rightarrow {\Bbb C}[\mathfrak{X}(\mathfrak{s}'_w)]. 
$$
Let
$$
\zeta_{w,\chi}: \ES{Z}(G(F), {^w\!\rho})\rightarrow \ES{Z}(G'(F),\rho'_w)
$$
be the injective morphism of ${\Bbb C}$--algebras which makes the following diagram
$$
\xymatrix{{\Bbb C}[\mathfrak{X}(\mathfrak{s})] \ar[r]^(.45){\beta} \ar[d]_{\eta_{w,\chi}} & \ES{Z}(G(F),{^w\!}\rho)\ar[d]^{\zeta_{w,\chi}} \\
{\Bbb C}[\mathfrak{X}(\mathfrak{s}'_w)] \ar[r]^(.45){\beta'} & \ES{Z}(G'(F), \rho'_w)
}\leqno{(2)}
$$
commutative. By construction, the following diagram
$$
\xymatrix{
\ES{Z}(G(F),\rho) \ar[r]^{[w]} \ar[d]_{\zeta_\chi} & \ES{Z}(G(F),{^w\!\rho})\ar[d]^{\zeta_{w,\chi}} \\
\ES{Z}(G'(F),1_{\ES{I}'}) \ar[r]^{\{w\}'} & \ES{Z}(G'(F),\rho'_w)
}\leqno{(3)}
$$
is commutative. As for the morphism $\zeta_{\chi}$ of \ref{Bernstein center}.(3), the morphism $\zeta_{w,\chi}$ above can be characterized by actions on principal series: for 
$f\in \ES{Z}(G(F),\rho)$, $\zeta_{w,\chi}({^wf})$ is the unique element of $\ES{Z}(G'(F),\rho'_w)$ which acts on the left on 
$\iota_{B'}^{G'}(\xi)^{\rho'_w}$ by multipication by the scalar
$$
\lambda'_\xi(\zeta_{w,\chi}({^w\!f}))= \lambda_{\xi\chi}(f)\leqno{(4)}
$$
for all characters $\xi$ of $T'(F)=T(F)$ extending a $W'$--conjugate of $\chi_{w,0}$ (note that since $W'\subset W_{\chi_0}$, 
$\xi\chi$ extends a $W'$--conjugate of ${^w\!(\chi_0)}$, and in particular it extends a $W$--conjugate of $\chi_0$). From the characterization of the morphisms $\{w\}$ and $[w]'$ 
by actions on principal series, we have also: for $f\in \ES{Z}(G(F),\rho)$, $\zeta_{w,\chi}({^w\!f})\in\ES{Z}(G(F),\rho'_w)$ acts on the left on 
$i_{B'}^{G'}(\{w\}'(\xi))^{\rho'_w}$ by multiplication by the scalar
$$
\lambda'_{\{w\}'(\xi)}(\zeta_{w,\chi}({^w\!f}))=  \lambda'_{\{w\}'(\xi)}(\{w\}'(\zeta_\chi(f))= \lambda'_\xi(\zeta_\chi(f))=\lambda_{\xi\chi}(f).\leqno{(5)}
$$
for all unramified characters $\xi$ of $T'(F)$. Note that since $\{w\}'(\xi)\chi= {^w\!(\xi\chi)}$, we have
$$\lambda_{\xi\chi}(f)= 
\lambda_{^w\!(\xi\chi)}(f) = \lambda_{\{w\}'(\xi)\chi}(f).
$$
 
 \begin{rem1}
{\rm 
For $w'\in W'$, since $\{w'\}'$ is the identity of $\ES{Z}(G'(F),1_{\ES{I}'})$, 
we have $\zeta_{w'\!,\chi}\circ [w']= \zeta_\chi$. More generally, we have
$$
\zeta_{w'w,\chi}\circ [w'w]= \{w'w\}'\circ \zeta_{\chi} = [w']'_w \circ \{w\}' \circ \zeta_{\chi}= [w']'_w\circ \zeta_{w,\chi}\circ [w].
$$
On the other hand, for $w_0\in W_{\chi_0}$, we have
$$
\zeta_{ww_0,\chi} \circ [ww_0]= \{ww_0\}'\circ \zeta_\chi = \{w_0\}'_w \circ \{w\}' \circ \zeta_\chi = \{w_0\}'_w\circ \zeta_{w,\chi}\circ [w].
$$ 
}
\end{rem1}

 \begin{rem2}
{\rm 
As in \ref{some results of Roche}, the character ${^w\chi_0}$ of $\ES{T}$ defines a $F$--split connected reductive group 
$G'_w$ with maximal $F$--split torus $T'_w=T$, an Iwahori subgroup $\ES{I}'_w$ of $G'_w(F)$, and a $F$--group $\wt{G}'_w = G'_w \rtimes C_{^w\!\chi_0}$. More precisely, $G'_w$ is given by the root datum 
$(X, w(\Phi'),\check{X}, w(\check{\Phi}'))$, $\ES{I}'_w$ corresponds to the alcove $w(\ES{A}')\subset V$, and $C_{^w\!\chi_0}= wC_{\chi_0}w^{-1}$. In other words, $w$ induces by transport of structure an $F$--isomorphism $\wt{G}' \buildrel\simeq \over{\longrightarrow} \wt{G}'_w$ which allows us to identify $G'_w(F)$ with $G(F)$ and $\ES{I}'_w$ with $\ES{I}'$. With these identifications, we have an injective morphism of ${\Bbb C}$--algebras
$$
\zeta_{^w\chi}^{^w\ES{I}}:\ES{Z}(G(F),{^w\!\rho})\rightarrow \ES{Z}(G'_w(F),1_{\ES{I}'_w})=\ES{Z}(G'(F),1_{\ES{I}'})
$$
which is given by $\zeta_{^w\chi}^{^w\ES{I}}= \zeta_\chi \circ [w]^{-1}$. Hence we have 
$$
\{w\}'\circ \zeta_{^w\chi}^{^w\ES{I}}= \zeta_{w,\chi}.
$$
In particular if $w\in W_{\chi_0}$, we have $\zeta_{w,\chi}=\{w\}'\circ \zeta^{^w{\ES{I}}}_\chi$, and if $w\in W'$, we have $\zeta_{w,\chi}= \zeta^{^w\ES{I}}_\chi$.
}
\end{rem2}

\subsection{Conjugation by $w\in W'$}\label{conjugation by W'}
Let $w$ be an element of $W'$. We choose a representative element $n'_w$ of $w$ in $N'\cap \ES{K}'$, and we put ${^w\ES{I}'}= {^w\!(\ES{I}')}={\rm Int}_{n'_w}(\ES{I}')$. 
For a function $f'\in \ES{Z}(G'(F),1_{\ES{I}'})$, we put $[w]'(f') = {^{w}\!f'}= f'\circ {\rm Int}_{n'^{-1}_w}$. Since ${^w\chi}=\chi$, by replacing $\ES{I}$ by ${^w\ES{I}}$ and $\ES{I}'$ by ${^w\ES{I}'}$ in the definition of $\zeta_\chi$, we obtain an injective morphism of ${\Bbb C}$--algebras
$$
{^w\!\zeta_\chi}: \ES{Z}(G(F), {^w\!\rho})\rightarrow \ES{Z}(G'(F),1_{^w{\ES{I}'}})
$$
which makes the following diagram
$$
\xymatrix{{\Bbb C}[\mathfrak{X}(\mathfrak{s})] \ar[r]^(.45){\beta} \ar[d]_{\eta_{\chi}} & \ES{Z}(G(F),{^w\!}\rho)\ar[d]^{^w\!\zeta_\chi} \\
{\Bbb C}[\mathfrak{X}(\mathfrak{s}')] \ar[r]^(.45){\beta'} & \ES{Z}(G'(F), 1_{^w\ES{I}'})
}\leqno{(1)}
$$
commutative. Here $\beta'$ is the isomorphism of ${\Bbb C}$--algebras given by the natural equivalence between $\mathfrak{R}^{\mathfrak{s}'}(G'(F))$ and the category of left $\ES{H}(G'(F),1_{^w\ES{I}'})$--modules. By construction, the following diagram
$$
\xymatrix{
\ES{Z}(G(F),\rho) \ar[r]^{[w]} \ar[d]_{\zeta_\chi} & \ES{Z}(G(F), {^w\!\rho}) \ar[d]^{^w\!\zeta_{\chi}}\\
\ES{Z}(G'(F),1_{\ES{I}'}) \ar[r]^{[w]'} & \ES{Z}(G'(F), 1_{^w{\ES{I}'}})
}\leqno{(2)}
$$
is commutative. 

\begin{rem}
{\rm 
From the remark 1 of \ref{conjugation by W}, we have $\zeta^{^{w}\ES{I}}_{\chi}= \zeta_{w,\chi}= \zeta_\chi \circ [w]^{-1}$. Hence we have 
${^w\zeta_\chi}= [w]'\circ \zeta_{w,\chi}$.
}
\end{rem}

\subsection{The theorem}\label{the theorem}
The theory of orbital integrals and their $\kappa$--variants has been mainly developed in characteristic zero by Harish--Chandra, Langlands, Kottwitz and Shelstad. More recently, Labesse 
simplified the study of stable conjugation of semisimple elements thanks to a systematic use of non--abelian cohomological constructions. Ultimately, all these constructions are done in the derived 
category of bounded complexes of diagonalizable groups. Most of these constructions --- in particular all those used herein --- can be realized using complexes of tori, making them valid in characteristic $p>0$ (see \cite{LL}).

For a semisimple element $\gamma\in G(F)$, let $G_\gamma= G^{\gamma,\circ}$ be its connected (i.e. stable) centralizer. For a function $f\in C^\infty_{\rm c}(G(F))$, we define the {\it orbital integral}
$$
{\bf O}_\gamma(f)= \int_{G_\gamma(F)\backslash G(F)}f(g^{-1}\gamma g) \textstyle{dg\over dg_\gamma}.
$$
It depends on the choice of Haar measures $dg$ on $G(F)$ and $dg_\gamma$ on $G_\gamma(F)$. Since the orbit $\{g^{-1}\gamma g: g\in G(F)\}\subset G(F)$ is closed for the $\mathfrak{p}$--adic topology, the integral is absolutely convergent. Two semisimple elements $\gamma,\,\gamma'\in G(F^{\rm sep})$ are {\it stably conjugate (in $G$)} if there exists an element $g\in G(F^{\rm sep})$ such that $g^{-1}\gamma g= \gamma'$ and $g\sigma(g)^{-1}\in G_\gamma$ for all $\sigma \in \Gamma_F$. In that case, the connected centralizers $G_\gamma$ and $G_{\gamma'}$ are inner forms of each other and we require the Haar measures $dg_\gamma$ and $dg_{\gamma'}$ on $G_\gamma(F)$ and on $G_{\gamma'}(F)$ to be compatible with each other, the Haar measure $dg$ on $G(F)$ being fixed. 
If moreover $\gamma$ is {\it regular}, i.e. if the connected centralizer $G_\gamma$ is a torus, then we also define the {\it stable orbital integral}
$$
{\bf SO}_\gamma(f)= \sum_{\gamma'} {\bf O}_{\gamma'}(f)
$$
where $\gamma'$ runs over the elements of $G(F)$ which are stably conjugate to $\gamma$ modulo conjugation by $G(F)$.

Two semisimple elements $\delta\in G'(F^{\rm sep})$ and $\gamma\in G(F^{\rm sep})$ {\it correspond} if their conjugacy classes (over $F^{\rm sep})$ correspond via the natural surjective morphism $
T'/W' \rightarrow T/W$. If $(\delta,\gamma)\in G'(F)\times G(F)$ is a pair of corresponding elements, the connected centralizer $G'_\delta$ of $\delta$ in $G'$ is an inner form of $G_\gamma$. 
If moreover $\gamma$ is strongly regular, i.e. if the centralizer $G^\gamma$ is a torus, then $\delta$ is said to be {\it strongly $G$--regular}.

A function $f'\in C^\infty_{\rm c}(G'(F))$ is said to be an {\it endoscopic transfer}, or simply a {\it transfer}, of a function $f\in C^\infty_{\rm c}(G(F))$ if $f$ and $f'$ have matching strongly regular orbital integrals: for all (semisimple) strongly $G$--regular elements $\delta \in G'(F)$, we have
$$
{\bf SO}_\delta(f') = \sum_\gamma \Delta(\delta,\gamma) {\bf O}_\gamma(f)\leqno{(1)}
$$
where $\gamma$ runs over the elements of $G(F)$ which correspond (in $G$) to $\delta$ modulo conjugation by $G(F)$. 
In particular, this means 
that ${\bf SO}_\delta(f')=0$ if $\delta$ does not correspond to any element of $G(F)$. In (1), $G'_\delta$ is an inner form of $G_\gamma$, and we choose compatible Haar measures on $G'_\delta(F)$ and on $G_\gamma(F)$. We use them, and the Haar measures $dg$ on $G(F)$ and $dg'$ on $G'(F)$ normalized by
$$
{\rm vol}(\ES{I},dg)=1={\rm vol}(\ES{I}',dg'),
$$
to define the stable orbital integral ${\bf SO}_\delta$ on $G'(F)$ and the orbital integral ${\bf O}_\gamma$ on $G(F)$. The transfer factor $\Delta(\delta,\gamma)$ is the canonical transfer factor defined by the choice of $\ES{K}$ as in \cite[6.3]{MW}\footnote{In loc.~cit., the orbital integrals on $G(F)$ and the stable orbital integrals on $G'(F)$ are the normalized ones. Hence the transfer factor $\Delta(\delta, \gamma)$ of (1) is the canonical transfer factor of loc.~cit. multiplied by the factor $\Delta_{\rm IV}(\delta,\gamma)=\vert D_G(\gamma)\vert^{1/2}_F \vert D_{G'}(\delta)\vert_F^{-1/2}$. 
Moreover, in loc.~cit., the base field $F$ is a finite extension of ${\Bbb Q}_p$ (without any restrictive hypothesis on the residual characteristic $p$), but they are also valid in characteristic $p>0$, cf. the beginning of \ref{the theorem}.}. 
In other words, $\Delta(\delta,\gamma)$ is the transfer factor which makes ${\rm vol}(\ES{K}',dg')^{-1}1_{\ES{K}'}$ a transfer of ${\rm vol}(\ES{K},dg)^{-1}1_\ES{K}$ 
(fundamental lemma).

\begin{nota}
{\rm 
We denote by $\wt{\bf O}_\delta(f)$ the right hand side of (1).
}
\end{nota}

Recall that for all elements $w\in W$, we have defined an injective morphism of ${\Bbb C}$--algebras $\{w\}':\ES{Z}(G'(F),1_{\ES{I}'})\rightarrow 
\ES{Z}(G'(F),\rho'_w)$ --- cf. \ref{conjugation by W}. We are interested in the composition morphism
$$
\zeta_{w,\chi}\circ [w]= \{w\}'\circ \zeta_\chi : \ES{Z}(G(F),\rho) \rightarrow \ES{Z}(G'(F),\rho'_w).
$$
For $w_0\in W_{\chi_0}$, we have $\rho'_{ww_0}=\rho'_w$, but the composition morphism depends really on $w$, and not only  
on the projection of $w$ on $W/W'$ (remark 1 of \ref{conjugation by W}): we have $\{ww_0\}'\circ \zeta_\chi = \{w_0\}'_w \circ \{w\}' \circ \zeta_\chi$ where $\{w_0\}'_w$ is an automorphism of the ${\Bbb C}$--algebra $\ES{Z}(G'(F),\rho'_w)$ which is not trivial in general, even if $w_0\in W'$.

For $f\in \ES{Z}(G(F),\rho)$, we put
$$
\bs{\zeta}_\chi(f)= \vert W'\vert^{-1} \sum_{w\in W}\zeta_{w,\chi}({^w\!f}).\leqno{(2)}
$$
By definition, the function $\bs{\zeta}_\chi(f)$ belongs to the subspace
$$
\bs{\ES{Z}}(G'(F))_{\chi_0}=\sum_{w\in W/W_{\chi_0}}\ES{Z}(G'(F),\rho'_w)
$$
of $\ES{H}(G'(F),1_{\ES{I}'_+})$. For $w_1,\, w_2\in W$, we have $\mathfrak{s}'_{w_1}= \mathfrak{s}'_{w_2}$ if and only if there exists an element $w'\in W'$ such that
${^{w'}\!(\chi_{w_1,0})}= \chi_{w_2,0}$, i.e. $\chi_{w'w_1,0}= \chi_{w_2,0}$. But the last equality is equivalent to $w_2^{-1}w'w_1\in W_{\chi_0}$. 
Hence we have
$$
\bs{\ES{Z}}(G'(F))_{\chi_0} = \bigoplus_w \ES{Z}(G'(F),\rho'_w)\leqno{(3)}
$$
where $w$ runs over a set of representative elements of $W'\backslash W/ W_{\chi_0}$ in $W$.

\vskip1mm
By definition, the image of the Langlands parameter $\varphi_\chi: W_F \rightarrow \check{T}$ of $\chi$ is contained in the center $Z(\check{G}')$ of 
the dual group $\check{G}'$ (cf. the remark 2 of \ref{some results of Roche}), and from Langlands, it defines a character $\omega'_\chi$ of $G'(F)$. Of course $\omega'_\chi$ extends 
the character $\chi_0$ of the maximal compact subgroup $\ES{T}$ of $T'(F)=T(F)$.

\begin{lem}
For $f\in \ES{Z}(G(F),\rho)$, the function $(\omega'_\chi)^{-1} \bs{\zeta}_\chi(f)$ depends only on the depth zero character $\chi_0$ of $\ES{T}$ (not on the choice of the $W_{\chi_0}$--invariant 
character $\chi$ of $T(F)$ extending $\chi_0$).
\end{lem}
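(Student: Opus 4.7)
The plan is to compare $(\omega'_\chi)^{-1}\bs{\zeta}_\chi(f)$ for two different $W_{\chi_0}$--invariant extensions $\chi$ and $\chi^\natural$ of $\chi_0$ to $T(F)$, and show they coincide. Write $\xi = \chi^\natural\chi^{-1}$; this is a $W_{\chi_0}$--invariant unramified character of $T(F)$. Since $W'\subset W_{\chi_0}$, the character $\xi$ is $W'$--invariant, and because $G'$ is $F$--split with Weyl group $W'$ one checks easily that $W'$--invariance of $\xi$ is equivalent to $\xi(\check{\alpha}(F^\times))=1$ for every $\check{\alpha}\in \check{\Phi}'$. Via the formula in remark 2 of \ref{some results of Roche}, this says exactly that the Langlands parameter $\varphi_\xi:W_F\to \check{T}$ has image in $Z(\check{G}')$, so it defines a character $\omega'_\xi$ of $G'(F)$ whose restriction to $T(F)$ is $\xi$. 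By the multiplicativity of the Langlands correspondence, $\omega'_{\chi^\natural}=\omega'_\chi\cdot \omega'_\xi$.

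The heart of the argument is the term--wise identity
$$
\zeta_{w,\chi^\natural}({^w\!f}) \;=\; \omega'_\xi \cdot \zeta_{w,\chi}({^w\!f}) \qquad (w\in W).
$$
Both sides lie in the same Hecke algebra $\ES{H}(G'(F),\rho'_w)$: since $\xi$ is unramified we have $\chi^\natural_{w,0}=\chi_{w,0}$, hence $\rho'_w$ is unchanged; and since $\omega'_\xi\vert_{\ES{I}'}=1$ (the restriction to $T(F)$ being unramified, and the restriction to the unipotent radicals being trivial), pointwise multiplication by $\omega'_\xi$ preserves $\ES{H}(G'(F),\rho'_w)$ and commutes with convolution, hence stabilizes the center $\ES{Z}(G'(F),\rho'_w)$. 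To identify the two sides I would invoke the characterization by actions on principal series \ref{conjugation by W}.(4): for every unramified character $\xi'$ of $T(F)$, $\zeta_{w,\chi^\natural}({^w\!f})$ acts on $i_{B'}^{G'}(\xi')^{\rho'_w}$ (where $\xi'$ extends a $W'$--conjugate of $\chi_{w,0}$) by the scalar $\lambda_{\xi'\chi^\natural}(f)=\lambda_{\xi'\xi\chi}(f)$, while $\omega'_\xi\cdot \zeta_{w,\chi}({^w\!f})$ acts through the twist isomorphism $i_{B'}^{G'}(\psi)\otimes \omega'_\xi \simeq i_{B'}^{G'}(\psi\cdot \omega'_\xi\vert_{T(F)}) = i_{B'}^{G'}(\psi\xi)$, yielding the scalar $\lambda_{\xi'\xi\chi}(f)$ as well. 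Since elements of $\ES{Z}(G'(F),\rho'_w)\simeq {\Bbb C}[\mathfrak{X}(\mathfrak{s}'_w)]$ are determined by these scalar actions, the term--wise identity follows.

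Summing over $w\in W$ and dividing by $|W'|$ gives $\bs{\zeta}_{\chi^\natural}(f)=\omega'_\xi\cdot \bs{\zeta}_\chi(f)$, and combined with $\omega'_{\chi^\natural}=\omega'_\chi\cdot\omega'_\xi$ this yields $(\omega'_{\chi^\natural})^{-1}\bs{\zeta}_{\chi^\natural}(f) = (\omega'_\chi)^{-1}\bs{\zeta}_\chi(f)$, proving the lemma. The main technical point is the identification of multiplication by $\omega'_\xi$ with the twist--of--representation functor: concretely the compatibility $i_{B'}^{G'}(\psi)\otimes \omega'_\xi \simeq i_{B'}^{G'}(\psi\xi)$, which in turn rests on the restriction formula $\omega'_\xi\vert_{T(F)}=\xi$ supplied by functoriality of the Langlands parametrization for the torus $T'=T$.
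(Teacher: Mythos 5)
Your proof is correct and takes essentially the same route as the paper's: set $\theta=\chi^\natural\chi^{-1}$, observe it is unramified and equal to $\omega'_{\chi^\natural}(\omega'_\chi)^{-1}\vert_{T(F)}$, and then use the characterization by actions on principal series \ref{conjugation by W}.(4) to obtain the term--wise identity $\zeta_{w,\chi^\natural}({^w\!f})=\omega'_{\chi^\natural}(\omega'_\chi)^{-1}\,\zeta_{w,\chi}({^w\!f})$, from which the lemma follows. The only difference is one of presentation: you introduce the auxiliary character $\omega'_\xi$ with $\omega'_\xi\vert_{T(F)}=\xi$ via the Langlands parameter of $\xi$ and spell out the twist isomorphism $i_{B'}^{G'}(\psi)\otimes\omega'_\xi\simeq i_{B'}^{G'}(\psi\xi)$, while the paper simply asserts the identity $\theta=\omega'_{\chi^\natural}(\omega'_\chi)^{-1}\vert_{T(F)}$ (which is immediate from $\omega'_\chi\vert_{T(F)}=\chi$ and $\omega'_{\chi^\natural}\vert_{T(F)}=\chi^\natural$) and reads the scalar identity $\lambda'_\xi(\zeta_{w,\chi^\natural}({^w\!f}))=\lambda'_{\xi\theta}(\zeta_{w,\chi}({^w\!f}))$ directly as the term--wise equality.
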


\begin{proof}
Let $\chi^\natural$ be another $W_{\chi_0}$--invariant character of $T(F)$ extending $\chi_0$. Put $\theta =(\chi^\natural)\chi^{-1}$. 
It is an unramified character of $T(F)$ which coincides with the restriction of the character $w'_{\chi^\natural}(\omega'_\chi)^{-1}$ of $G'(F)$ to $T(F)$. 
For $f\in \ES{Z}(G(F),\rho)$ and $w\in W$, from \ref{conjugation by W}.(4), 
$\zeta_{w,\chi^\natural}({^w\!f})$ is the unique element of $\ES{Z}(G'(F),\rho'_w)$ which acts on the left on 
$\iota_{B'}^{G'}(\xi)^{\rho'_w}$ by multipication by the scalar
$$
\lambda'_\xi(\zeta_{w,\chi^\natural}({^w\!f}))= \lambda_{\xi\chi^\natural}(f)= \lambda_{(\xi \theta)\chi}(f)= \lambda'_{\xi \theta}(\zeta_{w,\chi}({^w\!f})),
$$
for all characters $\xi$ of $T'(F)=T(F)$ extending a $W'$--conjugate of $\chi_{w,0}$. Since $\theta = \omega'_{\chi^\natural}(\omega'_\chi)^{-1}\vert_{T(F)}$, we obtain
$$
\zeta_{w,\chi^\natural}({^w\!f}) = \omega'_{\chi^\natural}(\omega'_{\chi})^{-1}\zeta_{w,\chi}({^w\!f}).
$$
Thus we have
$$
(\omega'_{\chi^\natural})^{-1}\bs{\zeta}_{\chi^\natural}(f)= (\omega'_\chi)^{-1}\bs{\zeta}_\chi(f).
$$
This proves the lemma.
\end{proof}

For $f\in \ES{Z}(G(F),\rho)$, we put
$$
\bs{\xi}_{\chi_0}(f) = (\omega'_\chi)^{-1}\bs{\zeta}_\chi(f).
$$
The function $\bs{\xi}_{\chi_0}(f)$ belongs to the subspace $\bigoplus_{w\in W'\backslash W/W_{\chi_0}} \ES{Z}(G'(F), \rho'_{^w\!(\chi_0)})$ of $\ES{H}(G'(F),1_{\ES{I}'_+})$.

\begin{thm}Let $f\in \ES{Z}(G(F),\rho)$. The function $\bs{\xi}_{\chi_0}(f)$ is a transfer of $f$.
\end{thm}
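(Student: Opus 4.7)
The plan is to follow the roadmap outlined in \S0.3--0.4: reduce to an elliptic two--character version of the statement, then attack that version via a local--global argument combined with explicitly computable Labesse--type elementary functions.

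\textbf{Step 1: Reductions.} I would first reduce to the case where $G'$ is an elliptic endoscopic group of $G$. If $G'$ is not elliptic, it is the endoscopic group of a proper Levi $M \subsetneq G$; using constant--term / parabolic descent for orbital integrals and the compatibility of the Bernstein centers $\ES{Z}(G(F),\rho)$ and $\ES{Z}(M(F),\rho_M)$ via the Roche/Bushnell--Kutzko formalism, I would deduce the statement for $(G,G')$ from the elliptic case for $(M,G')$. Parallel to this, the descent produces characters of the maximal compact torus $\ES{T}$ twisted by an auxiliary depth--zero character, which is precisely why the ``two--character'' variant (theorem**) of the statement is needed rather than the plain elliptic one. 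I would also reduce structurally (via $z$--extensions and inner twisting by $Z(G)$) to the case $\mathfrak{Z}_G$ a torus with $G_{\rm der}$ simply connected, or alternatively to $G = G_{\rm AD}$, which simplifies the Iwahori--Weyl and transfer--factor book--keeping. The remaining task is thus theorem** under these structural hypotheses.

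\textbf{Step 2: Elementary functions and their orbital/character data.} For each $B$--dominant regular cocharacter $\nu\in \check{X}$, build a Labesse--type element $\mathfrak{f}_\nu\in \ES{H}(G(F),\rho_{\psi_0\chi_0})$ by transporting the translation element $T_{t_\nu}$ of the extended affine Hecke algebra $\ES{H}(\ES{W}_{\chi_0},\ES{S}')$ via $\vartheta_{\tilde{\chi}}$ of proposition~1, and similarly construct $\bs{\mathfrak{f}}'_\nu\in\sum_{w}\ES{H}(G'(F),\rho'_{\xi_{w,0}})$ by applying the analogous translation on the $G'$--side and then twisting/averaging over $W$. The point of these functions is that (a) their stable orbital integrals on strongly regular elements are explicitly computable, (b) a direct calculation in the Hecke algebra shows that $(\omega'_\chi)^{-1}\bs{\mathfrak{f}}'_\nu$ is already a transfer of $\mathfrak{f}_\nu$, and (c) their traces on principal series $i_B^G(\xi\chi)^\rho$ are monomials of the shape $\sum_w \xi(w(\nu))$, so they encode $\nu$ in a linearly independent way.

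\textbf{Step 3: Local data and linear independence.} The crux is the existence of \emph{local data} in the sense of Hales: finitely many pairs $(\gamma_i,\delta_i)$ of strongly regular elements such that the orbital integrals $f\mapsto {\bf O}_{\gamma_i}(f)$ (resp. stable orbital integrals $f'\mapsto {\bf SO}_{\delta_i}(f')$) separate the Hecke modules involved well enough that the desired transfer identity for a given function is equivalent to finitely many numerical equalities. Granting this, test the (conjectural) transfer identity ${\bf SO}_\delta(\bs{\xi}_{\chi_0}(\mathfrak{f}_\nu)) = \wt{\bf O}_\delta(\mathfrak{f}_\nu)$ against the elements of the local data; using step 2(c), both sides become Laurent polynomials in $\xi(\nu)$. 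Running $\nu$ over the $B$--dominant regular cocharacters and invoking linear independence of characters of $T$, the identity for all $\nu$ forces the identity for $\nu=0$: namely, $(\omega'_\chi)^{-1}|W'|^{-1}\sum_w e_{\rho'_{\xi_{w,0}}}$ is a transfer of the unit $e_{\rho_{\psi_0\chi_0}}$ of $\ES{H}(G(F),\rho_{\psi_0\chi_0})$.

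\textbf{Step 4: Propagation to arbitrary $f$.} For general $f\in \ES{Z}(G(F),\rho)$, use the characterization of $\zeta_{w,\chi}$ by its actions on principal series (formulas (4)--(5) of \ref{conjugation by W}): $\zeta_{w,\chi}({}^w\!f)$ acts on $i_{B'}^{G'}(\xi)^{\rho'_w}$ by the scalar $\lambda_{\xi\chi}(f)$, the same scalar by which $f$ acts on $i_B^G(\xi\chi)^\rho$. Combined with the unit--case transfer of step 3, and the fact that central elements commute with the constructions in step 2, the scalar--action matching lifts the transfer identity from the unit to every $f$ in the center. This will finish theorem**, and then step 1 gives the main theorem. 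Finally, undo the reduction to theorem** to conclude the original theorem for $f\in\ES{Z}(G(F),\rho)$.

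The main obstacle is unmistakably step 3. In characteristic zero one can cite Waldspurger's observation that Arthur's spectral endoscopic transfer produces the required local data by a purely local argument, but in characteristic $p>0$ one must redo Hales's comparison of two simple (elliptic) trace formulas after globalizing $(G_0,G'_0)$ to a pair $(G,G')$ with $G'$ globally elliptic. That globalization relies on the (unpublished) result of Waldspurger that two endoscopic data equivalent almost everywhere are globally equivalent, together with the fundamental lemma for the unit and the existence of transferable linear combinations of supercuspidal matrix coefficients at auxiliary places --- all of which impose the restriction that $p$ be large enough.
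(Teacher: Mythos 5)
Your architecture matches the paper's roadmap: reduce to the elliptic two-character variant with $G_{\rm der}$ simply connected and $\mathfrak{Z}_G$ a torus (Step 1); build Labesse-type elementary functions $\mathfrak{f}_\nu$ and $\bs{\mathfrak{f}}'_\nu$, verify directly that $(\omega'_\chi)^{-1}\bs{\mathfrak{f}}'_\nu$ is a transfer of $\mathfrak{f}_\nu$, and compute their traces (Step 2); use local data plus linear independence of characters to descend to the transfer of the unit (Step 3); propagate to general central $f$ via Bernstein-center scalar actions on principal series (Step 4); and establish the local data via Waldspurger's spectral-transfer argument in characteristic zero or Hales's globalization and comparison of simple trace formulas in positive characteristic, with the restrictions on $p$ you mention. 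Steps 1, 2 and 4 are essentially the paper's (with minor inaccuracies: $\mathfrak{f}_\nu$ is defined directly by support $\ES{I}t_\nu\ES{I}$ and explicit values in the Hecke algebra $\ES{H}(G(F),\rho_{\psi_0\chi_0})$ rather than by transporting $T_{t_\nu}$ through $\vartheta_{\tilde\chi}$, which targets $\ES{H}(G(F),\rho)$; and non-elliptic $G'$ is an \emph{elliptic} endoscopic group of $M=Z_G(A_{G'})$, not merely an endoscopic group of a proper Levi).

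The one genuine flaw is the description of local data in Step 3. You describe them as ``finitely many pairs $(\gamma_i,\delta_i)$ of strongly regular elements'' whose orbital integrals ``separate the Hecke modules.'' That is not Hales's notion, and as stated it would not carry the argument. The local data the paper uses are \emph{spectral}: a triple $(I,\{\alpha_i(\pi)\},\{\beta_i(\pi')\})$ with $I$ a possibly infinite index set and, for each $i$, finitely supported families of complex coefficients indexed by irreducible representations of $G(F)$ (in the Bernstein component $\mathfrak{s}_0$) and of $G'(F)$ (in $\mathfrak{S}'_0$), such that for $(f,f')$ in the relevant Hecke algebras the geometric transfer identity ${\bf SO}_\delta(f')=\wt{\bf O}_\delta(f)$ for all strongly $G$-regular $\delta$ is \emph{equivalent} to the family of character identities $\sum_\pi\alpha_i(\pi)\,{\rm trace}(\pi(f))=\sum_{\pi'}\beta_i(\pi')\,{\rm trace}(\pi'(f'))$, $i\in I$. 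This spectral reformulation is exactly what your Step 2(c) plugs into: the traces of $\mathfrak{f}_\nu$ and $\bs{\mathfrak{h}}'_\nu$ are linear combinations of the characters $\nu\mapsto\mu(t_\nu)$, and the linear-independence argument runs over those characters, not over orbital integrals at a finite set of points. A finite geometric test set could neither certify condition (B) over the continuum of regular classes nor interface with the trace formulae, and the local data must in fact be used in both directions --- geometric-to-spectral on the elementary functions to feed linear independence, and spectral-to-geometric after multiplying by the Bernstein scalars $\lambda_\theta(f)$ to conclude that $\bs{\xi}_{\chi_0}(f)$ is a transfer of $f$. Once local data are understood as spectral objects in this sense, your Steps 3 and 4 carry through as in the paper.
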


\begin{rem1}
{\rm 
The character $\omega'_\chi$ of $G'(F)$ is constant on each stable conjugacy class of $G'(F)$ \cite[lemma 3.1]{Hal}. Hence the theorem says we have 
$$
{\bf SO}_\delta(\bs{\zeta}_\chi(f)) = \omega'_\chi(\delta)\wt{\bf O}_\delta(f)
$$
for all strongly $G$--regular elements $\delta\in G'(F)$.
}
\end{rem1}

\begin{rem2}
{\rm 
Suppose $G=T$. In that case the theorem is true. Indeed, for $f\in \ES{H}(T(F),\chi_0)=\ES{Z}(T(F),\chi_0)$, the compactly supported function $\zeta_\chi(f)$ on 
$T(F)/\ES{T}$ is given by $\zeta_\chi(f)= \chi f$, and the character $\omega'_\chi$ of $T(F)$ is given by $\omega'_\chi =\chi$. On the other hand the transfer factor $\Delta$ on $T(F)\times T(F)$ is equal to $1$. Thus the equality (1) is true: $\omega'_\chi(t)^{-1}\zeta_\chi(f)(t)= \Delta(t,t) f(t)$ for all $t\in T(F)$.
}
\end{rem2} 

The proof is modeled on the method of Haines \cite{Hai2}. The final part of the proof uses local data, which are obtained via a local--global argument based on a simple trace formula, whose geometric side is the elliptic (strongly regular) part of the trace formula. Beforehand it is necessary to reduce the proof of the theorem to the proof of the following ``elliptic version'' of it\footnote{Recall that a semisimple element $\gamma\in G(F)$ is said to be {\it elliptic} if the maximal central $F$--split torus $A_\gamma$ in 
the connected centralizer $G_\gamma$ coin cide with the maximal central $F$--split torus $A_G=Z(G)^\circ$ in $G$. If $G'$ is elliptic and 
$(\delta,\gamma)\in G'(F)\times G(F)$ is a pair of corresponding semisimple elements, then $\delta$ is elliptic in $G'(F)$ if and only if $\gamma$ is elliptic in $G(F)$. 
}. 

\begin{thm*}Assume that the endoscopic group $G'$ of $G$ is elliptic, and let $f$ be a function in $\ES{Z}(G(F),\rho)$. The functions $f$ and $\bs{\xi}_{\chi_0}(f)$ 
have matching elliptic strongly $G$--regular orbital integrals.
\end{thm*}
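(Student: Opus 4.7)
The plan is to follow the Haines-style strategy described in Section 0.4 of the introduction. First I would reduce Theorem* to its two-character variant (Theorem**), since the extra flexibility of $\psi_0$ is what makes the inductive bookkeeping close up cleanly, and I would simultaneously perform the standard reductions on $G$ (to either $G_{\rm der}$ simply connected with $\mathfrak{Z}_G$ a torus, or equivalently to $G=G_{\rm AD}$). These reductions force $C_{\chi_0}=\{1\}$ and remove the twisted tensor product in Roche's isomorphism, so that $\wt{G}'=G'$ and one can work directly with the morphisms $\zeta_{w,\chi}$ without any additional equivariance bookkeeping.

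Second, I would introduce Labesse-type elementary functions on both sides. For each $B$-dominant regular cocharacter $\nu \in \ck{X}$ of $T$ I construct a function $\mathfrak{f}_\nu \in \ES{H}(G(F),\rho_{\psi_0\chi_0})$ and, on the endoscopic side, a family whose sum $\bs{\mathfrak{f}}'_\nu \in \sum_{w\in W}\ES{H}(G'(F),\rho'_{\xi_{w,0}})$ is manufactured so that its strongly regular orbital integrals and its traces against principal series admit transparent closed-form expressions (supported on translates of $\nu(\varpi)$ modulo Iwahori-like pieces). Using the characterization of $\zeta_{w,\chi}$ by actions on principal series (\ref{conjugation by W}.(4)--(5)) together with the explicit description of $\omega'_\chi$ via the Langlands parameter of $\chi$, a direct computation should verify that $(\omega'_\chi)^{-1}\bs{\mathfrak{f}}'_\nu$ is an elliptic transfer of $\mathfrak{f}_\nu$ for every regular dominant $\nu$.

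Third, and this is the heart of the matter, I would invoke the existence of Hales local data: a finite family of matched pairs $(f_i,f'_i)$ on elliptic strongly regular orbits such that the map $f \mapsto \sum_i {\rm tr}\,\pi(f)\,f'_i$ realizes transfer on the Bernstein block cut out by $\rho_{\psi_0\chi_0}$, for all supercuspidal $\pi$ appearing. In characteristic zero these come either from Waldspurger's short local argument via Arthur's spectral transfer, or from the comparison of simple elliptic trace formulas on $G$ and $G'$ together with the isolation of the global elliptic endoscopic datum. I expect this to be the main obstacle: the globalization must preserve ellipticity and unramifiedness at almost all places, and in positive characteristic one further needs the fundamental lemma and sufficient matrix-coefficient transfers at auxiliary places — precisely what imposes the restriction on $p$.

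Fourth, I would combine steps two and three. Feeding the elementary functions into the local data translates the geometric elliptic transfer identity into a character identity, parametrized by the infinite family of $B$-dominant regular cocharacters $\nu$. By linear independence of characters on this set, the identity descends to $\nu=0$, which is exactly the statement that $(\omega'_\chi)^{-1}|W'|^{-1}\sum_w e_{\rho'_{\xi_{w,0}}}$ is an elliptic transfer of the unit $e_{\rho_{\psi_0\chi_0}}$. For an arbitrary $f$ in the center, the characterization of each $\zeta_{w,\chi}({^w\!f})$ by its action on principal series, combined with the fact (again via local data) that convolution by a central element corresponds under transfer to convolution by its spectral image, should then propagate the unit identity to arbitrary central $f$, yielding Theorem** and hence Theorem*. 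Beyond constructing the local data, I expect the most delicate routine work to be the bookkeeping of $W' \backslash W / W_{\chi_0}$ cosets and verifying that the twist by $\omega'_\chi$ correctly absorbs the characters $\chi_{w,0}$ across all double cosets simultaneously.
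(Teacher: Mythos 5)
Your proposal matches the paper's strategy step for step: reduce to the two-character variant (Theorem**) with $G_{\rm der}$ simply connected and $\mathfrak{Z}_G$ a torus (or $G=G_{\rm AD}$, which also kills $C_{\chi_0}$ and the twist), match Labesse elementary functions, invoke local data (via Waldspurger/Arthur in characteristic zero, or comparison of simple elliptic trace formulas plus isolation of the elliptic endoscopic datum), apply linear independence of exponents to descend to the unit, and propagate to general central $f$ via the Bernstein eigenvalue characterization. One conceptual caveat in your step two: the matching $\mathfrak{f}_\nu \leftrightarrow (\omega'_\chi)^{-1}\bs{\mathfrak{f}}'_\nu$ cannot be deduced from the characterization of $\zeta_{w,\chi}$ by actions on principal series, because $\mathfrak{f}_\nu$ is not a central element of $\ES{H}(G(F),\rho_{\psi_0\chi_0})$ and $\zeta_{w,\chi}$ is not even defined on it; the paper proves the matching by a direct orbital-integral and transfer-factor computation on the explicit supports $\ES{I}t_\nu\ES{I}$ and $\ES{I}'t_{\tilde w(\nu)}\ES{I}'$. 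Also note that the elementary functions are supported on elements conjugate into $T(F)$, hence on split (not elliptic) regular elements, so what must be verified and fed into condition (B) of the local data is the full strongly $G$-regular matching of $\mathfrak{f}_\nu$ and $\bs{\mathfrak{h}}'_\nu$, not merely the elliptic one; the final output of the local-data argument is then a full transfer for central $f$, of which the elliptic statement (Theorem*) is a corollary.
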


For technical reasons, other reductions are needed. In fact, we will prove that we can suppose (in the theorem and also in the theorem*)
that the derived group $G_{\rm der}$ of $G$ is simply connected, and the scheme--theoretic center of $G$ is a (split) torus.

\subsection{Variant with two depth zero characters of $\ES{T}$}\label{variant with two characters}
As we will see in \ref{a result of descent}, the reduction of the theorem to the theorem* of \ref{the theorem} is based on a descent formula (proposition of 
\ref{a result of descent}) which leads us to consider a generalization of the theorem* of \ref{the theorem}. 

So let us fix another depth zero character $\psi_0$ of $\ES{T}$. Let $\rho'_{\psi_0}$ be the inflation of $\psi_0$ to the Iwahori subgroup $\ES{I}'$ of $G'$. Here, $G'$ is always the endoscopic group of $G$ defined by $\chi_0$ (cf. \ref{some results of Roche}). We define as follows an injective morphism of ${\Bbb C}$--algebras (again denoted by $\zeta_\chi$)
$$
\zeta_{\chi}= \zeta_{\chi}^{\ES{I}}: \ES{Z}(G(F),\rho_{\psi_0\chi_0})\rightarrow \ES{Z}(G'(F),\rho'_{\psi_0}).
$$
Let us choose a character $\psi$ of $T(F)$ extending $\psi_0$ (it is not necessary to take it $W_{\psi_0}$--invariant). 
Let $\mathfrak{s}_{\psi_0\chi_0}=[T,\psi\chi]_G$ be the $G(F)$--inertial class of the cuspidal pair $(T,\psi\chi)$ of $G(F)$, and $\mathfrak{s}'_{\psi_0}=[T,\psi]_{G'}$ 
the $G'(F)$--inertial class of the cuspidal pair $(T,\psi)$ of $G'(F)$. Since
$$
W'_{\psi_0} \subset W_{\psi_0}\cap W_{\chi_0} \subset W_{\psi_0\chi_0},
$$
the surjective morphism of algebraic varieties
$$
\mathfrak{X}(\mathfrak{s}'_{\psi_0})\rightarrow \mathfrak{X}(\mathfrak{s}_{\psi_0\chi_0}), (T,\xi)\mapsto (T,\xi\chi),
$$
defines by duality an injective morphism of ${\Bbb C}$--algebras (again denoted by $\eta_\chi$)
$$
\eta_\chi : {\Bbb C}[\mathfrak{X}(\mathfrak{s}_{\psi_0\chi_0})] \rightarrow {\Bbb C}[\mathfrak{X}(\mathfrak{s}'_{\psi_0})],
$$
and $\zeta_\chi$ is the morphism of ${\Bbb C}$--algebras which makes the following diagram
$$
\xymatrix{{\Bbb C}[\mathfrak{X}(\mathfrak{s}_{\psi_0\chi_0})] \ar[d]_{\eta_\chi} \ar[r]^(.45){\beta} & \ES{Z}(G(F), \rho_{\psi_0\chi_0}) \ar[d]^{\zeta_\chi}\\
{\Bbb C}[\mathfrak{X}(\mathfrak{s}'_{\psi_0})] \ar[r]^(.45){\beta'} & \ES{Z}(G'(F),\rho'_{\psi_0})
}\leqno{(1)}
$$
commutative. As for the morphism $\zeta_\chi$ of \ref{Bernstein center}.(3), the morphism $\zeta_\chi$ above can be characterized 
by actions on principal series: for $f\in \ES{Z}(G(F), \rho_{\psi_0\chi_0})$, $\zeta_\chi(f)$ is the unique element of $\ES{Z}(G'(F),\rho'_{\psi_0})$ which acts on the space 
$\iota_{B'}^{G'}(\xi)^{\rho'_{\psi_0}}$ by multiplication by the scalar
$$
\lambda'_\xi(\zeta_\chi(f))= \lambda_{\xi\chi}(f)\leqno{(2)}
$$
for all characters $\xi $ of $T'(F)=T(F)$ extending a $W'$--conjugate of $\psi_0$ (note that since $W'\subset W_{\chi_0}$, $\xi\chi$ extends a $W'$--conjugate 
of $\psi_0\chi_0$).

\begin{rem1}
{\rm If $\psi_0=1$, then $\rho'_{\psi_0}= 1_{\ES{I}'}$ and $\zeta_{\chi}$ coincides with the injective morphism of ${\Bbb C}$--algebras 
$\ES{Z}(G(F),\rho_{\chi_0})\rightarrow \ES{Z}(G'(F),1_{\ES{I}'})$ defined in \ref{Bernstein center}. On the other hand, if $\chi_0=1$, then $G'=G$ and 
$\zeta_\chi$ is the identity of $\ES{Z}(G(F),\rho_{\psi_0})$.
}
\end{rem1}

Let $w$ be an element of $W$. As in \ref{conjugation by W}, we choose a lift $n$ of $w$ in $N\cap \ES{K}$ --- recall that $\ES{K}$ is a hyperspecial maximal compact 
subgroup of $G(F)$ containing $\ES{I}$. We consider the character ${^w\!(\rho_{\psi_0\chi_0})}= \rho_{\psi_0\chi_0}\circ {\rm Int}_{n^{-1}}$ of ${^w\ES{I}}$, and the character 
$\rho'_{^w\!\psi_0}\rho'_{\chi_{w,0}}= \rho'_{({^w\!\psi_0})\chi_{w,0}}$ of $\ES{I}'$. Recall that (by definition) $\chi_{w,0}= ({^w\!\chi_0})\chi_0^{-1}$, thus $\rho'_{({^w\!\psi_0})\chi_{w,0}}$ is the inflation of the character 
$({^w\!\psi_0}) \chi_{w,0}= {^w\!(\psi_0\chi_0)}\chi_0^{-1}$ of $\ES{T}$ to $\ES{I}'$. 
We define as follows an injective morphism of ${\Bbb C}$--algebras
$$
\zeta_{w,\chi}: \ES{Z}(G(F),{^w\!(\rho_{\psi_0\chi_0})}) \rightarrow \ES{Z}(G'(F), \rho'_{({^w\!\psi_0})\chi_{w,0}}).
$$
Let $\mathfrak{s}'_{({^w\!\psi_0})\chi_{w,0}}= [T, ({^w\!\psi})\chi_w]_{G'}$ be the $G'(F)$--inertial class of the cuspidal pair $(T, ({^w\!\psi})\chi_w)$ of $G'(F)$. Since
$$
W'_{({^w\!\psi_0})\chi_{w,0}} = W'_{^w\!(\psi_0\chi_0)}\subset W_{\chi_0},
$$
for a character $\xi$ of $T(F)$ extending $\psi_0$ and for $\eta\in W'_{({^w\!\psi_0})\chi_{w,0}}$, we have
$$
{^\eta(({^w\!\xi})\chi_{w})} = {^{\eta w}(\xi\chi)}\chi^{-1}= {^w({^{w^{-1}\eta w}(\xi \chi)})}\chi^{-1}= 
{^w({^{w^{-1}\eta w}(\xi \chi)}\chi^{-1})}\chi_w
$$
(in this calculation we used ${^\eta\chi}=\chi$). 
Hence the surjective morphism of algebraic varieties
$$
\mathfrak{X}(\mathfrak{s}'_{({^w\!\psi_0}) \chi_{w,0}})\rightarrow \mathfrak{X}(\mathfrak{s}_{\psi_0\chi_0}), (T,({^w\xi}) \chi_w)_{G'}\mapsto (T,\xi\chi)_G
$$
is well defined. It
defines by duality an injective morphism of ${\Bbb C}$--algebras
$$
\eta_{w,\chi}: {\Bbb C}[\mathfrak{X}(\mathfrak{s}_{\psi_0\chi_0})] \rightarrow {\Bbb C}[\mathfrak{X}(\mathfrak{s}'_{({^w\!\psi_0}) \chi_{w,0}})],
$$
and $\zeta_{w,\chi}$ is the morphism of ${\Bbb C}$--algebras which makes the following diagram
$$
\xymatrix{
{\Bbb C}[\mathfrak{X}(\mathfrak{s}_{\psi_0\chi_0})] \ar[d]_{\eta_{w,\chi}} \ar[r]^(.45){\beta} & \ES{Z}(G(F), {^w\!(\rho_{\psi_0\chi_0})}) \ar[d]^{\zeta_{w,\chi}}\\
{\Bbb C}[\mathfrak{X}(\mathfrak{s}'_{({^w\!\psi_0})\chi_{w,0}})] \ar[r]^(.45){\beta'} & \ES{Z}(G'(F),\rho'_{({^w\psi_0})\chi_{w,0}})
}\leqno{(3)}
$$
commutative. As for the morphism $\zeta_{w,\chi}$ of \ref{conjugation by W}.(3), the morphism $\zeta_{w,\chi}$ above can be characterized by actions on principal series: for 
$f\in \ES{Z}(G(F),\rho_{\psi_0\chi_0})$, $\zeta_{w,\chi}(f)$ is the unique element of $\ES{Z}(G'(F),\rho'_{({^w\psi_0})\chi_{w,0}})$ which acts on the left on 
$\iota_{B'}^{G'}(\xi)^{\rho'_{({^w\psi_0})\chi_{w,0}}}$ by multipication by the scalar
$$
\lambda'_\xi(\zeta_{w,\chi}({^w\!f}))= \lambda_{\xi\chi}(f)\leqno{(4)}
$$
for all characters $\xi$ of $T'(F)=T(F)$ extending a $W'$--conjugate of $({^w\psi_0})\chi_{w,0}$ (note that since $W'\subset W_{\chi_0}$, 
$\xi\chi$ extends a $W'$--conjugate of ${^w\!(\psi_0\chi_0)}$, and in particular it extends a $W$--conjugate of $\psi_0\chi_0$).

\begin{rem2}
{\rm 
If $\psi_0=1$, then --- with the notation of \ref{some results of Roche} and \ref{conjugation by W} --- 
$\mathfrak{s}_{\psi_0\chi_0}= \mathfrak{s}$ and $\mathfrak{s}'_{({^w\!\psi_0})\chi_{w,0}}= \mathfrak{s}'_w$, and $\zeta_{w,\chi} $ coincides with 
the injective morphism of ${\Bbb C}$--algebras $ \zeta_{w,\chi}: \ES{Z}(G(F),{^w\!\rho})\rightarrow \ES{Z}(G'(F), \rho'_w)$ defined in \ref{conjugation by W}.(3). On the other 
hand, If $\psi_0\neq 1$, in general the composition morphism
$$
\zeta_{w,\chi}\circ [w]: \ES{Z}(G(F),\rho_{\psi_0\chi_0})\rightarrow \ES{Z}(G'(F),\rho'_{({^w\psi_0})\chi_{w,0}})
$$
does not factorize through the morphism $\zeta_\chi: \ES{Z}(G(F),\rho_{\psi_0\chi_0}) \rightarrow \ES{Z}(G'(F),\rho'_{\psi_0})$ defined above; it does if and only if the inclusion
$$
W'_{({^w\!\psi_0})\chi_{w,0}}= W'_{^w\!(\psi_0\chi_0)}\subset W'_{\psi_0}
$$
is verified. Nevertheless, the equality (4) is true even if $\psi_0\neq 1$. 
}
\end{rem2}

For $f\in \ES{Z}(G(F), \rho_{\psi_0\chi_0})$, we put
$$
\bs{\zeta}_\chi(f) = \vert W'\vert^{-1} \sum_{w\in W} \zeta_{w,\chi}({^w\!f}).\leqno{(5)}
$$
The notations are coherent: if $\psi_0=1$, from the remark 2, the function $\bs{\zeta}_\chi(f)$ defined by (5) coincides with the one defined by \ref{conjugation by W}.(2).

\begin{lem}
For $f\in \ES{Z}(G(F),\rho_{\psi_0\chi_0})$, the function $(\omega'_\chi)^{-1}\bs{\zeta}_\chi(f)$ depends only on the depth zero character $\chi_0$ of $\ES{T}$. 
\end{lem}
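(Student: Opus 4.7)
The proof mirrors the lemma already established in \ref{the theorem}, the only novelty being that the intertwining character $\psi_0$ is now present. The plan is to reduce to the characterization of $\zeta_{w,\chi}$ by its action on principal series (equation (4) of \ref{variant with two characters}) and observe that the passage from $\chi$ to another $W_{\chi_0}$--invariant extension $\chi^\natural$ of $\chi_0$ amounts to multiplication by an unramified character of $T(F)$, which in turn comes from a character of $G'(F)$.

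First, I would fix another $W_{\chi_0}$--invariant character $\chi^\natural$ of $T(F)$ extending $\chi_0$ and set $\theta = \chi^\natural \chi^{-1}$. Then $\theta$ is an unramified character of $T(F)$, and because both $\chi$ and $\chi^\natural$ have the same Langlands parameter when restricted to $I_F$, the quotient $\omega'_{\chi^\natural}(\omega'_\chi)^{-1}$ is an unramified character of $G'(F)$ whose restriction to $T(F)$ equals $\theta$. In particular, this character is $\ES{I}'$--biinvariant, so pointwise multiplication by it preserves each Hecke algebra $\ES{H}(G'(F),\rho'_{({^w\!\psi_0})\chi_{w,0}})$ and, being a character of $G'(F)$, induces a ${\Bbb C}$--algebra automorphism of it that sends center to center.

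Next, for each $w\in W$ and each character $\xi$ of $T'(F)$ extending a $W'$--conjugate of $({^w\!\psi_0})\chi_{w,0}$, I would apply the characterization \ref{variant with two characters}.(4) twice, once with $\chi$ and once with $\chi^\natural$, to obtain
$$
\lambda'_\xi(\zeta_{w,\chi^\natural}({^w\!f})) = \lambda_{\xi\chi^\natural}(f) = \lambda_{(\xi\theta)\chi}(f) = \lambda'_{\xi\theta}(\zeta_{w,\chi}({^w\!f})).
$$
Since $\theta$ is the restriction of the $G'(F)$--character $\omega'_{\chi^\natural}(\omega'_\chi)^{-1}$ to $T(F)$, we have an isomorphism $i_{B'}^{G'}(\xi\theta) \simeq \omega'_{\chi^\natural}(\omega'_\chi)^{-1}\otimes i_{B'}^{G'}(\xi)$, from which it follows that
$$
\lambda'_{\xi\theta}(\zeta_{w,\chi}({^w\!f})) = \lambda'_\xi\bigl(\omega'_{\chi^\natural}(\omega'_\chi)^{-1}\zeta_{w,\chi}({^w\!f})\bigr).
$$
By the characterization of an element of $\ES{Z}(G'(F),\rho'_{({^w\!\psi_0})\chi_{w,0}})$ by its actions on principal series (varying $\xi$), I would conclude
$$
\zeta_{w,\chi^\natural}({^w\!f}) = \omega'_{\chi^\natural}(\omega'_\chi)^{-1}\,\zeta_{w,\chi}({^w\!f}).
$$

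Finally, summing this identity over $w\in W$ and dividing by $\vert W'\vert$ yields $\bs{\zeta}_{\chi^\natural}(f) = \omega'_{\chi^\natural}(\omega'_\chi)^{-1}\bs{\zeta}_\chi(f)$, and hence $(\omega'_{\chi^\natural})^{-1}\bs{\zeta}_{\chi^\natural}(f) = (\omega'_\chi)^{-1}\bs{\zeta}_\chi(f)$, as desired. The only point requiring care is checking that the extra character $\psi_0$ does not interfere with the manipulations, i.e.\ that $\omega'_{\chi^\natural}(\omega'_\chi)^{-1}\zeta_{w,\chi}({^w\!f})$ still lies in $\ES{Z}(G'(F),\rho'_{({^w\!\psi_0})\chi_{w,0}})$; this is immediate from the unramified nature of $\omega'_{\chi^\natural}(\omega'_\chi)^{-1}$ and the fact that multiplication by a group character on $G'(F)$ commutes with convolution.
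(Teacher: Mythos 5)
Your proof is correct and takes essentially the same approach as the paper: the paper's proof of this lemma simply refers back to the proof of the lemma in \ref{the theorem}, which is precisely the argument you give (introduce $\chi^\natural$, set $\theta=\chi^\natural\chi^{-1}$, identify $\theta$ as the restriction of $\omega'_{\chi^\natural}(\omega'_\chi)^{-1}$, and pass through the chain $\lambda'_\xi(\zeta_{w,\chi^\natural}({^w\!f}))=\lambda_{\xi\chi^\natural}(f)=\lambda_{(\xi\theta)\chi}(f)=\lambda'_{\xi\theta}(\zeta_{w,\chi}({^w\!f}))$ using the principal-series characterization, then sum over $w$). Your additional remarks spelling out why twisting by the unramified $G'(F)$--character preserves each $\ES{Z}(G'(F),\rho'_{({^w\!\psi_0})\chi_{w,0}})$ are accurate and only make explicit what the paper leaves implicit.
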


\begin{proof}
It is identical to the one of the lemma of \ref{the theorem}.
\end{proof}

For $f\in \ES{Z}(G(F),\rho_{\psi_0\chi_0})$, we put
$$
\bs{\xi}_{\chi_0}(f)= (\omega'_\chi)^{-1}\bs{\zeta}_{\chi}(f).
$$
Note that the function $\bs{\xi}_{\chi_0}(f)$ belongs to the subspace $\sum_{w\in W} \ES{Z}(G'(F), \rho'_{^w\!(\psi_0\chi_0)})$ of $\ES{H}(G'(F),1_{\ES{I}'})$. 
The following theorem, resp. theorem*, is a generalization of the theorem, resp. theorem*, of \ref{the theorem}: we recover the theorems of \ref{the theorem} by taking $\psi_0=1$.

\begin{thm}Let $f\in \ES{Z}(G(F),\rho_{\psi_0\chi_0})$. The function $\bs{\xi}_{\chi_0}(f)$ is a transfer of $f$.
\end{thm}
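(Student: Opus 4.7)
The plan follows the blueprint laid out in the introduction and closely mirrors \cite{Hai2}: a geometric descent to reduce to the elliptic case (theorem** of \ref{variant with two characters}), an explicit computation on Labesse--style elementary functions, and a final linear-independence argument fed by Hales local data.

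First I would reduce to the elliptic variant. Given a strongly $G$--regular semisimple $\delta\in G'(F)$ that is not elliptic, its connected centralizer $G'_\delta$ lies in a proper Levi $M'\subsetneq G'$, and its endoscopic matches in $G(F)$ lie in the analogous Levi $M\subsetneq G$, with $M'$ an elliptic endoscopic group of $M$ associated to a depth zero character of the maximal compact subgroup of a maximal split torus of $M(F)$. The descent formula of \ref{a result of descent} then expresses $\wt{\bf O}_\delta(f)$ and ${\bf SO}_\delta(\bs{\xi}_{\chi_0}(f))$ in terms of the analogous integrals on $(M,M')$ of parabolic restrictions (or constant terms) of $f$ and of $\bs{\xi}_{\chi_0}(f)$. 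The key compatibility to check is that the Bernstein-center maps $\zeta_{w,\chi}$ and the twist by $(\omega'_\chi)^{-1}$ commute with these descents; granted this, an induction on semisimple rank reduces the theorem to the elliptic version. In parallel, a $z$--extension argument lets me assume $G_{\rm der}$ simply connected and $\mathfrak{Z}_G$ a torus; by \ref{some results of Roche}.(4) this gives $C_{\chi_0}=\{1\}$ and $\wt{G}'=G'$, simplifying the Hecke-algebra combinatorics used below.

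Assuming now $G'$ elliptic, for each $B$--dominant regular cocharacter $\nu\in \check{X}$ I would introduce the Labesse elementary function $\mathfrak{f}_\nu\in \ES{H}(G(F),\rho_{\psi_0\chi_0})$ essentially supported on $\ES{I}\nu(\varpi)\ES{I}$ together with a companion $\bs{\mathfrak{f}}'_\nu\in \sum_{w\in W}\ES{H}(G'(F),\rho'_{({^w\!\psi_0})\chi_{w,0}})$ assembled from the $\Psi_{\tilde{\chi}}$--images of pieces of $\mathfrak{f}_\nu$ (cf.\ proposition 2 of \ref{Hecke algebra isomorphism}). Using the Bernstein presentation of the Iwahori--Matsumoto Hecke algebra, both the strongly regular orbital integrals of $\mathfrak{f}_\nu$ and its traces on unramified principal series admit explicit Kazhdan--Lusztig-type closed formulas, and a direct calculation shows that $(\omega'_\chi)^{-1}\bs{\mathfrak{f}}'_\nu$ is a transfer of $\mathfrak{f}_\nu$ for every $\nu$.

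The endgame combines this with the Hales local data produced in \ref{existence of local data}: a finite set of elliptic strongly $G$--regular semisimple classes in $G'(F)$ whose twisted stable orbital integrals separate characters on the principal-series block $\mathfrak{R}^{\mathfrak{s}_{\psi_0\chi_0}}(G(F))$. Testing the identity $\wt{\bf O}_\delta(\mathfrak{f}_\nu)=\omega'_\chi(\delta){\bf SO}_\delta(\bs{\xi}_{\chi_0}(\mathfrak{f}_\nu))$ against these data transforms it into an equality of linear combinations of characters on $\mathfrak{X}(\mathfrak{s}_{\psi_0\chi_0})$ parametrized by $\nu$. Linear independence of characters (as in \cite{Hai2}) then lets me extract the identity at $\nu=0$, which shows that $(\omega'_\chi)^{-1}\vert W'\vert^{-1}\sum_{w\in W} e_{\rho'_{({^w\!\psi_0})\chi_{w,0}}}$ is a transfer of the unit $e_{\rho_{\psi_0\chi_0}}$. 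Finally, the characterization of each $\zeta_{w,\chi}({^w\!f})$ by its action on principal series (equation \ref{variant with two characters}.(4)) exhibits $\bs{\zeta}_\chi(f)$ as acting on every principal-series component by the same scalar $\lambda_{\xi\chi}(f)$ as $f$ does on its $\rho_{\psi_0\chi_0}$--isotypic component; this upgrades the unit-level identity to arbitrary $f\in \ES{Z}(G(F),\rho_{\psi_0\chi_0})$ and finishes the proof. The main obstacle I expect is the construction of the local data in positive characteristic: this is where the hypothesis ``$p$ large enough'' actually bites, since one must adapt the argument of \cite{Hal} using the fundamental lemma for the unit, existence of supercuspidal transfer at auxiliary places, and Waldspurger's unpublished result on global equivalence of endoscopic data.
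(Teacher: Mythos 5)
Your proposal tracks the paper's own argument almost step for step: the descent to the elliptic case via the proposition/corollary of \ref{a result of descent} (which is exactly why the ``variant with two characters'' is introduced), the $z$-extension reductions of \ref{further reductions (step 1)}--\ref{further reductions (step 2)}, the Labesse elementary functions $\mathfrak{f}_\nu$ and $\bs{\mathfrak{f}}'_\nu$ with their matching in \ref{matching of elementary functions}, the local data and linear-independence argument of \ref{proof of the theorem}, and the Hales-style local-global construction of \ref{existence of local data} with the hypotheses (H1)--(H3) in positive characteristic. One small correction of emphasis: the orbital integrals and traces of $\mathfrak{f}_\nu$ are computed not via Bernstein-presentation or Kazhdan--Lusztig combinatorics but directly from Labesse's identity $\ES{L}_\nu=\ES{I}t_\nu\ES{I}$ and a Jacquet-module (exponent) calculation as in \cite[8.3, 8.5]{Hai2}, and the companion $\bs{\mathfrak{f}}'_\nu$ is defined intrinsically on $G'(F)$ rather than as a $\Psi_{\tilde\chi}$-image, with the transfer identity verified by hand via \ref{matching of elementary functions}.(5)--(8).
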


\begin{thm*}
Assume that the endoscopic group $G'$ of $G$ is elliptic, and let $f$ be a function in $\ES{Z}(G(F),\rho_{\psi_0\chi_0})$. The functions $f$ and $\bs{\xi}_{\chi_0}(f)$ 
have matching elliptic strongly $G$--regular orbital integrals.
\end{thm*}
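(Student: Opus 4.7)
The plan is to follow closely the method of Haines \cite{Hai2}, adapted to our setting where one must combine the morphisms $\zeta_{w,\chi}$ for all $w\in W$ rather than just $w\in W_{\chi_0}$. Before starting, I would invoke the reductions already announced (and to be proved in Section \ref{main result}): we may assume $G_{\rm der}$ simply connected and $\mathfrak{Z}_G$ a torus, so that $C_{\chi_0}=\{1\}$ and $\wt{G}'=G'$. The input from outside this section is the existence of \textit{Hales local data} for the pair $(G,G')$, which will be established in Section \ref{existence of local data} via a local--global comparison of simple trace formulas, and which roughly says that if an identity of the form ``$\sum_i c_i {\bf SO}_\delta(\phi'_i)=\sum_j d_j \wt{\bf O}_\delta(\phi_j)$ for all elliptic strongly $G$--regular $\delta\in G'(F)$'' can be promoted to an identity of traces on tempered (supercuspidal) representations and conversely.

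The first step is to introduce Labesse--type elementary functions adapted to the Hecke algebras at hand. For each $B$--dominant regular co--character $\nu\in \check{X}$, I would define, using the Iwahori--Matsumoto presentation of Proposition~1 of \ref{Hecke algebra isomorphism}, a function $\mathfrak{f}_\nu\in \ES{H}(G(F),\rho_{\psi_0\chi_0})$ supported on $\ES{I}\varpi^\nu \ES{I}$, and corresponding elements $\bs{\mathfrak{f}}'_\nu\in \sum_{w\in W}\ES{H}(G'(F),\rho'_{\xi_{w,0}})$ given on each component as $q^{-l(\nu)/2}\tilde{\chi}(\varpi^\nu)^{-1}$ times the natural Iwahori--supported basis element. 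By a direct computation of orbital integrals on strongly $G$--regular elements (as in \cite[§4]{Hai2}, using that $\varpi^\nu$ is regular in $T$ and the triangular decomposition of $\ES{I}$ and $\ES{I}'$), one checks that $(\omega'_\chi)^{-1}\bs{\mathfrak{f}}'_\nu$ is a transfer of $\mathfrak{f}_\nu$; the transfer factors that appear are exactly absorbed by $\omega'_\chi$ thanks to the explicit form of the Langlands parameter $\varphi_\chi$ and the compatibility of the canonical transfer factor normalized by $\ES{K},\ES{K}'$.

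Next, I would compute the traces of $\mathfrak{f}_\nu$ and $\bs{\mathfrak{f}}'_\nu$ on principal series. Since these functions live in the Iwahori--Hecke picture, the eigenvalue on $i_B^G(\psi\chi\xi)^{\rho_{\psi_0\chi_0}}$ is of the form $\sum_{w\in W_{\psi_0\chi_0}\backslash W}(\psi\chi\xi)({^w\varpi^\nu})$ up to an explicit normalization, and similarly for $\bs{\mathfrak{f}}'_\nu$ on the corresponding principal series of $G'(F)$; the identity $\lambda'_\xi(\zeta_{w,\chi}({^w\!f}))=\lambda_{\xi\chi}(f)$ of \ref{variant with two characters}.(4) then relates these two expressions. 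At this point I would apply the local data: the already--established geometric transfer identity for the $\mathfrak{f}_\nu$ translates, through local data, into a character identity. Viewing both sides as finite linear combinations of unramified characters of $T(F)/\ES{T}$ indexed by the $W$--orbit of $\nu$, linear independence of characters on the free abelian group $\check{X}$ allows one to specialize in $\nu$. Taking $\nu=0$ (formally: dividing by the Weyl character sum and evaluating at the identity) produces the identity \[{\bf SO}_\delta\!\Bigl((\omega'_\chi)^{-1}|W'|^{-1}\sum_{w\in W}e_{\rho'_{\xi_{w,0}}}\Bigr)=\omega'_\chi(\delta)\,\wt{\bf O}_\delta(e_{\rho_{\psi_0\chi_0}})\] for all elliptic strongly $G$--regular $\delta\in G'(F)$, i.e.\ the unit of $\ES{H}(G(F),\rho_{\psi_0\chi_0})$ has the expected transfer.

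The final step handles an arbitrary $f\in\ES{Z}(G(F),\rho_{\psi_0\chi_0})$. Because $f$ is central, it acts on every $i_B^G(\psi\chi\xi)^{\rho_{\psi_0\chi_0}}$ by a scalar $\lambda_{\psi\chi\xi}(f)$, and the characterization of $\zeta_{w,\chi}$ by actions on principal series from \ref{variant with two characters}.(4) shows that $\zeta_{w,\chi}({^w\!f})$ acts on the corresponding $G'(F)$--principal series by the same scalars as $f$ does on those of $G(F)$. Combining this with the identity obtained for the unit and re-applying the local data in the reverse direction (traces $\Rightarrow$ orbital integrals on elliptic strongly regular elements), one deduces that $\bs{\xi}_{\chi_0}(f)=(\omega'_\chi)^{-1}\bs{\zeta}_\chi(f)$ is an elliptic transfer of $f$, which is Theorem**. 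The main obstacle is clearly the existence of Hales local data: translating geometric identities into spectral ones requires the simple trace formula and isolation of $G'$ inside the stabilized elliptic expansion for $G$, and in characteristic $p>0$ it is here that the hypotheses on $p$ from \ref{local data in char p} enter. Once that tool is available, the remaining steps above are essentially algebraic manipulations in the Iwahori--Hecke algebra together with linear independence of characters.
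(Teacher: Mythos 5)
Your proposal is correct and follows essentially the same route as the paper: elementary functions $\mathfrak{f}_\nu$ and $\bs{\mathfrak{f}}'_\nu$ with matching elliptic strongly regular orbital integrals, Hales local data to pass between geometric and spectral identities, linear independence of (co)characters to specialize at $\nu=0$ and obtain the transfer of the unit, and then centrality of $f$ together with the characterization of $\zeta_{w,\chi}$ by scalar actions on principal series to conclude. The paper fills in what you sketch only schematically (the precise statement of Proposition~1 of \ref{traces of elementary functions} involving the exponents $a_\mu(\pi)$ of Jacquet modules for arbitrary irreducible $\pi\in{\rm Irr}^{\mathfrak{s}_0}$, and the refined linear-independence step à la \cite[lemma 9.2.2]{Hai2} giving an identity for each $W_{\theta_0}$--orbit $\theta$ before multiplying by $\lambda_\theta(f)$), but the structure of the argument is the same.
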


\begin{rem3}
{\rm 
This generalization with two depth zero characters of $\ES{T}$ is not really interesting in itself but only as a tool to reduce the theorem of \ref{the theorem} to its elliptic version (theorem* of \ref{the theorem}.). 
In fact, we will prove in \ref{reduction to the variant} that the ``variant with two characters'' of the theorem* of \ref{the theorem} --- i.e. the theorem* above --- implies the theorem of \ref{the theorem}. But the local--global argument used to prove the theorem* above also makes possible to obtain a stronger result: assuming the endoscopic group $G'$ is elliptic, for all $f\in \ES{Z}(G(F),\rho_{\psi_0\chi_0})$, the function $\bs{\xi}_{\chi_0}(f)$ is a transfer of $f$. Using a descent formula (lemma 1 of \ref{a result of descent}), we will prove in \ref{reduction to G' elliptic} that the result remains true without assuming $G$ elliptic --- i.e. the theorem above.  
}
\end{rem3}

\subsection{Parabolic descent}\label{parabolic descent}
Let $P$ be a semi--standard parabolic subgroup of $G$ defined over $F$, where {\it semi--standard} means containing $T$. Let $U$ be the unipotent radical of $P$, and $M$ the semi--standard Levi component of $P$. The decomposition $P= M\rtimes U$ is defined over $F$. We denote by $W_M$ the Weyl group $N_M(T)/T$ of $M$. We put $B_M=B\cap M$ and $\ES{I}_M= \ES{I}\cap M$. The group $\ES{I}_M$ is an Iwahori subgroup of $M(F)$ in good position with respect to the Borel pair $(B_M,T)$ of $M$.

Let $\mathfrak{s}_M= [T,\chi]_M$ be the $M(F)$--inertial class of the cuspidal pair $(T,\chi)$ {\it of $M(F)$}. Let $\mathfrak{X}(\mathfrak{s}_M)$ be the complex algebraic variety associated to $\mathfrak{s}_M$. There is a canonical surjective morphism of varieties
$$
\mathfrak{X}(\mathfrak{s}_M)\rightarrow \mathfrak{X}(\mathfrak{s}),\, (T,\xi)_M\mapsto (T,\xi)_G,
$$
which gives an injective morphism of ${\Bbb C}$--algebras
$$
\eta^G_M:{\Bbb C}[\mathfrak{X}(\mathfrak{s})]\rightarrow {\Bbb C}[\mathfrak{X}(\mathfrak{s}_M)].
$$
For $f\in C^\infty_{\rm c}(G)$, we define a function $f^{(P,\ES{I})}\in C^\infty_{\rm c}(M)$ by
$$
f^{(P,\ES{I})}(m)= \delta_{P(F)}^{1/2}(m)\int_{U(F)}f(mu)du
$$
where $du$ is the Haar measure on $U$ which gives the volume $1$ to $U\cap \ES{I}$. The inclusion $\ES{T}\subset \ES{I}_M$ induces an isomorphism $\ES{T}/\ES{T}_+\buildrel\simeq\over{\longrightarrow} \ES{I}_M/\ES{I}_{M,+}$ which allows us to view $\chi_0$ as a character $\rho_M$ of $\ES{I}_M$. For $f\in \ES{H}(G(F),\rho)$, the function $f^{(P,\ES{I})}$ is contained in $\ES{H}(M(F),\rho_M)$. From \cite[prop.~5.4.1]{Hai2}, the map
$$
C^\infty_{\rm c}(G(F))\rightarrow C^\infty_{\rm c}(M(F)),\, f\mapsto f^{(P,\ES{I})}
$$
induces by restriction a morphism of ${\Bbb C}$--algebras $\zeta^G_M:\ES{Z}(G(F),\rho)\rightarrow \ES{Z}(M(F),\rho_M)$ and the following diagram
$$
\xymatrix{
{\Bbb C}[\mathfrak{X}(\mathfrak{s})] \ar[r]^\beta \ar[d]_{\eta^G_M} & \ES{Z}(G(F),\rho) \ar[d]^{\zeta^G_M}\\
{\Bbb C}[\mathfrak{X}(\mathfrak{s}_M)] \ar[r]^(.45){\beta_M} & \ES{Z}(M(F),\rho_M)
}\leqno{(1)}
$$
is commutative. In particular, $\zeta^G_M$ is injective. Moreover (loc.~cit.), if we replace $P$ by another semi--standard parabolic subgroup of $G$, defined over $F$ and having $M$ as Levi component, we obtain the same morphism $\zeta^G_M$. Note that $\zeta^G_M$ depends on $\ES{I}$ (even if $\ES{I}$ does not appear in the notation): $\rho$ is a character of 
$\ES{I}$, and the Haar measure $du$ on $U(F)$ is normalized by $\ES{I}$.

\begin{rem}
{\rm
The commutativity of (1) is obtained by comparing the actions on principal series (cf. \ref{Bernstein center}): for $f\in \ES{Z}(G(F),\rho)$, $\zeta^G_M(f)$ is the unique element of $\ES{Z}(M(F),\rho_M)$ which acts on the left on $i_{B_M}^M(\psi)^{\rho_M}$ by multiplication by the scalar
$$
\lambda_{M,\psi}(\zeta^G_M(f))= \lambda_\psi(f),
$$
for all characters $\psi$ of $T(F)$ extending a $W_M$--conjugate of $\chi_0$.
}
\end{rem}

Now we can replace $G$ by $M$ in the construction of \ref{some results of Roche} and \ref{Bernstein center}. In particular, we obtain a connected reductive $F$--split group 
$M'$ which is an endoscopic group of $M$, a group $\smash{\wt{M}}'= M'\rtimes C_{M'}$, and a support--preserving isomorphism of ${\Bbb C}$--algebras
$$
\Psi^M_{\tilde{\chi}_M}:\ES{H}(M(F),\rho_M)\buildrel\simeq \over{\longrightarrow} \ES{H}(\smash{\wt{M}}'(F),1_{\ES{I}_{M'}})\leqno{(2)}
$$
where $\tilde{\chi}_M$ is the restriction of $\tilde{\chi}$ to the group $N_M(T)(F)_{\chi_0}= M \cap N(F)_{\chi_0}$. Hence we have $\tilde{\chi}_M\vert_{T(F)}= \chi$. The isomorphism (2) gives 
by restriction an isomorphism of ${\Bbb C}$--algebras
$$
\zeta^M_{\chi}: \ES{Z}(M(F),\rho_M) \buildrel\simeq\over{\longrightarrow} \ES{Z}(\smash{\wt{M}}'(F),1_{\ES{I}_{M'}})\;(\subset \ES{Z}(M'(F),1_{\ES{I}_{M'}})).\leqno{(3)}
$$
Put $\mathfrak{s}'_{M'}= [T, 1]_{M'}$. Like in \ref{Bernstein center}, the character $\chi$ of $T(F)$ gives an injective morphism of ${\Bbb C}$--algebras 
$$
\eta^M_{\chi}: {\Bbb C}[\mathfrak{X}(\mathfrak{s}_M)] \rightarrow {\Bbb C}[\mathfrak{X}(\mathfrak{s}'_{M'})],\leqno{(4)}
$$
and the following diagram
$$
\xymatrix{
{\Bbb C}[\mathfrak{X}(\mathfrak{s}_M)] \ar[r]^(.45){\beta_M} \ar[d]_{\eta^M_{\chi}} & \ES{Z}(M(F),\rho_M) \ar[d]^{\zeta^M_{\chi}}\\
{\Bbb C}[\mathfrak{X}(\mathfrak{s}'_{M'})] \ar[r]^(.45){\beta_{M'}} & \ES{Z}(M'(F),1_{\ES{I}_{M'}})
}\leqno{(5)}
$$
is commutative. By construction, the group $M'$ can be identified with a semi--standard $F$--Levi factor in $G'$, i.e. the Levi component of a semi--standard parabolic subgroup of $G'$ defined over $F$, where semi--standard means containing $T'=T$. Hence the following diagram
$$
\xymatrix{
{\Bbb C}[\mathfrak{X}(\mathfrak{s}')] \ar[r]^{\beta'} \ar[d]_{\eta^{G'}_{M'}} & \ES{Z}(G'(F),1_{\ES{I}'}) \ar[d]^{\zeta^{G'}_{M'}}\\
{\Bbb C}[\mathfrak{X}(\mathfrak{s}'_{M'})] \ar[r]^(.45){\beta_{M'}} & \ES{Z}(M'(F),1_{\ES{I}_{M'}})
}\leqno{(6)}
$$
is commutative, where the maps $\eta_{M'}^{G'}$ and $\zeta_{M'}^{G'}$ are defined as for (1), mutatis mutandis. 
From (1), (5) and (6), we deduce that the following diagram
$$
\xymatrix{
{\Bbb C}[\mathfrak{X}(\mathfrak{s})] \ar[rrr]^\beta \ar@/^{.3pc}/[dr]_{\eta^G_M} \ar[ddd]_{\eta_\chi}&&& \ES{Z}(G(F),\rho) \ar@/_{.3pc}/[dl]^{\zeta^G_M} \ar[ddd]^{\zeta_{\chi}} \\
&{\Bbb C}[\mathfrak{X}(\mathfrak{s}_M)] \ar[r]^(.45){\beta_M} \ar[d]_{\eta^M_{\chi}}& \ES{Z}(M(F),\rho_M) \ar[d]^{\zeta^M_{\chi}}&\\
&{\Bbb C}[\mathfrak{X}(\mathfrak{s}'_{M'})] \ar[r]^(.45){\beta_{M'}} & \ES{Z}(M'(F),1_{\ES{I}_{M'}})&\\
{\Bbb C}[\mathfrak{X}(\mathfrak{s}')] \ar@/_{.3pc}/[ru]^{\eta_{M'}^{G'}}\ar[rrr]^{\beta'} &&& \ES{Z}(G'(F), 1_{\ES{I}'})\ar@/^{.3pc}/[lu]_{\zeta_{M'}^{G'}}
}\leqno{(7)}
$$
is commutative. We just have to verify the commutativity of the left and right trapezoids. But the commutativity of the left trapezoid is clear (by definition of the maps), and since all the horizontal arrows are bijective maps, it implies the commutativity of the right one. 

Suppose moreover that the Weyl group $W'$ of $G'$ is contained in the Weyl group $W_M=N_M(T)/T$ of $M$. In that case, we have $M'=G'$ and $\beta_{M'}=\beta'$,  and from (7), the following diagram
$$
\xymatrix{
{\Bbb C}[\mathfrak{X}(\mathfrak{s})] \ar[rrr]^\beta \ar@/^{.3pc}/[dr]_{\eta^G_M} \ar[dd]_{\eta_\chi}&&& \ES{Z}(G(F),\rho) \ar@/_{.3pc}/[dl]^{\zeta^G_M} \ar[dd]^{\zeta_{\chi}} \\
&{\Bbb C}[\mathfrak{X}(\mathfrak{s}_M)] \ar[r]^(.45){\beta_M} \ar@/^{.3pc}/[dl]_{\eta^M_{\chi}}& \ES{Z}(M(F),\rho_M) \ar@/_{.3pc}/[dr]^{\zeta^M_{\chi}}&\\
{\Bbb C}[\mathfrak{X}(\mathfrak{s}')] \ar[rrr]^{\beta'} &&& \ES{Z}(G'(F), 1_{\ES{I}'})
}\leqno{(8)}
$$
is commutative.

\subsection{Compatibility with conjugation by $w\in W$.}\label{compatibility by W}
Let $w$ be an element of $W$. Let $M$ be a semi--standard $F$--Levi factor in $G$, i.e. the semi--standard Levi component of a semi--standard parabolic subgroup $P$ of $G$ defined over $F$. Let ${^w\mathfrak{s}_M}= [T,{^w\chi}]_M$ be the $M(F)$--inertial class of the cuspidal pair $(T,{^w\chi})$ of $M(F)$. There is an isomorphism of algebraic varieties
$$
\mathfrak{X}({^w\mathfrak{s}_M}) \buildrel\simeq\over{\longrightarrow} \mathfrak{X}(\mathfrak{s}_M), \, (T, \psi)_M \mapsto (T, {^{w^{-1}}\!\psi})_M,
$$
which gives by duality an isomorphism of ${\Bbb C}$--algebras
$$
{^w\!\mu_M}:{\Bbb C}[\mathfrak{X}(\mathfrak{s}_M)] \buildrel \simeq\over{\longrightarrow} {\Bbb C}[\mathfrak{X}({^w\mathfrak{s}_M})].
$$
By construction, the following diagram
$$
\xymatrix{
{\Bbb C}[\mathfrak{X}(\mathfrak{s})] \ar@{=}[r] \ar[d]_{\eta^G_M} & {\Bbb C}[\mathfrak{X}(\mathfrak{s})] \ar[d]^{\eta^G_M}\\
{\Bbb C}[\mathfrak{X}(\mathfrak{s}_M)] \ar[r]^(.45){^w\!\mu_{M}}& {\Bbb C}[\mathfrak{X}({^w\mathfrak{s}_M})]}\leqno{(1)}
$$
is commutative.
The character ${^w\chi}$ of $T(F)$ can be identified as in \ref{some results of Roche} with a character $[{^w\!\rho}]_M$ of the Iwahori subgroup $[^w\ES{I}]_M = {^w\ES{I}}\cap M$ of $M(F)$. It is given by $[{^w\!\rho}]_M = {^w\!\rho}\vert_{[{^w\ES{I}}]_M}$. Let
$$
[w]_M: \ES{Z}(M(F), \rho_M)\buildrel \simeq\over{\longrightarrow} \ES{Z}(M(F), [{^w\!\rho}]_M)
$$
be the isomorphism of ${\Bbb C}$--algebras which makes the following diagram
$$
\xymatrix{{\Bbb C}[\mathfrak{X}(\mathfrak{s}_M)] \ar[r]^(.45){\beta_M} \ar[d]_{^w\!\mu_{M}} & \ES{Z}(M(F),\rho)\ar[d]^{[w]_M} \\
{\Bbb C}[\mathfrak{X}({^w\mathfrak{s}_M})] \ar[r]^(.45){\beta_M} & \ES{Z}(M(F), [{^w\!\rho}]_M)
}\leqno{(2)}
$$
commutative. Let
$$
{\zeta^G_M} :\ES{Z}(G(F), {^w\!\rho}) \rightarrow  \ES{Z}(M(F), [{^w\!\rho}]_M)
$$
be the morphism of ${\Bbb C}$--algebras given by $\zeta^G_M(f)= f^{(P,{^w\ES{I}})}$. 
From (1), (2) and \ref{conjugation by W}.(1), we obtain that the following diagram
$$
\xymatrix{
\ES{Z}(G(F),\rho) \ar[d]_(.45){\zeta^G_M} \ar[r]^{[w]} & \ES{Z}(G(F), {^w\!\rho})\ar[d]^(.45){\zeta^G_M}\\
\ES{Z}(M(F),\rho_M)  \ar[r]^(.45){[w]_M}  & \ES{Z}(M(F), [{^w\!\rho}]_M)
}\leqno{(3)}
$$
is commutative. 

Let us continue with the semi--standard $F$--Levi factor $M$ in $G$. It defines a semi--standard $F$--Levi factor $M'$ in $G'$ (cf. \ref{parabolic descent}), and 
by replacing $(G,G')$ by $(M,M')$ in the construction of the diagram \ref{conjugation by W}.(3), we obtain a similar commutative diagram
$$
\xymatrix{
\ES{Z}(M(F),\rho_M) \ar[r]^(.45){[w]_M} \ar[d]_{\zeta^M_{\chi}} & \ES{Z}(M(F),[{^w\!\rho}]_M)\ar[d]^{\zeta^M_{w,\chi}} \\
\ES{Z}(M'(F),1_{\ES{I}_{M'}}) \ar[r]^(.45){\{w\}_{M'}} & \ES{Z}(M'(F),[\rho'_w]_{M'})
}.\leqno{(4)}
$$
Here, $[\rho'_w]_{M'}$ is the restriction of the character $\rho'_w$ of $\ES{I}'$ to the Iwahori subgroup $\ES{I}_{M'}= \ES{I}'\cap M'$ of 
$M'(F)$, and $\{w\}_{M'}=\{w\}_{M'\!,\chi}$ is defined as follows. Let $\mathfrak{s}'_{M'\!,w}= [T, \chi'_w]_{M'}$ be the $M'(F)$--inertial class of the cuspidal pair $(T,\chi_w)$ of $M'(F)$, and let $\mu_{M'\!,w}=\mu_{M'\!,w,\chi}: {\Bbb C}[\mathfrak{X}(\mathfrak{s}'_{M'})]\rightarrow 
{\Bbb C}[\mathfrak{X}(\mathfrak{s}'_{M'\!,w})]$ be the injective morphism of ${\Bbb C}$--algebras defined as we have defined $\mu'_w=\mu'_{w,\chi}$ (cf. \ref{conjugation by W}) replacing $(G,G')$ by $(M,M')$. Then 
$$
\{w\}_{M'}: \ES{Z}(M'(F),1_{\ES{I}'})\rightarrow \ES{Z}(M'(F),[\rho'_w]_{M'})
$$
is the injective morphism of ${\Bbb C}$--algebras which makes the following diagram
$$
\xymatrix{
{\Bbb C}[\mathfrak{X}(\mathfrak{s}'_{M'})] \ar[r]^(.45){\beta_{M'}} \ar[d]_{\mu_{M'\!,w}} & \ES{Z}(M'(F),1_{\ES{I}_{M'}}) \ar[d]^{\{w\}_{M'}}\\
{\Bbb C}[\mathfrak{X}(\mathfrak{s}'_{M'\!,w})] \ar[r]^(.45){\beta_{M'}} & \ES{Z}(M'(F), [\rho'_w]_{M'})}
$$
commutative. By construction, the following diagram
$$
\mbox{\footnotesize
\xymatrix{
\ES{Z}(G(F),\rho)\ar[ddd]_{\zeta^G_M}\ar[rrr]^{[w]} \ar@/^{.3pc}/[rd]_{\zeta_\chi}&&& \ES{Z}(G(F),{^w\!\rho})\ar@/_{.3pc}/[dl]^{\zeta_{w,\chi}}\ar[ddd]^{\zeta^G_M}\\
&\ES{Z}(G'(F),1_{\ES{I}'}) \ar[d]_(.45){\zeta^{G'}_{M'}} \ar[r]^{\{w\}'} & \ES{Z}(G'(F), \rho'_w)\ar[d]^(.45){\zeta^{G'}_{M'}}&\\
&\ES{Z}(M'(F),1_{\ES{I}_{M'}})  \ar[r]^(.45){\{w\}_{M'}}  & \ES{Z}(M'(F), [\rho'_w]_{M'})&\\
\ES{Z}(M(F),\rho_M) \ar[rrr]^{[w]_M} \ar@/_{.3pc}/[ur]^{\zeta^M_{\chi}}&&& \ES{Z}(M(F),[^w\!\rho]_M)\ar@/^{.3pc}/[ul]_{\zeta^M_{w,\chi}}
}}\leqno{(5)}
$$
is commutative. We have to verify the commutativity of the middle rectangle and of the right trapezoid. The commutativity of the middle rectangle is clear by definition of the maps $\{w\}'$ and $\{w\}_{M'}$, and the commutativity of the right trapezoid is a consequence of 
the commutativity of the other three trapezoids and of the two rectangles. 

\begin{rem}
{\rm  
As in \ref{some results of Roche} (replacing $G$ by $M$), The character ${^w\!\chi_0}$ of $\ES{T}$ defines a $F$--split connected reductive group $M'_{w}$ with maximal $F$--split torus $T'_{w}= T$, a $F$--group $\wt{M}'_{w} = M'_{w} \rtimes C^M_{^{w}\!\chi_0}$, and an Iwahori subgroup $\ES{I}_{M'_{w}}$ of $M'_{w}(F)$. The root datum $(X, \Phi_{M'_{w}},\check{X},\check{\Phi}_{M'_{w}})$ of $M'_{w}$ is given by
$$
\Phi_{M'_{w}}=\Phi_M \cap w(\Phi'), \quad \check{\Phi}_{M'_{w}}= \check{\Phi}_M \cap w(\check{\Phi}'),
$$
where $(X,\Phi_M,\check{X},\check{\Phi}_M)$ is the root datum of $M$. Note that in general, the group $M'_w$ {\it is not} isomorphic to $M'$ (even if 
${^w\ES{I}}\cap M = \ES{I}\cap M$, in which case $[{^w\!\rho}]_M$ is a character of $\ES{I}_M= \ES{I}\cap M$ which coincides with the inflation of the character ${^w\!\chi_0}$ of $\ES{T}$ 
to $\ES{I}_M$ via the natural isomorphism $\ES{T}/\ES{T}_+ \buildrel \simeq \over{\longrightarrow }\ES{I}_M/\ES{I}_{M,+}$).
}
\end{rem}

Suppose moreover that the Weyl group $W'$ of $G'$ is contained in $M$. Then the following diagramm
$$
\mbox{\footnotesize
\xymatrix{
\ES{Z}(G(F),\rho)\ar[dd]_{\zeta^G_M}\ar[rrr]^{[w]} \ar@/^{.3pc}/[rd]_{\zeta_\chi}&&& \ES{Z}(G(F),{^w\!\rho})\ar@/_{.3pc}/[dl]^{\zeta_{w,\chi}}\ar[dd]^{\zeta^G_M}\\
&\ES{Z}(G'(F),1_{\ES{I}'})  \ar[r]^(.45){\{w\}'} & \ES{Z}(G'(F), \rho'_w)&\\
\ES{Z}(M(F),\rho_M) \ar[rrr]^{[w]_M} \ar@/_{.3pc}/[ur]^{\zeta^M_{\chi}}&&& \ES{Z}(M(F),[^w\!\rho]_M)\ar@/^{.3pc}/[ul]_{\zeta^M_{w,\chi}}
}}\leqno{(6)}
$$
is commutative.

\subsection{Compatibility with conjugation by $w\in W'$}\label{compatibility by W'}
Let $w$ be an element of $W'$, and $M$ a semi--standard $F$--Levi factor in $G$. 
Put $[{^w\ES{I}'}]_{M'}= {^w\ES{I}'}\cap M'$. By replacing $(G,G')$ by $(M,M')$ in the previous constructions, we obtain 
the following commutative diagram
$$
\mbox{\footnotesize
\xymatrix{
\ES{Z}(G(F),\rho)\ar[ddd]_{\zeta^G_M}\ar[rrr]^{[w]} \ar@/^{.3pc}/[rd]_{\zeta_\chi}&&& \ES{Z}(G(F),{^w\!\rho})\ar@/_{.3pc}/[dl]^{^w\!\zeta_\chi}\ar[ddd]^{\zeta^G_M}\\
&\ES{Z}(G'(F),1_{\ES{I}'}) \ar[d]_(.45){\zeta^{G'}_{M'}} \ar[r]^{[w]'} & \ES{Z}(G'(F), 1_{^w{\ES{I}'}})\ar[d]^(.45){\zeta^{G'}_{M'}}&\\
&\ES{Z}(M'(F),1_{\ES{I}_{M'}})  \ar[r]^(.45){[w]_{M'}}  & \ES{Z}(M'(F), [{^w\ES{I}'}]_{M'})&\\
\ES{Z}(M(F),\rho_M) \ar[rrr]^{[w]_M} \ar@/_{.3pc}/[ur]^{\zeta^M_{\chi}}&&& \ES{Z}(M(F),[^w\!\rho]_M)\ar@/^{.3pc}/[ul]_{^w\!\zeta^M_{\chi}}
}}.\leqno{(1)}
$$
The morphism ${^w{\zeta_\chi^M}}$ is defined as in \ref{conjugation by W'} and it makes the lower 
trapezoid commutative (cf. \ref{conjugation by W'}.(2)). 
The morphism $[w]_{M'}$ is defined as in \ref{compatibility by W} and 
it makes the middle rectangle commutative (cf. \ref{compatibility by W}.(3)). 
We just have to verify the commutativity of the right trapezoid, but it is a consequence of 
the commutativity of the other three trapezoids and of the two rectangles. 

Suppose moreover that the Weyl group $W'$ of $G'$ is contained in $M$ (we always assume that $w\in W'$). Then the following diagramm
$$
\mbox{\footnotesize
\xymatrix{
\ES{Z}(G(F),\rho)\ar[dd]_{\zeta^G_M}\ar[rrr]^{[w]} \ar@/^{.3pc}/[rd]_{\zeta_\chi}&&& \ES{Z}(G(F),{^w\!\rho})\ar@/_{.3pc}/[dl]^{^w\!\zeta_\chi}\ar[dd]^{\zeta^G_M}\\
&\ES{Z}(G'(F),1_{\ES{I}'})  \ar[r]^{[w]'} & \ES{Z}(G'(F), 1_{^w{\ES{I}'}})&\\
\ES{Z}(M(F),\rho_M) \ar[rrr]^{[w]_M} \ar@/_{.3pc}/[ur]^{\zeta^M_{\chi}}&&& \ES{Z}(M(F),[^w\!\rho]_M)\ar@/^{.3pc}/[ul]_{^w\!\zeta^M_{\chi}}
}}\leqno{(2)}
$$
is commutative.

\subsection{A result of descent}\label{a result of descent}
Let $M$ be a semi--standard $F$--Levi factor in $G$, and let $P$ be the {\it standard} parabolic subgroup of $G$ having $M$ as Levi component, where standard means containing $B$. The group $P$ is defined over $F$. Let $\{\dot{w}: w\in W_M\backslash W\}$ be the set of Kostant representatives for $W_M\backslash W$ in $W$, i.e. the set of minimal coset representatives for the elements of $W_M\backslash W$ with respect to the Bruhat order on $W$ (and the notion of length) defined by $B$. For each $w\in W_M\backslash W$, we fix a representative element $n_{\dot{w}}$ of $\dot{w}$ in $N\cap \ES{K}$. Then we have the refined Iwasawa decomposition (cf. \cite[5.2.1]{Hai2})
$$
G(F)= \coprod_{w\in W_M\backslash W} P(F)n_{\dot{w}}\ES{I}. \leqno{(1)}
$$

\begin{rem1}
{\rm 
For $w\in W_M\backslash W$, the Kostant representative $\dot{w}$ has a nice property \cite[4.5.2.(a)]{Hai1}:
$$
{^{\dot{w}}\ES{I}}\cap M = \ES{I}\cap M \;(= \ES{I}_M).
$$
We also have (by the same same proof of loc.~cit.)
$$
{^{\dot{w}}\!B}\cap M = B\cap M \;(= B_M).
$$
In particular, we have $\zeta_{^{\dot{w}}\chi}^{M,[{^{\dot{w}}\ES{I}}]_M}= \zeta_{^{\dot{w}}\chi}^{M,\ES{I}_M}=
\zeta_{^{\dot{w}}\chi}^M$.
}
\end{rem1}

In the same way, the group $M'$ is the semi--standard Levi component of a {\it standard} parabolic subgroup $P'$ of $G'$ defined over $F$, where standard means containing $B'$. Let 
$\{\ddot{w}: w \in W_{M'}\backslash W'\}$ be the set of Kostant representatives for $W_{M'}\backslash W'$ in $W'$, i.e. the set of minimal coset representatives for the elements of $W_{M'}\backslash W'$ with respect to the Bruhat order on $W'$ defined by $B'$. For each $w\in W_{M'}\backslash W'$, we fix a representative element $n'_{\ddot{w}}$ of $\ddot{w}$ in $N'\cap \ES{K}'$. Then we have the refined Iwasawa decomposition
$$
G'(F)= \coprod_{w\in W_{M'}\backslash W'} P'(F)n'_{\ddot{w}}\ES{I}'. \leqno{(2)}
$$

Since $W_{M'}= W'\cap W_M$, the inclusion $W'\subset W$ induces an injective map $W_{M'}\backslash W' \rightarrow W_M\backslash W$ which allows us to identifie $W_{M'}\backslash W'$ with a subset of $W_M\backslash W$. For $w\in W_{M'}\backslash W'$, $\ddot{w}$ is the element of $W_{M'}w$ of minimal length, and $\dot{w}$ is the element of $W_Mw$ of minimal length. 

For a strongly $M$--regular element $\delta\in M'(F)$, let ${\bf SO}^{M'}_\delta$, resp. $\wt{\bf O}^M_\delta$, be the distribution on $M'(F)$, resp. on $M(F)$, defined as in \ref{the theorem} replacing $(G,G')$ by $(M,M')$: for a function $\phi'\in C^\infty_{\rm c}(M'(F))$, we have
$$
{\bf SO}_\delta^{M'}(\phi')= \sum_{\delta'}{\bf O}_{\delta'}^{M'}(\phi')\leqno{(3)}
$$
where $\delta'$ runs over the elements of $M'(F)$ which are stably conjugate to $\delta$ modulo conjugation by $M'(F)$ --- since the connected centralizer $M'_{\delta'}$ is a torus, we have $e(M'_{\delta'})=1$ ---, and for a function $\phi\in C^\infty_{\rm c}(M(F))$, we have
$$
\wt{\bf O}_\delta^M(\phi)= \sum_\gamma \Delta_M(\delta,\gamma){\bf O}_\gamma^M(\phi)\leqno{(4)}
$$
where $\gamma$ runs over the strongly regular elements in $M(F)$ which correspond to $\delta$ modulo conjugation by $M(F)$. In (3), ${\bf O}_{\delta'}^{M'}$ is the orbital integral defined by $\delta'$ on $M'(F)$. In (4), $\Delta_M(\delta,\gamma)$ is the canonical transfer factor on $M'(F)\times M(F)$ defined by the hyperspecial maximal compact subgroups $\ES{K}_M=\ES{K}\cap M$ of $M(F)$, and ${\bf O}_\gamma^M$ is the orbital integral defined by $\gamma$ on $M(F)$. The invariant measures defining the distributions ${\bf SO}_\delta^{M'}$ on $M'(F)$ and ${\bf O}_\gamma^M$ on $M(F)$ are choosen as in \ref{the theorem} (replacing $(G,G')$ by $(M,M')$). 

\begin{lem1}
Let $f\in \ES{Z}(G,\rho)$, and let $\delta\in M'(F)$ be a strongly $G$--regular element. We have the descent formula
$$
\wt{\bf O}_\delta(f) = \vert D_{M'\backslash G'}(\delta)\vert_F^{-1/2}\sum_w \wt{\bf O}_\delta^M(\zeta^G_M({^{\dot{w}}\!f}))
$$
where $w$ runs over the elements of $W_M\backslash W$.
\end{lem1}

\begin{proof}Note that since $\delta$ is strongly $G$--regular, it is a fortiori strongly $M$--regular. Recall also that we have (by definition) 
$\zeta^G_M({^{\dot{w}}\!f})= [{^{\dot{w}\!}f}]^{(P,{^{\dot{w}}\ES{I}})}$. 

If $\gamma\in G(F)$ corresponds to $\delta$, then the $G(F)$--orbit of $\gamma$ meets $M(F)$, so we can suppose that $\gamma$ belongs to $M(F)$. In that case, since $M_\gamma = G_\gamma$ and since the map
$$
{\bf H}^1(F,M) \rightarrow {\bf H}^1(F,G)
$$
is injective  (cf. \cite[4.1]{Hai1}), we obtain that the natural map
$$
\ker [{\bf H}^1(F,M_\gamma)\rightarrow {\bf H}^1(F,M)] \rightarrow \ker [{\bf H}^1(F,G_\gamma)\rightarrow {\bf H}^1(F,G)]
$$
is bijective. Here we put ${\bf H}^1(F,?)= {\bf H}^1(\Gamma_F,?(F^{\rm sep}))$. In other words, there is a natural bijection from the set of $M(F)$--conjugacy classes in the stable conjugacy class of $\gamma$ in $M(F)$ to 
the set of $G(F)$--conjugacy classes in the stable conjugacy class of $\gamma$ in $G(F)$. So we have
$$
\wt{\bf O}_\delta(f)= \sum_\gamma \Delta(\delta,\gamma) {\bf O}_\gamma(f)
$$
where $\gamma$ runs over the strongly regular elements in $M(F)$ which correspond to $\delta$ modulo conjugation by $M(F)$.

On the other hand, for $\gamma\in M(F)$ corresponding to $\delta$, 
we have the descent formula \cite[6.2.3]{Hai2} 
$$
\vert D_{M\backslash G}(\gamma)\vert_F^{1/2}{\bf O}_\gamma(f)= \sum_w {\bf O}_\gamma^M(\zeta^G_M({^{\dot{w}\!}f}))\leqno{(5)}
$$
where $w$ runs over the elements of $W_M\backslash W$. Thus we have
$$
\wt{\bf O}_\delta(f)= \sum_w \sum_\gamma \vert D_{M\backslash G}(\gamma)\vert_F^{-1/2}\Delta(\delta,\gamma) {\bf O}_\gamma^M(\zeta^G_M({^{\dot{w}\!}f}))\leqno{(6)}
$$
where $w$ runs over the elements of $W_M\backslash W$ and $\gamma$ runs over the elements of $M(F)$ corresponding to $\delta$ modulo conjugation by $M(F)$. 
For such a $\gamma$, we have 
(by definition of the canonical transfer factors)
$$
 \Delta_{\rm IV}(\delta,\gamma)^{-1}\Delta(\delta,\gamma)=\Delta_{M,{\rm IV}}(\delta,\gamma)^{-1}\Delta_M(\delta, \gamma),
$$
i.e.
$$
\vert D_{M\backslash G}(\gamma)\vert_F^{-1/2}\Delta(\delta,\gamma)=\vert D_{M'\backslash G'}(\delta)\vert_F^{-1/2}\Delta_M(\delta,\gamma).\leqno{(7)}
$$
From (6) and (7), we obtain the lemma 1. 
\end{proof}

\begin{lem2}
Let $f'\in \ES{Z}(G'(F),\rho'_{w_1})$ for an element $w_1\in W$, and let $\delta\in M'(F)$ be a strongly $M$--regular element. 
We have the descent formula
$$
{\bf SO}_\delta(f') = \vert D_{M'\backslash G'}(\delta)\vert_F^{-1/2}\sum_w {\bf SO}_\delta^{M'} (\zeta^{G'}_{M'}({^{\ddot{w}\!}f'}))
$$
where $w$ runs over the element of $W_{M'}\backslash W'$.
\end{lem2}

\begin{proof}From the proof of lemma 1, 
there is a natural bijection from the set of $M'(F)$--conjugacy classes in the stable conjugacy class of $\delta$ in $M'(F)$ to 
the set of $G'(F)$--conjugacy classes in the stable conjugacy class of $\delta$ in $G'(F)$. So we have
$$
{\bf SO}_\delta(f')= \sum_{\delta'} {\bf O}_{\delta'}(f')
$$
where $\delta'$ runs over the elements in $M'(F)$ which are stably conjugate to $\delta$ modulo conjugation by $M'(F)$. For such a $\delta'$, we have the descent 
formula \cite[6.2.3]{Hai2} 
$$
\vert D_{M'\backslash G'}(\delta')\vert_F^{1/2}{\bf O}_{\delta'}(f') = \sum_w {\bf O}_{\delta'}^{M'} (\zeta^{G'}_{M'}({^{\ddot{w}\!}f'}))\leqno{(8)}
$$
where $w$ runs over the element of $W_{M'}\backslash W'$. Since
$$
\vert D_{M'\backslash G'}(\delta')\vert_F= \vert D_{M'\backslash G'}(\delta)\vert_F
$$
for all $\delta'\in M'(F)$ stably conjugate to $\delta$, we obtain the lemma 2.
\end{proof}

Using these two lemmas, we can deduce for the strongly $M$--regular elements of $M'(F)$, the matching relation we are interested with (for the pair $(G,G')$) 
from similar matching relations for the pair $(M,M')$. 

Recall that for each $w_1\in W$, we have defined in \ref{compatibility by W} an injective morphism of ${\Bbb C}$--algebras
$$
\{w_1\}_{M'}=\{w_1\}_{M'\!,\chi}:\ES{Z}(M'(F), 1_{\ES{I}_{M'}})\rightarrow \ES{Z}(M'(F), [\rho'_{w_1}]_{M'})
$$
where $[\rho'_{w_1}]_{M'}$ is the inflation of the character $\chi_{w_1,0}=({^{w_1}\!\chi_0})\chi_0^{-1}$ of $\ES{T}$ to the Iwahori subgroup 
$\ES{I}_{M'}$ of $M'(F)$. 

We defined in \ref{Bernstein center} a character $\omega'_\chi$ of $G'(F)$, associated to the $W_{\chi_0}$--invariant character $\chi$ of $T(F)$ extending $\chi_0$.  
Since $\chi$ is a fortiori $(W_M)_{\chi_0}$--invariant, it defines also a character $\omega_\chi^{M'}$ of $M'(F)$, which is the restriction of $\omega'_\chi$ to $M'(F)$. 

\begin{prop}
Let $f\in \ES{Z}(G,\rho)$ and $f'=  \zeta_{\chi}(f)\in \ES{Z}(G'\!,1_{\ES{I}'})$, and let $\delta\in M'(F)$ be a strongly $G$--regular element. Suppose 
we have
$$
\vert W_{M'}\vert^{-1}\sum_{w}\sum_{w_M\in W_M}
{\bf SO}_\delta^{M'}(\{w_M\dot{w}\}_{M'}\circ \zeta^{G'}_{M'}(f'))=
\omega'_\chi(\delta)\sum_{w}\wt{\bf O}^M_\delta(\zeta_M^G({^{\dot{w}}\!f}))
$$
where $w$ runs over the elements of $W_M\backslash W$. 
Then we have
$$
\vert W'\vert^{-1}\sum_{w_1\in W}{\bf SO}_\delta(\{w_1\}'(f'))= \omega'_\chi(\delta)\wt{\bf O}_\delta(f).
$$
\end{prop}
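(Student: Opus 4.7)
The plan is to combine the two descent formulas (Lemma 1 and Lemma 2) with the compatibility diagrams of Sections \ref{compatibility by W} and \ref{compatibility by W'}, and then reduce to the hypothesis by a combinatorial reindexing. First I would apply Lemma 1 to the right-hand side of the desired identity:
$$
\omega'_\chi(\delta)\wt{\bf O}_\delta(f)=\omega'_\chi(\delta)\vert D_{M'\backslash G'}(\delta)\vert_F^{-1/2}\sum_{w\in W_M\backslash W}\wt{\bf O}_\delta^M(\zeta^G_M({^{\dot w}\!f})),
$$
and use the hypothesis of the proposition --- after noting that every $w_2\in W$ has a unique expression $w_2=w_M\dot w$ with $w_M\in W_M$ and $\dot w$ a Kostant representative --- to rewrite this as
$$
\vert D_{M'\backslash G'}(\delta)\vert_F^{-1/2}\vert W_{M'}\vert^{-1}\sum_{w_2\in W}{\bf SO}_\delta^{M'}(\{w_2\}_{M'}\circ\zeta^{G'}_{M'}(f')).
$$

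Next I would apply Lemma 2 to each term ${\bf SO}_\delta(\{w_1\}'(f'))$ on the left-hand side, which gives
$$
\vert W'\vert^{-1}\sum_{w_1\in W}{\bf SO}_\delta(\{w_1\}'(f'))=\vert W'\vert^{-1}\vert D_{M'\backslash G'}(\delta)\vert_F^{-1/2}\sum_{w_1\in W}\sum_{\ddot w\in W_{M'}\backslash W'}{\bf SO}_\delta^{M'}(\zeta^{G'}_{M'}({^{\ddot w}\{w_1\}'(f')})).
$$
The key compatibility step is to identify $\zeta^{G'}_{M'}({^{\ddot w}\{w_1\}'(f')})$ with $\{\ddot ww_1\}_{M'}(\zeta^{G'}_{M'}(f'))$. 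For this I would invoke the middle rectangle of the diagram \ref{compatibility by W}.(5), which yields $\zeta^{G'}_{M'}\circ\{w_1\}'=\{w_1\}_{M'}\circ\zeta^{G'}_{M'}$, together with the middle rectangle of \ref{compatibility by W'}.(1), which yields $\zeta^{G'}_{M'}\circ[\ddot w]'=[\ddot w]_{M'}\circ\zeta^{G'}_{M'}$ for $\ddot w\in W'$. Finally, the $M'$-analogue of the relation $\{w'w\}'=[w']'_w\circ\{w\}'$ from the first identity of Remark~1 in \ref{conjugation by W}, applied with $w'=\ddot w$ and $w=w_1$, gives $\{\ddot w w_1\}_{M'}=[\ddot w]_{M'}\circ\{w_1\}_{M'}$ --- this identity is verified by the characterization of both sides by their action on principal series, using that $\chi$ is $W_{\chi_0}$--invariant and $\ddot w\in W'\subset W_{\chi_0}$.

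Finally I would carry out the reindexing. As $(w_1,\ddot w)$ ranges over $W\times(W_{M'}\backslash W')$, the product $\ddot ww_1$ ranges over $W$ with each element hit exactly $\vert W'\vert/\vert W_{M'}\vert$ times (for fixed $w_2=\ddot ww_1$, each choice of $\ddot w$ determines $w_1=\ddot w^{-1}w_2$ uniquely). Therefore the double sum collapses to $(\vert W'\vert/\vert W_{M'}\vert)\sum_{w_2\in W}{\bf SO}_\delta^{M'}(\{w_2\}_{M'}(\zeta^{G'}_{M'}(f')))$, and after dividing by $\vert W'\vert$ we obtain precisely the expression produced in the first paragraph, which completes the proof. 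The main obstacle is really the compatibility $\{\ddot ww_1\}_{M'}=[\ddot w]_{M'}\circ\{w_1\}_{M'}$: the definitions of $\{\cdot\}_{M'}$ and $[\cdot]_{M'}$ live in parallel frameworks (the Bernstein center of $M'$ versus conjugation of Hecke algebras), and one has to trace carefully through the characterizations by action on principal series --- as was done for $G'$ in Section \ref{conjugation by W} --- to see that they fit together as the $M'$-version of that identity, the rest being bookkeeping.
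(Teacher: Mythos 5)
Your proposal is correct and follows essentially the same route as the paper's proof: combine the two descent lemmas, establish the compatibility $\zeta^{G'}_{M'}({^{\ddot w}(\{w_1\}'(f'))})=\{\ddot w w_1\}_{M'}\circ\zeta^{G'}_{M'}(f')$ (equality (11) in the paper), and reindex the resulting double sum against the hypothesis. The paper organizes the compatibility step slightly differently --- it factors through $\{\ddot w w_1\}'=[\ddot w]'_{w_1}\circ\{w_1\}'$ together with the observation $[{^{\ddot w}(\rho'_{w_1})}]_{M'}=[\rho'_{\ddot w w_1}]_{M'}$, rather than through the conjugation diagrams you cite (whose stated domains are $\ES{Z}(G'(F),1_{\ES{I}'})$ rather than $\ES{Z}(G'(F),\rho'_{w_1})$) --- but the underlying facts, including the principal-series characterization you flag as the main obstacle, are the same.
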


\begin{proof}For $w_1\in W$, we put $f'_{w_1}= \{w_1\}'(f')\in \ES{Z}(G'(F), \rho'_{w_1})$. From the lemmas 1 and 2, we have to prove the equality
$$
\vert W'\vert^{-1}\sum_{w_1\in W}\sum_{w'}
{\bf SO}_\delta^{M'} (\zeta^{G'}_{M'}({^{\ddot{w}'}\!(f'_{w_1})}))=\omega'_\chi(\delta)\sum_w\wt{\bf O}_\delta^M(\zeta^G_M({^{\dot{w}}\!f}))\leqno{(9)}
$$
where $w'$ runs over the elements of $W_{M'}\backslash W'$ and $w$ runs over the elements of $W_M\backslash W$. For $w'\in W_{M'}\backslash W'$, the function ${^{\ddot{w}'}\!(f'_{w_1})}$ belongs to $\ES{Z}(G'(F),{^{\ddot{w}'}\!(\rho'_{w_1})})$ where 
${^{\ddot{w}'}\!(\rho'_{w_1})}$ is the character $ \rho'_{w_1}\circ {\rm Int}_{\ddot{w}'^{-1}}$ of ${^{\ddot{w}'}\!\ES{I}'}$, 
and we have (by definition)
$$
\zeta^{G'}_{M'}({^{\ddot{w}'\!}(f'_{w_1})})= [{^{\ddot{w}'\!}(f'_{w_1})}]^{(P',{^{\ddot{w}}\ES{I}'})}.
$$ 
Since ${^{\ddot{w}'}\!\chi_0}=\chi_0$, ${^{\ddot{w}'}\!(\rho'_{w_1})}$ is the inflation of the character ${^{\ddot{w}'}\!(\chi_{w_1,0})}= \chi_{\ddot{w}w_1,0}$ of $\ES{T}$ to the Iwahori subgroup 
${^{\ddot{w}}\ES{I}'}$ of $G'(F)$, and since ${^{\ddot{w}}\ES{I}'}\cap M' = \ES{I}'\cap M' = \ES{I}_{M'}$, $[{^{\ddot{w}'}\!(\rho'_{w_1})}]_{M'}$ is the inflation of 
the character $\chi_{\ddot{w}w_1,0}$ of $\ES{T}$ to $\ES{I}_{M'}$. In other words, we have
$$
[{^{\ddot{w}'}\!(\rho'_{w_1})}]_{M'} = [\rho'_{\ddot{w}'w_1}]_{M'},\leqno{(10)}
$$
and the function $\zeta^{G'}_{M'}({^{\ddot{w}'\!}(f'_{w_1})})$ belongs to $\ES{Z}(M'(F),[\rho'_{\ddot{w}'w_1}]_{M'})$. 
We also have the injective morphism of 
${\Bbb C}$--algebras
$$
\{\ddot{w}'w_1\}_{M'}=\{\ddot{w}'w_1\}_{M'\!,\chi}: \ES{Z}(M'(F),1_{\ES{I}_{M'}})\rightarrow \ES{Z}(M'(F), [\rho'_{\ddot{w}'w_1}]_{M'}),
$$
and the function $\{\ddot{w}'w_1\}_{M'}\circ \zeta^{G'}_{M'}(f')$ belongs to $\ES{Z}(M'(F), [\rho'_{\ddot{w}'w_1}]_{M'})$. Recall that we have defined in \ref{conjugation by W} an 
isomorphism of ${\Bbb C}$--algebras
$$
[\ddot{w}]'_{w_1}: \ES{Z}(G'(F), \rho'_{w_1})\rightarrow \ES{Z}(G'(F),\rho'_{\ddot{w}w_1})
$$
such that
$$
\{\ddot{w}w_1\}'= [\ddot{w}]'_{w_1}\circ \{w_1\}'.
$$
From (10), we have
$$
\zeta^{G'}_{M'} \circ \{\ddot{w}w_1\}'=\zeta^{G'}_{M'} \circ [\ddot{w}]'_{w_1} \circ \{w_1\}'= 
\zeta^{G'}_{M'}\circ [\ddot{w}]'\circ \{w_1\}'.
$$
Since
$$
\zeta^{G'}_{M'} \circ \{\ddot{w}w_1\}'=\{\ddot{w}'w_1\}_{M'}\circ \zeta^{G'}_{M'},
$$
we obtain the equality
$$
\zeta^{G'}_{M'}({^{\ddot{w}'\!}(f'_{w_1})})=\{\ddot{w}'w_1\}_{M'}\circ \zeta^{G'}_{M'}(f').\leqno{(11)}
$$

On the other hand, $\ddot{w}'w_1$ can be written as $\ddot{w}'w_1= w_M \dot{w}$ for a unique $w\in W_M\backslash W$ and a unique $w_M\in W_M$. 
The map
$$
W \times (W_{M'}\backslash W') \rightarrow (W_M\backslash W) \times W_M, \, (w_1,w')\mapsto (w,w_M)
$$
defined by $\ddot{w}'w_1= w_M\dot{w}$ is surjective, with fibers of cardinality $\vert W_{M'}\backslash W'\vert$. 
Thus from (11), the left hand side of (9) is equal to
$$
\vert W'\vert^{-1} \vert W_{M'}\backslash  W'\vert \sum_{w}\sum_{w_M} 
{\bf SO}_\delta^{M'}(\{w_M\dot{w}\}_{M'}\circ \zeta^{G'}_{M'}(f'))\leqno{(12)}
$$
where $w$ runs over the elements of $W_M\backslash W$ and $w_M$ runs over the elements of $W_M$. 
The hypothesis of the proposition implies that (12) is equal to the right hand side of (9).
\end{proof}

\subsection{About the hypothesis in the proposition of \ref{a result of descent}}\label{about the hypothesis}

Let us continue with the hypothesis and notations of \ref{a result of descent}. Let $f\in \ES{Z}(G(F),\rho)$ and $f'=\zeta_\chi(f)\in \ES{Z}(G'(F),1_{\ES{I}'})$, and 
let $w$ be an element of $W_M\backslash W$. The function $\phi_{\dot{w}} = \zeta^G_M({^{\dot{w}}\!f})$ belongs to $\ES{Z}(M(F), [{^{\dot{w}}\!\rho}]_M)$ where 
$[{^{\dot{w}}\!\rho}]_M$ is the inflation of the character ${^{\dot{w}}\!\chi_0}$ of $\ES{T}$ to the Iwahori subgroup $\ES{I}_M$ of $M(F)$. Replacing 
$(G,\chi_0)$ by $(M,{^{\dot{w}}\!\chi_0})$ in the definition of $G'$, as in \ref{some results of Roche} --- cf. also 
the remark of \ref{compatibility by W} --- the character ${^{\dot{w}}\!\chi_0}$ of $\ES{T}$ defines a $F$--split connected reductive 
group $M'_{\dot{w}}$ with maximal $F$--split torus $T'_{\dot{w}}= T$, 
an Iwahori subgroup $\ES{I}_{M'_{\dot{w}}}$ of $M'_{\dot{w}}(F)$, and a $F$--group $\wt{M}'_{\dot{w}} = M'_{\dot{w}} \rtimes C^M_{^{\dot{w}}\!\chi_0}$. The group 
$M'_{\dot{w}}$ is an endoscopic group of $M$. We have an injective morphism of ${\Bbb C}$--algebras
$$
\zeta^{M}_{^{\dot{w}}\!\chi}=\zeta^{M,\ES{I}_M}_{^{\dot{w}}\!\chi}: \ES{Z}(M(F),[{^{\dot{w}}\!\rho}]_M) \rightarrow \ES{Z}(M'_{\dot{w}}(F),1_{\ES{I}_{M'_{\dot{w}}}}),
$$
and for each $w_M\in W_M$, we have an injective morphism of ${\Bbb C}$--algebras
$$
\{w_M\}_{M'_w,{^{\dot{w}}\!\chi}}: \ES{Z}(M'_{\dot{w}}(F),1_{\ES{I}_{M'_{\dot{w}}}})\rightarrow 
\ES{Z}(M'_{\dot{w}}(F),({^{\dot{w}}\!\rho})'_{w_M})
$$
where $({^{\dot{w}}\!\rho})'_{w_M}$ is the inflation of the character ${^{w_M}\!({^{\dot{w}}\!\chi_0})} ({^{\dot{w}}\!\chi_0})^{-1}$ of $\ES{T}$ to ${\ES{I}_{M'_{\dot{w}}}}$. 
The theorem of \ref{the theorem} for $M$ and the character ${^{\dot{w}}\!\chi_0}$ of 
$\ES{T}$ says that the function 
$$
\phi'_{\dot{w}}= \vert W_{M'_{\dot{w}}}\vert^{-1} \sum_{w_M\in W_M} \{w_M\}_{M'_{\dot{w}},{^{\dot{w}}\!\chi}} \circ \zeta_{^{\dot{w}}\!\chi}^M(\phi_{\dot{w}})
$$
is a transfer of $\phi_{\dot{w}}$. In other words, for all $M(F)$--regular elements $\delta\in M'_{\dot{w}}(F)$ we have
$$
\omega_{^{\dot{w}}\!\chi}^{M'_{\dot{w}}}(\delta)^{-1}{\bf SO}_\delta^{M'_{\dot{w}}}(\phi'_{\dot{w}})= \sum_\gamma \Delta_{M,{^{\dot{w}}\!\chi_0}}(\delta,\gamma) {\bf O}_\gamma^M(\phi_{\dot{w}})\leqno{(1)}
$$
where $\gamma$ runs over the semisimple (regular) elements in $M(F)$ corresponding to $\delta$, modulo conjugation by $M(F)$. Here $\Delta_{M,{^{\dot{w}}\!\chi_0}}(\delta,\gamma)$ is the canonical transfer factor defined by the hyperspecial compact subgroup $\ES{K}_M= \ES{K}\cap M$ of $M(F)$, and $\omega_{^{\dot{w}}\!\chi}^{M'_{\dot{w}}}$ is the character 
of $M'_{\dot{w}}(F)$ associated to ${^{\dot{w}}\!\chi}$ as in \ref{the theorem}.

Instead of working with the endoscopic group $M'_{\dot{w}}$ of $M$, we can replace $M$ by 
$M_{\dot{w}}= {\rm Int}_{\dot{w}^{-1}}(M)$ and consider the endoscopic group $(M_{\dot{w}})'$ of $M_{\dot{w}}$ defined by $\chi_0$ replacing $M$ by $M_{\dot{w}}$. 
Note that $M_{\dot{w}}$ is the semistandard $F$--Levi subgroup of $G$ associated to the root datum $(X,\dot{w}^{-1}(\Phi_M),\check{X},\dot{w}^{-1}(\check{\Phi}_M))$. Since
$$
{^{\dot{w}^{-1}}\!(\ES{I}_M)}= {^{\dot{w}^{-1}}({^{\dot{w}}\ES{I}} \cap M)}= \ES{I} \cap {M_{\dot{w}}} = \ES{I}_{M_{\dot{w}}},
$$
the character $[{^{\dot{w}}\!\rho}]_M\circ {\rm Int}_{\dot{w}}$ of $\ES{I}_{M_{\dot{w}}}$ coincides with $\rho_{M_{\dot{w}}}$ --- 
the inflation of the character $\chi_0$ of $\ES{T}$ to the Iwahori subgroup $\ES{I}_{M_{\dot{w}}}$ of $M_{\dot{w}}(F)$. 
The map
$$
C^\infty_{\rm c}(M(F))\rightarrow C^\infty_{\rm c}(M_{\dot{w}}(F)),\, \phi \mapsto {^{\dot{w}^{-1}}\!\phi}= \phi \circ {\rm Int}_{n_{\dot{w}}}
$$
induces by restriction an isomorphism of ${\Bbb C}$--algebras
$$
[\dot{w}^{-1}]^M_{M_{\dot{w}}}: \ES{Z}(M(F), [{^{\dot{w}}\!\rho}]_M)\rightarrow \ES{Z}(M_{\dot{w}}(F),\rho_{M_{\dot{w}}}).
$$
Replacing $M$ by $M_{\dot{w}}$ in the definition of $M'$, the character $\chi_0$ of $\ES{T}$ defines 
a $F$--split connected reductive group $(M_{\dot{w}})'$ with maximal 
$F$--split torus $(T_{\dot{w}})'= T$, an Iwahori subgroup $\ES{I}_{(M_{\dot{w}})'}$ of $(M_{\dot{w}})'(F)$, 
and a $F$--group $(M_{\dot{w}})'\rtimes C^{M_{\dot{w}}}_{\chi_0}$. 
The root datum $(X, \Phi_{(M_{\dot{w}})'},\check{X},\check{\Phi}_{(M_{\dot{w}})'})$ of $(M_{\dot{w}})'$ is given by 
$$
\Phi_{(M_{\dot{w}})'}= \dot{w}^{-1}(\Phi_M)\cap \Phi',\quad \check{\Phi}_{(M_{\dot{w}})'}= \dot{w}^{-1}(\check{\Phi}_M)\cap \check{\Phi}'.
$$
As in the remark 2 of \ref{conjugation by W}, $\dot{w}$ induces by transport of structure an $F$--isomorphism $(M_{\dot{w}})'\rtimes C^{M_{\dot{w}}}_{\chi_0}
\buildrel\simeq\over{\longrightarrow} \wt{M}'_{\dot{w}}$ which allows us to identify $(M_{\dot{w}})'(F)$ with $M'_{\dot{w}}(F)$ and $\ES{I}_{(M_{\dot{w}})'}$ 
with $\ES{I}_{M'_{\dot{w}}}$. For a semisimple regular elements $\gamma\in M(F)$, writing $\gamma_{\dot{w}}= n_{\dot{w}}^{-1}\gamma n_{\dot{w}}$, 
we have (by transport of structure)
$$
{\bf O}^M_\gamma(\phi_{\dot{w}})= {\bf O}^{M_{\dot{w}}}_{\gamma_{\dot{w}}}([\dot{w}^{-1}]^M_{M_{\dot{w}}}(\phi_{\dot{w}})).
$$
Hence for all $M$--regular elements $\delta\in M'_{\dot{w}}(F)= (M_{\dot{w}})'(F)$ and for all semisimple (regular) elements $\gamma\in M(F)$ 
corresponding to $\delta$, the canonical transfer factor $\Delta_{M,{^{\dot{w}}\!\chi_0}}(\delta,\gamma)$ is given by
$$
\Delta_{M,{^{\dot{w}}\!\chi_0}}(\delta,\gamma)= \Delta_{M_{\dot{w}}}(\delta,\gamma_{\dot{w}}).\leqno{(2)}
$$
Here $\Delta_{M_{\dot{w}}}(\delta,\gamma_{\dot{w}})= \Delta_{M_{\dot{w}},\chi_0}(\delta,\gamma_{\dot{w}})$ is the canonical transfer factor defined 
by the hyperspecial compact subgroup $\ES{K}_{M_{\dot{w}}}= \ES{K}\cap M_{\dot{w}}$ of $M_{\dot{w}}(F)$ --- since 
the representative element $n_{\dot{w}}$ of $\dot{w}$ belongs to $\ES{K}\cap N_G(T)$, we have 
$$
\ES{K}_{M_{\dot{w}}}= n_{\dot{w}}^{-1} \ES{K}_M n_{\dot{w}}.
$$
On the other hand, the character $\omega_\chi^{(M_{\dot{w}})'}$ of $(M_{\dot{w}})'(F)$ defined by $\chi$ coincides with the character 
$\omega_{^{\dot{w}}\!\chi}^{M'_{\dot{w}}}$ of $M'_{\dot{w}}(F)$ defined by ${^{\dot{w}}\!\chi}$.

\vskip1mm
Unfortunately, the formula (1) and the calculation of the transfer factor (2) are not sufficient to prove the hypothesis of the proposition of \ref{a result of descent}: 
we need a formula analogous to (1), but in terms of $M'$ instead of $M'_{\dot{w}}$ (recall that the groups $M'$ and $M'_{\dot{w}}$ are in general {\it not} isomorphic). 
This is why we have introduced in \ref{variant with two characters} the ``variant with two characters'' of the theorems of \ref{the theorem}.

\begin{prop}
If the theorem of \ref{variant with two characters} is true for $M$ (and for all pairs $(\psi_0,\chi_0)$ of depth zero characters of $\ES{T}$), then 
the hypothesis of the proposition of \ref{a result of descent} 
is verified.
\end{prop}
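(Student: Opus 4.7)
To verify the hypothesis I would proceed one Kostant representative at a time and sum at the end. Fix $\dot{w}$ for each $w\in W_M\backslash W$, and set $\phi_{\dot{w}} = \zeta^G_M({^{\dot{w}}\!f})$ and $f'=\zeta_\chi(f)$. My first observation is that $\phi_{\dot{w}}$ lies in $\ES{Z}(M(F),\rho_{M,\psi_0^{(\dot{w})}\chi_0})$ for the pair of depth zero characters $\psi_0^{(\dot{w})}:=\chi_{\dot{w},0}=({^{\dot{w}}\!\chi_0})\chi_0^{-1}$ and $\chi_0$, because $[{^{\dot{w}}\!\rho}]_M$ is by definition the inflation to $\ES{I}_M$ of $\psi_0^{(\dot{w})}\chi_0={^{\dot{w}}\!\chi_0}$. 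The plan is then to apply the assumed theorem of \ref{variant with two characters} for $M$ to $\phi_{\dot{w}}$ with this pair, and to reinterpret its conclusion in terms of the morphisms $\{w_M\dot{w}\}_{M'}\circ\zeta^{G'}_{M'}$ that appear on the left-hand side of the desired identity.

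Invoking the theorem, the function $\bs{\xi}^M_{\chi_0}(\phi_{\dot{w}})=(\omega^{M'}_\chi)^{-1}\vert W_{M'}\vert^{-1}\sum_{w_M\in W_M}\zeta^M_{w_M,\chi}({^{w_M}\!\phi_{\dot{w}}})$ is a transfer of $\phi_{\dot{w}}$. Since $\omega^{M'}_\chi=\omega'_\chi\vert_{M'(F)}$ is constant on stable conjugacy classes (remark 1 of \ref{the theorem}), this is equivalent to
\[
\vert W_{M'}\vert^{-1}\sum_{w_M\in W_M}{\bf SO}^{M'}_\delta\bigl(\zeta^M_{w_M,\chi}({^{w_M}\!\phi_{\dot{w}}})\bigr) = \omega'_\chi(\delta)\,\wt{\bf O}^M_\delta(\phi_{\dot{w}})
\]
for every strongly $G$-regular $\delta\in M'(F)$ (which is a fortiori strongly $M$-regular). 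Summing over $w\in W_M\backslash W$ will produce the hypothesis of \ref{a result of descent} provided we establish the pointwise identification
\[
\zeta^M_{w_M,\chi}({^{w_M}\!\phi_{\dot{w}}}) = \{w_M\dot{w}\}_{M'}\circ\zeta^{G'}_{M'}(f') \quad\text{in } \ES{Z}(M'(F),[\rho'_{w_M\dot{w}}]_{M'}).
\]

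This identification is the main technical obstacle; I would prove it by comparing both sides via their actions on principal series $i^{M'}_{B'_{M'}}(\xi)^{[\rho'_{w_M\dot{w}}]_{M'}}$ for unramified characters $\xi$ of $T(F)$. A short computation using $\psi_0^{(\dot{w})}=\chi_{\dot{w},0}$ gives $({^{w_M}\!\psi_0^{(\dot{w})}})\chi_{w_M,0}=\chi_{w_M\dot{w},0}$, so both sides indeed lie in the same algebra and act on principal series in the Bernstein component $\mathfrak{s}'_{M',w_M\dot{w}}$. On the right-hand side, \ref{Bernstein center}.(4) (for $G'$), the analogue of \ref{parabolic descent}.(1) for the Levi $M'$ of $G'$, and the principal-series characterization of $\{w_M\dot{w}\}_{M'}$ from \ref{compatibility by W} give scalar $\lambda'_\xi(f')=\lambda_{\xi\chi}(f)$ on the series with parameter $\{w_M\dot{w}\}_{M'}(\xi)$. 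On the left-hand side, writing $\phi_{\dot{w}}=[\dot{w}]_M(\zeta^G_M(f))$ via \ref{compatibility by W}.(3), then using \ref{variant with two characters}.(4) for $M$ together with $\{w_M\dot{w}\}_{M'}(\xi)\chi={^{w_M\dot{w}}(\xi\chi)}$ and the $W$-invariance of $\lambda$ on $\mathfrak{X}(\mathfrak{s})$, the action is again by $\lambda_{{^{w_M\dot{w}}(\xi\chi)}}(f)=\lambda_{\xi\chi}(f)$. Matching scalars on every principal series in the relevant Bernstein component forces the equality in the center.

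Once the identification is established, substituting it into the transfer identity for each $\phi_{\dot{w}}$ and summing over $w\in W_M\backslash W$ yields
\[
\vert W_{M'}\vert^{-1}\sum_{w}\sum_{w_M\in W_M}{\bf SO}^{M'}_\delta(\{w_M\dot{w}\}_{M'}\circ\zeta^{G'}_{M'}(f')) = \omega'_\chi(\delta)\sum_{w}\wt{\bf O}^M_\delta(\zeta^G_M({^{\dot{w}}\!f})),
\]
which is exactly the hypothesis of the proposition of \ref{a result of descent}, for every strongly $G$-regular $\delta\in M'(F)$.
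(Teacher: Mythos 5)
Your proof is correct and follows essentially the same path as the paper's: reduce the hypothesis to the morphism identity $\zeta^M_{w_M,\chi}({^{w_M}\!\phi_{\dot{w}}}) = \{w_M\dot{w}\}_{M'}\circ\zeta^{G'}_{M'}(f')$, then invoke the theorem of \ref{variant with two characters} for $M$ with the pair $(\chi_{\dot{w},0},\chi_0)$ and sum over $W_M\backslash W$. The only cosmetic difference is that you verify the key identity by matching scalars on principal series, whereas the paper checks the equivalent dual statement that the two induced surjections $\mathfrak{X}(\mathfrak{s}_{M'\!,\chi_{w_M\dot{w},0}})\rightarrow \mathfrak{X}(\mathfrak{s}_{M,\chi_0})$ coincide by definition; these are the same argument read through the dictionary of \ref{Bernstein center}.
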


\begin{proof}
Let $f\in \ES{Z}(G(F), \rho)$ and $f'=\zeta_\chi(f)\in \ES{Z}(G'(F),1_{\ES{I}'})$. Let us prove that for all $w\in W_M\backslash W$ 
and $w_M\in W_M$, we have
$$
\{w_M\dot{w}\}_{M'\!,\chi} \circ \zeta^{G'}_{M'}(f') = \zeta^M_{w_M, \chi}\circ {^{w_M}\!(\zeta^G_M({^{\dot{w}}\!f}))}\leqno{(3)}
$$
where
$$
\zeta^M_{w_M,\chi}: \ES{Z}(M(F),{^{w_M}\!( [{^{\dot{w}}\!\rho}]_M)}) \rightarrow \ES{Z}(M'(F), [\rho'_{w_M\dot{w},\chi_0}]_{M'})
$$
is the morphism of ${\Bbb C}$--algebras defined in \ref{variant with two characters} taking $\psi_0= \chi_{\dot{w},0}\;(=({^{\dot{w}}\!\chi_0})\chi_0^{-1})$), 
$[{^{\dot{w}}\!\rho}]_M$ is the inflation of the character ${^{\dot{w}}\!\chi_0}= \psi_0\chi_0$ of $\ES{T}$ to $\ES{I}_M$, and $[\rho_{w_M\dot{w},\chi_0}]_{M'}$ is the inflation of the character 
$({^{w_M}\!\psi_0}) \chi_{w_M,0} = \chi_{w_M\dot{w},0}\;(= ({^{w_M\dot{w}}\!\chi_0})\chi_0^{-1})$ of $\ES{T}$ to $\ES{I}_{M'}$. Since
$$
\{w_M\dot{w}\}_{M'\!,\chi} \circ \zeta^{G'}_{M'}(f') = \{w_M\dot{w}\}_{M'} \circ \zeta^M_\chi \circ \zeta^G_M(f)
$$
and
$$
\zeta^M_{w_M, \chi}\circ {^{w_M}\!(\zeta^G_M({^{\dot{w}}\!f}))}= \zeta^M_{w_M, \chi}\circ [w_M\dot{w}]_M \circ \zeta^G_M(f),
$$
we are reduced to prove the equality
$$
\{w_M\dot{w}\}_{M'} \circ \zeta^M_\chi =  \zeta^M_{w_M, \chi}\circ [w_M\dot{w}]_M\leqno{(4)}
$$
as morphisms of ${\Bbb C}$--algebras $\ES{Z}(M(F), \rho_M)\rightarrow \ES{Z}(M'(F), [\rho_{w_M\dot{w},\chi_0}]_{M'}$. But (4) follows quite directly from the definitions: the 
injective composition morphisms of ${\Bbb C}$--algebras of the left hand side and the right hand side of (3) are given by two 
surjective morphisms of algebraic varieties
$$
\mathfrak{X}(\mathfrak{s}_{M'\!,\chi_{w_M\dot{w},0}})\rightarrow \mathfrak{X}(\mathfrak{s}_{M,\chi_0})
$$ 
which (by definition) coincide; where $\mathfrak{s}_{M,\chi_0}= [T,\chi]_M$ is the $M(F)$--inertial class of the cuspidal pair $(T,\chi)$ of $M(F)$ and 
$\mathfrak{s}_{M'\!,\chi_{w_M\dot{w},0}}= [T, \chi_{w_M\dot{w}}]_{M'}$ is the $M'(F)$--inertial class of the cuspidal pair 
$(T, \chi_{w_M\dot{w}})$ of $M'(F)$.
Now if the theorem of \ref{variant with two characters} is true for $M$ (and for all pairs $(\psi_0,\chi_0)$ of depth zero characters of $\ES{T}$), then for all 
$w\in W_M\backslash W$, by (3) the function
$$
(\omega^{M'}_\chi)^{-1}\vert W_{M'}\vert^{-1} \sum_{w_M\in W_M} \{w_M\dot{w}\}_{M'} \circ \zeta^{G'}_{M'}(f')
$$
is a transfer of 
$\zeta^G_M({^{\dot{w}}\!f})$. Taking the sum over all the elements $w\in W_M\backslash W$, we obtain that the hypothesis of the proposition of \ref{a result of descent} 
is true.
\end{proof}

\begin{cor}
Suppose $M'$ is an elliptic endoscopic group of $M$. If the theorem* of \ref{variant with two characters} is true for $M$ (and for all pairs of depth zero characters 
$(\psi_0,\chi_0)$ of $\ES{T}$), then for all strongly $G$--regular elements $\delta\in M'(F)$ which are $M$--elliptic, the hypothesis of the proposition of \ref{a result of descent} 
is verified.
\end{cor}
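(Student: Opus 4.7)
The plan is to reuse the argument of the preceding proposition verbatim, and only weaken the input at the very last step: replace the invocation of the theorem of \ref{variant with two characters} (which gives matching at \emph{all} strongly regular orbital integrals) by the theorem* of \ref{variant with two characters} (which only gives matching at \emph{elliptic} strongly regular orbital integrals). Since we are restricting to $\delta$ that are $M$-elliptic, this weaker input is exactly what is needed.

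More precisely, first I would retain the algebraic identity
$$
\{w_M\dot{w}\}_{M'\!,\chi} \circ \zeta^{G'}_{M'}(f') \;=\; \zeta^M_{w_M,\chi}\circ {^{w_M}\!(\zeta^G_M({^{\dot{w}}\!f}))}
$$
established in the proof of the proposition: this is a pure Hecke-algebra statement and holds without any restriction on $\delta$. Next, I fix a strongly $G$-regular element $\delta\in M'(F)$ that is $M$-elliptic. Using the hypothesis that $M'$ is elliptic in $M$, every $\gamma\in M(F)$ corresponding to $\delta$ is automatically $M$-elliptic (cf.\ the footnote of \ref{the theorem}), so both the stable orbital integral ${\bf SO}^{M'}_\delta$ and the endoscopic sum $\wt{\bf O}^M_\delta$ involve only elliptic elements. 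Applying ${\bf SO}^{M'}_\delta$ to both sides of the identity above, summing over $w_M\in W_M$, and invoking the theorem** of \ref{variant with two characters} for $(M,M')$ with the pair of depth-zero characters $(\psi_0,\chi_0)=(\chi_{\dot{w},0},\chi_0)$, I obtain
$$
\sum_{w_M\in W_M}{\bf SO}^{M'}_\delta\bigl(\{w_M\dot{w}\}_{M'}\circ \zeta^{G'}_{M'}(f')\bigr) \;=\; |W_{M'}|\,\omega^{M'}_\chi(\delta)\,\wt{\bf O}^M_\delta(\zeta^G_M({^{\dot{w}}\!f})),
$$
using that $\omega^{M'}_\chi = \omega'_\chi\vert_{M'(F)}$.

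Summing over $w\in W_M\backslash W$ and dividing by $|W_{M'}|$ produces precisely the identity in the hypothesis of the proposition of \ref{a result of descent}. The only step where ellipticity plays a role is in the invocation of theorem**, and the compatibility of the ellipticity hypotheses (ellipticity of $\delta$ in $M'$, ellipticity of its corresponding $\gamma$'s in $M$) is guaranteed by the hypothesis that $M'$ is an elliptic endoscopic group of $M$. I expect no genuine obstacle: the entire content of the corollary is the observation that the algebraic identity (3) in the preceding proof is distributional in $\delta$, so restricting to the $M$-elliptic locus allows one to weaken ``theorem'' to ``theorem**'' without altering any other part of the argument.
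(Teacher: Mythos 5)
Your proof is correct and matches what the paper intends: the corollary has no separate proof in the paper precisely because it follows immediately from the proposition's argument by restricting to the $M$-elliptic locus, exactly as you spell out. The key observations you make — that identity (3) is a pure Hecke-algebra identity independent of $\delta$, and that the ellipticity of $M'$ in $M$ forces all $\gamma$ in $M(F)$ corresponding to an $M$-elliptic $\delta$ to be $M$-elliptic as well (footnote of \ref{the theorem}) — are exactly the points that justify replacing the theorem by its elliptic version. One minor notational slip: what you call ``theorem**'' should be ``theorem*'' of \ref{variant with two characters} (the paper reserves the double-star label for the introduction's informal statement; within section \ref{variant with two characters} the elliptic two-character version is theorem*), but this is purely a labeling issue and the content is right.
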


\subsection{An example}\label{an example}
Let us give the simplest example illustrating the result of descent: the case of a depth zero regular character $\chi_0$ of $\ES{T}$. In that case we have $G'=T'=T$. Let $f\in \ES{Z}(G(F),\rho)$. For $w\in W$ and $t\in T(F)$, we have
$$
\zeta^G_T({^w\!f})(t)= ({^w\!f})^{(B,{^w\ES{I}})}(t)=\delta_B^{1/2}(t)\int_{U(F)}{^w\!f}(t u ){^w\!du}
$$
where ${^w\!du}$ is the Haar measure on $U(F)$ which gives the volume $1$ to ${^w\ES{I}}\cap U$. 
Let $n_w$ be a representative element of $w$ in $N\cap \ES{K}$. Put $B_1= {^{w^{-1}}\!B}$ and $U_1= {^{w^{-1}}\!U}$. 
The map
$$
U(F)\rightarrow U_1(F), u \mapsto {\rm Int}_{n_w^{-1}}(u)
$$ 
is an isomorphism of $\mathfrak{p}$--adic varieties, with Jacobian $1$. Using
$$
\delta_B^{1/2}(t)\delta_{B_1}^{-1/2}({^{w^{-1}}\!t})=1,
$$
we obtain
$$
\zeta^G_T({^w\!f})(t)= \delta_{B}^{1/2}(t)\int_{U_1(F)}f(({^{w^{-1}}\!t}) u_1)du_1= 
f^{(B_1,\ES{I})}({^{w^{-1}}\!t})=\zeta^G_T(f)({^{w^{-1}}\!t})\leqno{(1)}
$$
where $du_1$ is the Haar measure on $U_1(F)$ which gives the volume $1$ to $\ES{I}\cap U_1$. Note that the above equality is just 
$\zeta^G_T\circ [w]= [w]_T \circ \zeta^G_T$.

On the other hand, for a $G$--regular element $t\in T(F)$, we have (\ref{a result of descent}.(7))
$$
\vert D_{G}(t) \vert_F^{-1/2}\Delta(t,t)= \Delta_T(t,t)=1,\leqno{(2)}
$$
and $\omega'_\chi = \chi$. From (2), the lemma 1 of \ref{a result of descent} gives
$$
\Delta(t,t){\bf O}_t(f) =  \sum_{w\in W}\zeta^G_T({^w\!f})(t).\leqno{(3)}
$$
Since
$$
\chi\cdot \zeta^G_T({^w\!f})= 
\zeta_{w,\chi}^T\circ \zeta^G_T({^w\!f})= \zeta_{w,\chi}({^w\!f}),\quad w\in W,$$
we obtain
$$
\chi(t)\wt{\bf O}_t(f)= \sum_{w\in W}\zeta_{w,\chi}({^w\!f})(t)= \bs{\zeta}_\chi(f)(t), \quad t\in T(F).\leqno{(4)}
$$
This proves the theorem of \ref{the theorem} in this case. Note that in this case, we do not need the ``variant with two characters'' for $T$. Note also that for 
$w\in W$, 
we have
$$
\zeta_{w,\chi}({^w\!f})(t) = \chi(t) \zeta^G_T(f)({^{w^{-1}}\!t})=\chi_w(t)^{-1} \zeta_\chi(f)({^{w^{-1}}\!t}), \quad t\in T(F).
$$
Thus in this case, the transfer $\bs{\xi}_{\chi_0}(f)=(\omega'_\chi)^{-1}\bs{\zeta}_\chi(f)\in C^\infty_{\rm c}(T(F))$ of $f$ is given by
$$
\bs{\xi}_{\chi_0}(f) = \sum_W \chi^{-1} \zeta_{w,\chi}({^w\!f})=\sum_{w\in W} {^w(\chi^{-1}\zeta_\chi)}.\leqno{(5)}
$$

\subsection{Reduction to the theorem* of \ref{variant with two characters}}\label{reduction to the variant}
Let us prove that the theorem of \ref{the theorem} is implied by the theorem* of \ref{variant with two characters}. 

We start with a remark on the condition of ellipticity for $G'$. Suppose the Weyl group $W'$ of $G'$ is contained in the Weyl group $W_M=N_M(T)/T$ of $M$ for 
a semi--standard $F$--Levi factor $M$ in $G$. Then we have $G'=M'$. Suppose moreover that $M$ is the {\it minimal} semi--standard $F$--Levi factor in $G$ such that 
$W'\subset W_M$, i.e. $M$ is the intersection of all the semi--standard $F$--Levi factors in $G$ containing $W'$. Let $A_M$ be the maximal central $F$--split torus in $M$. Since $M$ is $F$--split, $A_M$ coincides with the connected component $Z(M)^\circ$ of $Z(M)$. We put also $A_{G'}=Z(G')^\circ$. The inclusion $Z(M)\subset Z(G')$ induces an inclusion $A_{M}\subset A_{G'}$. The centralizer $L= Z_G(A_{G'})$ of $A_{G'}$ in $G$ is a semi--standard $F$--Levi factor in $G$, verifying the inclusions $W'\subset L \subset M$. Thus $L=M$, $A_M = A_{G'}$, and $G'$ is an elliptic endoscopic group of $M$. Reciprocally, for a semi--standard $F$--Levi factor $M$ in $G$, the equality $A_M=A_{G'}$ implies that $G'$ is an elliptic endoscopic group of $M$. 

Now let $f\in \ES{Z}(G(F),\rho)$. We want to prove that $f$ and
$$
\bs{\xi}_{\chi_0}(f) = (\omega'_\chi)^{-1}\vert W'\vert^{-1} \sum_{w\in W} \zeta_{w,\chi}({^w\!f})
$$
have matching strongly $G$--regular orbital integrals, i.e. that $\bs{\xi}_{\chi_0}(f)$ is a transfer of $f$.
So let $\delta\in G'(F)$ be a strongly $G$--regular element. Suppose $\delta$ belongs to $M'(F)$ for a semi--standard $F$--Levi factor $M$ in $G$. 
If the theorem of \ref{variant with two characters} is true for $M$, then the propositions of \ref{a result of descent} and \ref{about the hypothesis} imply that we have 
$$
{\bf SO}_\delta(\bs{\zeta}_\chi(f))= \omega'_\chi(\delta)\wt{\bf O}_\delta(f).
$$
Let $A_\delta$ be the maximal central $F$--split torus in $G'_\delta$ (recall that $G'_\delta= G'^{\delta,\circ}$ is the connected centralizer of $\delta$ in $G'$). 
There exists a $G'(F)$--conjugate of $g'^{-1}\delta g$ of $\delta$ such that $A_{g'^{-1}\delta g}= {\rm Int}_{g'^{-1}}(A_\delta)$ is contained 
in $T'=T$, so we can suppose (by replacing $\delta$ by $g'^{-1}\delta g'$) that $A_\delta\subset T'$. Let $M=Z_G(A_\delta)$. It is a semi--standard $F$--Levi factor in $G$, and the endoscopic group $M'$ of $M$ associated to $\chi_0$ as in \ref{some results of Roche} is given by $M'=Z_{G'}(A_\delta)$. Since $A_M = A_\delta = A_{M'}$, $M'$ is an {\it elliptic} endoscopic group of $M$, 
and $\delta$ is an {\it elliptic} strongly regular (in fact strongly $G$--regular) element in $M'(F)$. Hence if the theorem* of \ref{variant with two characters} is true for $M$, then the proposition of 
\ref{a result of descent} and the corollary of \ref{about the hypothesis} imply that we have
$$
{\bf SO}_\delta(\bs{\zeta}_\chi(f))= \omega'_\chi(\delta)\wt{\bf O}_\delta(f).
$$ 

So we are reduced to prove the theorem* of \ref{variant with two characters}.

\subsection{Reduction to $G'$ elliptic in the theorem of \ref{variant with two characters}}\label{reduction to G' elliptic}
Let us prove that if the theorem of \ref{variant with two characters} is true when the endoscopic group $G'$ is elliptic, then it is true in general.

Suppose the endoscopic $G'$ is not elliptic, and let $M= Z_G(A_{G'})$. Then $M'=G'$ is an elliptic endoscopic group of $M$, and (by hypothesis) 
the theorem of \ref{variant with two characters} is true for $M$. Let $f\in \ES{Z}(G(F), \rho_{\psi_0\chi_0})$ and $\delta\in G'(F)$ a strongly 
$G$--regular element. By the lemma 1 of \ref{a result of descent}, we have
$$
\wt{\bf O}_\delta(f)= \sum_{w\in W_M\backslash W} \wt{\bf O}_\delta^M(\zeta_M^G({^{\dot{w}}\!f})).\leqno{(1)}
$$
For $w\in W_M\backslash W$, the function $\zeta_M^G({^{\dot{w}}\!f})$ belongs to $\ES{Z}(M(F), \rho^M_{^{\dot{w}}\!(\psi_0\chi_0)})$ where $\rho^M_{^{\dot{w}}\!(\psi_0\chi_0)}$ 
is the inflation of the depth zero character ${^{\dot{w}}\!(\psi_0\chi_0)}$ of $\ES{T}$ to the Iwahori subgroup $\ES{I}_M$ of $M(F)$. Put $\xi_{\dot{w},0}= {^{\dot{w}}\!(\psi_0\chi_0)}\chi_0^{-1}$. 
For $w_M\in W_M$,  we have an injective 
morphism of ${\Bbb C}$--algebras
$$
\zeta_{w_M,\chi}^M: \ES{Z}(M(F), \xi_{\dot{w},0}\chi_0)\rightarrow \ES{Z}(G'(F),{^{w_M}\!(\xi_{\dot{w},0}\chi_0)}\chi_0^{-1}),
$$
and (by hypothesis) the function
$$
(\omega'_\chi)\vert_{M'(F)}^{-1}\vert W_M \vert^{-1} \sum_{w_M \in W_M}\zeta_{w_M,\chi}^M\circ \zeta^G_M({^{\dot{w}}\!f})\in C^\infty_{\rm c}(G'(F))
$$
is a transfer of $\zeta^G_M({^{\dot{w}}\!f})$. But for $w_M\in W_M$, 
$$
{^{w_M}\!(\xi_{\dot{w},0}\chi_0)}\chi_0^{-1}= {^{w_M\dot{w}}\!(\psi_0\chi_0)}\chi_0^{-1}
$$
and
$$
\zeta_{w_M,\chi}^M\circ \zeta^G_M\circ [\dot{w}]= \zeta_{w_M\dot{w},\chi}\circ [w_M\dot{w}],
$$
thus we have
$$
\vert W_M \vert^{-1} \sum_{w_M\in W_M} {\bf SO}_\delta(\zeta_{w_M\dot{w},\chi}({^{w_M\dot{w}}\!f}))= \omega'_\chi(\delta) \wt{\bf O}_\delta^M(\zeta^G_M({^{\dot{w}}\!f}).
$$
Summing over all the elements $w\in W_M\backslash W$, from (1) we obtain
$$
\vert W\vert^{-1} \sum_{w\in W}{\bf SO}_\delta(\zeta_{w,\chi}({^{w}\!f}))= \omega'_\chi(\delta) \wt{\bf O}_\delta(f).
$$
This proves that $\bs{\xi}_{\chi_0}(f) = (\omega'_\chi)^{-1} \vert W\vert^{-1} \sum_{w\in W}\zeta_{w,\chi}({^{w}\!f}))$ is a transfer of $f$.

\subsection{Further reductions (step 1)}\label{further reductions (step 1)}
As we explained in the introduction, a local--global argument will be necessary to finish the proof of the theorem* of \ref{variant with two characters}. 
We will use a simple trace formula, and it is possible to make it even simpler thanks to the following reductions. We proceed in two steps, as in \cite{Hai1, Hai2}:
\begin{enumerate}[leftmargin=17pt]
\item[(1)]We may assume that $G_{\rm der}$ is simply connected;
\item[(2)]We may assume that $G_{\rm der}$ is simply connected and $\mathfrak{Z}_G$ is a torus.
\end{enumerate}

Let us continue with the previous notations: $\chi_0$ is a depth zero character of $\ES{T}$, $\rho= \rho_{\chi_0}$ is the inflation of $\chi_0$ to 
$\ES{I}$, and $\chi$ is a $W_{\chi_0}$--invariant character of $T(F)$ extending $\chi_0$. For $w\in W$, $\rho'_w= \rho'_{\chi_{w,0}}$ is the inflation to $\ES{I}$ of the 
character $\chi_{w,0}= ({^w\!\chi}_0)\chi_0^{-1}$ of $\ES{T}$. 

We also fix another depth zero character $\psi_0$ of $\ES{T}$. Let $\eta = \rho_{\psi_0}$ be the inflation of $\psi_0$ to $\ES{I}$, and for $w\in W$, let 
$\xi_{w,0}$ be the character $({^w\!\psi_0})\chi_{w,0}= {^w\!(\psi_0\chi_0)} \chi_0^{-1}$ of $\ES{T}$, and 
$\tau'_w= \rho'_{\xi_{w,0}}$ the inflation of $\xi_{w,0}$ to $\ES{I}'$. Hence we have $\tau'_w = \rho'_{^w\!\psi_0}\rho'_w$.

\vskip1mm
For $w\in W$, we defined in \ref{variant with two characters} an injective morphism of ${\Bbb C}$--algebras
$$
\zeta_{w,\chi}: \ES{Z}(G'(F),{^w\!(\eta\rho)})\rightarrow \ES{Z}(G'(F), \tau'_w),
$$
and for $f\in \ES{Z}(G(F), \eta\rho)$, we put
$$
\bs{\zeta}_\chi(f) = \vert W'\vert^{-1}\sum_{w\in W} \zeta_{w,\chi}({^w\!f}).
$$
 
Let us prove (1). There exists a {\it $z$--extension} of $F$--split groups
$$
1\rightarrow Z_1\rightarrow G_1 \rightarrow G \rightarrow 1.
$$
By definition of a $z$--extension, $Z_1$ is a torus and $G_1$ is a connected reductive group such that $(G_1)_{\rm der}$ is simply connected. 
Let us give a characteristic--free short proof of the existence of such a $z$--extension. Let $\pi: G_{\rm SC}\rightarrow G$ be the simply connected covering of $G_{\rm der}$, 
and $T_{\rm sc}$ the inverse image of $T$ in $G_{\rm SC}$. It is easy to construct a short exact sequence of $F$--split tori $1 \rightarrow Z_1 \rightarrow T_1 \rightarrow T \rightarrow 1$ such that the morphism $T_{\rm sc} \rightarrow T$ lifts to an injective morphism $T_{\rm sc} \rightarrow T_1$ (cf. \cite[pp~298--299]{MS}). The group $T$ acts on $G$ via inner automorphisms: for $t\in T$, we choose an element $t_{\rm sc}\in T_{\rm sc}$ that lifts the projection of $t$ on $G_{\rm AD}$, and we make $t$ act on $G_{\rm sc}$ via ${\rm Int}_t$ (it does not depend on the choice of $t_{\rm sc}$). This defines a semidirect product $G_{\rm SC} \rtimes T$. We denote by $G_{\rm SC} *_{T_{\rm sc}} T $ the quotient of $G_{\rm SC} \rtimes T$ by the distinguished subgroup $\{(t_{\rm sc},\pi(t_{\rm sc})^{-1}):t_{\rm sc}\in T_{\rm sc}\}$. The morphism
$$
G_{\rm SC} \times T \rightarrow G,\, (g_{\rm sc},t) \mapsto \pi(g_{sc})t
$$
induces an identification $G= G_{\rm SC} *_{T_{\rm sc}}T$. We define in the same way the group $G_1 = G_{\rm SC}*_{T_{\rm sc}} T_1$. The morphism $T_1\rightarrow T$ induces a surjective morphism $G_1\rightarrow G$ with kernel $Z_1$, and the morphism $G_{\rm SC}\rightarrow  G_1$ induces an isomorphism $G_{\rm SC} \rightarrow (G_1)_{\rm der}$. Hence we are done.

So let us fix a $z$--extension of $F$--split groups $1 \rightarrow Z_1 \rightarrow G_1\rightarrow G \rightarrow 1$. The inverse image $T_1$ of $T$ in $G_1$ 
is a maximal $F$--split torus in $G_1$, and we have a short exact sequence of $F$--tori
$$
1\rightarrow Z_1\rightarrow T_1 \rightarrow T\rightarrow 1
$$
which induces a short exact sequence of commutative groups
$$
1\rightarrow Z_1(F)\rightarrow T_1(F)\rightarrow T(F)\rightarrow 1.
$$
Put $\ES{T}_1= T_1(\mathfrak{o})$ and $\ES{Z}_1= Z_1(\mathfrak{o})$. By applying the functor ${\rm Hom}(-,\mathfrak{o}^\times)$ to the 
short exact sequence of free ${\Bbb Z}$--modules
$$
0\rightarrow X=X(T)\rightarrow X(T_1)\rightarrow X(Z_1)\rightarrow 0,
$$
we obtain a short exact sequence of compact commutative groups
$$
1\rightarrow \ES{Z}_1\rightarrow \ES{T}_1 \rightarrow \ES{T}\rightarrow 1={\rm Ext}^1_{\Bbb Z}(X(Z_1),\mathfrak{o}^\times).
$$
It allows us to identify $\chi_0$ with a character of $\ES{T}_1$ trivial on $\ES{Z}_1$. It is also trivial on $\ES{T}_{1,+}$. 
Denote by $\ES{I}_1$ the Iwahori subgroup of $G_1(F)$ 
corresponding to $\ES{I}$ (for the identification of the building of $G_1(F)$ with the building of $G(F)$). Let $B_1$ be the Borel subgroup of 
$G_1$ corresponding to $B$ (i.e. the pre--image of $B$), $U_1$ the unipotent radical of $B_1$, and $\overline{U}_{\!1}$ the unipotent radical of the Borel subgroup 
$\overline{B}_1$ of $G$ opposite to $B_1$ with respect to $T_1$. The $F$--morphism $G_1\rightarrow G$ gives by restriction two isomorphisms
$$
\ES{I}_1\cap U_1\buildrel\simeq\over{\longrightarrow} \ES{I}\cap U,\quad \ES{I}_1\cap \overline{U}_{\!1}\buildrel\simeq\over{\longrightarrow} \ES{I}\cap \overline{U}.
$$
From the triangular decompositions for $\ES{I}_1$ and $\ES{I}$ (cf. \ref{some results of Roche}) and the previous short exact sequence, 
we obtain a short exact sequence of compact groups
$$
1\rightarrow \ES{Z}_1\rightarrow \ES{I}_1 \rightarrow \ES{I}\rightarrow 1
$$
which allows us to identify the character $\rho= \rho_{\chi_0}$ of $\ES{I}$ with a character of $\ES{I}_1$ trivial on $\ES{I}_{1,+}\ES{Z}_1=\ES{Z}_1\ES{I}_{1,+}$. The Weyl group $W_1= N_{G_1}(T_1)/T_1$ of $G_1$ is canonically identified with $W$, and the $W_{\chi_0}$--invariant character $\chi$ of $T(F)$ extending $\chi_0$ is identified with a $W_{\chi_0}$--invariant character of $T_1(F)$ trivial on $Z_1(F)$. Let $G'_1$ 
be the endoscopic group of $G_1$ defined by the character $\chi_0$ of $\ES{T}_1$. It is an extension of $G'$ by $Z_1$: we have a short exact sequence of $F$--split groups
$$
1\rightarrow Z_1\rightarrow G'_1\rightarrow G'\rightarrow 1.
$$
Let $\ES{I}'_1$ be the Iwahori subgroup of $G_1(F)$ corresponding to $\ES{I}'$. As in \ref{Bernstein center} we obtain an injective morphism of ${\Bbb C}$--algebras $\zeta_{1,\chi}: \ES{Z}(G_1(F),\rho)\rightarrow \ES{Z}(G'_1(F),1_{\ES{I}'_1})$. More generally, for all $w\in W$, as in \ref{variant with two characters} we obtain an injective morphism of ${\Bbb C}$--algebras 
$$
\zeta_{1,w,\chi}: \ES{Z}(G_1(F), {^w\!(\eta\rho)})\rightarrow \ES{Z}(G'_1(F),\tau'_w)
$$
where ${^w\!(\eta\rho)}$ is viewed as a character of ${^w\!(\eta\rho)}$ of ${^w(\ES{I}_1)}$ trivial on
$$
{^w(\ES{I}_{1,+})}\ES{Z}_1={^w(\ES{I}_{1,+}\ES{Z}_1)} = {^w(\ES{Z}_1\ES{I}_{1,+})}= \ES{Z}_1{^w(\ES{I}_{1,+})}
$$ and 
$\tau'_w$ is viewed as a character of $\ES{I}'_1$ trivial on 
$\ES{I}'_{1,+}\ES{Z}_1=\ES{Z}_1 \ES{I}'_{1,+}$.

For a function $\phi\in C^\infty_{\rm c}(G_1(F))$ and a character $\lambda$ of $Z_1(F)$, we put
$$
\phi_\lambda (g_1) = \int_{Z_1(F)}\phi(g_1z_1)\lambda(z_1)dz_1,\quad g_1\in G_1(F),
$$
where $dz_1$ is the Haar measure on $Z_1(F)$ which gives the volume $1$ to $\ES{Z}_1$. This defines a function $\phi_\lambda$ on $G_1(F)$ which belongs to the space 
$C^\infty_{\lambda}(G_1(F))$ of locally constant functions $f_1$ on $G_1(F)$ which are compactly supported modulo $Z_1(F)$ and verifie $f_1(z_1g_1)= \lambda(z_1)^{-1}f_1(g)$ for all $z_1\in Z_1(F)$ and all $g_1\in G_1(F)$. The function 
$\phi_{\lambda=1}$, denoted $\bar{\phi}$, is identified with an element of $C^\infty_{\rm c}(G(F))$. For a function 
$\phi'\in C^\infty_{\rm c}(G'_1(F))$ and a character $\lambda$ of $Z_1(F)$, we define the function $\phi'_\lambda\in C^\infty_{\lambda}(G'_1(F))$ in the same manner, and we put $\bar{\phi}'= 
\phi'_{\lambda=1}\in C^\infty_{\rm c}(G'(F))$. For all $w\in W$, the map
$$
C^\infty_{\rm c}(G_1(F))\rightarrow C^\infty_{\rm c}(G(F)),\,\phi\mapsto \bar{\phi}
$$
induces a surjective morphism of ${\Bbb C}$--algebras
$$
\ES{Z}(G_1(F),{^w\!(\eta\rho)})\rightarrow \ES{Z}(G(F),{^w\!(\eta\rho)}),
$$
and the map
$$
C^\infty_{\rm c}(G_1(F))\rightarrow C^\infty_{\rm c}(G(F)),\,\phi\mapsto \bar{\phi}
$$
induces a surjective morphism of ${\Bbb C}$--algebras
$$
\ES{Z}(G_1(F),\tau'_w)\rightarrow \ES{Z}(G(F),\tau'_w).
$$ 
By construction we have
$$
\zeta_{1,w,\chi} (\bar{\phi})= \overline{\zeta_{1,w,\chi}(\phi)},\quad \phi\in \ES{Z}(G_1(F),{^w\!(\eta\rho)}).\leqno{(3)}
$$
For $\phi\in \ES{Z}(G_1(F),\eta\rho)$, we put
$$
\bs{\zeta}_{1,\chi}(\phi)= \vert W'\vert^{-1} \sum_{w\in W} \zeta_{1,w,\chi}({^w\!\phi}).
$$
Note that the function $\bs{\zeta}_{1,\chi}(\phi)$ belongs to the subspace
$$
\bs{\ES{Z}}(G'_1(F))_{\psi_0,\chi_0}= \sum_{w\in W}\ES{Z}(G'_1(F),\tau'_w)
$$
of $\ES{Z}(G'_1(F),1_{\ES{Z}_1\ES{I}'_{1,+}})$. 
Since $\overline{^w\!\phi}= {^w\!\bar{\phi}}$ ($w\in W$), we obtain
$$
\overline{\bs{\zeta}_{1,\chi}(\phi)}= \bs{\zeta}_\chi(\bar{\phi}), \quad \phi\in \ES{Z}(G_1(F),\eta\rho). \leqno{(4)}
$$

The hyperspecial vertex of the building of $G(F)$ --- which is also the Bruhat--Tits building of $G_1(F)$ --- defining the hyperspecial maximal compact subgroup 
$\ES{K}$ of $G(F)$ defines also a hyperspecial maximal compact subgroup $\ES{K}_1$ of $G_1(F)$. From the 
decompositions $\ES{K}_1= \ES{I}_1W\ES{I}_1$ and $\ES{K}=\ES{I}W\ES{I}$ and the short exact sequence $1\rightarrow \ES{Z}_1\rightarrow \ES{I}_1\rightarrow \ES{I}\rightarrow 1$, 
we deduce a short exact sequence of 
compact groups
$$
1\rightarrow \ES{Z}_1\rightarrow \ES{K}_1 \rightarrow \ES{K}\rightarrow 1.
$$
For a pair $(\delta_1,\gamma_1)\in G'_1(F)\times G_1(F)$ of corresponding 
semisimple elements such that $\gamma_1$ is strongly regular in $G_1$, let $\Delta_1(\delta_1,\gamma_1)$ be the canonical transfer factor defined by the choice of $\ES{K}_1$. The image 
$(\delta,\gamma)\in G'(F)\times G(F)$ of $(\delta_1,\gamma_1)$ is a pair of corresponding semisimple elements such that $\gamma$ is strongly regular in $G(F)$, and from the definition of 
canonical transfer factors \cite[6.3]{MW}, we have the equality
$$
\Delta(z_1\delta_1,z_1\gamma_1)=\Delta_1(\delta_1,\gamma_1)= \Delta(\delta,\gamma),\qquad z_1\in Z_1(F). \leqno{(5)}
$$

The character $\omega'_\chi$ of $G'(F)$, viewed as a character of $G'_1(F)$ trivial on $Z_1(F)$, coincides with 
the character $\omega^{G'_1}_\chi$ of $G'_1(F)$ associated to $\chi$, 
viewed as a $W_{\chi_0}$--invariant character of $T_1(F)$ trivial on $Z_1(F)$, 
as in \ref{the theorem}. For $\phi\in \ES{Z}(G_1(F),\eta\rho)$, we put
$$
\bs{\xi}_{1,\chi_0}(\phi) = (\omega_{\chi}^{G'_1})^{-1}\bs{\zeta}_{1,\chi}(\phi).
$$
We have
$$
\overline{\bs{\xi}_{1,\chi_0}(\phi)}= \bs{\xi}_{\chi_0}(\bar{\phi}),\quad \phi\in \ES{Z}(G_1(F),\eta\rho).\leqno{(6)}
$$
\begin{prop}
Let $\phi\in C^\infty_{\rm c}(G_1(F))$ and $\phi'\in C^\infty_{\rm c}(G'_1(F))$.
\begin{enumerate}
\item[(i)] $\phi'$ is a transfer of $\phi$ if and only if $\phi'_\lambda$ is a transfer of $\phi_\lambda$ for every character $\lambda$ of $Z_1(F)$. 
\item[(ii)] if $\phi\in \ES{H}(G_1(F),1_{\ES{I}_{1,+}})$ and $\phi'\in \ES{H}(G'_1(F),1_{\ES{I}'_{1,+}})$, then in (i) we need only consider
characters $\lambda$ of $Z_1(F)$ that are trivial on $\ES{Z}_{1,+}$.
\item[(iii)] $\phi'_{\lambda=1}$ is a transfer of $\phi_{\lambda=1}$ (for the pair $(G_1,G'_1)$) if and only if $\bar{\phi}'$ is a transfer of $\bar{\phi}$ (for the pair $(G,G')$).
\end{enumerate}
\end{prop}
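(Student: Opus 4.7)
The three claims hinge on one common identity, namely (5), together with Fourier analysis on the locally compact abelian group $Z_1(F)$, and on the fact that strongly $G_1$-regular elements of $G_1(F)$ (resp.\ of $G'_1(F)$) project to strongly $G$-regular elements of $G(F)$ (resp.\ of $G'(F)$). My plan is to exploit this structure part by part, with most of the work in (i).

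For (i), I would first apply Fubini to rewrite, for any strongly regular $\gamma_1 \in G_1(F)$ and $\delta_1 \in G'_1(F)$,
$$
\mathbf{O}_{\gamma_1}(\phi_\lambda) = \int_{Z_1(F)} \mathbf{O}_{z_1 \gamma_1}(\phi)\, \lambda(z_1)\, dz_1, \qquad
\mathbf{SO}_{\delta_1}(\phi'_\lambda) = \int_{Z_1(F)} \mathbf{SO}_{z_1 \delta_1}(\phi')\, \lambda(z_1)\, dz_1.
$$
Both right-hand integrands are compactly supported functions of $z_1 \in Z_1(F)$ since $\phi, \phi'$ are compactly supported. Now by (5), for every $z_1 \in Z_1(F)$ the transfer identity for $(\phi,\phi')$ at $(z_1\delta_1, z_1\gamma_1)$ reads
$$
\mathbf{SO}_{z_1\delta_1}(\phi') = \sum_{\gamma_1} \Delta_1(\delta_1,\gamma_1)\, \mathbf{O}_{z_1\gamma_1}(\phi),
$$
the transfer factor being $Z_1(F)$-invariant. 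Integrating against $\lambda$ yields the transfer identity for $(\phi_\lambda, \phi'_\lambda)$ at $(\delta_1, \gamma_1)$. Conversely, Fourier inversion on $Z_1(F)$ recovers the family of identities in $z_1$ from the family in $\lambda$, so the two conditions are equivalent. Note every pair of corresponding strongly regular elements in $G'_1(F)\times G_1(F)$ is of the form $(z_1 \delta_1, z_1 \gamma_1)$ for some base pair, so this really gives the full transfer identity.

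For (ii), if $\phi \in \ES{H}(G_1(F), 1_{\ES{I}_{1,+}})$ then the inclusion $\ES{Z}_{1,+} \subset \ES{I}_{1,+}$ yields $\phi(g_1 z_0) = \phi(g_1)$ for $z_0 \in \ES{Z}_{1,+}$. Substituting $z_1 \mapsto z_0^{-1} z_1$ in the defining integral of $\phi_\lambda$ gives $\phi_\lambda = \lambda(z_0)\, \phi_\lambda$, so $\phi_\lambda \equiv 0$ unless $\lambda|_{\ES{Z}_{1,+}} = 1$. The identical argument applies to $\phi'_\lambda$, so in (i) the $\lambda$ with non-trivial restriction to $\ES{Z}_{1,+}$ contribute nothing and may be dropped.

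For (iii), I would fix compatible Haar measures $dg_1, dg, dz_1$ on $G_1(F), G(F), Z_1(F)$ (and likewise on the centralizers, using that $G_{1,\gamma_1}$ is an extension of $G_\gamma$ by $Z_1$ for any strongly regular $\gamma_1$ with image $\gamma$, and analogously for $G'_{1,\delta_1}$ and $G'_\delta$). Since $\phi_{\lambda=1}$ is $Z_1(F)$-invariant and descends to $\bar\phi$, a direct computation using the quotient $G_{1,\gamma_1}(F)\backslash G_1(F) \to G_\gamma(F)\backslash G(F)$ gives $\mathbf{O}_{\gamma_1}(\phi_{\lambda=1}) = \mathbf{O}_\gamma(\bar\phi)$; similarly $\mathbf{SO}_{\delta_1}(\phi'_{\lambda=1}) = \mathbf{SO}_\delta(\bar\phi')$. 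Combined with $\Delta_1(\delta_1,\gamma_1) = \Delta(\delta,\gamma)$ from (5) and the bijection between strongly $G$-regular stable classes in $G'(F)$ and $Z_1(F)$-orbits of strongly $G_1$-regular stable classes in $G'_1(F)$, the two transfer identities are literally the same statement.

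No step is genuinely hard; the only care required is bookkeeping with Haar measures and verifying that the canonical transfer factor for $(G,G')$ defined by $\ES{K}$ pulls back to the canonical transfer factor for $(G_1, G'_1)$ defined by $\ES{K}_1$, which is precisely (5).
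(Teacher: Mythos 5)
Your proof is correct and follows essentially the same path as the paper: the key identity (5) that the transfer factor is $Z_1(F)$-invariant, the Fourier-type unfolding $\mathbf{O}_{\gamma_1}(\phi_\lambda)=\int_{Z_1(F)}\mathbf{O}_{z_1\gamma_1}(\phi)\lambda(z_1)\,dz_1$ (the paper packages this at the level of $\widetilde{\mathbf{O}}_{\delta_1}$ and $\mathbf{SO}_{\delta_1}$ directly), Fourier inversion for the converse in (i), vanishing of $\phi_\lambda$ when $\lambda|_{\EuScript{Z}_{1,+}}\neq 1$ for (ii), and the measure/quotient computation plus (5) for (iii). Your write-up is simply a more explicit rendering of the paper's terse ``simple adaptation of \cite[5.3]{Hai1}, \cite[7.2]{Hai2}.''
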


\begin{proof} 
It as a simple adaptation of \cite[5.3]{Hai1}, \cite[7.2]{Hai2}. Let $\phi\in C^\infty_{\rm c}(G_1(F))$ and $\phi'\in C^\infty_{\rm c}(G'_1(F))$. 
Let $\delta_1\in G'_1(F)$ be a strongly $G_1$--regular element. 
From (5), 
for a character $\lambda$ of $Z_1(F)$, we have
$$
\wt{\bf O}_{\delta_1}^{G_1}(\phi_\lambda)= \int_{Z_1(F)}\lambda(z_1)\wt{\bf O}_{z_1\delta_1}^{G_1}(\phi)dz_1
$$
and
$$
{\bf SO}_{\delta_1}^{G'_1}(\phi'_\lambda)=\int_{Z_1(F)}\lambda(z_1){\bf SO}_{z_1\delta_1}^{G'_1}(\phi')dz_1.
$$
This proves (i) and (ii). For $\lambda=1$, again from (5), we have
$$
\wt{\bf O}_{\delta_1}^{G_1}(\phi_{\lambda=1})= \wt{\bf O}_\delta(\bar{\phi})
$$
and
$$
{\bf SO}_{\delta_1}^{G'_1}(\phi'_{\lambda=1})= {\bf SO}_\delta(\bar{\phi}').
$$
This proves (iii).
\end{proof}

Note that the endoscopic group $G'$ of $G$ is elliptic if and only if the endoscopic group $G'_1$ of $G_1$ is elliptic. If $G'$ is elliptic and if 
$(\delta_1,\gamma_1)\in G'_1(F)\times G_1(F)$ is a pair of (semisimple) strongly $G_1$--regular elements which correspond, then the projection 
$(\delta,\gamma)$ of $(\delta_1,\gamma_1)$ on $(G'(F),G(F))$ is a pair of strongly $G$--regular elements which correspond, and $\delta_1$ is elliptic in $G'_1(F)$ 
if and only if $\delta$ is elliptic in $G'(F)$. Thus from (6) and the proposition, we may assume in the theorems of \ref{variant with two characters} 
that the derived group $G_{\rm der}$ of $G$ is simply connected.

\subsection{Further reductions (step 2)}\label{further reductions (step 2)}
Now let us prove the step (2) of \ref{further reductions (step 1)}. From the step (1) already proved in \ref{further reductions (step 1)}, we may assume that $G_{\rm der}$ is simply connected. From \cite[8]{R}, there exists an exact sequence of $F$--split groups
$$
1\rightarrow G \rightarrow G_1 \rightarrow C_1 \rightarrow 1
$$
where $C_1$ is a torus, and $G_1$ is a connected reductive group such that the scheme--theoretic center $\mathfrak{Z}_{G_1}$ of $G_1$ is a torus. 
We identify $G$ with a subgroup of $G_1$. Thus we have $(G_1)_{\rm der}=G_{\rm der}$. 
Let $T_1=Z_{G_1}(T)$. It is a maximal $F$--split torus in $G_1$. We have a short exact sequence of $F$--split tori
$$
1\rightarrow T \rightarrow T_1 \rightarrow C_1\rightarrow 1,
$$
which induces a short exact sequence of commutative groups
$$
1\rightarrow T(F) \rightarrow T_1(F) \rightarrow C_1(F)\rightarrow 1.
$$
Put $\ES{T}_1=T(\mathfrak{o})$ and $\ES{C}_1= C(\mathfrak{o})$. Let $U_F^1= 1+ \mathfrak{p}\subset \mathfrak{o}^\times$. 
By applying the functors ${\rm Hom}(-,\mathfrak{o}^\times)$, ${\rm Hom}(-,U_F^1)$ and ${\rm Hom}(-,\mathfrak{K})$ to the short exact sequence of free 
${\Bbb Z}$--modules
$$
0 \rightarrow X(C_1)\rightarrow X(T_1)\rightarrow X=X(T)\rightarrow 0, 
$$
we obtain two short exact sequences of compact commutative groups
$$
1\rightarrow \ES{T}\rightarrow \ES{T}_1\rightarrow \ES{C}_1\rightarrow 1,
$$
$$
1\rightarrow \ES{T}_+\rightarrow \ES{T}_{1,+}\rightarrow \ES{C}_{1,+}\rightarrow 1
$$
and a short exact sequence of finite commutative groups
$$
1\rightarrow T(\mathfrak{K})=\ES{T}/\ES{T}_+ \rightarrow \ES{T}_1/\ES{T}_{1,+} \rightarrow \ES{C}_1/\ES{C}_{1,+} \rightarrow 1.
$$
We also have a short exact sequence of locally compact commutative groups
$$
1 \rightarrow T(F)/\ES{T}_+ \rightarrow T_1(F)/\ES{T}_{1,+} \rightarrow C_1(F)/\ES{C}_{1,+} \rightarrow 1.
$$
Choose $\chi_{1,0}$ a depth zero character of $\ES{T}_1$ extending $\chi_0$, and let $\chi_1= \chi_{1,0}^\varpi$ be the character of $T_1(F)$ extending $\chi_{1,0}$ trivially on $\check{X}(T_1)$ for the identification $T_1(F)=\check{X}(T_1)\times \ES{T}_1$ given by $\varpi$. Then $\chi_1\vert_{T(F)}= \chi\;(= \chi_0^\varpi)$. 
The Weyl group $W_1= N_{G_1}(T_1)/T_1$ of $G_1$ is canonically identified with $W$, and it acts trivially on $C_1$. 
We have $W_{\chi_{1,0}}= W_{\chi_0}$ and $\chi_1$ is $W_{\chi_0}$--invariant. Let $G'_1$ be the endoscopic group of $G_1$ defined by the character $\chi_{1,0}$ of $\ES{T}_1$. We have a short exact sequence of $F$--split groups
$$
1\rightarrow G'\rightarrow G'_1\rightarrow C_1\rightarrow 1.
$$
Let $\ES{I}'_1$ be the Iwahori subgroup of $G_1(F)$ corresponding to $\ES{I}'$. Let $\psi_{1,0}$ be a depth zero character of $\ES{T}_1$ extending 
$\psi_0$, and let $\rho'_{\psi_{1,0}}$ be the inflation of $\psi_{1,0}$ to $\ES{I}'_1$. As in \ref{variant with two characters}, we define 
an injective morphism of ${\Bbb C}$--algebras
$$
\zeta_{\chi_1}: \ES{Z}(G_1(F),\eta_1\rho_1)\rightarrow \ES{Z}(G'_1(F),\rho'_{\psi_{1,0}}).
$$
We also have a natural morphism of ${\Bbb C}$--algebras
$$
\zeta^G_{G_1}: \ES{Z}(G(F),\eta\rho)\rightarrow \ES{Z}(G_1(F),\eta_1\rho_1)
$$
defined as follows. Let $\mathfrak{r}= \mathfrak{s}_{\psi_0\chi_0}$ be the $G(F)$--inertial class $[T,\psi\chi]_G$ of the cuspidal pair $(T,\psi\chi)$ of $G(F)$, and let $\mathfrak{r}_1=\mathfrak{s}_{\psi_{1,0}\chi_{1,0}}$ be the $G_1(F)$--inertial class $[T_1,\psi_1\chi_1]_{G_1}$ of the cuspidal pair 
$(T,\psi_1\chi_1)$ of $G_1(F)$, where $\psi$ is a character of $T(F)$ extending $\psi_0$ and $\psi_1$ is a character of $T_1(F)$ extending $\psi_{1,0}$. The choices of $\psi$ and $\psi_1$ 
do not matter (we can suppose $\psi_1\vert_{T(F)}=\psi$ but it is not necessary). The surjective morphism of 
algebraic varieties
$$
\mathfrak{X}(\mathfrak{r}_1) \rightarrow \mathfrak{X}(\mathfrak{r}),\, (T_1,\xi_1)_{G_1} \mapsto (T,\xi_1\vert_{T(F)})_G
$$
gives by duality an injective morphism of ${\Bbb C}$--algebras
$$
\eta^G_{G_1}: {\Bbb C}[\mathfrak{X}(\mathfrak{r})]\rightarrow {\Bbb C}[\mathfrak{X}(\mathfrak{r}_1)],
$$
and $\zeta^G_{G_1}$ is the unique morphism of ${\Bbb C}$--algebras which makes the following diagram
$$
\xymatrix{
{\Bbb C}[\mathfrak{X}(\mathfrak{r})] \ar[r]^(.45){\beta} \ar[d]_{\eta^G_{G_1}} &  \ES{Z}(G(F),\eta\rho)\ar[d]^{\zeta^G_{G_1}}\\
{\Bbb C}[\mathfrak{X}(\mathfrak{r}_1)] \ar[r]^(.45){\beta_1} & \ES{Z}(G_1(F),\eta_1\rho_1)
}\leqno{(1)}
$$
commutative. Here, $\beta_1$ is the isomorphism of ${\Bbb C}$--algebras given by the natural equivalence between $\mathfrak{R}^{\mathfrak{r}_1}(G_1(F))$ and the category of left $\ES{H}(G_1(F),\eta_1\rho_1)$--modules. We define in the same manner an injective morphism of ${\Bbb C}$--algebras
$$
\zeta^{G'}_{G'_1}: \ES{Z}(G'(F),\rho'_{\psi_0})\rightarrow \ES{Z}(G'_1(F),\rho'_{\psi_{1,0}}).
$$
By construction, the following diagram
$$
\xymatrix{\ES{Z}(G(F),\eta\rho) \ar[r]^{\zeta_\chi} \ar[d]_{\zeta^G_{G_1}} & \ES{Z}(G'(F),\rho'_{\psi_0}) \ar[d]^{\zeta^{G'}_{G'_1}} \\
\ES{Z}(G_1(F),\eta_1\rho_1) \ar[r]^{\zeta_{\chi_1}} & \ES{Z}(G'_1(F),\rho'_{\psi_{1,0}})
}\leqno{(2)}
$$
is commutative.

For $w\in W$, we define as in \ref{variant with two characters} an injective morphism of ${\Bbb C}$--algebras
$$
\zeta_{w,\chi_1}:\ES{Z}(G_1(F), {^w\!(\eta_1\rho_1)})\rightarrow \ES{Z}(G'_1(F), \tau'_{1,w})
$$
where $\tau'_{1,w}$ is the inflation of the character ${^w\!(\psi_{1,0}\chi_{1,0}}) \chi_{1,0}^{-1}$ of $\ES{T}_1$ to $\ES{I}_1$. We also define, as we have define 
$\zeta^G_{G_1}$ and $\zeta^{G'}_{G'_1}$ before, an injective morphism of ${\Bbb C}$-algebras (again denoted $\zeta^G_{G_1}$)
$$
\zeta^G_{G_1}: \ES{Z}(G(F), {^w\!(\eta\rho)})\rightarrow \ES{Z}(G_1(F), {^w\!(\eta_1\rho_1)})
$$
and an injective morphism of ${\Bbb C}$--algebras (again denoted $\zeta^{G'}_{G'_1}$)
$$
\zeta^{G'}_{G'_1} :\ES{Z}(G'(F), \tau'_w)\rightarrow \ES{Z}(G'_1(F), \tau'_{1,w}).
$$
By construction, the following diagram
$$
\xymatrix{\ES{Z}(G(F),{^w\!(\eta\rho)}) \ar[r]^{\zeta_{w,\chi}} \ar[d]_{\zeta^G_{G_1}} & \ES{Z}(G'(F),\tau'_w) \ar[d]^{\zeta^{G'}_{G'_1}} \\
\ES{Z}(G_1(F),{^w\!(\eta_1\rho_1)}) \ar[r]^{\zeta_{w,\chi_1}} & \ES{Z}(G'_1(F),\tau'_{1,w})
}\leqno{(3)}
$$
is commutative.

Let $\ES{K}_1$ be the hyperspecial maximal compact subgroup of $G_1(F)$ defined by the hyperspecial vertex of the building of $G(F)$ --- which is also the building of $G_1(F)$ --- 
defining $\ES{K}$. From the decompositions $\ES{K}=\ES{I}W\ES{I}$ and $\ES{K}_1= \ES{I}_1W\ES{I}_1$ and the short exact sequence of compact groups (cf. \ref{further reductions (step 1)})
$1\rightarrow \ES{I}\rightarrow \ES{I}_1\rightarrow \ES{C}_1\rightarrow 1$, we deduce a short exact sequence of compact groups
$$
1 \rightarrow \ES{K}\rightarrow \ES{K}_1 \rightarrow \ES{C}_1 \rightarrow 1.
$$
For a pair $(\delta_1,\gamma_1)\in G'_1(F)\times G_1(F)$ of corresponding semisimple elements such that $\gamma_1$ is strongly regular in $G_1$, 
let $\Delta_1(\delta_1,\gamma_1)$ be the canonical transfer factor defined by the choice of $\ES{K}_1$.

\begin{lem}
Let $(\delta,\gamma)\in G'(F)\times G(F)$ be a pair of corresponding semisimple elements such that $\gamma$ is strongly regular in $G$. Then 
$\gamma$ is strongly regular in $G_1$, 
and we have the equality
$$
\Delta_1(\delta,\gamma)= \Delta(\delta,\gamma).
$$
\end{lem}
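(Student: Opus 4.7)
The plan has two parts corresponding to the two assertions. First, I would establish strong regularity of $\gamma$ in $G_1$. Since $(G_1)_{\rm der} = G_{\rm der}$, we have $G_1 = G \cdot Z(G_1)^\circ$, with $Z(G_1)^\circ$ a central torus. Writing any $h \in G_1$ as $h = zg$ with $z \in Z(G_1)^\circ$ and $g \in G$, centrality of $z$ gives $h\gamma h^{-1} = g\gamma g^{-1}$, so $h$ centralizes $\gamma$ if and only if $g$ does. Hence $(G_1)^\gamma = Z(G_1)^\circ \cdot G^\gamma$. The right-hand side is the image in $G_1$ of the multiplication map from the product of the two commuting tori $Z(G_1)^\circ$ and $G^\gamma$ (the latter being a torus by hypothesis), so it is itself a connected commutative group of semisimple elements, i.e., a torus. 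This shows that $\gamma$ is strongly regular in $G_1$.

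For the equality $\Delta_1(\delta,\gamma) = \Delta(\delta,\gamma)$, the strategy is to revisit the definition of the canonical transfer factor of \cite[6.3]{MW} and to verify, term by term, that each ingredient is preserved under the passage from $(G,G',\ES{K})$ to $(G_1,G'_1,\ES{K}_1)$. Recall that $\Delta$ is the Langlands--Shelstad product $\Delta_I \Delta_{II} \Delta_{III_1} \Delta_{III_2} \Delta_{IV}$ normalized so that $\mathrm{vol}(\ES{K}',dg')^{-1} 1_{\ES{K}'}$ is a transfer of $\mathrm{vol}(\ES{K},dg)^{-1} 1_{\ES{K}}$. The combinatorial factors $\Delta_{II}$ and $\Delta_{IV}$, which are products over the roots of $T_1$ in $G_1$ (resp.\ $T$ in $G$), coincide because the projection $X(T_1) \to X(T)$ induces a bijection of root systems, and likewise on the endoscopic side. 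The cohomological factors $\Delta_I$, $\Delta_{III_1}$ and $\Delta_{III_2}$ are computed from the centralizer tori $G^\gamma$ and $G'_\delta$ and their duals; under the extension these tori grow only by the central $F$--split torus $Z(G_1)^\circ$, whose Galois cohomology vanishes in the relevant degrees (Hilbert~90), so these factors are also unchanged. Finally, the normalization constants agree by virtue of the compatible short exact sequences $1 \to \ES{K} \to \ES{K}_1 \to \ES{C}_1 \to 1$ and $1 \to \ES{K}' \to \ES{K}'_1 \to \ES{C}_1 \to 1$, together with the matching of Haar measures.

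The main obstacle I anticipate is the bookkeeping in the cohomological factors: one must check that the auxiliary choices ($a$--data, $\chi$--data, admissible splittings) made for $(G,G^\gamma)$ can be lifted canonically and compatibly to $(G_1,(G_1)^\gamma)$ so that each factor agrees literally rather than merely up to a harmless constant. This is a well-known compatibility of transfer factors with central torus extensions (used, for instance, in \cite{Hai1,Hai2}), so the verification should ultimately be routine, albeit tedious.
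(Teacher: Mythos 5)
The first assertion (strong regularity of $\gamma$ in $G_1$) is handled correctly and in essentially the same way as the paper: both you and the authors exhibit $(G_1)^\gamma$ as an extension of the split torus $C_1$ by the torus $G^\gamma$, and conclude that it is a torus. Your description $(G_1)^\gamma = Z(G_1)^\circ\cdot G^\gamma$ is a fine way of seeing this.

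For the transfer-factor identity, your overall strategy (factor-by-factor comparison) is the right one and is what the paper does, and your treatment of $\Delta_{\rm II}$ and $\Delta_{\rm IV}$ is correct: these are purely combinatorial, built from the roots, which are unchanged under $G\hookrightarrow G_1$. But the treatment of the cohomological factors has a genuine gap. The assertion that ``the centralizer tori grow only by the central split torus, whose Galois cohomology vanishes by Hilbert~90, so these factors are unchanged'' does not follow. The cohomological terms in the transfer factor are Tate--Nakayama / Langlands-duality \emph{pairings}, not cohomology classes whose triviality is at issue: what is needed is a compatibility (functoriality) of the specific cocycles and pairings under the maps $T\hookrightarrow T_1$ and $\check T_1\twoheadrightarrow\check T$ (and $T_{\rm sc}=T_{1,\rm sc}$ since $(G_1)_{\rm der}=G_{\rm der}$), not the vanishing of $H^1(F,C_1)$. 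In particular, Hilbert~90 by itself says nothing about how the invariant classes and the elements on the dual side are related between $G$ and $G_1$.

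You also miss the simplification that makes this comparison tractable, and which the paper uses explicitly: $G$, $G'$, $G_1$, $G'_1$ are all $F$--split, so the Galois actions on $\check G$, $\check G'$, $\check G_1$, $\check G'_1$ are trivial. Working with the formula of \cite[I,~6.3]{MW} for the canonical transfer factor,
$$
\Delta(\delta,\gamma)= \lambda_{\zeta}(\delta)^{-1}\lambda_z(\gamma)\, \langle (V_S,\nu_{\rm ad}),(t_{S,{\rm sc}},s_{\rm ad})\rangle^{-1}\, \Delta_{\rm II}(\delta,\gamma)\Delta_{\rm IV}(\delta,\gamma),
$$
the two characters $\lambda_\zeta$ and $\lambda_z$ are then trivial, and one is left only with $\Delta_{\rm II}$, $\Delta_{\rm IV}$ (which you handle), and the single Tate--Nakayama pairing $\langle\cdot,\cdot\rangle$, which one checks matches by the functoriality just described. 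Finally, the closing remark that ``the normalization constants agree by virtue of the compatible short exact sequences of compact subgroups'' is not an argument: once you fix a formula for the canonical factor (as in \cite{MW}), there is no free constant to normalize, and if you instead invoke the fundamental-lemma characterization, you are implicitly assuming a compatibility of the two fundamental lemmas which is itself a nontrivial statement (addressed in the paper later via the comparison of the sets $\Lambda$ and $\Lambda_1$ and properties $({\rm P}_1)$--$({\rm P}_3)$). So the second part of your proposal, as written, does not close the gap.
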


\begin{proof} The centralizer $(G_1)^{\gamma}$ of $\gamma$ in $G_1$ is an extension of $C_1$ by $G^\gamma$: we have a short exact sequence of $F$--groups
$$
1\rightarrow G^\gamma \rightarrow (G_1)^\gamma \rightarrow C_1 \rightarrow 1.
$$
Since $C_1$ and $G^\gamma$ are tori, $(G_1)^\gamma$ is a torus too. Hence $\gamma$ is strongly regular in $G_1$. For the equality of transfer factors, we calculate 
$\Delta(\delta,\gamma)$ and $\Delta_1(\delta,\gamma)$. Put $S=G_\gamma$, $S_1= (G_1)_\gamma$, $S'= G'_\delta$ and $S'_1 = (G'_1)_\delta$. The $F$--torus $S_1$, resp. $S'_1$, is an extension of the $F$--split torus $C_1$ by $S$, resp. $S'$. We identify the dual torus $\check{S}$ to $\check{T}$ endowed with a Galois action defined by 
a $1$--cocycle of $\Gamma_F$ with values in $W= N_G(T)/T\;(=N_{\check{G}}(\check{T}))$. Hence $\check{S}_1$ identifies to $\check{T}_1$ with a Galois action defined by the same $1$--cocycle. 
The dual torus $\check{S}'$, resp. $\check{S}'_1$, identifies to $\check{S}$, resp. $\check{S}_1$, in a $\Gamma_F$--equivariant way. We have to fix some $a$--data and some $\chi$--data for the action 
of $\Gamma_F$ on $S$ (or $\check{S}$). They identifie to $a$--data and $\chi$--data for the action of $\Gamma_F$ on $S_1$ (or $\check{S}_1$). The formula in \cite[I, 6.3]{MW} 
gives\footnote{Recall that in loc.~cit., the transfer factor is the normalized one, that is our transfer factor divided 
by $\Delta_{\rm IV}(\delta,\gamma)= \vert D_G(\gamma)\vert_F^{1/2}\vert D_{G'}(\delta)\vert_F^{-1/2}$.}
$$
\Delta(\delta,\gamma)= \lambda_{\zeta}(\delta)^{-1}\lambda_z(\gamma) \langle (V_S,\nu_{\rm ad}),(t_{S,{\rm sc}},s_{\rm ad})\rangle^{-1} \Delta_{\rm II}(\delta,\gamma)\Delta_{\rm IV}(\delta,\gamma).
$$
Here $\lambda_\zeta$ is a character of $G'(F)$ and $\lambda_z$ is a character of $G(F)$. Since the Galois actions on the dual groups $\check{G}$ and $\check{G}'$ are trivial, these two characters are trivial. For each of the other three factors of $\Delta(\delta,\gamma)$, it is easy to verify that it coincides with the corresponding factor defined by replacing $G$ by $G_1$. 
\end{proof}

Let $\omega'_{\chi_1}= \omega_{\chi_1}^{G'_1}$ be the character of $G'_1(F)$ associated to $\chi_1$ as in \ref{the theorem}. Since $\chi_1\vert_{T(F)}= \chi$, the Langlands parameter 
$\varphi_\chi:W_F \rightarrow \check{T}$ of $\chi$ coincides with the composition of the Langlands parameter $\varphi_{\chi_1}: W_F \rightarrow \check{T}_1$ 
with the natural morphism $\check{T}_1 \rightarrow \check{T}$ (i.e. the one dual to the inclusion $T\subset T_1$), and we 
have
$$
\omega'_{\chi_1}\vert_{G'(F)}= \omega'_{\chi}.
$$

It is convenient to introduce the following normalisation of the elliptic regular orbital integrals. Let $A_G=Z(G)^\circ$ be the maximal central $F$--split torus in $G$, and $da$ the Haar measure on $A_G(F)$ which gives the 
volume $1$ to the maximal compact subgroup $A_G(\mathfrak{o})$ of $A_G(F)$. For a (semisimple) elliptic regular element $\gamma$ in $G(F)$, we can normalize the orbital integral ${\bf O}_\gamma$ on $G(F)$ by taking the Haar measure 
$dg_\gamma$ on $G_\gamma(F)$ such that
$$
{\rm vol}(A_G(F)\backslash G_\gamma(F), \textstyle{dg_\gamma \over da})=1.
$$
Recall that $dg$ is the Haar measure on $G(F)$ which gives the volume $1$ to $\ES{I}$. Hence we have
$$
{\bf O}_\gamma(f)= \int_{A_G(F)\backslash G(F)}f(g^{-1}\gamma g)\textstyle{dg\over da},\quad f\in C^\infty_{\rm c}(G(F)).
$$
We define in the same manner the normalized elliptic regular orbital integrals on $G_1(F)$, $G'(F)$ and $G'_1(F)$. 
Note that the endoscopic group $G'$ of $G$ is elliptic if and only if $A_{G'}=A_G$, in which case for a pair $(\delta,\gamma)\in G'(F)\times G(F)$ 
of corresponding semisimple elements such that $\delta$ is elliptic $G$--regular, the Haar measures $dg_\delta$ on $G'_\delta(F)$ and 
$dg_\gamma$ on $G_\gamma(F)$ normalized as above are compatible. On the other hand, the endoscopic group $G'$ of $G$ is elliptic 
if and only if the endoscopic group $G'_1$ of $G_1$ is elliptic. 

For $\phi\in \ES{Z}(G_1(F),\eta_1\rho_1)$, we put
$$
\bs{\zeta}_{\chi_1}(\phi)= \vert W'\vert^{-1}\sum_{w\in W} \zeta_{w,\chi_1}({^w\!\phi})\leqno{(4)}
$$
and
$$
\bs{\xi}_{\chi_{1,0}}(\phi) = (\omega'_{\chi_1})^{-1}\bs{\zeta}_{\chi_1}(\phi).
$$

\begin{prop}
Assume that the endoscopic group $G'$ of $G$ is elliptic, and that all the elliptic regular orbital integrals are the normalized ones.
\begin{enumerate}
\item[(i)]
For all $f\in \ES{Z}(G(F),\eta\rho)$ and all elliptic strongly $G$--regular $\delta\in G'(F)$, we have
$$
\wt{\bf O}_\delta(f) =\wt{\bf O}_\delta^{G_1}(\zeta^G_{G_1}(f)).
$$
\item[(ii)]For all $f\in \ES{Z}(G(F),\eta\rho)$ and all elliptic strongly $G$--regular $\delta\in G'(F)$, 
we have
$$
{\bf SO}_\delta(\bs{\zeta}_\chi(f)) = {\bf SO}_\delta^{G'_1}(\bs{\zeta}_{\chi_1}\circ \zeta^G_{G_1}(f)).
$$
\end{enumerate}
\end{prop}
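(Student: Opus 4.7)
Both (i) and (ii) will be reduced to the single identity: for every $f\in \ES{Z}(G(F),\eta\rho)$ and every elliptic strongly regular $\gamma\in G(F)$, with the normalized orbital integrals of the statement,
$$
{\bf O}^{G_1}_\gamma\bigl(\zeta^G_{G_1}(f)\bigr) = {\bf O}^G_\gamma(f).\qquad(\star)
$$
Since $G'$ is elliptic in $G$, $G'_1$ is elliptic in $G_1$, and the $(G'\!,G'_1)$-analogue of $(\star)$ is available to us by the same argument.

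\textbf{Deduction of (i) from $(\star)$.} Fix $\delta\in G'(F)$ elliptic strongly $G$-regular. Any $\gamma_1\in G_1(F)$ corresponding to $\delta$ must map to $1\in C_1(F)$ (since $\delta\in G'(F)=\ker(G'_1\twoheadrightarrow C_1)$), hence lies in $G(F)$. The short exact sequence $1\to G_\gamma\to(G_1)_\gamma\to C_1\to 1$ of the lemma, together with $(G_1)_{\rm der}=G_{\rm der}\subset G$, yields by the same $1$-cocycle argument as in the proof of the lemma 1 of \ref{a result of descent} a bijection between the $G(F)$-conjugacy classes within the stable class of $\gamma$ in $G(F)$ and the $G_1(F)$-conjugacy classes within the stable class of $\gamma$ in $G_1(F)$. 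Combined with the equality $\Delta_1(\delta,\gamma)=\Delta(\delta,\gamma)$ (lemma of the present subsection) and with $(\star)$, this proves (i) termwise.

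\textbf{Deduction of (ii) from the $(G'\!,G'_1)$-analogue of $(\star)$.} Diagram (3) gives $\zeta^{G'}_{G'_1}\circ\zeta_{w,\chi}=\zeta_{w,\chi_1}\circ\zeta^G_{G_1}$ for every $w\in W$. The conjugation $f\mapsto{^w\!f}$ is defined via representatives in $\ES{K}\cap N$, which lift compatibly to $\ES{K}_1\cap N_{G_1}(T_1)$, so $\zeta^G_{G_1}({^w\!f})={^w\!(\zeta^G_{G_1}(f))}$. Averaging over $w\in W$ with weight $\vert W'\vert^{-1}$ yields
$$
\bs{\zeta}_{\chi_1}\bigl(\zeta^G_{G_1}(f)\bigr)=\zeta^{G'}_{G'_1}\bigl(\bs{\zeta}_{\chi}(f)\bigr).
$$
Then (ii) becomes ${\bf SO}_\delta(\bs{\zeta}_\chi(f))={\bf SO}_\delta^{G'_1}(\zeta^{G'}_{G'_1}(\bs{\zeta}_\chi(f)))$, which is the sum over stable conjugates of $\delta$ of the $(G'\!,G'_1)$-analogue of $(\star)$. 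The bijection between $G'(F)$-conjugacy classes in the stable class of $\delta$ in $G'(F)$ and $G'_1(F)$-conjugacy classes in the stable class of $\delta$ in $G'_1(F)$ is obtained exactly as in (i); note that no transfer factor intervenes on either side.

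\textbf{Main obstacle: the identity $(\star)$.} The identity $(\star)$ cannot be extracted from the Bernstein-center diagrams (1)--(3), which only control the action of $\zeta^G_{G_1}$ on principal series. My plan, following \cite[\S\,7]{Hai2} \emph{mutatis mutandis}, is spectral. The construction of $\zeta^G_{G_1}$ via diagram (1) guarantees that for every $\pi_1\in\mathfrak{R}^{\mathfrak{r}_1}(G_1(F))$ lying above $\pi\in\mathfrak{R}^{\mathfrak{r}}(G(F))$ (i.e.\ whose cuspidal support restricts from $T_1$ to $T$ to that of $\pi$), the scalar by which $\zeta^G_{G_1}(f)$ acts on $\pi_1$ equals the scalar by which $f$ acts on $\pi$. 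The Kazhdan density theorem expresses both sides of $(\star)$ as integrals of tempered elliptic characters in the respective Bernstein blocks against the Plancherel density. The fiber of $\mathfrak{X}(\mathfrak{r}_1)\twoheadrightarrow\mathfrak{X}(\mathfrak{r})$ is a torsor under the group of unramified characters of $C_1(F)$, whose Plancherel measure is exactly the normalization factor between $A_G$- and $A_{G_1}$-normalized Haar measures appearing in the comparison of ${\bf O}^G_\gamma$ and ${\bf O}^{G_1}_\gamma$ for $\gamma\in G(F)\subset G_1(F)$. Carrying out this Plancherel matching, and verifying that tempered elliptic characters of $G_1(F)$ in $\mathfrak{r}_1$ restrict at $\gamma\in G(F)$ to the expected linear combinations of tempered elliptic characters of $G(F)$ in $\mathfrak{r}$, is the step I expect to require the most delicate technical work.
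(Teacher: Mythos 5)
Your reduction to the identity $(\star)$ is where the argument fails: $(\star)$ is false in general. The correct version is formula (7) of the paper's proof, imported from \cite[7.4.5]{Hai2}:
$$
{\bf O}^{G_1}_\gamma\bigl(\zeta^G_{G_1}(f)\bigr)=\sum_{\lambda_1\in\Lambda_1/\Lambda}{\bf O}_{\gamma_{\lambda_1}}(f),
$$
where $\Lambda=\check{X}/({\Bbb Z}\check{\Phi}+\check{X}(A_G))$, $\Lambda_1=\check{X}_1/({\Bbb Z}\check{\Phi}+\check{X}(A_{G_1}))$, and the $\gamma_{\lambda_1}$ are representatives of the distinct $G(F)$-conjugacy classes inside the $G_1(F)$-conjugacy class of $\gamma$ meeting the support of $\zeta^G_{G_1}(f)$. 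Your $(\star)$ drops the sum, which is nontrivial as soon as $C_1\neq\{1\}$. The reason is visible at the level of supports: $\Psi^G_{G_1}$ sends $f_{w,\tilde\theta}\mapsto f_{1,w,\tilde\theta_1}$, so ${\rm Supp}(\zeta^G_{G_1}(f))=\ES{I}_1\,{\rm Supp}(f)\,\ES{I}_1$, a strictly larger set than ${\rm Supp}(f)$; and since $G$ is normal in $G_1$, the $G_1(F)$-orbit of $\gamma\in G(F)$ stays inside $G(F)$ but decomposes into several $G(F)$-orbits, each of which contributes.

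For the same reason your asserted bijection between $G(F)$-conjugacy classes and $G_1(F)$-conjugacy classes within the stable class of $\gamma$ fails: the natural map from the former onto the latter has fibers of cardinality $\vert\Lambda_1/\Lambda\vert$. The lemma 1 argument of \ref{a result of descent} does not carry over, since it used $M_\gamma=G_\gamma$ and the injectivity of ${\bf H}^1(F,M)\to{\bf H}^1(F,G)$ for a Levi $M\subset G$, whereas here $(G_1)_\gamma\neq G_\gamma$ and ${\bf H}^1(F,G_\gamma)\to{\bf H}^1(F,(G_1)_\gamma)$ has a kernel coming from the $C_1(F)$-part of the long exact sequence. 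In the paper's proof of (i), the double sum over $G_1(F)$-classes $\gamma_1$ and over $\lambda_1\in\Lambda_1/\Lambda$ recombines into the single sum over $G(F)$-classes via properties $({\rm P}_1)$--$({\rm P}_3)$ of \cite[7.4]{Hai2}; your sketch collapses this step incorrectly. Your proposed Plancherel-theoretic derivation of $(\star)$ is not carried out, and would in any case reproduce precisely the factor $\vert\Lambda_1/\Lambda\vert$ you are missing when integrating over the $\mathfrak{X}(\mathfrak{r}_1)\to\mathfrak{X}(\mathfrak{r})$ fibration, so it cannot terminate in $(\star)$. The parts of your argument that are correct --- the transfer factors agree by the lemma, any $\gamma_1$ corresponding to $\delta\in G'(F)$ lies in $G(F)$, and (ii) reduces to (i)-type identities for each $f'_w=\zeta_{w,\chi}({^w\!f})$ via diagram (3) --- are what the paper does once the $\Lambda_1/\Lambda$ sum is restored.
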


\begin{proof}We need to compare orbital integrals on $G_1$, resp. $G'_1$, to orbital integrals on $G$, resp. $G'$. For this, let $\theta_0$ be 
a depth zero character of $\ES{T}$, and let $\theta_{1,0}$ be a depth zero character of $\ES{T}_1$ extending $\theta_0$. 
Since the Weyl group $W=W_1$ acts trivially on $C_1$, we have the equality $W_{\theta_{1,0}}= W_{\theta_0}$. 
We define as before (diagram (1)) a morphism of ${\Bbb C}$--algebras
$$
\zeta^G_{G_1}: \ES{Z}(G(F),\rho_{\theta_0}) \rightarrow \ES{Z}(G_1(F), \rho_{\theta_{1,0}}).
$$
Put $N_1= N_G(T_1)$ and $\ES{N}_1=N_1(\mathfrak{o})$. Let us choose a character $\tilde{\theta}_{1,0}$ of 
$(\ES{N}_1)_{\theta_{1,0}}$ extending $\theta_{1,0}$ \cite[6.11]{HL}, and 
let $\tilde{\theta}_1$ be the character 
of $N_1(F)_{\theta_{1,0}}= \check{X}_1\rtimes (\ES{N}_1)_{\theta_{1,0}}$
 extending $\tilde{\theta}_{1,0}$ trivially on $\check{X}_1= \check{X}(T_1)$. 
Then $\theta_1= \tilde{\theta}_1\vert_{T_1(F)}$ is a $W_{\theta_0}$--invariant character of 
$T_1(F)$ extending $\theta_{1,0}$. By restriction, we obtain also a character $\tilde{\theta}= \tilde{\theta}_1\vert_{N(F)_{\theta_0}}$ of $N(F)_{\theta_0}= N\cap N_1(F)_{\theta_{1,0}}$ extending $\theta_0$, and a 
$W_{\theta_0}$--invariant character $\theta = \tilde{\theta}\vert_{T(F)} \;(= \tilde{\theta}_1\vert_{T(F)})$ of $T(F)$. For $w_1\in (\ES{W}_1)_{\theta_{1,0}}= N_1(F)_{\theta_{1,0}}/\ES{T}_1$, we define as in 
\ref{Hecke algebra isomorphism} the element $f_{1,w_1,\tilde{\theta}_1}$ of the Hecke algebra $\ES{H}(G_1(F),\rho_{\theta_{1,0}})$: it is 
the unique element of $\ES{H}(G_1(F),\rho_{\theta_{1,0}})$ supported on $\ES{I}_1 w_1 \ES{I}_1= \ES{I}_1 n_{w_1} \ES{I}_1$ and having value 
$q^{-l_1(w)/2}\tilde{\theta}_1(n_{w_1})^{-1}$ at $n_{w_1}$ for any representative element $n_{w_1}$ of $w_1$ in $N_1(F)_{\theta_{1,0}}$. Here $l_1$ is the length function on 
$(\ES{W}_1)_{\theta_{1,0}}$ defined by the lemma of \ref{Hecke algebra isomorphism}.
For $w\in \ES{W}_{\theta_0}= N(F)_{\theta_0}/\ES{T}$, we define the element 
$f_{w,{\tilde{\theta}}}\in \ES{H}(G(F),\rho_{\theta_0})$ in the same manner. We have $\ES{W}_{\theta_0} \subset (\ES{W}_1)_{\theta_{1,0}}$, and the length function $l$ on 
$ \ES{W}_{\theta_0}$ coincides with the restriction $l_1\vert_{\ES{W}_{\theta_0}}$. The map
$$
f_{w,\tilde{\theta}} \mapsto f_{1,w,\tilde{\theta}_1},\quad w\in \ES{W}_{\theta_0},
$$
defines by linearity an injective morphism of ${\Bbb C}$--vector spaces
$$
\Psi^G_{G_1}: \ES{H}(G(F),\rho_{\theta_0})\rightarrow \ES{H}(G_1(F),\rho_{\theta_{1,0}}).
$$
From \cite[7.3]{Hai2} (cf. also \ref{Hecke algebra isomorphism} and \ref{Bernstein center}), $\Psi^G_{G_1}$ is a morphism of ${\Bbb C}$--algebras 
such that 
$$
\Psi^G_{G_1}\vert_{\ES{Z}(G(F),\rho_{\theta_0})}= \zeta^G_{G_1}.\leqno{(5)}
$$
Note that for $f\in \ES{H}(G(F),\rho_{\theta_0})$, we have
$$
{\rm Supp}(\Psi^G_{G_1}(f))= \ES{I}_1{\rm Supp}(f)\ES{I}_1.
$$
Let $w\in \ES{W}_{\theta_0}$. From \cite[7.4.5]{Hai2}, for an elliptic regular element $\gamma\in G(F)$, we have the equality
$$
{\bf O}_\gamma^{G_1}(f_{1,w,\tilde{\theta}_1}) =\sum_{\lambda_1} {\bf O}_{\gamma_{\lambda_1}}(f_{w,\tilde{\theta}})\leqno{(6)}
$$
where $\lambda_1$ runs over the elements of a finite group $\Lambda_1/\Lambda$ and $\gamma_{\lambda_1}= g_{\lambda_1}^{-1} \gamma g_{\lambda_1} $ for an element 
$g_{\lambda_1}\in G_1(F)$. More precisely, $\Lambda$ and $\Lambda_1$ are the finite groups defined by 
$\Lambda= \check{X}/ ({\Bbb Z}\check{\Phi}+ \check{X}(A_G))$ and $\Lambda_1= 
\check{X}_1/ ({\Bbb Z}\check{\Phi}+ \check{X}(A_{G_1})$ --- note that since $G_{\rm der}=D(G_1)$ is simply connected, the ${\Bbb Z}$--modules $\check{X}/{\Bbb Z}\check{\Phi}$ and $\check{X}_1/{\Bbb Z}\check{\Phi}$ are free ---, and $\Lambda$ is identified with a subgroup of $\Lambda_1$ via 
the natural injective morphism $\Lambda \rightarrow \Lambda_1$. There is also a surjective morphism $G_1(F)\rightarrow \Lambda_1$, 
which is the composition of the Kottwitz morphism $G_1(F)\rightarrow \check{X}_1/{\Bbb Z}\check{\Phi}$ with the natural projection 
$\check{X}_1/{\Bbb Z}\check{\Phi}\rightarrow \Lambda_1$. For $\lambda_1\in \Lambda_1$, we choose an element $g_{\lambda_1}\in G_1(F)$ that projects on $\lambda_1$. The correspondence 
$\Lambda_1 \rightarrow G(F),\, \lambda_1\mapsto \gamma_{\lambda_1}$ is not really defined (as a map), but it satisfies the following properties \cite[7.4.3]{Hai2}:
\begin{enumerate}
\item[(${\rm P}_1$)]two elements of the form $\gamma_{\lambda_1}$ and $\gamma_{\lambda'_1}$ are in the same $G(F)$--conjugacy class if and only if 
$\lambda_1$ and $\lambda'_1$ belong 
to the same coset in $\Lambda_1/\Lambda$;
\item[(${\rm P}_2$)]any element $\gamma'\in G(F) \cap \ES{I}_1 w \ES{I}_1$ belonging to the $G_1(F)$--conjugacy class of $\gamma$ is $G(F)$--conjugate to an element 
of the form $\gamma_{\lambda_1}$.
\end{enumerate}
Moreover, from \cite[7.4.2]{Hai2} we have:
\begin{enumerate}
\item[(${\rm P}_3$)] if an element $\gamma_1\in \ES{I}_1 w \ES{I}_1$ is stably conjugate to $\gamma$, then it is $G_1(F)$--conjugate to an element in 
$G(F)\cap \ES{I}_1 w \ES{I}_1$.
\end{enumerate}
From (5) and (6), for all functions $f\in \ES{Z}(G(F),\rho_{\theta_0})$, we have the equality
$$
{\bf O}_\gamma^{G_1}(\zeta^G_{G_1}(f)) =\sum_{\lambda_1\in \Lambda_1/\Lambda} {\bf O}_{\gamma_{\lambda_1}}(f).\leqno{(7)}
$$

Now we can apply these results  to the points (i) and (ii) of the proposition. Let $f\in \ES{Z}(G(F),\eta\rho)$ and $\delta\in G'(F)$ be an elliptic strongly $G$--regular element. 
For (i), we take $\theta_0= \psi_0\chi_0$ and $\theta_{1,0}= \psi_{1,0}\chi_{1,0}$. We have
$$
\wt{\bf O}_\delta^{G_1}(\zeta^G_{G_1}(f))= \sum_{\gamma_1}\Delta_1(\delta,\gamma_1) {\bf O}_\gamma^{G_1}(\zeta^G_{G_1}(f))\leqno{(8)}
$$
where $\gamma_1$ runs over the elements of $G_1(F)$ which correspond (in $G_1$) to $\delta$ modulo conjugation by $G_1(F)$. Let $\gamma\in G(F)$ be 
an elliptic strongly regular element which correspond to $\delta$ (in $G$). From (${\rm P}_3$), all the elements $\gamma_1\in G_1(F)$ giving a non--trivial contribution to the right hand side of  
(8) can be choosen in $G(F)$, and from (${\rm P}_1$), (${\rm P}_2$) and (7), the right hand side of (8) is equal to
$$
\sum_{\gamma'}\Delta_1(\delta,\gamma') {\bf O}_{\gamma'}(f)
$$
where $\gamma'$ runs over the elements of $G(F)$ which correspond (in $G$) to $\delta$ modulo conjugation by $G(F)$. Using the lemma, we obtain that the right hand side of 
(8) is equal to $\wt{\bf O}_\delta(f)$. The point (i) is proved. For (ii), we have
$$
\bs{\zeta}_{\chi_1}\circ \zeta^G_{G_1}(f) = \vert W'\vert^{-1}\sum_{w\in W} \zeta_{w,\chi_1} ({^w[\zeta^G_{G_1}(f)]}).
$$
Since we have
$$
{^w[\zeta^G_{G_1}(f)]}= \zeta^G_{G_1}({^w\!f}),\quad w\in W, 
$$
from (3) we obtain
$$
\bs{\zeta}_{\chi_1}\circ \zeta^G_{G_1}(f)= \vert W'\vert^{-1}\sum_{w\in W} \zeta^{G'}_{G'_1} \circ \zeta_{w,\chi}({^w\!f}).\leqno{(9)}
$$
Let $w\in W$. Replacing $G$ by $G'$, $f$ by $f'_w=\zeta_{w,\chi}({^w\!f})$, and $(\psi_0\chi_0, \psi_{1,0}\chi_{1,0})$ by the depth zero characters $\theta_0={^w\!(\psi_0\chi_0}) \chi_0^{-1}$ of $\ES{T}$ 
and $\theta_{1,0}={^w\!(\psi_{1,0}\chi_{1,0}}) \chi_{1,0}^{-1}$ of $\ES{T}_1$, as in the proof of (i), we obtain the equality
$$
{\bf SO}_\delta^{G'_1}(\zeta^{G'}_{G'_1}(f'_w))= {\bf SO}_\delta(f'_w).\leqno{(10)}
$$
From (9) and (10), we deduce the point (ii). 
\end{proof}

\begin{cor}If the theorem, resp. theorem*, of \ref{variant with two characters} is true for $G_1$, 
then it is true for $G$.
\end{cor}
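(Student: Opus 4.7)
The plan is to apply Proposition 2 above, which precisely relates the orbital integrals for $(G,G')$ to those for $(G_1,G'_1)$. I note at the outset that the full theorem for $G_1$ trivially implies theorem* for $G_1$ (the latter is just a restriction to elliptic endoscopy and elliptic strongly regular elements), so it suffices to prove theorem* for $G$ from theorem* for $G_1$ and then bootstrap to the full theorem by invoking the descent reductions already established in \ref{a result of descent}, \ref{reduction to the variant}, and \ref{reduction to G' elliptic}.

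For theorem* we may assume $G'$ is elliptic in $G$. Fix $f\in \ES{Z}(G(F),\eta\rho)$, set $\phi=\zeta^G_{G_1}(f)\in\ES{Z}(G_1(F),\eta_1\rho_1)$, and let $\delta\in G'(F)$ be elliptic strongly $G$-regular. The short exact sequence $1\to G'\to G'_1\to C_1\to 1$ (together with its analogue for $G\hookrightarrow G_1$) shows that $G'_1$ is elliptic in $G_1$ and that $\delta$, viewed in $G'_1(F)$, is elliptic strongly $G_1$-regular: the maximal central $F$-split torus of $(G'_1)_\delta$ equals $A_\delta\cdot C_1=A_{G'}\cdot C_1=A_{G'_1}$. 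Applying theorem* for $G_1$ to $\phi$ at $\delta$ gives
$$
{\bf SO}_\delta^{G'_1}\bigl(\bs{\zeta}_{\chi_1}(\phi)\bigr) \;=\; \omega'_{\chi_1}(\delta)\, \wt{\bf O}_\delta^{G_1}(\phi).
$$
By Proposition 2(i) the right-hand side equals $\omega'_{\chi_1}(\delta)\wt{\bf O}_\delta(f)$; by Proposition 2(ii) the left-hand side equals ${\bf SO}_\delta(\bs{\zeta}_\chi(f))$; and since $\chi_1|_{T(F)}=\chi$, the Langlands parameter $\varphi_\chi$ is the composition of $\varphi_{\chi_1}$ with $\check{T}_1\to\check{T}$, so $\omega'_{\chi_1}|_{G'(F)}=\omega'_\chi$. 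Combining the three yields the matching identity ${\bf SO}_\delta(\bs{\zeta}_\chi(f))=\omega'_\chi(\delta)\wt{\bf O}_\delta(f)$, i.e.\ theorem* for $G$.

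For the full theorem one drops the ellipticity assumptions. The same argument applied not only to $(G,G_1)$ but to every pair $(M,M_1)$, with $M$ a semi-standard $F$-Levi of $G$ and $M_1$ its pre-image in $G_1$, yields theorem* for each such $M$: all ingredients --- the maps $\zeta^{?}_{?_1}$, the characters $\omega'_\chi$, and the canonical transfer factors --- propagate to Levis (cf.\ \ref{parabolic descent}, \ref{compatibility by W}, \ref{compatibility by W'}, and the lemma above). The descent formulas of \ref{a result of descent} combined with the reductions of \ref{reduction to the variant} and \ref{reduction to G' elliptic}, applied inside $G$, then assemble these elliptic matchings across all Levis into the full theorem for $G$. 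The main technical point one must verify is the stability of the canonical transfer factors $\Delta_1(\delta,\gamma)=\Delta(\delta,\gamma)$ under the central extension $G\hookrightarrow G_1$ on corresponding strongly regular pairs, which is precisely the content of the lemma above; with this in hand no new local input is required.
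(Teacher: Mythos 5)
Your argument for the theorem* implication is correct and is essentially the paper's: fix an elliptic strongly $G$\nobreakdash-regular $\delta\in G'(F)$, observe that $\delta$ is elliptic strongly $G_1$\nobreakdash-regular in $G'_1$ (via the exact sequences $1\to G'\to G'_1\to C_1\to1$ and $1\to G'_\delta\to (G'_1)_\delta\to C_1\to1$), apply theorem* for $G_1$ to $\zeta^G_{G_1}(f)$, and translate the resulting $(G_1,G'_1)$-identity back to $(G,G')$ by Proposition (i), (ii) and the compatibility $\omega'_{\chi_1}|_{G'(F)}=\omega'_\chi$.

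For the theorem implication, however, your bootstrap does not close. You reduce to theorem* for $G$, then propose to recover the full theorem for $G$ by invoking the reductions of \ref{a result of descent}, \ref{reduction to the variant}, \ref{reduction to G' elliptic} applied ``inside $G$.'' But those reductions require the theorem (not merely theorem*) of \ref{variant with two characters} for all semi-standard Levis $M$ of $G$ (see the hypothesis in the proof of \ref{reduction to G' elliptic}, and the statement of the proposition of \ref{about the hypothesis}), and \ref{reduction to the variant} yields only the theorem of \ref{the theorem} (the $\psi_0=1$ version), not the theorem of \ref{variant with two characters} for $G$. Running ``the same argument'' at the level of pairs $(M,M_1)$ would in turn require theorem or theorem* for $M_1$, which is not supplied by the hypothesis (theorem for $G_1$ concerns only the full group $G_1$). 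The paper's own proof of the corollary is direct and does not detour through Levis: one applies the theorem for $G_1$ to $\zeta^G_{G_1}(f)$ and specializes the resulting identity at an arbitrary strongly $G_1$\nobreakdash-regular $\delta_1=\delta\in G'(F)$; the passage from $(G_1,G'_1)$- to $(G,G')$-orbital integrals is exactly what (i), (ii) of the Proposition (together with the lemma on transfer factors) provide. You should note that although the Proposition is stated for $G'$ elliptic and $\delta$ elliptic --- because the normalized orbital integrals are cleanest there --- the underlying comparison via the Kottwitz map and properties $(\mathrm{P}_1),(\mathrm{P}_2),(\mathrm{P}_3)$ of \cite[7.4]{Hai2} persists for non-elliptic $\delta$ after descent to $M=Z_G(A_\delta)$; with that remark the direct argument carries over to the full theorem, in parallel with your theorem* argument, without invoking theorem or theorem* for Levis of $G_1$.
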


\begin{proof}
If the theorem of \ref{variant with two characters} is true for $G_1$, then for all $f\in \ES{Z}(G(F), \rho_{\psi_0\chi_0})$, the function 
$\bs{\xi}_{\chi_{1,0}}(\zeta^{G}_{G_1}(f))$ is a transfer of $\zeta^G_{G_1}(f)$: for all strongly $G_1$--regular elements $\delta_1\in G_1(F)$, we have
$$
{\bf SO}_{\delta_1}(\bs{\zeta}_{\chi_1}\circ \zeta^G_{G_1}(f))= \omega'_{\chi_1}(\delta_1)\wt{\bf O}_{\delta_1}(\zeta^G_{G_1}(f)).
$$
In particular for $\delta_1=\delta\in G'(F)$, since $\omega'_{\chi_1}(\delta) = \omega'_\chi(\delta)$, we obtain
$$
{\bf SO}_{\delta}(\bs{\zeta}_{\chi}(f)= \omega'_{\chi}(\delta)\wt{\bf O}_{\delta}(f),
$$
which means that $\bs{\xi}_{\chi_0}(f)$ is a transfer of $f$. This also proves that if the theorem* of \ref{variant with two characters} is true for $G_1$, then it is true for $G$.
\end{proof}

We have proved that in the theorems of \ref{variant with two characters}, we may assume that the derived group $G_{\rm der}$ of $G$ is simply connected 
and the scheme--theoretic center $\mathfrak{Z}_G$ of $G$ is a torus (i.e. $\mathfrak{Z}_G=Z(G)=Z(G)^\circ$). 

\begin{rem}
{\rm 
Suppose $\mathfrak{Z}_G$ is a torus. Hence it is an $F$--split torus, and since $\mathfrak{Z}_G$ is the scheme--theoretic kernel of the natural morphism 
$G\rightarrow G_{\rm AD}$, the morphism $G(F)\rightarrow G_{\rm AD}(F)$ is surjective. If moreover the derived 
group $G_{\rm der}$ is simply connected\footnote{This condition is not really necessary. In fact the Step 1 runs for all short exact sequences of connected reductive $F$--split groups 
$1 \rightarrow Z_1 \rightarrow G_1 \rightarrow G \rightarrow 1$ such that $Z_1$ is a torus.}, then we may apply the Step 1 
to the $z$--extension
$$
1 \rightarrow \mathfrak{Z}_G \rightarrow G \rightarrow G_{\rm AD} \rightarrow 1.
$$
Thus we may also assume in the theorem* of \ref{variant with two characters} that $G=G_{\rm AD}$.
}\end{rem}


\section{Proof of the theorem of \ref{variant with two characters} (assuming the existence of local data)}\label{proof of the theorem assuming local data}

Let us continue with the notations of \ref{some results of Roche}---\ref{variant with two characters}: $G$ is a connected 
reductive $F$--split group, $T$ is a maximal $F$--split torus in $G$, ($\chi_0,\psi_0)$ is a pair of depth zero characters 
of the maximal compact subgroup $\ES{T}= T(\mathfrak{o})$ of $T(F)$, and $G'$ is the ($F$--split) endoscopic group of $G$ defined by $\chi_0$. 
We {\it do not} assume that the endoscopic group $G'$ of $G$ is elliptic. 

\subsection{Elementary functions}\label{elementary functions}
The elementary functions defined here are simple variants of those introduced by Labesse in \cite{Lab1}. 
Recall we have choosen in \ref{some results of Roche} a uniformizer element $\varpi$ of 
$F$, and $\chi$ is the $W_{\chi_0}$--invariant character $\chi^\varpi_0$ of $T(F)$ extending $\chi_0$ trivially on $\check{X}$, 
for the identification $T(F)= \check{X}\rtimes \ES{T}$ given by $\varpi$. 

\begin{nota}
{\rm For $\nu \in \check{X}= \check{X}(T)$, 
let $t_\nu = t_\nu^\varpi$ the element $\nu(\varpi)\in T(F)$. In other words, $t_\nu =\nu $ for the identification $T(F)= \check{X}\times \ES{T}$ given by $\varpi$. 
}
\end{nota}

Let $\nu \in \check{X}$ be a $B$--dominant regular co--character of $T$. Here {\it $B$--dominant} (or {\it dominant with respect to $B$}) means that $\langle \alpha, \nu \rangle \geq 0$ for all $\alpha \in \Delta$, and {\it regular} means that $\langle \alpha,\nu\rangle \neq 0$ for all $w\in \Phi$; thus $\nu\in \check{X}$ is $B$--dominant regular if and only if $\langle \alpha,\nu\rangle >0$ for all $\alpha\in \Delta$. 
To any element $g\in G(F)$, on can associate a $F$--parabolic subgroup $P_g$ of $G$ and a $F$--Levi component $M_g$ of $P_g$: $P_g$ (resp. $M_g$) is the set of elements 
$x\in G$ such that $x^n g x^{-n}$ remains {\it bounded} as $n$ ranges over all positive integers (resp. all integers). For $g= t_\nu$, since $\nu$ is $B$--dominant regular, we have $P_{t_\nu}= B$ and $M_{t_\nu}=T$.

The map
$$
\delta_\nu:\ES{T}\backslash \ES{I}\times t_\nu \ES{T} \rightarrow G(F),\, (k,y)\mapsto k^{-1} yk
$$
is injective and everywhere submersive. Thus its image is a compact open subset of $G(F)$ which we denote by $\ES{L}_\nu$. Moreover, the absolute value value of the Jacobian 
of $\delta_\nu$ is constant on $\ES{T}\backslash \ES{I}\times t_\nu \ES{T}$ and equal to the non--zero number
$$
\delta_B(t_\nu)^{-1}= q^{\langle 2\mu, \nu \rangle}\leqno{(1)}
$$
where $\mu$ denotes the half sum of positive roots of $G$. In \cite[prop. IV.1.1]{Lab2}, Labesse proved that
$$
\ES{L}_\nu = \ES{I}t_\nu \ES{I}.\leqno{(2)}
$$

\vskip1mm
As in \cite[8.1]{Hai2}, we define the elementary function $\mathfrak{f}_\nu=\mathfrak{F}_{\nu,\psi_0\chi_0}$ to be the function on $G(F)$ supported on $\ES{L}_\nu$
and verifying
$$
\mathfrak{f}_\nu(k^{-1}t_\nu t_0k)= q^{-l(t_\nu)/2}(\psi_0\chi_0)(t_0)^{-1},\quad k\in \ES{I},\, t_0\in \ES{T}.
$$
From \cite[lemma 8.1.3]{Hai2}, $\mathfrak{f}_\nu$ belongs to $\ES{H}(G(F),\rho_{\psi_0\chi_0})$, where $\rho_{\psi_0\chi_0}$ is the inflation 
of the depth zero character $\psi_0\chi_0$ of $\ES{T}$ to $\ES{I}$. 

\vskip1mm
Now let $\nu \in \check{X}$ be a $B'$--dominant $G'$--regular co--character of $T$, i.e. such that $w'(\nu)\neq \nu$ for all $w'\in W'$ and $\langle \alpha , \nu \rangle >0$ for all $\alpha \in \Delta'$. 
Similarly, we define the elementary function $\mathfrak{f}'_\nu = \mathfrak{f}'_{\nu,\psi_0}$ to be the fonction on $G'(F)$ supported on the compact open subset
$$
\ES{L}'_\nu= \{k'^{-1}t_\nu t_0k': k'\in \ES{I}',\,t_0\in \ES{T}\}\subset G'(F)
$$
and verifying
$$
\mathfrak{f}'_\nu(k'^{-1}t_\nu t_0k')= q^{-l'(t_\nu)/2}\psi_0(t_0)^{-1},\quad k'\in \ES{I}',\, t_0\in \ES{T}.
$$
From \cite[lemma 8.1.3]{Hai2}, $\mathfrak{f}'_\nu$ belongs to $\ES{H}(G'(F),\rho'_{\psi_0})$ where $\rho'_{\psi_0}$ is the inflation of the depth zero character $\psi_0$ 
of $\ES{T}$ to $\ES{I}'$. 

We will also need to consider the following variant of the elementary functions $\mathfrak{f}'_\nu$. Let $\nu\in \check{X}$ be a $B$--dominant regular co--character of $T$, and let $w\in W$. 
The co--character $w(\nu)$ of $T$ is still regular, but no longer $B$--dominant; in fact it is ${^w\!B}$--dominant, with ${^w\!B}= wB w^{-1}$. As in \ref{some results of Roche}, 
the Borel subgroup ${^w\!B}$ of $G$ defines a Borel subgroup $({^w\!B})'$ of $G'$, and the Iwahori subgroup ${^w\ES{I}}$ of $G(F)$ defines a Iwahori subgroup $({^w\ES{I}})'$ of $G'(F)$. Since 
$w(\nu)$ is a $({^w\!B})'$--dominant ($G$--)regular co--character of $T$, it is a fortiori $({^w\!B})'$--dominant and $G'$--regular. Since the Weyl group $W'=N_{G'}(T)/T$ acts simply transitively on the set of Borel subroup of $G'$ containing $T$, there exists a {\it unique} element $w'\in W'$ such that ${^{w'}\!({^w\!B}')}= B'$. Let $\tilde{w}= w'w$. Then $\tilde{w}(\nu)$ is a $B'$--dominant $G'$--regular co--character of $T$. 
We define the elementary function $\mathfrak{f}'_{w,\nu}=\mathfrak{f}'_{\tilde{w}(\nu),\xi_{\tilde{w},0}}$ to be the function on $G'(F)$ supported on the compact open subset
$$
\ES{L}'_{\tilde{w}(\nu)} = \{k'^{-1}t_{\tilde{w}(\nu)} t_0k': k'\in \ES{I}',\,t_0\in \ES{T}\}\subset G'(F)
$$
and verifying
$$
\mathfrak{f}'_{w,\nu}(k'^{-1}t_{\tilde{w}(\nu)}t_0 k') = q^{-l'(t_{\tilde{w}(\nu)})/2}\xi_{\tilde{w},0}(t_0)^{-1},\quad k'\in \ES{I}',\, t_0\in \ES{T}.
$$
From loc.~cit., $\mathfrak{f}'_{w,\nu}$ belongs to $\ES{H}(G'(F),\rho'_{\xi_{\tilde{w},0}})$ where $\rho'_{\xi_{\tilde{w},0}}$ is the inflation of the character 
$\xi_{\tilde{w},0}= {^{\tilde{w}}\!(\psi_0\chi_0)} \chi_0^{-1}$ of $\ES{T}$ to $\ES{I}'$. Note that for all $w_1\in W'w$, we have $\tilde{w}_1= \tilde{w}$ and (by definition) 
$\mathfrak{f}'_{w_1,\nu}= \mathfrak{f}'_{w,\nu}= \mathfrak{f}'_{\tilde{w},\nu}$. In particular, if $w\in W'$ then $\tilde{w}=1$ and 
$\xi_{\tilde{w}=1,0}= \psi_0$, thus $\mathfrak{f}'_{w,\nu}= \mathfrak{f}'_\nu$. 

\begin{lem}
For $\nu\in \check{X}$ and $w\in W$, we have $l'(t_{w(\nu)})= l'(t_\nu)$.
\end{lem}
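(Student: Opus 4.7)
The plan is to apply the explicit formula for the length of a pure translation in the extended Iwahori-Weyl group $\ES{W}' = \check{X} \rtimes W'$. The classical Iwahori-Matsumoto-Bernstein-Lusztig formula (see \cite[lemma 6.2]{R} together with the standard analysis of the extended affine Weyl group) gives, for every $\nu \in \check{X}$,
\[
l'(t_\nu) = \sum_{\alpha \in \Phi'_+} |\langle \alpha, \nu \rangle|.
\]
Since each pair $\{\alpha,-\alpha\} \subset \Phi'$ contributes the same term, I may rewrite this as
\[
l'(t_\nu) = \tfrac{1}{2}\sum_{\alpha \in \Phi'} |\langle \alpha, \nu\rangle|.
\]

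Next, I apply this to $w(\nu)$ and reindex the sum by $\beta = w^{-1}(\alpha)$:
\[
l'(t_{w(\nu)}) = \tfrac{1}{2}\sum_{\alpha \in \Phi'} |\langle \alpha, w(\nu)\rangle| = \tfrac{1}{2}\sum_{\alpha \in \Phi'} |\langle w^{-1}(\alpha), \nu\rangle| = \tfrac{1}{2}\sum_{\beta \in w^{-1}(\Phi')} |\langle \beta, \nu\rangle|.
\]
Thus the lemma reduces to the combinatorial identity
\[
\sum_{\beta \in w^{-1}(\Phi')} |\langle \beta, \nu \rangle| = \sum_{\alpha \in \Phi'} |\langle \alpha, \nu \rangle|
\]
valid for every $w \in W$ and every $\nu \in \check{X}$. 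When $w \in W_{\chi_0}$, one has $w^{-1}(\Phi') = \Phi'$ and the equality is immediate.

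The main obstacle is the case $w \notin W_{\chi_0}$, when $\Phi'$ and $w^{-1}(\Phi')$ are two (in general distinct but $W$-conjugate) closed sub-root systems of $\Phi$. Here I would establish the identity by setting up an explicit bijection $\Phi' \to w^{-1}(\Phi')$ that preserves the pairing with $\nu$ up to sign; this is a statement purely at the level of the root system $\Phi$ (independent of $\nu$). Equivalently, one can exploit the $W$-invariant decomposition of $\Phi$ into $W$-orbits and verify that $\Phi'$ and $w^{-1}(\Phi')$ meet each $W$-orbit in the same number of roots, together with a matching of those intersections under the $W$-action on $\check X$. Once this combinatorial identity is in place, the lemma follows directly from the formula above.
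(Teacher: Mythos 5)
Your reduction to the combinatorial identity
\[
\sum_{\alpha\in\Phi'_+}\bigl|\langle\alpha,w(\nu)\rangle\bigr| \;=\; \sum_{\alpha\in\Phi'_+}\bigl|\langle\alpha,\nu\rangle\bigr|
\]
via the translation--length formula is correct, and you rightly dispose of the case $w\in W_{\chi_0}$ (indeed $W_{\chi_0}$ stabilizes $\Phi'$, so $w^{-1}(\Phi')=\Phi'$ there). But the step you defer --- ``setting up an explicit bijection $\Phi'\to w^{-1}(\Phi')$ that preserves the pairing with $\nu$ up to sign'' --- is not a detail to be supplied; it is the whole content, and no such bijection exists in general. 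Take $G=SL_3$ with simple roots $\alpha_1,\alpha_2$, and choose a depth zero character $\chi_0$ with $\Phi'=\{\pm\alpha_1\}$ (for instance $\chi_0=(\chi_1,\chi_1)$ on $T(\mathfrak{K})\simeq(\mathfrak{K}^\times)^2$ with $\chi_1$ nontrivial). Let $\nu=\check\alpha_1+\check\alpha_2$, which is $B$--dominant regular, and $w=s_2$, so $w(\nu)=\check\alpha_1$. Then
\[
l'(t_\nu)=\bigl|\langle\alpha_1,\check\alpha_1+\check\alpha_2\rangle\bigr|=1, \qquad
l'(t_{w(\nu)})=\bigl|\langle\alpha_1,\check\alpha_1\rangle\bigr|=2,
\]
and the two sides are not equal. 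So the identity you are trying to prove, and hence the statement of the lemma for arbitrary $w\in W$, fails. The ``$W$--orbit counting'' alternative you sketch runs into the same wall: $\Phi'$ and $w^{-1}(\Phi')$ are $W$--conjugate as subsystems of $\Phi$, but that conjugation does not respect the pairing against a fixed $\nu\in\check X$, which is exactly what the identity would require.

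It is worth comparing with what the paper does. The paper's proof argues by transport of structure: conjugation by $w$ carries the Coxeter system $(\ES{W}',\ES{S}')$ to the one attached to ${^w\chi_0}$, and thereby produces a length function $l''=l'\circ\mathrm{Int}_{w^{-1}}$ on $\ES{W}_{^w\chi_0}$ with $l''(t_{w(\nu)})=l'(t_\nu)$. This is correct as far as it goes, but it yields an identity involving the \emph{different} length function $l''$ (attached to $w(\Phi')$), not the asserted one for $l'$; passing from $l''(t_{w(\nu)})$ to $l'(t_{w(\nu)})$ would require exactly the combinatorial identity you isolated, so the paper's argument does not close the gap either. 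In other words, you have correctly located the precise point at which the lemma --- in the generality ``$w\in W$'' --- is in trouble, but instead of flagging it you have waved it away. What \emph{is} true, and what the downstream application in the matching of elementary functions actually uses (through $\tilde w=w'w$ with $w'\in W'$), is the $W'$--invariance $l'(t_{w'(\mu)})=l'(t_\mu)$ for $w'\in W'$ (or more generally $w'\in W_{\chi_0}$), which follows immediately from your formula because $w'(\Phi')=\Phi'$. So the honest outcome of your analysis should have been: the trivial case is the only valid one, the general statement is false, and one must check that only the trivial case is used.
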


\begin{proof}
Recall that $l'$ is the length function on $\ES{W}_{\chi_0}$ extending the one associated to the Coxeter system $(\ES{W}'\!,\ES{S}')$ --- cf. 
\ref{Hecke algebra isomorphism}. The character ${^{w}\!\chi_0}$ of $\ES{T}$ defines a $F$--group $\wt{G}'_{w} =G'_w \rtimes C_{^w\!\chi_0}$ with connected 
component $G'_w$, and $w$ induces by transport of structure an $F$--isomorphism 
$\wt{G}' \buildrel\simeq\over{\longrightarrow} \wt{G}'_w$ (cf. the remark 2 of \ref{conjugation by W}). The stabilizer $\ES{W}_{^w\!\chi_0}$ of ${^w\!\chi_0}$ in $\ES{W}= \check{X}\rtimes W$ 
is equal to
$$
{^w\!(\ES{W}_{\chi_0})}= \check{X}\rtimes {^w(W_{\chi_0})},
$$ and $l''=l'\circ w^{-1}$ is the length function on $\ES{W}_{^w\!\chi_0}$ extending the one associated 
to the Coxeter system $({^w\ES{W}'},{^w\ES{S}'})$ deduced from $(\ES{W}'\!,\ES{S}')$ by transport of structure. This implies the lemma.
\end{proof}

For a dominant $G'$--regular co--character $\nu \in \check{X}$, the function
$$
\bs{\mathfrak{f}}'_\nu = \vert W'\vert^{-1} \sum_{w\in W}\mathfrak{f}'_{w,\nu}= \sum_{w\in W'\backslash W} \mathfrak{f}'_{\tilde{w},\nu}
$$
belongs to the subspace $\sum_{w\in W'\backslash W} \ES{H}(G'(F),\rho'_{\xi_{\tilde{w},0}})$ of $\ES{H}(G'(F),1_{\ES{I}'_+})$, and the function
$$
\bs{\mathfrak{h}}'_\nu = (\omega'_\chi)^{-1} \bs{\mathfrak{f}}'_\nu
$$
belongs to the subspace $\sum_{w\in W'\backslash W} \ES{H}(G'(F),\rho'_{^{\tilde{w}}\!(\psi_0\chi_0)})$ of $\ES{H}(G'(F),1_{\ES{I}'_+})$

\subsection{Matching of elementary functions}\label{matching of elementary functions}
Let $\nu\in \check{X}$ be a $B$--dominant regular co--character of $T$. Put
$$
\mathfrak{U}_\nu = \bigcup_{w\in W} t_{w(\nu)} \ES{T}.
$$
Since $\nu$ is regular, the union is disjoint: 
$t_{w_1(\nu)}\ES{T}\cap t_{w_2(\nu)}\ES{T}=\emptyset $ if $w_1\neq w_2$. 
From the proposition 8.3.1 of \cite{Hai2}, for a strongly regular element $\gamma\in G(F)$, we have ${\bf O}_\gamma(\mathfrak{f}_\nu)=0$ if $\gamma$ is not 
$G(F)$--conjugate to an element in $t_\nu\ES{T}$, and for all $t_0\in \ES{T}$, we have
$$
{\bf O}_{t_\nu t_0}(\mathfrak{f}_\nu)=q^{-l(t_\nu)/2}(\psi_0\chi_0)(t_0)^{-1}.\leqno{(1)}
$$
In particular, the function $T(F)\rightarrow {\Bbb C},\, t\mapsto {\bf O}_t(\mathfrak{f}_\nu)$ 
is supported on the compact open $W$--invariant set $\mathfrak{U}_\nu$. 

Similarly, for a strongly regular element 
$\delta\in G'(F)$, we have ${\bf O}_\delta(\mathfrak{f}'_\nu)=0$ if $\delta$ is not $G'(F)$--conjugate to an element of 
$t_\nu\ES{T}$, and for all $t_0\in  \ES{T}$, we have
$$
{\bf O}_{t_\nu t_0}(\mathfrak{f}'_\nu)=q^{-l'(t_\nu)/2}\psi_0(t_0)^{-1}.\leqno{(2)}
$$
In particular, the function $T(F)\rightarrow {\Bbb C},\, t\mapsto {\bf O}_t(\mathfrak{f}'_\nu)$ 
is supported on the compact open $W'$--invariant 
subset 
$$
\mathfrak{U}'_\nu =\coprod_{w'\in W'}t_{w'(\nu)}\ES{T} \subset \mathfrak{U}_\nu.
$$ 

Let $w\in W$. Recall we put $\tilde{w}= w' w$, where $w'$ is the unique element of $W'$ such that ${^{w'}\!({^w\!B})'}= B'$. For a strongly regular $\delta\in G'(F)$, we have ${\bf O}_\delta(\mathfrak{f}'_{w,\nu})=0$ if $\delta$ is not $G'(F)$--conjugate to an element 
in $t_{\tilde{w}(\nu)}\ES{T}= {^{w'}\!(t_{w(\nu)}\ES{T})}$, i.e. if $\delta$ is not $G'(F)$--conjugate to an 
element in $t_{w(\nu)}\ES{T} = {^w\!(t_\nu \ES{T})}$, and for $t_0\in \ES{T}$, we have
$$
{\bf O}_{^w\!(t_\nu t_0)}(\mathfrak{f}'_{w,\nu})= {\bf O}_{^{\tilde{w}}\!(t_\nu t_0)} (\mathfrak{f}'_{w,\nu})
= q^{-l'(t_{\tilde{w}(\nu)})/2}\xi_{\tilde{w},0}(^{\tilde{w}}t_0)^{-1}
$$
hence
$$
{\bf O}_{^w\!(t_\nu t_0)}(\mathfrak{f}'_{w,\nu})=\xi_{\tilde{w},0}(^{\tilde{w}}t_0)^{-1}\psi_0(^{\tilde{w}}t_0) {\bf O}_{^w\!(t_\nu t_0)}(\mathfrak{f}'_{\tilde{w}(\nu)})\leqno{(3)}
$$
with
$$
 \xi_{\tilde{w},0}(^{\tilde{w}}t_0)^{-1}\psi_0(^{\tilde{w}}t_0)= (\psi_0\chi_0)(({^{\tilde{w}}t_0})t_0^{-1}).
$$
But we also have (using the lemma of \ref{elementary functions})
$$
{\bf O}_{^w\!(t_\nu t_0)}(\mathfrak{f}'_{\tilde{w}(\nu)})= q^{-l'(t_{\tilde{w}(\nu)})/2}\psi_0({^{\tilde{w}}t_0})^{-1}=\psi_0(({^{\tilde{w}}t_0})t_0^{-1})^{-1} {\bf O}_{t_\nu t_0}(\mathfrak{f}'_\nu),
$$
which gives
$$
{\bf O}_{^w\!(t_\nu t_0)}(\mathfrak{f}'_{w,\nu})= \chi_0(({^wt_0})t_0^{-1}){\bf O}_{t_\nu t_0}(\mathfrak{f}'_\nu)= \chi_{w^{-1}}(t_\nu t_0){\bf O}_{t_\nu t_0}(\mathfrak{f}'_\nu).\leqno{(4)}
$$
with $\chi_{w^{-1}} = ({^{w^{-1}}\!\chi})\chi^{-1}= ({^{\tilde{w}^{-1}}\!\chi})\chi^{-1}$ (since $\chi= \chi_0^\varpi$, we have $\chi_{w^{-1}}(t_\nu)=1$).

For a strongly $G$--regular element $t\in T(F)$, we have (by definition of the canonical transfer factor) 
$$
\Delta(t,t) =\Delta_T(t,t) \vert D_G(t)\vert_F^{1/2} \vert D_{G'}(t)\vert_F^{1/2}\leqno{(5)}
$$
with $\Delta_T=1$. 
On the other hand, for $t\in \mathfrak{U}_\nu$, we have $\vert D_G(t)\vert_F = q^{\langle 2\mu , \nu \rangle }$ where $\mu$ is the half sum of positive roots of $G$. 
 From \cite[2.4.1]{MD}, we have
 $$
 l(t_\nu) = \sum_{\alpha\in \Phi^+} \langle \alpha, \nu \rangle = \langle 2\mu, \nu \rangle.
 $$
 Thus $\vert D_G(t)\vert_F^{1/2}= q^{l(t_\nu)/2}$. We also have $\vert D_{G'}(t)\vert_F^{1/2}= q^{l'(t_\nu)/2}$. From (5) we obtain
 $$
 \Delta(t,t)q^{-l(t_\nu)/2}= q^{-l'(t_\nu)/2}.\leqno{(6)}
 $$
For $w\in W$, since $\Delta({^{w^{-1}}\!t},{^{w^{-1}}\!t})=\Delta(t,t)$, we have
 $$
 \chi({^{w^{-1}}\!t})\Delta({^{w^{-1}}\!t},{^{w^{-1}}\!t})= \chi_w(t)\chi(t) \Delta(t,t). \leqno{(7)}
 $$
 
Now let $t\in \mathfrak{U}_\nu$. There exists a unique element $w\in W$ such that $t\in t_{w(\nu)}\ES{T}$. 
Write $t_1= {^{w^{-1}}\!t}\in t_\nu\ES{T}$. From (1), (2) and (6), we have
$$
\chi(t_1)\Delta(t_1,t_1){\bf O}_{t_1}(\mathfrak{f}_\nu)= {\bf O}_{t_1}(\mathfrak{f}'_\nu),
$$
and from (7) we deduce
$$
\chi(t)\Delta(t,t) {\bf O}_t(\mathfrak{f}_\nu)= 
\chi_w(t)^{-1}{\bf O}_{t_1}(\mathfrak{f}'_\nu).
$$
Since $\chi_w(t)^{-1}=\chi_{w^{-1}}(t_1)$, from (4) we obtain
$$
\chi(t)\Delta(t,t) {\bf O}_t(\mathfrak{f}_\nu)=
{\bf O}_t(\mathfrak{f}'_{w,\nu})= {\bf O}_t(\mathfrak{f}'_{\tilde{w},\nu}).\leqno{(8)}
$$

Recall we have
$$
\omega'_\chi\vert_{T(F)}=\chi.
$$
Since
$$
\mathfrak{U}_\nu = \coprod_{w\in W'\backslash W}\mathfrak{U}'_{\tilde{w}(\nu)},
$$
the equality (8) says that $\bs{\mathfrak{h}}'_{\nu}= (\omega'_\chi)^{-1} \bs{\mathfrak{f}}'_\nu$ is a transfer of $\mathfrak{f}_\nu$.

\begin{rem}
{\rm 
Let us take the example of \ref{an example}: the depth zero character $\chi_0$ of $\ES{T}$ is {\it regular} (i.e. $G'=T$; in particular, $G'$ is not elliptic if $G\neq T$), and $\psi_0=1$. 
Suppose moreover that the center $Z(G)$ is connected. In that case, since the Hecke--algebra $\ES{H}(G(F),\rho)\simeq \ES{H}(G'(F),1_{\ES{I}'})={\Bbb C}[T(F)/\ES{T}]$ is commutative, we have $\ES{H}(G(F),\rho)=\ES{Z}(G(F),\rho)$ 
and we can apply the result of \ref{an example} 
to the elementary functions. Let $\nu\in \check{X}$ be a 
dominant regular co--character of $T$. We have $\zeta_\chi(\mathfrak{f}_\nu)= \mathfrak{f}'_\nu =q^{-l'(t_\nu)/2}1_{t_\nu\ES{T}}$ and 
$$
\zeta_{w,\chi}({^w\!(\mathfrak{f}_\nu)})= \chi^{-1}_w\cdot {^w\!(\zeta_\chi(\mathfrak{f}_\nu))}= \chi^{-1}_w\cdot {^w\!(\mathfrak{f}'_\nu)}
$$ 
is the unique element of $\ES{H}(T(F),\chi_{w,0})$ supported on $t_{w(\nu)}\ES{T}$ which takes the value 
$\chi_{w,0}(t_0)^{-1}q^{-l'(t_\nu)/2}$ at $t_{w(\nu)}t_0$ for all $t_0\in \ES{T}$. 
Hence we have
$$
\zeta_{w,\chi}({^w\!(\mathfrak{f}_\nu))}=  \mathfrak{f}'_{w,\nu},
$$
and from \ref{an example}, the function
$$
\bs{\xi}_{\chi_0}(\mathfrak{f}_\nu)=(\omega'_\chi)^{-1}\vert W'\vert^{-1}\sum_{w\in W}\mathfrak{f}'_{w,\nu}=\bs{\mathfrak{h}}'_{\nu}
$$
is a transfer of $\mathfrak{f}_\nu$, which is the result proved above.
}
\end{rem}

\subsection{Traces of elementary functions}\label{traces of elementary functions}
For a depth zero character $\theta_0$ of $\ES{T}$, let $\wt{\Xi}_{\theta_0}$ be the set of characters of $T(F)$ which extend some $W$--conjugate of $\theta_0$, and let 
$\Xi_{\theta_0}\subset \wt{\Xi}_{\theta_0}$ be the subset formed of those characters which extend $\theta_0$. Let $\mathfrak{s}_{\theta_0}$ be the 
$G(F)$--inertial class $[T,\theta]_G$ of the cuspidal pair $(T,\theta)$ of $G(F)$ for some character $\theta\in \wt{\Xi}_{\theta_0}$. We denote by $i_B^G: \mathfrak{R}(T(F))\rightarrow \mathfrak{R}(G(F))$ and $r_G^B: \mathfrak{R}(G)\rightarrow \mathfrak{R}(T(F))$ the normalized parabolic induction and Jacquet restriction functors (with respect to $U(F)$).
Let $\pi$ an irreducible representation in $\mathfrak{R}^{\mathfrak{s}_{\theta_0}}(G(F))$. Then $\pi$ is a subquotient of $i_B^G(\xi)$ for some $\xi\in \wt{\Xi}_{\theta_0}$, and by the geometric lemma, 
the Jacquet module $\pi_U = \delta_B^{1/2}r^G_B(\pi)$ is a subquotient of
$$
i_B^G(\xi)_U= \delta_B^{1/2} \bigoplus_{w\in W} {\Bbb C}_{^w\!\xi}.
$$
Here ${\Bbb C}_{^w\!\xi}$ denotes the $1$--dimensional representation of $T(F)$ corresponding to the character ${^w\!\xi}$. Thus there is a well defined subset 
$\wt{\Xi}_{\theta_0}(\pi)\subset \wt{\Xi}_{\theta_0}$ such that
$$
\pi_U = \delta_B^{1/2} \bigoplus_{\mu\in \wt{\Xi}_{\theta_0}(\pi)}{\Bbb C}_{\mu}^{a_{\mu}(\pi)},\quad a_{\mu}(\pi)>0.
$$
Let
$$
\Xi_{\theta_0}(\pi) = \Xi_{\theta_0}\cap \wt{\Xi}_{\theta_0}(\pi).
$$
For convenience of writing, if $\pi$ is an irreducible representation of $G(F)$ which is not in $\mathfrak{R}^{\mathfrak{s}_{\theta_0}}(G(F))$, we put
$$
\Xi_{\theta_0}(\pi)= \emptyset.
$$

Recall that we have an identification $T(F)= \check{X}\times \ES{T}$ given by the choice of the uniformizer element $\varpi$ of $F$. For a character $\mu$ of $T(F)$, 
we denote by $\mu^{\rm nr}: T(F)/\ES{T} \rightarrow {\Bbb C}^\times$ the unramified character of $T(F)$ defined by $\mu^{\rm un}= \mu\vert_{\check{X}}$. Thus, writing $\mu_0= \mu\vert_{\ES{T}}$, 
we have the decomposition
$$
\mu = \mu^{\rm nr} \mu_0.
$$
For a co--character $\nu\in \check{X}$, we thus have $\mu(t_\nu)= \mu^{\rm nr}(t_\nu)\buildrel {\rm def}\over{=} \langle \nu ,  \mu^{\rm nr} \rangle$.

We fixed in \ref{the theorem} the Haar measure $dg$ on $G(F)$: it is the one that gives the volume $1$ to the Iwahori subgroup $\ES{I}$ of $G(F)$. 
For a function $f\in C^\infty_{\rm c}(G(F))$, we denote by ${\rm trace}(\pi(f))$ the trace of the operator $\int_{G(F)}f(g)\pi(g)dg$ on the space of $\pi$ 
(it depends only on the isomorphism class of $\pi$).

Let take $\theta_0= \psi_0\chi_0$. To a $B$--dominant regular co-character $\nu \in \check{X}$, we have associated in \ref{elementary functions} a function 
$\mathfrak{f}_\nu \in \ES{H}(G(F),\rho_{\theta_0})$ with support in $\ES{L}_\nu$. From \cite[8.5.2]{Hai2}, we have the

\begin{prop1}
Let $\nu \in \check{X}$ be a $B$--dominant regular co--character of $T$, and let $\pi$ be an irreducible representation of $G(F)$. We have
$$
{\rm trace}(\pi(\mathfrak{f}_\nu))= q^{l(t_\nu)/2}\sum_{\mu\in \Xi_{\psi_0\chi_0}(\pi)}\mu(t_\nu) a_{\mu}(\pi)
$$
\end{prop1}

Now let take $\theta'_0= {^{\tilde{w}}\!(\psi_0\chi_0)}$ for an element $w\in W'\backslash W$. By replacing $G$ by $G'$ and $\theta_0$ by $\theta'_0$ 
in the previous construction, we 
define, for each irreducible representation $\pi'$ of $G'(F)$, a subset $\wt{\Xi}'_{\theta'_0}(\pi')$ of the set $\wt{\Xi}'_{\theta'_0}\;(\subset \wt{\Xi}_{\theta'_0})$ 
of characters of $T(F)$ extending a $W'$--conjugate of $\theta'_0$, and we put $\Xi_{\theta'_0}(\pi')= \Xi_{\theta'_0}\cap \wt{\Xi}'_{\theta'_0}(\pi')$. 
To a $B$--dominant ($G$--)regular co--character $\nu \in \check{X}$, we associated in \ref{elementary functions} a function 
$\mathfrak{f}'_{\tilde{w},\nu} \in \ES{H}(G(F),\rho'_{\xi_{\tilde{w},0}})$ with support in $\ES{L}'_{\tilde{w}(\nu)}$. The function 
$(\omega'_\chi)^{-1}\mathfrak{f}'_{\tilde{w},\chi}$ belongs to $\ES{H}(G'(F),\rho'_{\theta'_0})$ and coincides with $\mathfrak{f}'_{\!\tilde{w}(\nu),\theta'_0}$, thus 
the proposition 1 allows us to calculate ${\rm trace}((\omega'_\chi)^{-1}\mathfrak{f}'_{\tilde{w},\nu}))$ in terms of the exponents $\mu'\in \Xi_{\theta'_0}(\pi')$. 

Recall we put $\bs{\mathfrak{h}}'_{\nu} = (\omega'_\chi)^{-1}\sum_{w\in W'\backslash W} \mathfrak{f}'_{\tilde{w},\nu}$. Since 
$l'(t_{w(\nu)})= l'(t_\nu)$ for all $w\in W$ (lemma of \ref{elementary functions}), we obtain the

\begin{prop2}
Let $\nu \in \check{X}$ be a $B$--dominant regular co--character of $T$, and let $\pi'$ be an irreducible representation of $G'(F)$. We have
$$
{\rm trace}(\pi'(\bs{\mathfrak{h}}'_{\nu}))= q^{-l'(t_\nu)/2}\sum_{w\in W'\backslash W}\sum_{\mu'\in \Xi_{^{\tilde{w}}\!(\psi_0\chi_0)}(\pi')}\mu'(t_{\tilde{w}(\nu)}) a_{\mu'}(\pi')
$$
\end{prop2}

\begin{rem}
{\rm 
For a $B$--dominant regular co--character $\nu$ of $T$, we can also calculate the expression ${\rm trace}(\pi'(\bs{\mathfrak{f}}'_\nu))$ for any irreducible representation $\pi'$ of $G'(F)$: 
since $\bs{\mathfrak{f}}'_\nu = (\omega'_\chi)\bs{\mathfrak{h}}'_\nu$, we have
$$
{\rm trace}(\pi'(\bs{\mathfrak{f}}'_\nu))={\rm trace}((\omega'_\chi\otimes \pi')(\bs{\mathfrak{h}}'_\nu)).
$$
}
\end{rem}

\subsection{Local data}\label{local data}
Recall that we have choosen a $W_{\chi_0}$--invariant character $\chi$ of $T(F)$ extending $\chi_0$, and a character 
$\psi$ of $T(F)$ extending $\psi_0$. We denote by $\mathfrak{s}_0= \mathfrak{s}_{\psi_0\chi_0}$ the $G(F)$--inertial class 
$[T,\psi\chi]_G$ of the cuspidal 
pair $(T,\psi\chi)$ of $G(F)$. Let ${\rm Irr}^{\mathfrak{s}_0}(G(F))$ be the set of isomorphism classes of irreducible representations in 
$\mathfrak{R}^{\mathfrak{s}_0}(G(F))$. For $w\in W$, let $\mathfrak{s}'_{^w\!(\psi_0\chi_0)}$ be the $G'(F)$--inertial class $[T, {^w\!(\psi\chi)}]_{G'}$ of the cuspidal 
pair $(T,{^w\!(\psi\chi)})$ of $G'(F)$. Let
$$
\mathfrak{S}'_0 = \bigcup_{w\in W}\mathfrak{s}'_{^w\!(\psi_0\chi_0)}= \bigcup_{w\in W'\backslash W}\mathfrak{s}'_{^{\tilde{w}}\!(\psi_0\chi_0)}
$$ and 
${\rm Irr}^{\mathfrak{S}'_0}(G'(F))$ be the set of isomorphism classes of irreducible representations in 
$\mathfrak{R}^{\mathfrak{S}'_0}(G'(F))$. 

\vskip1mm
Let $(I, \{\alpha_i(\pi)\},\{\beta_i(\pi')\})$ be a triple where:
\begin{itemize}
\item $I$ is an indexing set (possibly infinite);
\item $\{\alpha_i(\pi)\}$ is a collection of complex numbers for $i\in I$ and $\pi \in {\rm Irr}^{\mathfrak{s}_0}(G(F))$;
\item $\{\beta_i(\pi')\}$ is a collection of complex numbers for $i\in I$ and $\pi' \in {\rm Irr}^{\mathfrak{S}'_0}(G'(F))$;
\item for a fixed $i\in I$, the complex numbers $\alpha_i(\pi)$ and $\beta_i(\pi')$ are zero for all but finitely many $\pi$ and $\pi'$. 
\end{itemize}

\begin{defi}
{\rm We say that $(I, \{\alpha_i(\pi)\},\{\beta_i(\pi')\})$ are some {\it local data for $(G,G')$ with respect to $(\ES{I},\ES{I}')$} if for any pair of functions $(f,f')\in C^\infty_{\rm c}(G(F))\times 
C^\infty_{\rm c}(G'(F))$ such that $f\in \ES{H}_0=\ES{H}(G(F),\rho_{\psi_0\chi_0})$ and $f'\in \bs{\ES{H}}'_0=\sum_{w\in W} \ES{H}(G'(F), \rho'_{^w\!(\psi_0\chi_0)})$, the following two conditions are equivalent:
\begin{enumerate}
\item[(A)]for all $i\in I$, we have
$$
\sum_\pi \alpha_i(\pi){\rm trace}(\pi(f))= \sum_{\pi'}\beta_i(\pi'){\rm trace}(\pi'(f'))
$$
where $\pi$, resp. $\pi'$, runs over the elements (or a set of representatives for the elements) of ${\rm Irr}^{\mathfrak{s}_0}(G(F))$, resp. ${\rm Irr}^{\mathfrak{S}'_0}(G'(F))$;
\item[(B)]for all strongly $G$--regular elements $\delta\in G'(F)$, we have
$$
{\bf SO}_\delta(f') = \wt{\bf O}_\delta(f).
$$
\end{enumerate}
}
\end{defi}

\subsection{Proof of the theorem of \ref{variant with two characters}}\label{proof of the theorem}
We assume there exists some local data $(I, \{\alpha_i(\pi)\},\{\beta_i(\pi')\})$ for $(G,G')$ as in 
\ref{local data}. The proof is a simple adaptation of \cite[9.2]{Hai2}. Put $\theta_0= \psi_0\chi_0$.

Let $f\in \ES{Z}(G(F),\rho_{\theta_0})$ and $f'= \bs{\xi}_{\chi_0}(f)$. With the notations of \ref{local data}, 
the pair $(f,f')$ belongs to $\ES{H}_0\times \bs{\ES{H}}'_0$. To prove that the functions 
$f$ and $f'$ satisfy \ref{local data}.(B) --- i.e. $f'$ is a transfer of $f$ ---, we have to prove they satisfy 
\ref{local data}.(A):
$$
\sum_\pi \alpha_i(\pi){\rm trace}(\pi(f))= \sum_{\pi'}\beta_i(\pi'){\rm trace}(\pi'(f')),\quad i\in I,\leqno{(1)}
$$
where $\pi$, resp. $\pi'$, runs over the elements of ${\rm Irr}^{\mathfrak{s}_0}(G(F))$, resp. ${\rm Irr}^{\mathfrak{S}'_0}(G'(F))$.

From \ref{elementary functions} and \ref{matching of elementary functions}, for all $B$--dominant regular characters $\nu\in \check{X}$, the pair  
$(\mathfrak{f}_\nu, \bs{\mathfrak{h}}'_{\nu})$ belongs to $\ES{H}_0 \times \bs{\ES{H}}'_0$ and $\bs{\mathfrak{h}}'_\nu$ is a 
transfer of $\mathfrak{f}_\nu$, thus we have
$$
\sum_\pi \alpha_i(\pi){\rm trace}(\pi(\mathfrak{f}_\nu))= \sum_{\pi'}\beta_i(\pi'){\rm trace}(\pi'(\bs{\mathfrak{h}}'_\nu)),\quad i\in I,\leqno{(2)}
$$
where $\pi$, resp. $\pi'$, runs over the elements of ${\rm Irr}^{\mathfrak{s}_0}(G(F))$, resp. ${\rm Irr}^{\mathfrak{S}'_0}(G'(F))$. 

Let fix an index $i\in I$. Let $\overline{\Xi}_{\theta_0}=\Xi_{\theta_0}/W_{\theta_0}$ be a set of representative elements of the $W_{\theta_0}$--orbits in $\Xi_{\theta_0}$. For 
$\theta\in \overline{\Xi}_{\theta_0}$, let ${\rm Irr}(\theta)$ be the subset of ${\rm Irr}^{\mathfrak{s}_0}(G(F))$ formed of those classes of 
irreducible representations of $G(F)$ which appear as a subquotient of $i_B^G(\theta)$. 
From the proposition 1 of \ref{traces of elementary functions}, the left hand side of (2) rewrites as
$$
q^{l(t_\nu)/2}\sum_{\theta \in \overline{\Xi}_{\theta_0}}\sum_{\pi\in {\rm Irr}(\theta)}\alpha_i(\pi)\sum_{\mu\in \Xi_{\theta_0}(\pi)}\mu(t_\nu) a_{\mu}(\pi).\leqno{(3)}
$$
In the same manner, for $w\in W$, let $\overline{\Xi}'_{^w\!(\theta_0)} = \Xi_{^w\!(\theta_0)}/W'_{^w\!(\theta_0)}$ be a set of 
representative elements for the $W'_{^w\!(\theta_0)}$--orbits in $\Xi_{^w\!(\theta_0)}$, and for $\theta'\in \overline{\Xi}'_{^w\!(\theta_0)}$, let 
${\rm Irr}'(\theta')$ be the subset of ${\rm Irr}^{\mathfrak{S}'_0}(G'(F))$ formed of those classes of 
irreducible representations of $G'(F)$ which appear as a subquotient of $i_{B'}^{G'}(\theta')$. 
From the proposition 2 of \ref{traces of elementary functions}, the right hand side of (2) rewrites as
$$
q^{l'(t_\nu)/2}\sum_{w\in W'\backslash W}\sum_{\theta' \in \overline{\Xi}'_{^{\tilde{w}}\!(\theta_0)}}\sum_{\pi'\in {\rm Irr}'(\theta')}\beta_i(\pi')
\sum_{\mu'\in \Xi_{^{\tilde{w}}\!(\theta_0)}(\pi')}\mu'(t_{\tilde{w}(\nu)}) a_{\mu'}(\pi').\leqno{(4)}
$$
Recall for a character $\mu$ of $T(F)$, we put $\mu(t_\nu)= \langle \nu, \mu^{\rm un}\rangle$; thus for $w\in W'\backslash W$, we have
$$
\mu(t_{\tilde{w}(\nu)})= \langle \tilde{w}(\nu), \mu^{\rm un} \rangle = \langle \nu, ({^{\tilde{w}^{-1}}\! \mu})^{\rm un}\rangle.
$$
We can view the equality $(3)=(4) $ as a linear combination of characters on the subset $\check{X}^{\natural}\subset \check{X}$ of $B$--dominant $G$--regular co--characters of $T$. 
By linear independence statement, we obtain that the equality $(3)=(4)$ remains true for $\nu=0$ (cf. \cite[9.2]{Hai2}). 

Let $e_{\theta_0}= e_{\rho_{\theta_0}}$ be the unit element of the Hecke algebra $\ES{H}(G(F),\rho_{\theta_0})$, i.e. the function on $G(F)$ with support $\ES{I}$ such that 
$e_{\theta_0}(k)= \rho_{\theta_0}(k)^{-1}$ for all $k\in \ES{I}$. For $w\in W$, let $e'_{^w\!(\theta_0)}= e_{\rho'_{^w\!(\theta_0)}}$ be the unit element of the 
Hecke algebra $\ES{H}(G'(F), \rho'_{^w\!(\theta_0)})$. Put $\tilde{\bs{e}}'_{\theta_0}= \sum_{w\in W'\backslash W}e'_{^{\tilde{w}}\!(\theta_0)}$. 
As in \cite[8.5 (corollary 8.5.3)]{Hai2}, we obtain
$$
\sum_\pi \alpha_i(\pi){\rm trace}(\pi(e_{\theta_0}))= \sum_{\pi'}\beta_i(\pi'){\rm trace}(\pi'(\tilde{\bs{e}}'_{\theta_0})),\quad i\in I,
$$
where $\pi$, resp. $\pi'$, runs over the elements of ${\rm Irr}^{\mathfrak{s}_0}(G(F))$, resp. ${\rm Irr}^{\mathfrak{S}'_0}(G'(F))$. 
In other words, $\tilde{\bs{e}}'_{\theta_0}$ is a transfer of $e_{\theta_0}$. Put $\bs{e}'_{\theta_0}= \vert W'\vert^{-1} \sum_{w\in W}e'_{^w\!(\theta_0)}$. Since for $w\in W'\backslash W$ and 
$w'\in W$, we have
$$
{\rm trace}(\pi'(e'_{^{w'\tilde{w}}\!(\theta_0)}))= {\rm trace}(\pi'(e'_{^{\tilde{w}}\!(\theta_0)}))
$$ for all irreducible representations $\pi'$ of $G'(F)$, we deduce that 
$\bs{e}'_{\theta_0}$ is a transfer of $e_{\theta_0}$. Note that $\bs{e}'_{\theta_0}= (\omega'_\chi)^{-1}\vert W'\vert^{-1} \sum_{w\in W} e_{\rho'_{\xi_{w,0}}}$ where $e_{\rho'_{\xi_{w,0}}}$ is the 
unit element of the Hecke algebra $\ES{H}(G'(F),\rho'_{\xi_{w,0}})$. 

For $w\in W$ and $\theta'\in \Xi_{^w\!(\theta_0)}$, 
the character ${^{w^{-1}}\!\theta'}$ of $T(F)$ belongs to $\Xi_{\theta_0}$, and the map $\theta'\mapsto {^{w^{-1}}\!\theta'}$ defines a map  
$\overline{\Xi}'_{^w\!(\theta_0)}\rightarrow \overline{\Xi}_{\theta_0}$. For $\theta\in \overline{\Xi}_{\theta_0}$, we denote by $\overline{\Xi}'_{^w\!(\theta_0)}(\theta)\subset 
\overline{\Xi}'_{^w\!(\theta_0)}$ the pre--image of $\theta$ 
by this map. The linear independence statement mentioned above gives in fact a stronger result (cf. \cite[lemma 9.2.2]{Hai2}): for all $i\in I$ and all $\theta\in \overline{\Xi}_{\theta_0}$, we have
$$
\sum_{\pi\in {\rm Irr}(\theta)}\alpha_i(\pi){\rm trace}(\pi(e_{\theta_0}))= \sum_{(w,\theta'\!,\pi')}\beta_i(\pi'){\rm trace}(\pi'(e'_{^{\tilde{w}}\!(\theta_0)})), \leqno{(5)}
$$
where $(w,\theta',\pi')$ runs over the triples such that $w\in W'\backslash W$, $\theta'\in \overline{\Xi}'_{^{\tilde{w}}\!(\theta_0)}(\theta)$ and 
$\pi'\in {\rm Irr}'(\theta')$. Equivalently, for all $i\in I$ and all $\theta\in \overline{\Xi}_{\theta_0}$, writing $\pi'_\chi= (\omega'_\chi)^{-1}\otimes \pi'$ ($\pi'\in {\rm Irr}(G'(F))$), we have
$$
\sum_{\pi\in {\rm Irr}(\theta)}\alpha_i(\pi){\rm trace}(\pi(e_{\theta_0}))= \vert W'\vert^{-1}\sum_{(w,\theta'\!,\pi')}\beta_i(\pi'){\rm trace}(\pi'_\chi(e_{\rho'_{\xi_{w,0}}})), \leqno{(6)}
$$
where $(w,\theta',\pi')$ runs over the triples such that $w\in W$, $\theta'\in \overline{\Xi}'_{^w\!(\theta_0)}(\theta)$ and 
$\pi'\in {\rm Irr}'(\theta')$. Let $\theta\in \overline{\Xi}_{\theta_0}$. Recall that the function $f\in \ES{Z}(G(F),\rho_{\theta_0})$ acts on $i_B^G(\theta)^{\rho_{\theta_0}}$ by multiplication by the scalar $\lambda_\theta(f)$, 
and for $w\in W$ and $\theta'\in \overline{\Xi}'_{^w\!(\theta_0)}(\theta)$, the function $\zeta_{w,\chi}({^w\!f})$ acts on $i_{B'}^{G'}(\chi^{-1}\theta')^{\rho'_{\xi_{w,0}}}$ by multiplication by the scalar
$$
\lambda'_{\chi^{-1}\theta'}(\zeta_{w,\chi}({^w\!f}))=\lambda_{\theta'}(f)=\lambda_\theta(f).
$$
Thus multiplying both sides of the equality (6) by $\lambda_\theta(f)$, we obtain
$$
\sum_{\pi\in {\rm Irr}(\theta)}\alpha_i(\pi){\rm trace}(\pi(f))= \vert W'\vert^{-1}\sum_{(w,\theta'\!,\pi')}\beta_i(\pi'){\rm trace}(\pi'_\chi(\zeta_{w,\chi}({^w\!f})))\leqno{(7)}
$$
with $\pi'_\chi(\zeta_{w,\chi}({^w\!f}))= \pi'((\omega'_\chi)^{-1}\zeta_{w,\chi}({^w\!f})))$. Then summing (7) over all $\theta \in \overline{\Xi}_{\theta_0}$ gives (1). 
This completes the proof of the theorem of \ref{variant with two characters}, assuming the existence of a local datum for $(G,G')$.

\subsection{Existence of local data by Waldspurger (${\rm char}(F)=0$)}\label{walds}

When ${\rm char}(F)=0$, Waldspurger gave us the following demonstration of the existence of local data. 

Let denote by $\bs{D}(G(F))$ the space of finite linear combinations of traces of (complex, smooth) irreducible tempered representations of $G(F)$, viewed as invariant distributions on $G(F)$. 
Let $\bs{SD}(G'(F))\subset \bs{D}(G'(F))$ be the subspace formed of those distributions that are stable. For a function $f'\in C^\infty_{\rm c}(G'(F))$, let us consider the two following conditions:
\begin{enumerate}
\item[(a)]for all strongly $G$--regular elements $\delta\in G'(F)$, we have ${\bf SO}_\delta(f')=0$;
\item[(b)]for all distributions $D'\in \bs{SD}(G'(F))$, we have $D'(f')=0$.
\end{enumerate}
From the lemma 6.3 of \cite{A}, we have\footnote{This result has been extended to the twisted case in \cite[XI.5.2]{MW}.}:
\begin{enumerate}
\item[(1)] conditions (a) and (b) are equivalent.
\end{enumerate}
From loc.~cit., we know the existence of spectral transfer: for all distribution $D'\in \bs{SD}(G'(F))$, there exists a unique element $D= {\rm T}(D')\in \bs{D}(G(F))$ verifying
$D(f)= D'(f')$ for all pairs of functions $(f,f')\in C^\infty_{\rm c}(G(F))\times C^\infty_{\rm c}(G'(F))$ such that $f'$ is a transfer of $f$. Let us fix a basis $\{D'_i\}_{i\in I}$ of $\bs{SD}(G'(F))$, 
and for each $i\in I$, let $D_i={\rm T}(D'_i)\in \bs{D}(G(F))$. We write $D'_i$ and $D_i$ explicitely as a linear combination of traces of irreducible tempered representations:
$$
D'_i= \sum_{\pi'} b_i(\pi') {\rm trace}(\pi'),\quad D_i = \sum_\pi a_i(\pi){\rm trace}(\pi).
$$
For a pair of functions $(f,f')\in C^\infty_{\rm c}(G(F))\times C^\infty_{\rm c}(G'(F))$, Let us consider the two following conditions:
\begin{enumerate}
\item[(A)]for all $i\in I$, we have $\sum_\pi a_i(\pi){\rm trace}(\pi(f))= \sum_{\pi'}b_i(\pi'){\rm trace}(\pi'(f'))$;
\item[(B)]for all strongly $G$--regular elements $\delta\in G'(F))$, we have ${\bf SO}_\delta(f')= \wt{\bf O}_\delta(f)$.
\end{enumerate}
Then we have:
\begin{enumerate}
\item[(2)] conditions (A) and (B) are equivalent.
\end{enumerate}
Let us prove (2). Let ${\rm T}(f)\in C^\infty_{\rm c}(G'(F))$ be a transfer of $f$. 
Condition (A) rewrites ${\rm T}(D'_i)(f)= D'_i(f')$ for all $i\in I$, which is equivalent to $D'_i({\rm T}(f)-f')=0$ for all $i\in I$, i.e. to 
$D'({\rm T}(f')-f')=0$ for all $D'\in \bs{SD}(G'(F))$. Applying (1), this is equivalent to $f'= {\rm T}(f)$ modulo --- as usual --- the functions whose strongly $G$--regular 
orbital integrals are zero, i.e. to (B). 

Now for all $i$, $\pi$, $\pi'$ as above, we put $\alpha_i(\pi)= a_i(\pi)$ if $\pi$ is in ${\rm Irr}^{\mathfrak{s}_0}(G(F))$, and $\alpha_i(\pi)=0$ if not, 
$\beta_i(\pi')=b_i(\pi')$ if $\pi'\in {\rm Irr}^{\mathfrak{S}'_0}(G'(F))$, and $\beta_i(\pi')=0$ if not. For a pair of functions $(f,f')\in \ES{H}_0\times \bs{\ES{H}}_0$, conditions (A) and (B) above coincide with conditions (A) and (B) of \ref{local data}. This proves the existence of local data when ${\rm char}(F)=0$.


\section{Existence of local data (${\rm char}(F)\geq 0$)}\label{existence of local data}
In this section \ref{existence of local data}, we change notations: $F$ is a global field, and $G$ is a connected reductive group defined over $F$. We fixe an algebraic closure 
$\overline{F}$ of $F$, and denote by $\Gamma_F$, resp. $W_F$, the Galois group, resp. Weil group, of $F^{\rm sep}/F$, where $F^{\rm sep}$ is the separable closure of $F$ in $\overline{F}$.

\subsection{The local--global argument}\label{the local-global argument}
In characteristic zero, Hales \cite{Hal} obtained local data to deduce the fundamental lemma for the whole spherical Hecke algebra at a given finite place $v_0$ of $F$ from the fundamental lemma for the unit element at almost all unramified places. A variant of this method has been used by Waldspurger \cite{W1} to deduce the geometric transfer conjecture at $v_0$ from the fundamental lemma for the unit at almost all unramified places, and by Arthur \cite{A} to prove the spectral transfer conjecture at $v_0$ (see \ref{walds}). 

In this section 3, we prove the existence of the local data (\ref{local data}) used in the proof of the theorem of \ref{variant with two characters} (\ref{proof of the theorem}), when ${\rm char}(F)=p >0$ and $G'$ is elliptic, assuming local hypothesis that are satisfied if $p$ is large enough. We first prove the existence of local data in characteristic zero, following \cite{Hal}. Then we show this proof extends to the positive characteristic. 

Let us start with an unspecified local problem in characteristic zero, involving a finite extension $F_0$ of a $p$--adic field ${\Bbb Q}_p$, a connected reductive group $G_0$ defined over $F_0$, an {\it elliptic}  endoscopic datum $\underline{G}'_0=(G'_0,\ES{G}'_0, s_0)$ of $G$, etc. We compare two simple trace formulas, one for $G$ and the other one for the underlying group $G'$ of 
an elliptic endoscopic datum $\underline{G}'$ of $G$. Both groups $G$ and $G'$ are defined over a number field $F$. The global situation is choosen in such a way that at a given finite place $v_0$ of $F$, we recover the objects of our local problem: the completion $F_{v_0}$ of $F$ at $v_0$ is isomorphic to $F_0$, the local group $G_{v_0}= G\times_F F_{v_0}$ is isomorphic to $G_0$, the local endoscopic datum $\underline{G}'_{v_0}$ is isomorphic to $\underline{G}'_0$, etc. 
Such a comparison appears in many papers, most of the time when $\underline{G}'$ is the {\it principal endoscopic datum}\footnote{In the non twisted case, this means that the underlying endoscopic group 
$G'$ is the quasi--split inner form of $G$. Note in that case, $\underline{G}'$ is elliptic {\it at all} places of $F$.} (e.g. \cite{K2}, or \cite{Lab3} in the twisted case of base change). In the principal endoscopic datum case,  
the use of a cuspidal stabilizing function --- e.g. a Kottwitz function\footnote{Kottwitz functions are generalizations of Euler--Poincar\'e functions. If the center of $G$ is $F$--anisotropic, a pseudo--coefficient of the Steinberg representation is, up to a sign, an Euler--Poincar\'e function; such a pseudo--coefficient is a Kottwitz function.} \cite[3.9]{Lab3} --- at {\it one} finite place of $F$ ensures that the elliptic part of the geometric side of the trace formula for $G$ is already stabilized: the only endoscopic contribution comes from $\underline{G}'$. In the 
case of a non--principal endoscopic datum $\underline{G}'$, it is possible to play a similar game, at least when $\underline{G}'$ is elliptic {\it at almost all places\footnote{This hypothesis is necessary to separate the elliptic endoscopic data of $G$. Note that since $\underline{G}'$ is unramified at almost all finite places of $F$, by Chebotarev density theorem, $\underline{G}'$ is elliptic and unramified at {\it one} finite place, if and only if it is elliptic at {\it infinitely many} places. But this does not implies that $\underline{G}'$ is elliptic at almost all places.} of $F$} (see \ref{an application}): we can isolate $\underline{G}'$ in the endoscopic decomposition of the elliptic part of the trace formula. Once we have isolated $\underline{G}'$, it is easy to produce local data following the method used by Clozel \cite{Cl} and Labesse \cite{Lab1} to prove the fundamental lemma for base change for the whole spherical Hecke algebra --- cf. Hales \cite{Hal}. 

Let us emphasize that the case of a non--principal endoscopic datum $\underline{G}'$ is diametrically opposite to the principal endoscopic datum case: not only is it in general impossible to isolate $\underline{G}'$ with a single finite place, but if $\underline{G}'$ is not elliptic at almost all places of $F$, it may be impossible to isolate $\underline{G}'$ by using all the finite places where it is elliptic --- even if they form an infinite set. In fact, Waldspurger\footnote{Personnal communication, 08--12--2016.} has constructed an example, when $G$ is the simply connected cover of the quasi--split group $U(4n)$ with respect to the 
quadratic extension ${\Bbb Q}(i)/{\Bbb Q}$, of two {\it non--isomorphic} (global) elliptic endoscopic data of $G$, that are elliptic at infinitely many places, and are isomorphic at all the places where they are elliptic.

\subsection{Some highlights on the trace formula}
From now to \ref{end of the proof}, we suppose $F$ is a number field. 

The stabilization of the elliptic part of the trace formula for $G$ has been done by Langlands \cite{Lan} for the elliptic regular terms, and then extended by Kottwitz \cite{K1} to all the elliptic elements. 
Note that the result of \cite{K1} implies quite directly the variant with a character $\omega$ of $G(F)$ --- which could be  
used in \cite{Hal} to avoid the reduction to the ``basic case''. This variant is contained in the more general twisted case, 
done by Labesse \cite{Lab5} for all the elliptic elements, after Kottwitz and Shelstad did it for the elliptic strongly regular ones \cite{KS}. 
In the meantime, Kottwitz \cite{K2} proved the Weil conjecture on the Tamagawa numbers for all semisimple simply connected groups satisfying the Hasse principle, and Chernousov \cite{Ch} proved the Hasse principle for the resistant $E_8$ case (thereby eliminating restrictive assumptions in 
\cite{K1}). Finally, thanks to the work of Ng\^o Bao Ch\^au and Waldspurger (and Shelstad for the archimedean case), the local transfer assumption, used in the last step of the stabilization \cite{Lan,K1,KS,Lab5}, is now a theorem.

In \ref{elliptic terms of the trace formula}--\ref{stabilization 2}, we briefly recall the result of Kottwitz \cite{K1}\footnote{We will only use the result of Langlands \cite{Lan}, whose statement is not really simpler.}, but we will mainly 
refer to the non--abelian cohomological constructions of Labesse \cite{Lab3,Lab5}.

\subsection{Elliptic terms of the trace formula}\label{elliptic terms of the trace formula}
Let ${\Bbb A}_F$ be the ring of ad\`eles of $F$. 
For a place $v$ of $F$, let $F_v$ be the completion of $F$ with respect to $v$, and put $G_v= G\times_F F_v$.

For the question we are interested with, {\bf we may assume that the maximal central $F$--split torus $A_G$ in $G$ is trivial.} 

Let $C^\infty_{\rm c}(G({\Bbb A}_F))$ be the space of smooth compactly supported functions on $G({\Bbb A}_F)$. For a semisimple element $\gamma\in G(F)$ and a function 
$f\in C^\infty_{\rm c}(G({\Bbb A}_F))$, we define 
the orbital integral ${\bf O}_\gamma(\phi)={\bf O}_\gamma^G(f)$ by
$$
{\bf O}_\gamma(f)= \int_{I({\Bbb A}_F)\backslash G({\Bbb A}_F)} f(g^{-1}\gamma g) d\dot{g}_\gamma,\leqno{(1)}
$$
where $I$ is the ``stable centralizer'' of $\gamma$ in $G$, which coincides with the connected centralizer $G_\gamma=G^{\gamma,\circ}$. 
Hence $I$ is a connected reductive group defined over $F$. The integral (1) is absolutely convergent. The groups $G({\Bbb A}_F)$, $I({\Bbb A}_F)$, etc., are endowed with the Tamagawa measures. 
Thus $d\dot{g}_\gamma$ in (1) 
is the quotient of the Tamagawa measure on $G({\Bbb A}_F)$ by the Tamagawa measure on $I({\Bbb A}_F)$.

We denote by $R$ the right regular representation of $G({\Bbb A}_F)$ into the space 
$\bs{\mathfrak{X}}_G={\rm L}^2(G(F)\backslash G({\Bbb A}_F))$. 
For $f\in C^\infty_{\rm c}(G({\Bbb A}_F))$, let $K_{\rm e}(f)$ be the contribution of the elliptic elements to the kernel of the operator $R(f)$ on 
$\bs{\mathfrak{X}}_G$:
$$
K_{\rm e}(f;x,y)= \sum_{\gamma\in G(F)_{\rm e}}f^0(x^{-1}\gamma y),\leqno{(2)}
$$
where $G(F)_{\rm e}\subset G(F)$ denotes the subset of (semisimple) elliptic elements\footnote{Since $A_G=\{1\}$, a semisimple element of $G(F)$ is elliptic if and only if it belongs to an $F$--anistropic torus.}. 
We denote by ${\bf T}_{\rm e}$ the linear form on $C^\infty_{\rm c}(G({\Bbb A}_F))$ obtained by integration along the diagonal of $K_{\rm e}$:
$$
{\bf T}_{\rm e}(f)= \int_{G(F)\backslash G({\Bbb A}_F)}K_{\rm e}(f;x,x)d\dot{x}.\leqno{(3)}
$$
It is the contribution of the elliptic elements to the trace formula. For $\gamma\in G(F)_{\rm e}$, let
$$
a^G(\gamma) = {\tau(G_\gamma)\over \iota_G(\gamma)}
$$
where $\iota_G(\gamma)$ is the index of $G_\gamma(F)$ in $G^\gamma(F)$, and $\tau(G_\gamma)={\rm vol}(G_\gamma(F)\backslash G_\gamma({\Bbb A}_F))$ the Tamagawa number of $G_\gamma$.  
The integral (3) is absolutely convergent, and we have
$$
{\bf T}_{\rm e}(f)= \sum_\gamma a^G(\gamma){\bf O}_\gamma(f^0)\leqno{(4)}
$$
where $\gamma$ runs over the $G(F)$--conjugacy classes in $G(F)_{\rm e}$.

\subsection{Stabilization (step 1)}\label{stabilization 1}
The stabilization of the elliptic part of the trace formula is done in two steps. The first step, called pre--stabilization, consists in rewriting 
\ref{elliptic terms of the trace formula}.(4) in terms of stable conjugacy classes instead of ordinary conjugacy classes. Recall that two semisimple elements $\gamma,\,\gamma'\in G(F)$ are {\it stably conjugate} if there exists an element $x\in G(\overline{F})$ such that $x^{-1}\gamma x= \gamma'$ and $x\sigma(x)^{-1}\in G_\gamma(\overline{F})$ for all $\sigma\in \Gamma_F$. Thus, the set of $G(F)$--conjugacy class within the stable conjugacy class of $\gamma$ is parametrized by the pointed set
$$
\ES{D}(G_\gamma,G;F)={\rm coker}[{\bf H}^0(F,G)\rightarrow {\bf H}^0(F,G_\gamma\backslash G)].
$$
This leads to some variant of the orbital integral 
\ref{elliptic terms of the trace formula}.(1), the {\it $\kappa$--orbital integrals}, we define bellow. 

Let $\gamma\in G(F)_{\rm e}$ and $I=G_\gamma$. The {\it endoscopic characters} for $\gamma$ are the elements of the Pontryagin dual $\mathfrak{K}(I,G;F)_1$ of
$$
\mathfrak{E}(I,G;{\Bbb A}_F/F)={\rm coker}[{\bf H}^0_{\rm ab}({\Bbb A}_F,G) \rightarrow {\bf H}^0_{\rm ab}({\Bbb A}_F/F, I\backslash G)].
$$
Here we adopt the notations of Labesse (cf. \cite[1.8]{Lab3} or \cite[IV.1]{Lab5}), who defines groups of abelianized cohomology in the sense of Borovoi considering 
complex or bi--complex of reductive groups in {\it homological} degrees $0$ and $1$, and taking the $0$--cohomological group. 
For the definition and properties of adelic cohomology, we refer to \cite[1.4]{Lab3}. Put $\bar{\Bbb A}_F = {\Bbb A}_F \otimes_F\overline{F}$. Let $x\in G(\bar{\Bbb A}_F)$ such that $x\sigma(x)^{-1}\in I(\bar{\Bbb A}_F)$ for all $\sigma\in \Gamma_F$. We put $\gamma_x=x^{-1}\gamma x$ and $I_x = x^{-1}Ix$. Then $x$ defines a class $\dot{x}\in {\bf H}^0({\Bbb A}_F,I\backslash G)$, and for 
$\kappa\in \mathfrak{K}(I,G;F)_1$, we denote by $\kappa(x)$ the evaluation of $\kappa$ on the image of $\dot{x}$ by the natural map
$$
{\bf H}^0({\Bbb A}_F,I\backslash G) \rightarrow {\bf H}^0_{\rm ab}({\Bbb A}_F,I\backslash G).
$$
Let
$$
\mathfrak{D}(I,G;{\Bbb A}_F)={\rm coker}[{\bf H}^0({\Bbb A}_F,G) \rightarrow {\bf H}^0({\Bbb A}_F, I\backslash G)]
$$
The sets ${\bf H}^0({\Bbb A}_F,I\backslash G)$ and $\mathfrak{D}(I,G;{\Bbb A}_F)$ are endowed with a Tamagawa measure \cite{Lab4}. 
 For a function $\phi\in C^\infty_{\rm c}(\mathfrak{A}_G\backslash G({\Bbb A}_F))$, we define the $\kappa$--orbital integral
$$
{\bf O}^\kappa_\gamma(\phi)= \int_{{\bf H}^0({\Bbb A}_F,I\backslash G)}\kappa(x)\phi(\gamma_x)dx
$$
and the stable orbital integral
$$
{\bf SO}_\gamma(\phi)= {\bf O}_\gamma^{\kappa =1}(\phi)=\int_{\mathfrak{D}(I,G;{\Bbb A}_F)}{\bf O}_{\gamma_x}(\phi)dx.
$$

\begin{rem}
{\rm If $\phi=\prod_v\phi_v$, then the $\kappa$--orbital integral ${\bf O}^\kappa_\gamma(\phi)$ decomposes as a product of local $\kappa_v$--orbital integrals
$$
{\bf O}^\kappa_\gamma(\phi)= \prod_v {\bf O}^{\kappa_v}_\gamma(\phi_v)
$$
with
$$
{\bf O}^{\kappa_v}_\gamma(\phi_v)=\int_{{\bf H}^0(F_v;I_v\backslash G_v)}e((I_v)_{x_v})\kappa_v(x_v)\phi_v(\gamma_{x_v})dx_v.
$$
Here $e((I_v)_{x_v})$ is the local Kottwitz sign (this sign is $1$ if $(I_v)_{x_v}$ is quasi--split), and 
$\kappa_v$ is the image of $\kappa$ by the localization map
$$
\mathfrak{K}(I,G;F)_1\rightarrow \mathfrak{K}(I_v,G_v;F_v)_1
$$
where $\mathfrak{K}(I_v,G_v;F_v)_1$ is the Pontryagin dual of the abelian group
$$
\mathfrak{E}(I_v,G_v;F_v)= {\rm coker}[{\bf H}^0_{\rm ab}(F_v,G_v) \rightarrow {\bf H}^0_{\rm ab}(F_v,I_v\backslash G_v)].
$$
The Tamagawa measure $dx$ on ${\bf H}^0({\Bbb A}_F,I\backslash G)$ gives local invariant 
measures $dx_v$ on ${\bf H}^0(F_v, I_v\backslash G_v)$ for each place $v$ of $F$ (these local measures are not really defined but their product is well defined). 
Since $I$ is connected, we know by the Kottwitz reciprocity law (cf. \cite[1.7.2]{Lab3}) that for $x\in {\bf H}^0({\Bbb A}_F,I\backslash G)$, we have
$\prod_v e((I_x)_v)=1$. On the other hand (cf. \cite[2.7.1]{Lab3}), we have ${\bf O}_\gamma^{\kappa_v}(\phi_v)=1$ for almost all $v$.
}
\end{rem} 

For $f\in C^\infty_{\rm c}(G({\Bbb A}_F))$, we have (cf. \cite[V.2.1]{Lab5})
$$
{\bf T}_{\rm e}(f)= \tau(G)\sum_\gamma {1\over \tilde{\iota}_G(\gamma)}\sum_{\kappa} {\bf O}^\kappa_\gamma(f^0)\leqno{(1)}
$$
where $\gamma$ runs over a set of representative elements of the stable conjugacy classes in $G(F)_{\rm e}$, 
$\tilde{\iota}_G(\gamma)$ is the number of elements of the finite set ${\bf H}^0(F,G_\gamma \backslash G^\gamma)$ --- this number is constant on the stable conjugacy classes (cf. 
\cite[III.3]{Lab5}) ---, and $\kappa$ runs over 
the elements of $\mathfrak{K}(G_\gamma,G;F)_1$. Note that $\tilde{\iota}_G(\gamma)= \iota_G(\gamma)j(\gamma)$ whith
$$
j(\gamma) = \#  \ker[{\bf H}^1(F,G_\gamma)\rightarrow {\bf H}^1(F,G^\gamma)].
$$
If $G$ is quasi--split (over $F$), we define the stable analogue ${\bf ST}_{\rm e}$ of ${\bf T}_{\rm e}$ by
$$
{\bf ST}_{\rm e}(f)= \tau(G)\sum_\gamma {1\over \tilde{\iota}_G(\gamma)}{\bf SO}_\gamma(f^0)\leqno{(2)}
$$

\subsection{Stabilization (step 2)}\label{stabilization 2}
The second step of the stabilization consists in rewriting \ref{stabilization 1}.(1) in terms of some stable formulas \ref{stabilization 1}.(2) for some groups $G'$ associated 
to $G$ --- the elliptic endoscopic groups of $G$. There is a ``correspondence'' $(\gamma,\kappa)\rightarrow (\underline{G}',\delta)$, described in \cite[9.7]{K1} (see also \cite[IV.3]{Lab5}), 
between:
\begin{itemize}
\item pairs $(\gamma,\kappa)$ where $\gamma\in G(F)_{\rm e}$ and $\kappa \in \mathfrak{K}(G_\gamma,G;F)_1$;
\item pairs $(\underline{G}', \delta)$ where $\underline{G}'=(G',\ES{G}',s)$ is an endoscopic datum of $G$ which is elliptic and relevant, and $\delta\in G'(F)_{\rm e}$ is a 
$(G,G')$--regular element which corresponds to $\gamma$.
\end{itemize}
Here we use the notion of endoscopic datum given in \cite[I]{MW}. 
A semisimple element $\delta\in G'(F)$ is called {\it $(G,G')$--regular} if 
for any semisimple element $\gamma\in G(F)$ 
which corresponds to $\delta$, the connected centralizers $G'_\delta$ and 
$G_\gamma$ are isomorphic. For the other notions attached to endoscopic data (ellipticity, equivalence, relevance, etc.), 
we refer to loc.~cit. 

Let $\underline{G}'=(G',\ES{G}',s)$ be an elliptic endoscopic datum of $G$. For a function $f'\in C^\infty_{{\rm c}}(G'({\Bbb A}_F))$, we put
$$
{\bf T}_{\rm e}^{\underline{G}'}(f')= \sum_{\delta} a^{G'}(\delta) {\bf O}_\delta(f')\leqno{(1)}
$$
where $\delta$ runs over a set of representative elements of the conjugacy classes in $G'(F)_{\rm e}$ which are $(G,G')$--regular. Note that (1) is just a variant of \ref{elliptic terms of the trace formula}.(4) for $G'$, where we restrict to the elements in $G'(F)_{\rm e}$ 
which are $(G,G')$--regular. We define the stable version of (1) by
$$
{\bf ST}_{\rm e}^{\underline{G}'}(f')= \tau(G')\sum_{\delta}{1\over \tilde{\iota}_{G'}(\delta)}{\bf SO}_{\delta}(f')\leqno{(2)}
$$
where $\delta$ runs over a set of representative elements of the stable conjugacy classes in $G'(F)_{\rm e}$ which are 
$(G,G')$--regular.

Let $\mathfrak{E}_{\rm e}= \mathfrak{E}_{\rm e}(G)$ be a set of representatives for the equivalence classes of endoscopic data of $G$ which are elliptic and relevant. For 
each $\underline{G}'\in \mathfrak{E}_{\rm e}$, we fix some unitary auxiliary data $G'_1$, $C_1$, $\check{\xi}_1$ --- cf. \cite[VI.3.3]{MW}. Recall that $G'_1$ is a central extension of $G$ by the quasi--trivial torus $C_1$, and that the embedding $\check{\xi}_1: \ES{G}'\rightarrow {^LG_1}$ defines a character $\lambda_1$ of $C_1({\Bbb A}_F)$ trivial on $C_1(F)$. Let denote by $C^\infty_{{\rm c},\lambda_1}(G_1({\Bbb A}_F))$ the space of smooth, compactly supported modulo $C_1({\Bbb A}_F)$, functions on $G_1({\Bbb A}_F)$ that transform according to $\lambda_1^{-1}$ on $C_1({\Bbb A}_F)$. For a function $f'_1\in C^\infty_{{\rm c},\lambda_1}(G_1({\Bbb A}_F))$, one can define some ``variants with central character'' of the formulas (1) and (2) above\footnote{This slight complication does not exist if the derived group $G_{\rm der}$ of $G$ is simply connected, but this condition is hard to achieve while preserving the hypothesis $A_G=\{1\}$. However, to work around the problem, we may assume that the only $\underline{G}'=(G',\ES{G}',s)\in \mathfrak{G}_{\rm e}$ 
that can give a non--trivial contribution to the right hand side of (3) are those which satisfy $\ES{G}'\simeq {^LG'}$ (e.g. if ${\bf SO}_\delta(f^{\underline{G}'})=0$ for all the (semisimple) strongly regular elements $\delta\in G'(F)$ when $\ES{G}'\not\simeq {^LG'}$). This is what will be done later.}. 
Let $f\in C^\infty_{\rm c}(G({\Bbb A}_F))$ and for each $\underline{G}'\in \mathfrak{G}_{\rm e}$, let $f^{\underline{G}'}\in C^\infty_{{\rm c},\lambda_1}(G'_1({\Bbb A}_F))$, such that:
\begin{itemize}
\item $f$ decomposes as $f=\prod_vf_v$;
\item $f^{\underline{G}'}$ decomposes as $f^{\underline{G}'}= \prod_v f^{\underline{G}'}_v$;
\item for each place $v$ of $F$, $f^{\underline{G}'}_v$ is a local transfer of $f_v$ (see \ref{the local transfer map}).
\end{itemize}
The stabilization of the elliptic part of the trace formula \ref{elliptic terms of the trace formula}.(4)\ is the following formula \cite[9.6]{K1} (see also \cite[V.4.2]{Lab5}), for $f$ and $\{f^{\underline{G}'}\}_{\underline{G}'\in \mathfrak{E}_{\rm e}}$ as above:
$$
{\bf T}_{\rm e}(f)= \sum_{\underline{G}'\in \mathfrak{E}_{\rm e}}\iota(G,\underline{G}'){\bf ST}_{\rm e}^{\underline{G}'}(f^{\underline{G}'}).
\leqno{(3)}
$$
Here $\iota(G,\underline{G}')$ is a constant which depends only on the equivalence class of $\underline{G}'$. It is given by
$$
\iota(G,\underline{G}')= {\tau(G)\over \lambda(\underline{G}')\tau(G')}
$$
where $\lambda(\underline{G}')$ is the number of elements of the finite group ${\rm Out}(\underline{G}')$ --- for the definition of ${\rm Out}(\underline{G}')$, 
cf. \cite[I.1.5]{MW}.

\subsection{On the local transfer map}\label{the local transfer map}
Let $\underline{G}'=(G',\ES{G}',s)\in \mathfrak{G}_{\rm e}$.

For each place $v$ of $F$, $\underline{G}'$ gives by localization a relevant endoscopic datum $\underline{G}'_v= (G'_v,\ES{G}'_v,s)$ of $G_v$, which may not be elliptic. 
Let $\ES{D}(\underline{G}'_v)$ be the set of pairs of corresponding semisimple elements $(\delta_v,\gamma_v)\in G'(F_v)\times G(F_v)$ 
such that $\delta_v$ is strongly $G_v$--regular (i.e. $\gamma_v$ is strongly regular). Since $\underline{G}'$ is relevant --- which means that for any place $v$ of $F$, the set $\ES{D}(\underline{G}'_v)$ is nonempty ---, we know there exists a strongly $G$--regular element of $G'(F)$ which corresponds, at each place $v$ of $F$, to a strongly regular element of $G(F_v)$. This property allows us to define, for $\delta=(\delta_v)\in G'({\Bbb A}_F)$ and $\gamma=(\gamma_v)\in G({\Bbb A}_F)$ such that $(\delta_v,\gamma_v)\in \ES{D}(\underline{G}'_v)$ for any $v$, a canonical global transfer factor $\Delta(\delta,\gamma)$ which decomposes as
$$
\Delta(\delta,\gamma)= \prod_v \Delta_{v}(\delta_v,\gamma_v),\leqno{(1)}
$$
where $\Delta_v(\delta_v,\gamma_v)$ is a local transfer factor such that $\Delta_v(\delta_v,\gamma_v)=1$ for almost all $v$. 

For each place $v$ of $F$, the auxiliary data $G'_1$, $C_1$, $\check{\xi}_1$ for $\underline{G}'$ give by localization auxiliary data $G'_{1,v}$, $C_{1,v}$, $\check{\xi}_{1,v}$ for $\underline{G}'_v$. The character of $C_1(F_v)$ defined by $\check{\xi}_{1,v}$ is the localization $\lambda_{1,v}$ of the character $\lambda_1$ of $C_1(F)$ defined by $\check{\xi}_1$. Let $\ES{D}_1(\underline{G}'_v)$ be the set of pairs of semisimple elements $(\delta_1,\gamma)\in G'_1(F_v)\times G(F_v)$ such that $\delta_{1,v}$ projects on a strongly $G_v$--regular element in $G'(F_v)$ corresponding to $\gamma_v$ (we say that $\delta_{1,v}$ is strongly $G_v$--regular). For $\delta_1=(\delta_{1,v})\in G'_1({\Bbb A}_F)$ and $\gamma=(\gamma_v)\in G({\Bbb A}_F)$ such that $(\delta_{1,v},\gamma_v)\in \ES{D}_1(\underline{G}'_v)$ for any $v$, we also have a canonical global transfer factor $\Delta_1(\delta_1,\gamma)$, which decomposes as (1).

Let $v$ be a place of $F$, and let $f_v\in C^\infty_{\rm c}(G(F_v))$ and $f'_{1,v}\in C^\infty_{{\rm c},\lambda_{1,v}}(G'_1(F_v))$. For a (semisimple) strongly $G$--regular element 
$\delta_{1,v}\in G'_1(F_v)$, we define the stable orbital integral ${\bf SO}_{\delta_{1,v}}(f_{1,v})$ as in \ref{the theorem} (cf. \cite[I.2.4]{MW}). 
We say that $f'_{1,v}$ is a {\it transfer} of $f_v$ if for any strongly $G$--regular element $\delta_{1,v}\in G'_1(F_v)$, we have
$$
{\bf SO}_{\delta_{1,v}}(f'_{1,v})= \sum_{\gamma_v} \Delta(\delta_{1,v},\gamma_v) {\bf O}_{\gamma_v}(f_v)\leqno{(2)}
$$
where $\gamma_v$ runs over the elements of $G(F_v)$ such that $(\delta_{1,v},\gamma_v)\in \ES{D}_1(\underline{G}'_v)$, modulo conjugation by $G(F_v)$. 
Here the distributions ${\bf O}_{\gamma_v}$ and ${\bf SO}_{\delta_{1,v}}$ are defined using the measures coming by localization from the Tamagawa measures on the groups involved. 
The local transfer assumption is now a theorem\footnote{for $v$ archimedean, it is due to Shelstad, and for $v$ non archimedean, it is due to Ng\^o Bao Ch\^au and  Waldspurger.}: for any function $f_v\in C^\infty_{\rm c}(G(F_v))$, there exists a transfer $f'_{1,v}\in C^\infty_{{\rm c},\lambda_{1,v}}(G'_1(F_v))$ of $f_v$. This function $f'_{1,v}$ is not unique. Let $\bs{I}(G(F_v))$ be the quotient of $C^\infty_{\rm c}(G(F_v))$ by the subspace annulated by all the distributions ${\bf O}_{\gamma_v}$ for $\gamma_v\in G(F_v)$ semisimple strongly regular, and let $\bs{SI}(\underline{G}'_v)=\bs{SI}_{\lambda_{1,v}}(G'_1(F_v))$ be the quotient of $C^\infty_{{\rm c},\lambda_{1,v}}(G'_1(F_v))$ by the subspace annulated by all the distributions ${\bf SO}_{\delta_{1,v}}$ for $\delta_{1,v}\in G'_1(F_v)$ semisimple strongly regular (in $G'_1$). We can see the transfer as a linear map
$$
\bs{I}(G(F_v))\rightarrow \bs{SI}(\underline{G}'_v),\, f\mapsto f'= f^{\underline{G}'_v}.\leqno{(3)}
$$

Let $k$ be a finite extension of the $p$--adic field ${\Bbb Q}_p$, and $H$ a connected reductive $k$--group. We fix a set $\mathfrak{E}_{\rm e}(H)$ of representatives for the equivalence classes of elliptic endoscopic data $\underline{H}'=(H',\ES{H}',s)$ of $H$ which are relevant, and for each $\underline{H}'\in \mathfrak{E}_{\rm e}(H)$, we fix some auxiliary data $H'_1$, $C_1$, $\check{\xi}_1$, and a transfer factor $\Delta_1: \ES{D}_1(\underline{H}')\rightarrow {\Bbb C}^\times$. We also fix a Haar measure $dh$ on $H(k)$ and (for each $\underline{H}'$) a Haar measure $dh'$ on $H'(k)$. These choices allow to define as before a morphism
$$
\bs{I}(H(k))\rightarrow \bigoplus_{\underline{H}'\in \mathfrak{E}_{\rm e}(H)} \bs{SI}(\underline{H}'),\, f\mapsto \oplus_{\underline{H}'}f^{\underline{H}'}\leqno{(4)}
$$
Here $\bs{SI}(\underline{H}')= \bs{SI}_{\lambda_1}(H'_1(k))$ where $\lambda_1$ is the character of $C_1(k)$ defined by $\check{\xi}_1$. In \cite[I.3.1]{MW} is defined a subspace 
$\bs{I}_{\rm cusp}(H(k)) \subset \bs{I}(H(k)$ and, for each $\underline{H}'$, a subspace $\bs{SI}_{\rm cusp}(\underline{H}')\subset \bs{SI}(\underline{H}'(k))$. A function $f\in C^\infty_{\rm c}(H(k))$ is called {\it cuspidal} if its image in $\bs{I}(H(k))$ belongs to $\bs{I}_{\rm cusp}(H(k))$. Let $C^\infty_{\rm cusp}(H(k))\subset C^\infty_{\rm c}(H(k))$ the subspace of cuspidal functions. 
From \cite[I, 4.11]{MW}, the morphism (4) is injective --- its image is precisely described --- and it restricts to an isomorphism 
$$
\bs{I}_{\rm cusp}(H(k))\buildrel\simeq\over{\longrightarrow} \bigoplus_{\underline{H}'\in \mathfrak{E}_{\rm e}(H)} \bs{SI}_{\rm cusp}(\underline{H}')^{{\rm Aut}(\underline{H}')},\, f\mapsto \oplus_{\underline{H}'}f^{\underline{H}'},\leqno{(5)}
$$
where $X^{{\rm Aut}(\underline{H}')}\subset X$ is the subspace of invariants under ${\rm Aut}(\underline{H}')$ for the action defined in \cite[I, 2.6]{MW}.

\begin{defi}
{\rm Let $\underline{H}'\in \mathfrak{E}_{\rm e}(H)$. An element $f\in \bs{I}(H(k))$ is called {\it $\underline{H}'$--stabilizing} if $f^{\underline{H}'}\neq 0$ and $f^{\underline{H}''}=0$ for any $\underline{H}''\in \mathfrak{E}_{\rm e}(H)\smallsetminus \{\underline{H}'\}$. A function $f\in C^\infty_{\rm c}(H(k))$ is called {\it $\underline{H}'$--stabilizing} if its image in $\bs{I}(H(k))$ is $\underline{H}'$--stabilizing.
}
\end{defi}

\begin{rem}
{\rm For $k={\Bbb R}$, and more generally for any locally compact {\it Archimedean} field $k$, 
there is a similar injective morphism (4) which restricts to a similar isomorphism (5) \cite[I, 4.11]{MW}, and 
gives rise to a similar definition.
}
\end{rem}

\subsection{An application}\label{an application}
Let $\gamma\in G(F)_{\rm e}$, and put $I= G_\gamma$. Let $\mathfrak{K}(I,G;F)_0$ be the Pontryagin dual of the abelian 
group
$$
\ker^1_{\rm ab}(F,I\backslash G) = \ker[{\rm H}^1_{\rm ab}(F,I\backslash G)\rightarrow {\rm H}^1_{\rm ab}({\Bbb A}_F,I\backslash G)].
$$
The localization map (cf. the remark of \ref{stabilization 1}) gives an exact sequence
$$
1\rightarrow \mathfrak{K}(I,G;F)_0\rightarrow \mathfrak{K}(I,G;F)_1\rightarrow \prod_v \mathfrak{K}(I_v,G_v;F_v)_1
$$
where $v$ runs over the places of $F$. 
A finite nonempty subset $\ES{V}$ of places of $F$ is called {\it $(G,I)$--essential} \cite[1.9.5]{Lab3} if the 
localization map
$$
\mathfrak{K}(I,G;F)_1\rightarrow \mathfrak{K}(I,G;F_\ES{V})_1=\prod_{v\in \ES{V}} \mathfrak{K}(I_v,G_v; F_v)_1.
$$
is injective. This is possible only if $\mathfrak{K}(I,G;F)_0=\{1\}$. 

\begin{rem1}
{\rm 
A sufficient condition for a singleton $\ES{V}=\{v_1\}$ to be $(G,I)$--essential is that the group $I_{v_1}$ is {\it $G_{v_1}$--elliptic}, i.e. 
there exists a maximal $F_{v_1}$--torus in $I_{v_1}$ which is $F_{v_1}$--elliptic in $G_{v_1}$. If $I_{v_1}$ is $G_{v_1}$--elliptic, then the co--localization map
${\bf H}^0_{\rm ab}(F_{v_1},I_{v_1}\backslash G_{v_1})\rightarrow {\bf H}^0_{\rm ab}({\Bbb A}_F/F,I\backslash G)$
is surjective (cf. \cite[1.9.2]{Lab3}). In that case, the dual localization map 
is injective, and it restricts to an injective map
$
\mathfrak{K}(I,G;F)_1\rightarrow \mathfrak{K}(I_{v_1},G_{v_1};F_{v_1})_1.
$
}
\end{rem1}

\begin{rem2}
{\rm 
Let $\underline{G}'$ be the principal endoscopic datum of $G$. It is possible to isolate the contribution of $\underline{G}'$ in the decomposition \ref{stabilization 2}.(3) by taking a function $f\in C^\infty_{\rm c}(G({\Bbb A}_F))$ which decomposes as a product of local functions $f=\prod_vf_v$ and such that at {\it one} finite place $v_1$ of $F$, the component $f_{v_1}\in C^\infty_{\rm c}(G(F_{v_1}))$ is a cuspidal $\underline{G}'_{v_1}$--stabilizing function --- it is the trick used by Kottwitz in \cite{K2}. For such a function $f_{v_1}$, the $\kappa_{v_1}$--orbital integrals ${\bf O}_{\gamma_{v_1}}^{\kappa_{v_1}}(f_{v_1})$ are zero if $\kappa_{v_1}\neq 1$, for all pairs $(\gamma_{v_1},\kappa_{v_1})$ such that $\gamma_{v_1}\in G(F_{v_1})_{\rm e}$ and $\kappa_{v_1}\in \mathfrak{K}((G_{v_1})_{\gamma_{v_1}},G_{v_1};F_{v_1})_1$. This implies that the global $\kappa$--orbital integrals ${\bf O}_\gamma^\kappa(f)$ are zero if $\kappa \neq 1$, for all pairs $(\gamma,\kappa)$ such that $\gamma\in G(F)_{\rm e}$ and $\kappa\in \mathfrak{K}(G_\gamma,G;F)_1$. More precisely: if $\gamma$ is not elliptic in $G(F_{v_1})$, then ${\bf O}_{\gamma}^{\kappa_{v_1}}(f_{v_1})=0$ because $f_{v_1}$ is cuspidal; if $\gamma$ is elliptic in $G(F_{v_1})$, then $(G_\gamma)_{v_1}$ is a $G_{v_1}$--elliptic, and the singleton $\{v_1\}$ is $(G,G_\gamma)$--essential (remark 1).
}
\end{rem2}

Let $v$ be a finite place of $F$ such that the $F_v$--group $G_v$ is unramified, 
i.e. quasi--split and split over an unramified (finite) extension of $F_v$. 
An endoscopic datum $\underline{G}'= (G'\!,\ES{G}',s)$ of $G_v$ is said to be {\it unramified} if the 
inertia subgroup $I_{F_v}\subset W_{F_v}$ of the Weil group $W_{F_v}$ of $F_v$ is contained in $\ES{G}'$. This implies that the group $G'$ itself 
is unramified \cite[I.6.2]{MW} and the subgroup $\ES{G}'\subset {^LG}$ is isomorphic to the $L$--group of $G'$ \cite[I.6.3]{MW}. 
For a finite set $\ES{V}$ of places of $F$ containing all the 
archimedean places and all the places where $G$ is ramified, we denote by $\mathfrak{E}_{\rm e}^\ES{V}\subset \mathfrak{E}_{\rm e}$ the subset formed of those endoscopic data 
that are unramified outside $\ES{V}$. The set $\mathfrak{E}_{\rm e}^\ES{V}$ is finite \cite[VIII, 5, lemme 8.2]{Lan}.

\begin{defi}
{\rm 
Let $\underline{G}'\in \mathfrak{G}_{\rm e}$. Let $\ES{V}$ be a nonempty finite set of places of $F$ containing 
all the archimedean places, all the finite places where $G$ is ramified, and at least one finite place where $\underline{G}'$ is elliptic, and let $\ES{V}^*\subset \ES{V}$ 
be a nonempty subset of finite places where $\underline{G}'$ is elliptic. We say that the pair $(\ES{V} , \ES{V}^*)$
is {\it $(G,\underline{G}')$--special} if for any $\underline{G}''\in \mathfrak{G}_{\rm e}^\ES{V}$ such that $\underline{G}''_v$ is equivalent to $\underline{G}'_v$ at any place 
$v\in \ES{V}^*$, we have $\underline{G}''=\underline{G}'$. 
}
\end{defi}

\begin{lem}
Let $\underline{G}'\in \mathfrak{G}_{\rm e}$ such that $\underline{G}'$ is elliptic at almost all places of $F$. 
Let $\ES{V}'$ be a finite set of places of $F$ containing all the archimedean places, all the finite places where $G$ or $\underline{G}'$ is ramified, and at least 
one finite place $v_1$ such that $\underline{G}'_{v_1}$ is elliptic. Then there exists a finite set $\{v_2,\dots ,v_r\}$ of places of $F$ such that $v_i\notin \ES{V}'$, 
$\underline{G}'_{v_i}$ is elliptic, and writing $\ES{V}= \ES{V}'\cup \{v_2,\ldots ,v_r\}$ and $\ES{V}^*= \{v_1,\ldots ,v_r\}$, the pair 
$(\ES{V},\ES{V}^*)$ is $(G,\underline{G}')$--special.

\end{lem}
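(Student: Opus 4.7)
The plan proceeds by a two-step enlargement of $\ES{V}'$, using the finiteness of $\mathfrak{E}_{\rm e}^{\ES{V}'}$ together with the rigidity input from Waldspurger's unpublished result cited in 0.5: two global endoscopic data of $G$ equivalent at almost all places of $F$ are globally equivalent.

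First I enumerate $\mathfrak{E}_{\rm e}^{\ES{V}'}\smallsetminus\{\underline{G}'\}=\{\underline{G}''_1,\ldots,\underline{G}''_N\}$; this set is finite by the reference to \cite[VIII, 5, lemme 8.2]{Lan} made just before the definition of $(G,\underline{G}')$--special pair. For each index $i$, Waldspurger's rigidity shows that the set $\ES{F}_i=\{v\text{ finite place of }F : \underline{G}''_{i,v}\not\sim \underline{G}'_v\}$ is cofinite, because otherwise $\underline{G}''_i$ and $\underline{G}'$ would agree almost everywhere and hence globally. By hypothesis, the set $\ES{E}$ of finite places where $\underline{G}'$ is not elliptic is finite. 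Consequently each $\ES{F}_i\smallsetminus(\ES{V}'\cup \ES{E})$ is still infinite, and I can recursively pick pairwise distinct finite places $v_2\in \ES{F}_1,\ v_3\in \ES{F}_2,\,\ldots,\,v_{r}\in \ES{F}_{N}$ (with $r=N+1$), each lying outside $\ES{V}'\cup \ES{E}$ and distinct from the previously chosen ones. Set $\ES{V}=\ES{V}'\cup\{v_2,\ldots,v_r\}$ and $\ES{V}^*=\{v_1,v_2,\ldots,v_r\}$; by construction every $v_i\in \ES{V}^*$ satisfies that $\underline{G}'_{v_i}$ is elliptic.

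It remains to verify that the pair $(\ES{V},\ES{V}^*)$ is $(G,\underline{G}')$--special. Let $\underline{G}''\in\mathfrak{E}_{\rm e}^{\ES{V}}$ and suppose $\underline{G}''_v\sim\underline{G}'_v$ for every $v\in\ES{V}^*$. Two cases. If $\underline{G}''$ already lies in $\mathfrak{E}_{\rm e}^{\ES{V}'}$, then by the choice of $v_{i+1}\in\ES{F}_i$ the only element of $\{\underline{G}',\underline{G}''_1,\ldots,\underline{G}''_N\}$ compatible with $\underline{G}'$ at all of $v_2,\ldots,v_r$ is $\underline{G}'$ itself. Otherwise $\underline{G}''\in\mathfrak{E}_{\rm e}^{\ES{V}}\smallsetminus\mathfrak{E}_{\rm e}^{\ES{V}'}$, which means $\underline{G}''$ is ramified at some place $v_j\in\ES{V}\smallsetminus\ES{V}'=\{v_2,\ldots,v_r\}$; but $\underline{G}'$ is unramified at every such $v_j$ (since $\ES{V}'$ contains the global ramification locus of $\underline{G}'$), and local unramifiedness of an endoscopic datum is preserved under equivalence, so $\underline{G}''_{v_j}\not\sim\underline{G}'_{v_j}$, contradicting $v_j\in\ES{V}^*$.

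The real obstacle is not the combinatorics but the reliance on the unpublished rigidity statement of Waldspurger cited in 0.5; without it, the recursive choice of the $v_i$'s would collapse, since one could not guarantee that each $\ES{F}_i$ is cofinite (a priori one could only pick separating places individually, and one would then need a new argument to handle the endoscopic data appearing in $\mathfrak{E}_{\rm e}^{\ES{V}}\smallsetminus\mathfrak{E}_{\rm e}^{\ES{V}'}$ after enlarging $\ES{V}$). Granting it, the remainder is essentially bookkeeping, tracking how the ramification locus interacts with the enlargement of $\ES{V}$ and exploiting the hypothesis that $\underline{G}'$ is elliptic at almost all places to leave room for the separating places outside $\ES{V}'$.
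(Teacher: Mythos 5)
Your proposal follows essentially the same route as the paper's proof: enumerate the finitely many competitors in $\mathfrak{E}_{\rm e}^{\ES{V}'}$, use Waldspurger's rigidity to produce, for each one, a separating place outside $\ES{V}'$ where $\underline{G}'$ is also elliptic, and collect these as $\{v_2,\ldots,v_r\}$. (The paper enumerates only those $\underline{G}^{(i)}$ agreeing with $\underline{G}'$ at $v_1$, using $v_1\in\ES{V}^*$ to exclude the rest; you enumerate all of $\mathfrak{E}_{\rm e}^{\ES{V}'}\smallsetminus\{\underline{G}'\}$, which is harmless. The paper also cites Tchebotarev, which your direct counting argument makes unnecessary.)

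One false step, though a harmless one: rigidity does \emph{not} make $\ES{F}_i$ cofinite, only infinite. Your justification ("otherwise $\underline{G}''_i$ and $\underline{G}'$ would agree almost everywhere") misreads the negation: ``$\ES{F}_i$ not cofinite'' means the set of places where they \emph{agree} is infinite, which is far from ``they agree almost everywhere.'' Two globally inequivalent endoscopic data can perfectly well agree locally at infinitely many places while also disagreeing at infinitely many. What the rigidity statement gives is that a proper competitor cannot agree with $\underline{G}'$ outside any finite set, i.e.\ $\ES{F}_i$ is infinite. Luckily that is all the rest of your argument uses: since $\ES{V}'\cup\ES{E}$ is finite, $\ES{F}_i\smallsetminus(\ES{V}'\cup\ES{E})$ is still infinite, and the recursive choice of pairwise distinct $v_{i+1}$ goes through. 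Finally, a point in your favour: you explicitly dispose of the case $\underline{G}''\in\mathfrak{E}_{\rm e}^{\ES{V}}\smallsetminus\mathfrak{E}_{\rm e}^{\ES{V}'}$ via the invariance of local unramifiedness under equivalence, a step the paper leaves implicit in ``by construction.''
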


\begin{proof}
Since $\underline{G}'$ is elliptic at almost all place of $F$, it is elliptic {\it and} unramified at almost all finite places of $F$. 
Let $\{\underline{G}^{(1)}=\underline{G}', \underline{G}^{(2)},\ldots ,\underline{G}^{(r)}\}$ be the subset of $\mathfrak{E}_{\rm e}^{\ES{V}'}(G)$ formed of those elements 
such that $\underline{G}^{(i)}_{v_1}$ is equivalent to $\underline{G}^{(1)}_{v_1}=\underline{G}'_{v_1}$. Note that the underlying 
group $G^{(i)}$ of $\underline{G}^{(i)}$ is a (quasi--split) form of $\underline{G}'$. We know\footnote{Unfortunately, there is no published proof of this result today. An unpublished proof 
is due to Waldspurger, 
who checked the result case by case for each irreducible root system. This proof will be written for publication shortly.} that two (global) endoscopic data of $G$ which are equivalent almost everywhere, are globally equivalent. 
For $i=2,\ldots ,r$, pick a finite place $v_i$ of $F$ such that $v_i\notin \ES{V}'$ and $\underline{G}^{(i)}_{v_i}$ is {\it not} equivalent to $\underline{G}'_{v_i}$. The Tchebotarev density 
theorem gives infinitely many choices for $v_i$, hence we can choose $v_i$ such that $\underline{G}'_{v_i}$ is elliptic. Put $\ES{V}= \ES{V}'\cup 
\{v_2,\ldots, v_r\}$ and $\ES{V}^* =\{v_1,\ldots ,v_r\}$. By construction, the pair $(\ES{V},\ES{V}^*)$ is $(G,G')$--special.
\end{proof}

Let $\underline{G}'=(G',\ES{G}',s)\in \mathfrak{E}_{\rm e}$ such that $\underline{G}'$ is elliptic at almost all places of $F$. To simplify the statement, and because it is the only case we will use later, 
we also assume that the subgroup $\ES{G}'\subset {^LG}$ is isomorphic to ${^LG'}$. So we may (and do) that the auxiliary data $G'_1$, $C_1$, $\check{\xi}_1$ for $\underline{G}'$ are the trivial ones 
($G'_1=G'$ and $C_1=\{1\}$). From the lemma, there exists a $(G,\underline{G}')$--special pair $(\ES{V},\ES{V}^*)$ of subsets of places of $F$. 
Let $f\in C^\infty_{\rm c}(G({\Bbb A}_F))$ and $f'\in C^\infty_{\rm c}(G'({\Bbb A}_F))$ be two functions which decompose as $f=\prod_vf_v$ and $f'=\prod_vf'_{v}$, 
such that:
\begin{itemize}
\item at any place $v$ of $F$, $f'_v$ is a transfer of $f_v$;
\item at any place $v\in \ES{V}^*$, $f_{v}$ is a (cuspidal or not) $\underline{G}'_{v}$--stabilizing function;
\item at any place $v\notin \ES{V}$, $f_v$ is the characteristic function of a hyperspecial maximal compact subgroup $K_v$ of $G(F_v)$, 
and $f'_v$ is the characteristic function of a hyperspecial maximal compact subgroup $K'_v$ of $G'(F_v)$. 
\end{itemize}
Then from \ref{stabilization 2}.(3), we have
$$
{\bf T}_{\rm e}(f)= \iota(G,\underline{G}'){\bf ST}_{\rm e}^{\underline{G}'}(f').\leqno{(1)}
$$
Suppose moreover:
\begin{itemize}
\item at {\it one} place $v\in \ES{V}^*$, the function $f'_v$ is cuspidal --- this implies that the function $f_v$ is cuspidal too, because it is $\underline{G}'_v$--stabilizing (cf. \ref{the local transfer map}.(5)) 
--- 
and stabilizing, i.e. there exists a $\delta_v\in G(F_v)_{\rm e}$ such that ${\bf SO}_{\delta_v}(f'_v)\neq 0$, and ${\bf O}_{\delta_v}^{\kappa'_v}(f'_v)=0$ for all $\delta_v\in G'(F_v)_{\rm e}$ and all $\kappa'_v \neq 1$ (e.g. $f'_v$ is a Kottwitz function \cite[3.9]{Lab3}).
\end{itemize}
Since $\underline{G}'$ can be identified with the principal endoscopic datum of $G'$, 
we have (cf. remark 2)
$$
{\bf T}_{\rm e}^{\underline{G}'}(f')= {\bf ST}_{\rm e}^{\underline{G}'}(f').\leqno{(2)}
$$
Combining equalities (1) and (2), we obtain
$$
{\bf T}_{\rm e}(f)= \iota(G,\underline{G}') {\bf T}_{\rm e}^{\underline{G}'}(f').\leqno{(3)}
$$

\begin{rem3}
{\rm If moreover at one finite place $w$ of $F$, the support of the function $f'_w$ is contained in the set of elliptic strongly $G$--regular 
elements of $G'(F_w)$, then we have
$$
{\bf T}_{\rm e}^{\underline{G}'}(f')= {\bf T}_{\rm e}(f')
$$
where ${\bf T}_{\rm e}(f')$ is the whole elliptic part of the trace formula for $G'$ (defined as in \ref{elliptic terms of the trace formula} replacing $G$ by $G'$).
}
\end{rem3}

\subsection{Spectral decomposition (a simple trace formula)}\label{spectral decomposition}
Let $R_{\rm cusp}$ be the restriction of the operator $R$ to the cuspidal tempered spectrum $\mathfrak{X}_{G,{\rm cusp}}$ of $G({\Bbb A}_F)$ in $\mathfrak{X}_G={\rm L}^2(G(F)\backslash G({\Bbb A}_F))$, and ${\rm Irr}_{\rm cusp}= {\rm Irr}_{\rm cusp}(G({\Bbb A}_F))$ the set of isomorphism classes of irreducible admissible representations of $G({\Bbb A}_F)$ in $\mathfrak{X}_{G,{\rm cusp}}$. For $\pi\in {\rm Irr}_{\rm cusp}$, we denote by $m(\pi)$ the multiplicity of $\pi$ in $\mathfrak{X}_{G,{\rm cusp}}$. For $f\in C^\infty_{\rm c}(G({\Bbb A}_F))$, the operator $R_{\rm cusp}(f)$ is of trace class, and we have
$$
{\rm trace}(R_{\rm cusp}(f))= \sum_{\pi\in {\rm Irr}_{\rm cusp}}m(\pi){\rm trace}(\pi(f)).\leqno{(1)}
$$

By imposing more or less strong conditions to the test function $f$, we obtain different versions of the trace formula. If the center $Z(G_v)$ of $G_v$ is not $F_v$--anisotrope, we cannot put at a finite place $v$ of $F$ a coefficient of an irreducible supercuspidal representation of $G(F_v)$ or a pseudo--coefficient of a discrete series representation of $G(F_v)$, because these functions are {\it not} compactly supported, but only compactly supported modulo $Z(G;F_v)$. To avoid this problem, {\bf from now to the end of \ref{spectral decomposition}, we assume that the group $G$ is semi--simple}:
$$
G=G_{\rm der}.
$$

The simplest version of the trace formula is the following (it was first proved by Deligne and Kazhdan). Suppose $f\in C^\infty_{\rm c}(G({\Bbb A}_F))$ decomposes as $f= \prod_vf_v$, and there are finite places $v_1,\,v_2$ of $F$ ($v_1=v_2$ is allowed) such that:
\begin{itemize}
\item $f_{v_1}$ is a finite linear combination of coefficients of irreducible supercuspidal representations of 
$G(F_{v_1})$;
\item $f_{v_2}$ is supported on the set of elliptic regular elements in $G(F_{v_2})$. 
\end{itemize}
In that case, we have \cite[4]{Hen}
$$
{\rm trace}(R_{\rm cusp}(f)) = {\bf T}_{\rm e}(f).\leqno{(2)}
$$
This version may be too simple for some applications, because it does not see the (semisimple) elliptic but not regular elements --- in particular, it does not see the term $\tau(G)f(1)$. Another simple trace formula is obtained by replacing the condition at $v_2$  by the less strong following condition:
\begin{itemize}
\item all the semisimple orbital integrals of $f_{v_2}$ are zero except (perhaps) for the elliptic elements $\gamma\in G(F_{v_2})$ --- in particular $f_{v_2}$ is cuspidal.
\end{itemize}
In that case, we still have \cite[5]{K2}
$$
{\rm trace}(R_{\rm cusp}(f)) = {\bf T}_{\rm e}(f))\leqno{(3)}
$$
Note that the Euler--Poincar\'e function on $G(F_{v_2})$, or a pseudo--coefficient of the Steinberg representation of $G(F_{v_2})$, are examples 
of functions $f_{v_2}$ verifying the above condition at $v_2$.

\subsection{End of the proof in characteristic zero \cite{Cl, Lab1,Hal}}\label{end of the proof}
Now let's go back to the local situation of section \ref{main result} and \ref{proof of the theorem assuming local data}: $F_0$ is a finite extension of a $p$--adic field ${\Bbb Q}_p$, 
$G_0$ is a $F_0$--split connected reductive group, and $G'_0$ is a $F_0$--split elliptic endoscopic group of $G_0$ defined by a depth--zero character of the maximal compact subgroup 
of $T_0(F)$, where $T_0$ is a maximal $F_0$--split torus of $G_0$. 
Note that $G_0$ and $G'_0$ are $F_0$--isomorphic to Chevalley groups, i.e. algebraic groups 
defined over ${\Bbb Z}$. We may assume $G_0$ is adjoint (remark of \ref{further reductions (step 2)}).

{\bf From now to the end of \ref{end of the proof}, we suppose that $G_0$ is adjoint}:
$$
G_0= G_{0,{\rm AD}}.
$$ 

There exists a totally imaginary number field $F$, a $F$--split connected reductive group $G$, and a $F$--split endoscopic group $G'$ of $G$, such that at a finite place $v_0$ of $F$, we have
$$
F_{v_0}\simeq F_0,\quad G_{v_0}\simeq G_0,\quad G'_{v_0}\simeq G'_0. 
$$
Then $G=G_{\rm AD}$, and $G'$ defines an elliptic endoscopic datum $\underline{G}'=(G',\ES{G}',s)$ of $G$, where $\ES{G}'= \check{G}' \times W_F$  
and $s$ is a semisimple element in the dual group $\check{G}$ such that $\check{G}_s = \check{G}'$. Note that $\underline{G}'$ is elliptic at all places of $F$, 
and unramified at all finite places. We fix a Borel pair $(B,T)$ of $G$, such that $T$ is a maximal $F$--split torus in $G$. It defines, for each place $v$ of $F$, a Borel pair $(B_v,T_v)$ 
of $G_v$. We identifie $G_0(F_0)$ with $G(F_{v_0})$ and $G'_0(F_0)$ with $G'(F_{v_0})$. 

Let us fix three finite places $w_1,\,w_2,\,w_3$, different of each other, and such that $v_0\notin \{w_1,w_2,w_3\}$. 
Let $v_1$ be a finite place of $F$ not in $\{v_0,w_1,w_2,w_3\}$, and 
$\ES{V}'$ a finite set of places of $F$ containing $\{v_0,v_1,w_1,w_2,w_3\}$ and all the archimedean places. From the lemma of \ref{an application}, we can complete $\ES{V}'$ in $\ES{V}= \ES{V}' \cup \{v_2,\ldots ,v_r\}$ with $v_i\notin \ES{V}'$, such that the pair $(\ES{V}, \ES{V}^* =\{v_1,\ldots ,v_r\})$ is $(G,\underline{G}')$--special. 
Let $f\in C^\infty_{\rm c}(G({\Bbb A}_F))$ be a function which decomposes as 
$f= \prod_v f_v$, and $f'\in C^\infty_{\rm c}(G'({\Bbb A}_F))$ a function which decomposes as $f'= \prod_v f'_v$, verifying the following conditions:
\begin{itemize}
\item at any place $v$ of $F$, $f'_v$ is a transfer of $f_v$;
\item $f_{w_1}$ is a coefficient of a supercuspidal representation of $G(F_{w_1})$;
\item $f'_{w_2}$ is a coefficient of a supercuspidal representations of $G'(F_{w_2})$;
\item the support of $f_{w_3}$ is contained in the set of elliptic strongly regular element in $G(F_{w_3})$, and the support of $f'_{w_3}$ is contained in the set of elliptic strongly $G_{w_3}$--regular elements in $G'(F_{w_3})$;
\item at any place $v\in \ES{V}^*$, $f_{v}$ is a $\underline{G}'_{v}$--stabilizing function;
\item $f'_{v_1}$ is a cuspidal stabilizing function --- e.g. a pseudo--coefficient of the Steinberg representation of $G'(F_{v_1})$;
\item at any place $v\notin \ES{V}$, $f_v$ is the characteristic function of a hyperspecial maximal compact subgroup $K_v$ of $G(F_v)$, 
and $f'_v$ is the characteristic function of a hyperspecial maximal compact subgroup $K'_v$ of $G'(F_v)$. 
\end{itemize}
From \ref{an application} and 
\ref{spectral decomposition}, we have
$$
{\rm trace}(R_{\rm cusp}(f))=\iota(G, \underline{G}') {\rm trace}(R_{\rm cusp}(f')).\leqno{(1)}
$$
Let $F_\infty$ be the product of the completions $F_v$ for $v$ archimedean (i.e. for $v\vert \infty$ where $\infty$ is the archimedean place of ${\Bbb Q}$):
$$
F_\infty= \prod_{v\vert \infty} F_v= F\otimes_{\Bbb Q}{\Bbb R}.
$$
By varying the functions $f_v$ at the archimedean places $v$, we obtain \cite[6]{Hal} that (1) is equivalent to a family of identities, indexed by the 
irreducible unitary representations $\pi_\infty= \otimes_{v\vert \infty} \pi_v$ of $G(F_\infty)=\prod_{v\vert \infty}G(F_v)$. For such a 
$\pi_\infty$, we obtain an identity (cf. loc.~cit.)
$$
\sum_{\pi} a_{\pi_\infty}(\pi_{v_0}){\rm trace}(\pi_{v_0}(f_{v_0}))= \sum_{\pi'} b_{\pi_\infty}(\pi'_{v_0}){\rm trace}(\pi'_{v_0}(f'_{v_0}))\leqno{(2)}
$$
where $\pi$, resp. $\pi'$, runs over the elements of ${\rm Irr}_{\rm cusp}$, resp. ${\rm Irr}_{\rm cusp}(G'({\Bbb A}_F))$. More precisely, 
we can choose the functions $f_\infty = \prod_{v \vert \infty}f_v$ and $f'_\infty = \prod_{v\vert \infty}f'_v$ in such a way that the archimedean 
components $\pi_\infty= \prod_{v\vert\infty}\pi_v$, resp. $\pi'_\infty = \prod_{v\vert \infty}\pi'_v$ of the representations 
$\pi$, resp. $\pi'$, giving a non--trivial contribution to
(1) are irreducible tempered principal series representations of $G(F_\infty)$, resp. $G'(F_\infty)$. For such a $\pi'_\infty$, 
the character of $\pi'_\infty$ 
lifts to a tempered invariant distribution on 
$G(F_\infty)$ which is, up to a sign, the character of a principal series representation. 
By definition of the lift of a tempered distribution, each non--zero term ${\rm trace}(\pi'_\infty(f'_\infty))$ may be replaced 
by a term ${\rm trace}(\pi_\infty(f_\infty))$ for some tempered representation $\pi_\infty$ of $G(F_\infty)$, which is in fact (by the choice of $f_\infty$) 
an irreducible principal series. Now suppose 
the functions $f_v$, resp. $f'_v$, are fixed as above at all places $v\neq v_0$. The identity (2) is true for each pair of functions $(f_{v_0},f'_{v_0})$ 
such that $f'_{v_0}$ is a transfer of $f_{v_0}$. If moreover $(f_{v_0},f'_{v_0})\in \ES{H}_0\times \bs{\ES{H}}'_0$ (with the notations \ref{local data}), the cuspidal representations $\pi$, resp. $\pi'$, giving a non--trivial contribution to (2) have a non--zero fixed vector under the action of an open compact subgroup of the group of points of $G$, resp. $G'$, on the finite ad\`eles, which depends 
on the functions $f_v$, resp. $f'_v$, we have fixed 
at each finite place $v\neq v_0$ of $F$, and contains the pro--unipotent radical of an Iwahori subgroup at the place $v_0$. Since the archimedean component $\pi_\infty$ of $\pi$ 
is fixed, and is up to a sign a lift of the archimedean component $\pi'_\infty$ of $\pi'$ (by the choice of $f'_\infty$), 
these cuspidal representations $\pi$, resp. $\pi'$, belong to a finite set (by Harish--Chandra finitness theorem) which do not depend on $(f_{v_0},f'_{v_0})\in \ES{H}_0\times \bs{\ES{H}}'_0$. This prove the implication $(B) \Rightarrow (A)$ in the definition of local data (\ref{local data}). However, to prove the inverse implication $(A)\Rightarrow (B)$, we have to break the character identities (2) into a collection of character identities, one for each strongly $G_{v_0}$--regular element $\delta_0\in G'_0(F_0)=G'(F_{v_0})$.

So let $\delta_0\in G'(F_{v_0})$ be a strongly $G_{v_0}$--regular element. 
Choose a strongly $G$--regular element $\delta\in G'(F)$ being elliptic at a finite place $w_3\notin \{v_0,v_1,w_1,w_2\}$ of $F$, 
and approximating $\delta_0$ at $v_0$. More precisely, we demand that
$$
\wt{\bf O}_{\delta}(f_{v_0}) - {\bf SO}_{\delta}(f'_{v_0})= \wt{\bf O}_{\delta_0}(f_{v_0})- {\bf SO}_{\delta_0}(f'_{v_0})\leqno{(3)}
$$
for all pairs of functions $(f_{v_0},f'_{v_0})\in \ES{H}_0\times \bs{\ES{H}}'_0$ (cf. \ref{local data}). Such a global element $\delta$ exists by weak approximation and the Howe conjecture. Note that since $\delta$ is elliptic at a finite place, it is a fortiori elliptic in $G'(F)$. 
The test functions $f_\infty=\prod_{v\vert \infty}f_v$ and $f'_\infty= \prod_{v\vert \infty}f'_v$ are fixed as before. At each archimedean place $v$ of $F$, $f_v$ 
determines an open subset $U_v$ of $G(F_v)$ contained in the set of semisimple regular elements in $G(F_v)$, such that ${\bf O}_{\gamma_v}(f_v)\neq 0$ for all $\gamma_v\in U_v$ 
(cf. \cite[6, p.~991]{Hal}). By weak approximation, we may assume that at all archimedean places $v$ of $F$, $\delta$ corresponds to a strongly regular element $\gamma_v\in G(F_v)$ which belongs to $U_v$. We choose the finite set $\ES{V}'$ containing $\{v_0,v_1,w_1,w_2,w_3\}\cup \{v: v\vert\infty\}$
such that $\delta\in K'_v$ for all places $v\notin \ES{V}'$. We define $\ES{V}^*=\{v_1,\ldots ,v_r\}$ and $\ES{V}= \ES{V'}\cup\{v_2,\ldots ,v_r\}$ as before. Again by weak approximation, we may arrange that $\wt{\bf O}_\delta(f_{w_1})\neq 0$ and ${\bf SO}_\delta(f'_{w_2})\neq 0$, and ${\bf SO}_\delta(f'_v)\neq 0$ at each finite place $v\in \ES{V} \smallsetminus \{v_0,w_1,w_2\}$. At the finite places $v\notin \ES{V}$, the function $f_v$ and $f'_v$ are choosen as before. Finally, shrinking the support of $f'_{w_3}$, we can also arrange that the only 
$G'({\Bbb A}_F)$--conjugacy classes in $G'({\Bbb A}_F)$ meeting the support of $f'_{w_3}$ come from $\delta$. 
Now let $(h_{v_0},h'_{v_0})\in C^\infty_{\rm c}(G(F_{v_0}))\times C^\infty_{\rm c}(G'(F_{v_0}))$ be a pair of functions such that $h'_{v_0}$ is a transfer of $h_{v_0}$ and ${\bf SO}_\delta(h'_{v_0})\neq 0$. 
Then $h_{v_0}$, resp. $h'_{v_0}$, is biinvariant by a compact open subgroup $J_{v_0}$, resp. $J'_{v_0}$, of $G(F_{v_0})$, resp. $G'(F_{v_0})$, contained in the pro--unipotent radical of the Iwahori subgroup underlying the definition of $\ES{H}_0$, resp. $\bs{\ES{H}}'_0$. At the place $v_0$, we consider only pairs of test functions $(f_{v_0},f'_{v_0})$ such that $f_{v_0}$ is $J_{v_0}$--biinvariant and $f'_{v_0}$ is $J'_{v_0}$--biinvariant 
(e.g. $(f_{v_0},f'_{v_0})\in \ES{H}_0\times \bs{\ES{H}}'_0$). If $f'_{v_0}$ is a transfer of $f_{v_0}$, then we obtain as before the identity (2), the cuspidal representations $\pi$, resp. $\pi'$, 
running over a finite set which depends only on $J_{v_0}$, resp. $J'_{v_0}$, and on the test functions fixed at the other places of $F$. 
Conversely, suppose we have the identity (2) for all irreducible unitary representations $\pi_\infty$ 
of $G(F_\infty)$. Hence we have
$$
{\bf T}_{\rm e}(f)= \iota(G,\underline{G}'){\bf T}_{\rm e}(f').
$$
From the choice of the test functions $f_v$ at $v\neq v_0$, there are two non--zero constants $c,\,c'$ such that
$$
{\bf T}_{\rm e}(f) = \wt{\bf O}_\delta(f) = c\, \wt{\bf O}_\delta(f_{v_0})
$$
and
$$
{\bf T}_{\rm e}(f')= {\bf SO}_\delta(f')= c' {\bf SO}_\delta(f'_{v_0}).
$$
Thus we have
$$
\wt{\bf O}_\delta(f_{v_0})= \iota(G,\underline{G}') c^{-1}c' {\bf SO}_\delta(f'_{v_0}).
$$
Replacing $(f_{v_0},f'_{v_0})$ by $(h_{v_0},h'_{v_0})$ in the last equality, we obtain
$$
\iota(G,\underline{G}') c^{-1}c'=1.
$$ 
Hence we have
$$
\wt{\bf O}_\delta(f_{v_0})= {\bf SO}_\delta(f'_{v_0}).
$$
If moreover $(f_{v_0},f'_{v_0})$ belongs to $\ES{H}_0\times \bs{\ES{H}}'_0$, from (3) we obtain
$$
\wt{\bf O}_{\delta_0}(f_{v_0})= {\bf SO}_{\delta_0}(f'_{v_0}).
$$

\subsection{Existence of local data (${\rm char}(F)>0$)}\label{local data in char p}
In this subsection \ref{local data in char p}, we suppose $F$ is a function field of characteristic $p$.

Note that the Hasse principle for all semisimple simply connected groups 
is known by Harder \cite{Har2}, and the Weil conjecture on Tamagawa number has been proven recently by Gaitsgory and Lurie \cite{GL}. 

The proof of the stabilization of the elliptic regular part of the trace formula given by Langlands in \cite{Lan} is still valid in this context. In other words, replacing in 
\ref{elliptic terms of the trace formula}--\ref{stabilization 2} the set 
$G(F)_{\rm e}$ by the subset $G(F)_{\rm e,reg}\subset G(F)_{\rm e}$ of regular elements, we obtain the analogous formula of \ref{stabilization 2}.(3):
$$
{\bf T}_{\rm e,reg}(f) = \sum_{\underline{G}'\in \mathfrak{E}_{\rm e}} \iota(G,\underline{G}') {\bf ST}_{\rm e,reg}^{\underline{G}'}(f^{\underline{G}'}).\leqno{(1)}
$$
Note that if $\gamma,\,\gamma'\in G(F)_{\rm e,reg}$ are stably conjugate, the groups $G_\gamma$ and $G_{\gamma'}$ are $F$--isomorphic, 
and $\tau(G_\gamma)=\tau(G_{\gamma'})$. The only point on which we should give some details is the {\it Langlands obstruction}: denoting by $G^*$ the quasi--split form of $G$, 
we seek a cohomological condition so that an element of $G^*(F)$ which transfers to $G$ locally everywhere, transfers also globally\footnote{For number fields, 
the question has been solved by Langlands \cite{Lan} and Kottwitz \cite{K1} using global 
Poitou--Tate--Nakayama duality, and, in a more general context, by Labesse \cite{Lab5} using non--abelian 
cohomological constructions.}. In order to avoid additional complications, we suppose in (1) that $G$ is quasi--split (in that case, there is no Langlands obstruction). 

Of course on the spectral side, for $f_{v_1}$ and $f_{v_2}$ verifying the conditions of \ref{spectral decomposition}.(2), we have\footnote{Note that the simple trace formula \ref{spectral decomposition}.(2) is proved in \cite[4]{Hen} for a global field $F$ of characteristic $\geq 0$.}
$$
{\rm trace}(R_{\rm cusp}(f)) = {\bf T}_{\rm e,reg}(f).\leqno{(2)}
$$

Since the local transfer assumption is not proved in positive characteristic, we have to modify the definition of \ref{the local transfer map}: let $k$ be a finite extension of ${\Bbb F}_p((t))$, and $H$ a connected reductive $k$--group. We fix $\mathfrak{E}_{\rm e}(H)$ as in \ref{the local transfer map}. A pair 
$(\gamma,\kappa)$ formed of an elliptic element $\gamma \in H(k)$ and an endoscopic character $\kappa \in \mathfrak{K}(G_\gamma,G,k)_1$ is called an 
{\it elliptic endoscopic pair}. To such a pair is associated an element $\underline{H}_\kappa \in \mathfrak{E}_{\rm e}(H)$, cf. \ref{stabilization 2}. Let $\underline{H}'=(H'\!,\ES{H}',s)\in \mathfrak{E}_{\rm e}(H)$. 
A function $f\in C^\infty_{\rm c}(H(k))$ is said to be {\it $\underline{H}'$--stabilizing} if there exists an elliptic endoscopic pair $(\gamma, \kappa)$ such that $\underline{H}_\kappa = \underline{H}'$ and 
${\bf O}^\kappa_\gamma(f)\neq 0$, and ${\bf O}_\gamma^\kappa(f)=0$ for any elliptic endoscopic pair $(\gamma,\kappa)$ such that $\underline{H}_{\kappa} \neq \underline{H}'$. With this definition, the results of \ref{an application} remain true. Moreover, from \cite[6, pp.~989--990]{Hal}, we have the 

\begin{lem1}
Suppose $H$ is quasi--split. Let $\underline{H}'=(H'\!,\ES{H}'\!,s)\in \mathfrak{E}_{\rm e}(H)$, and $\delta\in H'(k)$ an elliptic strongly $H$--regular element. There exist a function $f\in C^\infty_{\rm c}(H(k))$ supported on the set of elliptic strongly regular elements in $H(k)$, and a function $f'\in C^\infty_{\rm c}(H'(k))$ supported on the set of elliptic strongly $H$--regular elements in $H'(k)$, such that $f'$ is a transfer of $f$ and ${\bf SO}_\delta(f')\neq 0$. Moreover, we can choose $(f,f')$ such that $f$ is $\underline{H}'$--stabilizing, resp. $f'$ is stabilizing.   
\end{lem1}
 
Now we can replace ${\Bbb Q}_p$ by ${\Bbb F}_p((t))$ in \ref{end of the proof}. We always suppose $G_0$ is adjoint. There exist a global field $F$ of characteristic $p$, a $F$--split connected reductive group $G$, and a $F$--split endoscopic group $G'$ of $G$, such that at a finite place $v_0$ of $F$, we have $F_{v_0}\simeq F_0$, $G_{v_0}\simeq G_0$ and $G'_{v_0}\simeq G'_0$. From \cite{Har1}, 
we know that $G$ satisfies the weak approximation property. All the considerations at archimedean places are useless here. We fix a $(G,\underline{G}')$--special pair $(\ES{V},\ES{V}')$ of sets of places of $F$ as before, and a strongly $G_{v_0}$--regular element $\delta_0\in G'(F_{v_0})$. By weak approximation property and the Howe conjecture (proved by Barbasch--Moy \cite{BM} in characteristic $p$), we can choose a strongly $G$--regular element $\delta\in G'(F)$ elliptic at $w_3$ and verifying \ref{end of the proof}.(3). We may also assume that $\delta$ is elliptic at all places 
$v\in \ES{V}\smallsetminus \{v_0\}$. Thanks to the lemma 1, the proof of \ref{end of the proof} works here, provided that the following results are available:
\begin{enumerate}
\item[(H1)] at $w_1$, there exists a non--zero (finite) linear combination $f_{w_1}$ of matrix coefficients of irreducible supercuspidal representations of $G(F_{w_1})$ that transfers to $G'(F_{w_1})$;
\item[(H2)] at $w_2$, there exists a function $f_{w_2}\in C^\infty_{\rm c}(G(F_{w_2})$ that transfers to a non--zero (finite) linear combination of matrix coefficients of supercuspidal representations of $G'(F_{w_2})$;
\item[(H3)] (fundamental lemma for the unit) at any place $v\notin \ES{V}$, the characteristic function of $K'_{v}$ is a transfer of the characteristic function of $K_v$.
\end{enumerate}

We claim that (H1), (H2), (H3) are true if $p$ is large enough. For (H3), it is a consequence of Ng\^o's result on the Lie algebra \cite{N}, cf. also the remark below. For (H1) and (H2), let us take again the connected reductive 
$k$--group $H$ introduced above.

\begin{lem2} 
Suppose $H$ is quasi--split and unramified. Let $\underline{H}'=(H'\!,\ES{H}',s)\in \mathfrak{E}_{\rm e}(H)$, and $\delta\in H'(k)$ an elliptic strongly $H$--regular element. Suppose the endoscopic 
data $\underline{H}'$ is unramified, the $k$--torus $T'=H^\delta$ is unramified, and $\delta$ belongs to the maximal compact subgroup $T'(\mathfrak{o}_k)$ of $T'(k)$. If $p= {\rm char}(k)$ is ``large enough'', there exist a (finite) linear combination $f$ of matrix coefficients of irreducible depth zero supercuspidal representations of $H(k)$, and a (finite) linear combination $f'$ of matrix coefficients of irreducible depth zero supercuspidal representations of $H'(k)$, such that $f$ transfers to $f'$ and ${\bf SO}_\delta(f')\neq 0$. 
\end{lem2}

\begin{proof}
Cf. \cite[5]{Hal}. In loc~cit., the base field $k$ is supposed to be of characteristic $0$ and of large enough residual characteristic $p$. All the arguments remain valid if ${\rm char}(k)=p>0$ is 
large enough.
\end{proof}

{\bf Suppose $p$ is ``large enough''}\footnote{Looking at the proof of \cite[lemma 5.1]{Hal}, it is possible to make the counting argument explicit, hence to 
get an explicit bound for the $p$ which work. More precisely, let ${\Bbb F}_q$ be the residue field of $k$. The torus $T'$ transfers to $H$. One has to compare the dimension of various spaces, for $H'$ and for $H$: the space of Deligne--Lusztig characters coming from $T'({\Bbb F}_q)$; the space of invariant functions  supported on the orbits of regular elements in $T'({\Bbb F}_q)$; the space of invariant functions supported on the orbits of elements in $T'({\Bbb F}_q)$. The asymptotic values of these dimensions are known --- this makes the lemma true for $p$ ``large enough'' ---, and we need to calculate these dimensions.  }. 
Taking $w_1=w_2$, we can put at $w_1$ a pair of matching functions $(f_{w_1},f'_{w_1})$ given by the lemma 2. Moreover, by weak approximation theory, we can choose $\delta$ elliptic {\it and} unramified at $w_1$, i.e. such that the maximal $F$--torus $T'= G'^\delta$ is unramified at $w_1$. Since $T'_{w_1}$ is $F_{w_1}$--anistropic (recall $G=G_{\rm AD}$), we have $T'(\mathfrak{o}_{F_{w_1}})= T'(F_{w_1})$. So we are done.
\begin{rem}
{\rm From \cite{N,W1,Hal}, the fundamental lemma for the whole Hecke algebra is true in characteristic $0$ without any restriction on the residual characteristic $p$. It should be possible to deduce it in characteristic $p>0$, without any restriction on $p$, via the method of close fields as in \cite{Lem}. So if we could prove (H1) and (H2) in general (for a $k$--split adjoint group $H$ on a finite extension $k$  of ${\Bbb F}_p((t))$ and a $k$--split endoscopic group $H'$ of it), then our result would be true in characteristic $p>0$ without any restriction on $p$. 
}\end{rem}


\end{document}